\title{Functional limit theorems for P\'olya urns with growing initial compositions.}
\numberwithin{equation}{section}
\newtheorem{theorem}{Theorem}[section]
\newtheorem{corollary}[theorem]{Corollary}
\newtheorem{lemma}[theorem]{Lemma}
\newtheorem{definition}[theorem]{Definition}
\theoremstyle{definition}
\newtheorem{remark}[theorem]{Remark}
\newtheorem{example}[theorem]{Example}
\newcommand{\R}{\right}
\newcommand{\cst}{\mathrm{cst}\hspace{0.2mm}}
\newcommand{\F}{\left}
\newcommand{\M}{\middle}
\newcommand{\Var}{\mathrm{Var}}
\newcommand{\E}{\mathbb{E}}
\newcommand{\Cov}{\mathrm{Cov}}
\newcommand{\bs}[1]{\boldsymbol{#1}}
\newcommand{\e}{\mathrm{e}}
\newcommand\eqindis{\stackrel{\mathclap{\mathrm{d}}}{=}}
\newcommand\conindis{\stackrel{\mathclap{\mathrm{d}}}{\rightarrow}}
\newcommand\coninas{\stackrel{\mathclap{\mathrm{a.s.}}}{\rightarrow}}
\newcommand\coninprob{\stackrel{\mathclap{\mathrm{p}}}{\rightarrow}}
\author{Christopher B.\ C.\ Dean\footnote{Department of Mathematical Sciences, University of Bath, UK. Email: cbcd20@bath.ac.uk.}}
\begin{document}
\maketitle
\begin{center}
    \Large \textbf{Abstract}
\end{center}
In this paper, we prove functional limit theorems for P\'olya urn processes whose number of draws and initial number of balls tend to infinity together. This 
is motivated by recent work of Borovkov \cite{Borov}, where they prove a functional limit theorem for this model when the urn has identity replacement rule.
We generalize this result to arbitrary balanced replacement rules (the total number of balls added to the urn is deterministic). Three asymptotic regimes are possible depending on how one lets the number of initial balls scale with the number of draws of the urn. In each regime, we show a first order deterministic limit and Gaussian second order fluctuations, where the behaviour of these limit processes depend on the regime, the initial composition of the urn, and the urns replacement rule. 
\\~\\
To prove our main results, we embed the process in continuous-time and use martingale theory. Although these methods are classical since the works of Athreya \& Karlin \cite{AthreyaandKarlin} and Janson \cite{Janson}, our setting with initial growing composition necessitates many new ideas. The main difference in proving limiting results for the continuous time embedding in our setting is that, when the initial composition is large compared to the number of draws, the branching process does not have time to reach equilibrium. Because of this, translating the results back to discrete-time is also much harder than in Janson \cite{Janson}. Interestingly, our continuous-time results hold under weaker assumptions on the replacement structure than classical results for multi-type branching processes; in particular, we do not need any ``irreducibility’’ assumption.
\section{Introduction}
\label{Section: Introduction}
A \(d\)-dimensional generalized P\'olya urn process \((\bs U(n))_{n\geq 0}\) is a Markov process that depends on three parameters: the initial composition \(\bs U(0) \in \mathbb{Z}^d_{\geq 0}\), the colour-weights \(a_1,...,a_d > 0\), and the (random) replacement rule \(\bs \xi_1,...,\bs \xi_d\in ~\mathbb{Z}^d\). The process represents the evolution of an urn containing balls of \(d\) colours, where, for \(1\leq i \leq d\), \(U(n)_i\) denotes the number of balls of colour \(i\) in the urn at step \(n\). The process evolves as follows: At each step \(n\) a ball is randomly drawn from the urn with probability proportional to its colour-weight. In other words, the probability of choosing a ball of colour \(i\) is~\(a_iU(n)_i/\sum_{j=1}^da_jU(n)_j\). (Many models in the literature assume the ball is chosen uniformly at random which in our setting equates to \(\bs a :=(a_1,...,a_d)\propto \bs 1\).) Then, if a ball of colour \(i\) is chosen, the ball is placed back into the urn along with a (random) set of new balls whose law is given by \(\bs \xi_i\). We assume that
\begin{equation}
    \xi_{ii}\geq -1, \text{ and } \xi_{ij}\geq 0, \text{ } i \neq j, \text{ a.s.} \label{eq: cond 111}
\end{equation}
The case of \(\xi_{ii} = -1\) represents the chosen ball being removed from the urn. Later, we consider more general urn models which allow for more than a single ball to be removed including colours other than the ball drawn (see Remark \ref{remark: tenability}).

We call the pair~\((\bs a,(\bs \xi_i)_{i=1}^d)\) the replacement structure of the urn. We call \(\sum_{j=1}^d a_j U(n)_j\) the total mass in the urn at time \(n\) and \(\sum_{j=1}^d a_j (U(n+1)_j-U(n)_{j})\) the amount of mass added to the urn from draw \(n\). Since we can remove balls from the urn, it is possible that \(\bs U(n) = 0\) for some \(n\). If this happens, we can no longer draw a ball from the empty urn, so we set \(\bs U(m) = 0\) for \(m \geq n\). In this case, we say the urn has gone extinct at time \(n\). 

First appearing in the literature in Markov \cite{Markov2006}, the properties of P\'olya urn models with varying replacement structures have been studied for over 100 years. A typical question when studying P\'olya urns (and the one we shall discuss in this paper) is how the composition of the urn behaves as the number of draws \(n\) tends to infinity. More specifically, of interest is how the proportion of each colour \(\bs U(n)/\sum_{i=1}^d U(n)_i\) (which we call the colour composition at time \(n\)) behaves as \(n\) tends to infinity, and the size, shape, and scale of the fluctuations of the urn around its asymptotic colour composition. As expected, this asymptotic behaviour depends on the replacement structure and initial composition of the urn. From the literature, one can isolate two canonical replacement structures from which the asymptotic behaviour of most urns can be inferred. \\~\\
\textbf{Identity urn:} Take \(\bs a =\bs 1\) (a ball is chosen uniformly at random), and, for \(1 \leq i \leq d\), \(S\geq 1\), let \(\bs \xi_i\) have a point mass distribution at \(S\bs e_i\), where \(\bs e_i\) is the \(i\)th canonical basis vector (when a ball is drawn, \(S\) balls of the same colour are added to the urn). 
\\~\\
\textbf{Irreducible urn:} The replacement structure is such that, for all \(1 \leq i,j \leq d\), there exists~\(n^*\geq 0\), such that given~\(\bs U(0)=\bs e_i\), there is positive probability of \(U(n^*)_j>0\). (Starting with a single ball of any colour can lead to a ball of any other colour eventually being contained in the urn.)
\\~\\
In both canonical cases, the asymptotic behaviour of the composition of the urn as the number of draws tends to infinity is known. Furthermore, the behaviour seen in the canonical cases drastically differ from each other. We give a brief overview of these results to highlight this fact. For more details, we refer the reader to: (identity case) Blackwell \& MacQueen \cite{blackwell}, and M\"uller \cite{muller}, (irreducible case) Athreya \& Karlin \cite{AthreyaandKarlin}, Janson \cite{Janson}, Pouyanne \cite{Pouyanne}, and Kesten \& Stigum \cite{Kesten}.
\\~\\
 \textbf{Identity urn:} Eggenberger \& P\'olya \cite{Polya} shows the asymptotic behaviour of the colour composition of the two colour urn. This has since been extended to a general \(d\)-colour urn (see Blackwell \& MacQueen \cite{blackwell}). They show \(\bs U(n)/\sum_{i=1}^dU(n)_i \coninas ~\bs V\) as~\(n~\rightarrow~\infty\), where \(\bs V\) is a Dirichlet vector with parameter \(\bs U(0)/S\). Furthermore, the fluctuations of the urn around~\(\bs V\) converge in distribution to a Gaussian random variable of order \(\sqrt{n}\), where the covariance matrix depends on \(\bs V\) (see M\"uller \cite{muller}).
\\~\\
\textbf{Irreducible urn:} Conditionally on non-extinction of the urn, \(\bs U(n)/\sum_{i=1}^dU(n)_i \coninas~\bs v_1\) as~\(n~\rightarrow~\infty\), where \(\bs v_1\) is a left eigenvector of the matrix \(A:=(a_j\mathbb{E}[\xi_{ji}])_{i,j=1}^d\) (see Athreya \& Karlin \cite{AthreyaandKarlin}). This result can be shown to hold for slightly weaker assumptions than irreducibility, and the size, shape, and scale of the fluctuations around \(\bs v_1\) can be categorised into three cases (see Janson \cite{Janson}). For brevity, assume the eigenvalues of \(A\) are real and simple. (This is an unnecessary restriction, but will speed up the presentation of these results whilst still highlighting their core features.) Let \(\lambda_1\) be the largest eigenvalue of \(A\), and \(\lambda_2\) the second largest. Then, conditionally on non-extiction of the urn: If \(\lambda_2<\lambda_1/2\), the fluctuations converge in distribution to a Gaussian random variable of order \(\sqrt{n}\). If \(\lambda_2 = \lambda_1/2\), the fluctuations converge in distribution to a Gaussian random variable of order \(\sqrt{n\log(n)}\). Lastly, if \(\lambda_2>\lambda_1/2\), the fluctuations converge a.s.\ to some non-Gaussian random variable of order \(n^{\lambda_2/\lambda_1}\). Furthermore, the fluctuations only depend on the initial composition in the case of \(\lambda_2>\lambda_1/2\).
\\~\\
In the present paper, we are interested in the asymptotic behaviour of urns whose number of initial balls and draws tend to infinity together. This is motivated by a recent paper Borovkov \cite{Borov}, where this behaviour is studied for the identity urn. These urns are defined as follows: Let~\((\bs U_n)_{n\geq 1}\) be a sequence of P\'olya urns each with the same replacement structure. Let \(N=N(n) := \sum_{i=1}^d U_{n}(0)_i\) denote the number of initial balls of \(\bs U_n\). We say the sequence has growing initial composition (GIC) if \(N\rightarrow \infty\) as \(n\rightarrow \infty\). We will often refer to such a sequence as a GIC urn. With this definition in mind, understanding the behaviour of an urn whose number of initial balls and draws go to infinity together can be formally defined as understanding the object~\(\bs U_n(n)\) as \(n\rightarrow \infty\).

We assume that the replacement structure of our GIC urn is balanced, this meaning that the total mass added at each time step is some fixed deterministic constant. Note, when \(\bs a =1\), the balanced property corresponds to the number of balls in the urn changing by a deterministic constant at each time step.
\begin{definition}[Balanced replacement structure]
 We say a replacement structure is balanced if there exists an \(S \in \mathbb{R}\setminus\{0\}\), such that, for each \(1 \leq i \leq d\), \(\sum_{j=1}^d a_j \xi_{ij}=S\) a.s.\ (the amount of mass added by each draw is a deterministic constant).
\end{definition}\noindent We exclude the case of \(S=0\), but note in this case the urn can be viewed as a discrete-time Markov chain on a finite state-space (there are only finitely many compositions that keep the mass of the urn constant). We give details as to why we require the balanced assumption in Section \ref{Sec: methods}. For now, note that results under the balanced assumption are still of significant interest with the assumption commonly appearing in the literature (see Pouyanne \cite{Pouyanne}, Pouyanne \& Janson \cite{PouyandJan}, M\"uller \cite{muller}). 

We also assume that the initial colour composition~\( N^{-1} \bs U_n(0)= \bs \mu\) for all \(n\geq 1\), for some vector \(\bs \mu\). This is necessary since the asymptotic behaviour of the GIC urn depends on its initial colour composition, so convergence without this assumption is not possible. This dependence is analogous to the asymptotic behaviour of the fixed initial composition~(FIC) urn (i.e.\ a single urn \(\bs U\) with initial composition \(\bs U(0)\)) depending on \(\bs U(0)\). Note, this assumption can be relaxed to the initial colour composition converging to \(\bs \mu\) (see Remark \ref{remark: converge to mu}). 
\subsection{Our Contribution}
\label{Sec: Our contribution}
 Our main contributions are Theorems \ref{theorem: main result simplified}, \ref{theorem: Main results discrete} and \ref{theorem: main results discrete 2}. These combined are a functional limit result for balanced GIC urns. They state the behaviour of the asymptotic colour composition of the urn and its fluctuations. 

As Borovkov \cite{Borov} shows for the identity urn, three asymptotic regimes are possible depending on how one lets the number of initial balls \(N\) of \(\bs U_n\) scale with the number of draws \(n\). The following is a brief overview of the results our theorems give on the asymptotic behaviour of \(\bs U_n(n)\) as \(n\rightarrow \infty\):
\\~\\
\textbf{Initial ball dominant \(n=o(N)\):} The colour composition converges to \(\bs \mu\), and the fluctuations are a Brownian motion of order \(\sqrt{n}\). The covariance matrix of the Brownian motion depends on \(\bs \mu\) and the replacement structure.
\\~\\
\textbf{Transitional regime \(n\sim o(N)\):} The colour composition converges to a deterministic process, and the fluctuations are a Gaussian process of order \(\sqrt{n}\). Both the deterministic process and the covariance function of the Gaussian process are dependent on \(\bs \mu\) and the replacement structure. For certain replacement structures, these processes have nice forms. For example, for the identity urn, Borovkov \cite{Borov} shows the asymptotic colour composition to be \(\bs \mu\) and the Gaussian process to be the sum of an independent Brownian motion and Gaussian random variable. Another example where these processes have nice forms is given in Example \ref{example: matching problem}.
\\~\\
\textbf{Time step dominant \(N=o(n)\):} The asymptotic behaviour seen depends on whether the Perron-Frobenius eigenvalue~\(\lambda_1\) (see Corollary \ref{cor: PF}) of \(A=(a_j\mathbb{E}[\xi_{ji}])_{i,j=1}^d\) is simple. This is analogous to the differing behaviour seen in the FIC case for the identity and irreducible urn. The identity urn is an example of \(\lambda_1\) non-simple, and the irreducible urn an example of~\(\lambda_1\) simple.
\\~\\
\textit{Case of \(\lambda_1\) non-simple:} The colour composition converges to a deterministic vector which depends on \(\bs \mu\). The fluctuations are a Gaussian random variable of order \(N^{-1/2}n\) which also depend on \(\bs \mu\).
\\~\\
\textit{Case of \(\lambda_1\) simple:} The colour composition converges to the right-eigenvector of \(\lambda_1\). The fluctuations are a Gaussian process whose size and covariance function depend on the spectral gap of \(A\). Assume the eigenvalues of \(A\) are real and simple. (As in the FIC case, this is unnecessary and we do not assume this in our theorems, but this assumption allows for a nicer overview of the results.) Let \(\lambda_2\) be the second largest eigenvalue of \(A\): If \(\lambda_2<\lambda_1/2\), the Gaussian process is of order \(\sqrt{n}\) and is an Ornstein-Uhlenbeck process. If \(\lambda_2 = \lambda_1/2\) a general Gaussian process is seen with size \(\sqrt{n\log(n/N)}\). Lastly, if \(\lambda_2 > \lambda_1/2\), the fluctuations are a Gaussian random variable of order \(N^{1/2}(n/N)^{\lambda_2/\lambda_1}\). Furthermore, the fluctuations only depend on \(\bs \mu\) when \(\lambda_2 > \lambda_1/2\).
\subsection{Our Methods}
\label{Sec: methods} 
We embed the P\'olya urn into continuous-time through a multi-type continuous-time Markov branching process (MCBP). This embedding was introduced by Athreya \& Karlin \cite{AthreyaandKarlin}. An MCBP~\((\bs X(t))_{t\geq 0}\) is a Markov process that depends on three parameters: the initial condition \(\bs X(0) \in \mathbb{Z}^d_{\geq 0}\), the particle lifespan parameters~\(a_1,...,a_d >0\), and the (random) particle replacement rule \(\bs \xi_1,...,\bs \xi_d \in \mathbb{Z}^d\), where we assume \eqref{eq: cond 111} holds. The process represents the evolution of a set of particles of \(d\) types, where, for \(1\leq i \leq d\), \(X(t)_i\) represents the number of particles of type \(i\) in the system at time \(t\). Each particle of type \(i\) lives for a random \(a_i\)-exponentially-distributed amount of time, and at time of death is replaced by a (random) set of new particles whose law is given by \(\bs \xi_i+\bs e_i\). (We add an additional particle of the type that died to keep our definition of the MCBP and P\'olya urn consistent with each other, since the particle dies in the MCBP, whereas the ball drawn is returned to the P\'olya urn.) We assume that a particle's lifespan and offspring are independent of each other and of all particles in the system. Let \(\bs U\) be a P\'olya urn with initial composition \(\bs X(0)\) and replacement structure \((\bs a, (\bs \xi_i)_{i=1}^d)\). Let \((\tau_{i})_{i\geq 1}\) be the ordered death times of particles in \(\bs X\). Then, by the memoryless property of the exponential distribution, one can show (see Athreya \& Karlin \cite{AthreyaandKarlin})
\begin{equation}
    \label{eq: branchurn}
    (\bs X(\tau_i))_{i\geq 1} \eqindis (\bs U(i))_{i\geq  1}.
\end{equation}
This relation allows one to prove results about the urn by proving results about \(\bs X\) and the random time change \((\tau_i)_{i\geq 1}\). For the GIC urn, we instead have a sequence of MCBPs defined \((\bs X_n)_{n\geq 1}\), where \eqref{eq: branchurn} holds for each \(n\). Let \((\tau_{n,i})_{i\geq 1}\) be the ordered death times of particles in \(\bs X_n\). Our goal is to show a functional limit theorem about the \((\bs X_n)_{n\geq 1}\) at a timescale that contains the \((\tau_{n,n})_{n\geq 1}\) with high probability (probability tending to 1 as \(n\rightarrow \infty\)). Then, we recover a result on the urn by using known results on random time changes of random processes along with showing good control of the \((\tau_{n,n})_{n\geq 1}\). The order of magnitude of \(\tau_{n,n}\) as \(n\rightarrow \infty\) depends on how \(N\) scales with \(n\). Thus, to achieve our goal, we study the branching process on various timescales corresponding to these orders of magnitude. 
\begin{remark}
\label{remark: bp timescales}
One can show in probability as \(n\rightarrow \infty\):
\begin{enumerate}
    \item If \(n=o(N)\), then \(\tau_{n,n}\) is of order  \(n/N\).
    \item   If \(n\sim N\), then \(\tau_{n,n}\) is of order \(1\).
    \item If \(N=o(n)\), then \(\tau_{n,n}\) is of order  \(\log(n/N)\).
\end{enumerate}
\end{remark}  
\noindent This alternate setting adds additional difficulties which necessitate a change in approach from the FIC case studied in Janson \cite{Janson}. Firstly, to deal with the growing number of initial particles of the MCBP, we use a natural property of MCBPs often called the branching property: For any three MCBPs~\(\bs X, \bs Y, \bs Z\) with identical replacement structures and initial conditions \(\bs X(0), \bs Y(0), \bs X(0)+\bs Y(0)\) respectively, we have
\begin{equation}
\label{equ:branching property}
   ( \bs X(t)+\bs Y(t))_{t\geq 0} \eqindis (\bs Z(t))_{t\geq 0}.
\end{equation}
This property is a simple consequence of the fact that particles in the MCBP behave independently of each other. The branching property implies our MCBP with \(N\) initial particles can be represented as the sum of \(N\) independent MCBPs each with 1 initial particle. Our approach is to apply a functional CLT to this sum, where the timescale used for this functional CLT depends on the regime being studied as explained in Remark \ref{remark: bp timescales}. The proof of this functional CLT is non-trivial and we introduce several new tools to handle it. One such tool is new sharp fourth moment bounds for the MCBP. Although not needed for us, our method to prove these fourth moment bounds could be extended to show higher moment bounds. These moment bounds are shown using martingale methods for the MCBP which build off preliminary results of Janson \cite{Janson}. Also, we show that standard CLT assumptions can be extended to allow for an independent random number of summands which is necessary for proving the functional CLT. Interestingly, since the behaviour of the functional CLT only depends on asymptotic variances of the MCBP, which are tractable for most replacement structures, our results hold under very weak assumptions (only a fourth moment bound on the replacement structure is necessary).

The second difficulty is obtaining good control of \(\tau_{n,n}\) as \(n\rightarrow \infty\). Unlike the FIC case, a first order result about \(\tau_{n,n}\) is insufficient to transfer the MCBP results to the urn. Instead, we need to understand the behaviour of \(N^{1/2}\tau_{n,n}\) as \(n\rightarrow \infty\). This heuristically follows because the GIC MCBP satisfies a CLT with number of summands \(N\). When we unembed to the urn setting, the deterministic term of this CLT becomes a random term involving the \(\tau_{n,n}\). The value \(N^{1/2}\) appears since, in \(N\), the deterministic term of the CLT grows \(N^{1/2}\) faster than the random fluctuations.

Not only do we need better control of the \(\tau_{n,n}\), but this control is harder to obtain. Take the \(n\sim N\) regime where these difficulties are most prevalent. Classically, one uses the equilibrium behaviour of the MCBP to obtain a first order result on the \(\tau_{n,n}\) (see Athreya \& Ney Chapter V Section 7.6 \cite{Athreya}). However, in this regime, by Remark \ref{remark: bp timescales}, the MCBP has not run for long enough to reach ``long term" equilibrium, but has had enough time to escape any equilibrium behaviour seen by staying close to its initial composition. Thus, we cannot apply classical methods in this regime. Even in the other regimes where the MCBP displays stationary-like behaviour, classical methods only give us first order convergence of \(\tau_{n,n}\), which is not sufficient for our setting. To obtain the necessary lower order fluctuations of \(\tau_{n,n}\), we assume our replacement structure satisfies the balanced property. Under the balanced property, the \((\tau_{n,n})_{n\geq 1}\) no longer depend on the composition of the MCBP. Indeed, the \((\tau_{n,n})_{n\geq 1}\) only depend on the MCBP through its total mass, which is a deterministic function of the number of particle deaths when the replacement structure is balanced. Thus, we are able to ignore the fact that the MCBP is not in equilibrium. It is unclear if the balanced assumption is an artifact of our method, or if different behaviour does appear for non-balanced replacement structures due to this non-equilibrium behaviour. In Remark \ref{remark: nonbal heuristic}, we give a heuristic which suggests that the non-balanced case displays different behaviour to the balanced case.

Before moving on, we state a property of MCBPs that we will often use. The strong Markov property for MCBPs: For any MCBP \(\bs X\) and stopping time \(\tau\), let
\begin{equation}
    \label{eq: strong markov prop}
    (\bs Y(t))_{t\geq 0}:= (\bs X(\tau +t))_{t\geq 0}.
\end{equation}
Then \(\bs Y\) is a MCBP with initial condition \(\bs X(\tau)\), replacement structure identical to \(\bs X\), and only depends on \((\bs X(t))_{t\in [0,\tau]}\) through its initial condition.
\\~\\
\textbf{Outline of the paper:} In Section \ref{Sec: Prelim}, we state the assumptions and preliminaries needed for our main results, which we give in Section \ref{Sec: main results}. In Section \ref{Sec: Heuristics}, we give detailed heuristics for the results in Section \ref{Sec: main results}. Lastly, Sections \ref{Sec: Prelim for proofs} - \ref{Sec: proof of discrete time} contain the proofs of the results in Section \ref{Sec: main results}.
\section{Preliminaries}
\label{Sec: Prelim}
Throughout this work, \(\cst\) will represent an arbitrary constant that may change from line to line. The constant will be independent of any variables of interest unless stated otherwise. In this case, we may write \(\cst_{a}\) to mean a dependency on some variable \(a\).
\subsection{Assumptions}
\label{Sec: assumptions}
\begin{theorem}[Perron-Frobenius theorem for non-negative matrices]
\label{theorem: PF}
Let \(B\) be a \(d\times d\) matrix with real non-negative entries. There exists a non-negative real eigenvalue \(\lambda_1\) of \(B\), such that, for any other eigenvalue \(\lambda\) of \(B\), \(|\lambda|\leq \lambda_1\). We call~\(\lambda_1\) the Perron-Frobenius eigenvalue of \(B\). This eigenvalue satisfies
\begin{equation*}
    \lambda_1 \leq \max_{1\leq j\leq d}\sum_{i=1}^d B_{ij}.
\end{equation*}
Furthermore, there exists a left and right eigenvector \(\bs u_1\) and \(\bs v_1\) of \(\lambda_1\) that have non-negative entries. If \(B\) is also irreducible, then \(\lambda_1\) is simple and positive with \(\bs u_1\) and \(\bs v_1\) also positive.
\end{theorem}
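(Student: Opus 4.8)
This is the classical Perron--Frobenius theorem, and the plan follows a standard route: prove it first when $B$ has strictly positive entries, then recover the general non-negative case by perturbation, and finally extract the irreducible refinements from the positive matrix $(I+B)^{d-1}$. For strictly positive $B$, the map $\bs x \mapsto B\bs x/\|B\bs x\|_1$ is a continuous self-map of the simplex $\Delta = \{\bs x \geq \bs 0 : \sum_i x_i = 1\}$, so Brouwer's fixed point theorem supplies $\bs v_1 \in \Delta$ with $B\bs v_1 = \lambda_1\bs v_1$, $\lambda_1 = \|B\bs v_1\|_1 > 0$, and then $\bs v_1 = B\bs v_1/\lambda_1 > \bs 0$. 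Running the same argument for $B^{\top}$ gives a positive left eigenvector $\bs u_1$ for some $\lambda_1' > 0$, and evaluating $\bs u_1^{\top}B\bs v_1$ two ways, using $\bs u_1^{\top}\bs v_1 > 0$, forces $\lambda_1' = \lambda_1$. For dominance, any eigenpair $(\lambda,\bs w)$ (possibly complex) satisfies $|\lambda|\,|\bs w| \leq B|\bs w|$ entrywise by the triangle inequality; left-multiplying by $\bs u_1^{\top} > \bs 0$ and dividing by $\bs u_1^{\top}|\bs w| > 0$ gives $|\lambda|\leq\lambda_1$, so $\lambda_1 = \rho(B)$. (A Collatz--Wielandt variational characterisation of $\lambda_1$ would serve equally well in place of Brouwer.)

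For a general non-negative $B$, apply the above to $B_\varepsilon := B + \varepsilon\,\bs 1\bs 1^{\top}$, $\varepsilon > 0$, producing $\lambda_1(\varepsilon), \bs u_1(\varepsilon), \bs v_1(\varepsilon)$ with the eigenvectors normalised into $\Delta$. Since $0 \leq M \leq M'$ entrywise implies $\rho(M)\leq\rho(M')$ (compare the entries of powers and use $\rho(M) = \lim_n\|M^n\|^{1/n}$), the numbers $\lambda_1(\varepsilon) = \rho(B_\varepsilon)$ decrease as $\varepsilon\downarrow 0$ to some $\lambda_1\geq 0$, which by continuity of the roots of the characteristic polynomial is an eigenvalue of $B$; together with $\rho(B)\leq\rho(B_\varepsilon)$ this gives $\lambda_1 = \rho(B)$ and hence the dominance bound $|\lambda|\leq\lambda_1$ for all eigenvalues $\lambda$. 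Compactness of $\Delta$ yields subsequential limits $\bs v_1,\bs u_1 \in \Delta$ (non-negative and nonzero) with $B\bs v_1 = \lambda_1\bs v_1$ and $\bs u_1^{\top}B = \lambda_1\bs u_1^{\top}$. The column-sum estimate is then immediate: summing the coordinates of $B\bs v_1 = \lambda_1\bs v_1$ and using $\sum_j v_{1,j}=1$ gives $\lambda_1 = \sum_j v_{1,j}\bigl(\sum_i B_{ij}\bigr) \leq \max_{1\leq j\leq d}\sum_{i=1}^d B_{ij}$.

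When $B$ is irreducible, $(I+B)^{d-1}$ is strictly positive (each ordered pair of indices is joined by a path of length $\leq d-1$ in the digraph of $B$); since $\bs v_1\geq\bs 0$ is nonzero and $(I+B)^{d-1}\bs v_1 = (1+\lambda_1)^{d-1}\bs v_1$, we get $\bs v_1 > \bs 0$, and likewise $\bs u_1 > \bs 0$, while $\lambda_1 = 0$ would force $B\bs v_1 = \bs 0$ with $\bs v_1 > \bs 0$, hence $B = 0$, contradicting irreducibility, so $\lambda_1 > 0$. The delicate part is simplicity. For geometric simplicity: given a real $\lambda_1$-eigenvector $\bs v'$ independent of $\bs v_1$, choose the extreme $t$ for which $\bs v_1 - t\bs v'\geq\bs 0$; this is a nonzero non-negative $\lambda_1$-eigenvector with a vanishing coordinate, contradicting the strict positivity just proved. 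For algebraic simplicity, geometric multiplicity $1$ together with algebraic multiplicity $\geq 2$ would produce a generalised eigenvector $\bs z$ with $(B-\lambda_1 I)\bs z = \bs v_1$; left-multiplying by $\bs u_1^{\top}$ annihilates the left-hand side but leaves $\bs u_1^{\top}\bs v_1 > 0$, a contradiction. I expect this last step --- upgrading geometric simplicity to algebraic simplicity, and checking carefully that the $\varepsilon\downarrow 0$ limit genuinely lands on an eigenvalue of $B$ with the claimed dominance --- to be where the real care is needed; the rest is the standard fixed-point and compactness machinery.
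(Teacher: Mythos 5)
The paper states Theorem \ref{theorem: PF} as a known classical fact and offers no proof at all; the only thing it proves in its vicinity is Corollary \ref{cor: PF}, which is obtained by applying Theorem \ref{theorem: PF} to $B+\gamma I$. So there is no ``paper proof'' to compare yours against --- you have simply supplied a proof that the paper omits.

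Your argument is correct and is the standard textbook route. The Brouwer fixed-point step on the simplex for a strictly positive matrix, the entrywise inequality $|\lambda|\,|\bs w|\leq B|\bs w|$ paired against the positive left eigenvector to get $|\lambda|\leq\lambda_1$, the monotonicity $\rho(B)\leq\rho(B_\varepsilon)$ together with continuity of characteristic roots to identify the limit as $\rho(B)$, compactness of $\Delta$ for the eigenvector limits, the column-sum identity $\lambda_1=\sum_j v_{1,j}\sum_i B_{ij}$, and the $(I+B)^{d-1}>0$ trick for strict positivity and simplicity in the irreducible case are all sound. Two small points worth being explicit about if you flesh this out: in the geometric-simplicity step you should say WLOG $\bs v'$ has a positive component (replace $\bs v'$ by $-\bs v'$ if needed) so that the set $\{t:\bs v_1-t\bs v'\geq\bs 0\}$ is bounded above and the extremal $t^*$ exists, and note that $\bs v_1-t^*\bs v'\neq\bs 0$ because $\bs v_1,\bs v'$ are independent; and in the algebraic-simplicity step you should record that geometric multiplicity one forces $(B-\lambda_1 I)\bs z$ to be a nonzero scalar multiple of $\bs v_1$ before pairing against $\bs u_1$. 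Neither is a gap, just bookkeeping. A Collatz--Wielandt argument, as you note parenthetically, would also work and avoids Brouwer, but the choice is cosmetic here.
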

\begin{corollary} 
\label{cor: PF}
Let \(B\) be a \(d\times d\) matrix with real diagonal entries and real non-negative entries elsewhere. Then, the eigenvalue \(\lambda_1\) of \(B\) with largest real part is real. We call this the Perron-Frobenius eigenvalue of \(B\). This eigenvalue satisfies
\begin{equation*}
        \lambda_1 \leq \max_{1\leq j\leq d}\sum_{i=1}^d B_{ij}.
\end{equation*}
\end{corollary}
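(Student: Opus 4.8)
The plan is to reduce to Theorem \ref{theorem: PF} by a spectral shift. Fix a real constant $c$ large enough that every diagonal entry of $B+cI$ is non-negative and, in addition, $\mathrm{tr}(B)+cd>0$; then $B+cI$ is a matrix with real non-negative entries, so Theorem \ref{theorem: PF} applies. Its Perron--Frobenius eigenvalue $\mu_1$ is real, non-negative, and dominates in modulus every eigenvalue of $B+cI$; moreover $\mu_1\ge \tfrac1d\sum_{\mu}|\mu|\ge\tfrac1d|\mathrm{tr}(B+cI)|=\tfrac1d(\mathrm{tr}(B)+cd)>0$, so $\mu_1>0$. Since the eigenvalues of $B+cI$ are exactly $\{\lambda+c:\lambda\text{ an eigenvalue of }B\}$, the number $\lambda_1:=\mu_1-c$ is a real eigenvalue of $B$.

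Next I would show $\lambda_1$ is the eigenvalue of $B$ with largest real part. For any eigenvalue $\lambda$ of $B$, writing $\mu=\lambda+c$ gives
\[
\mathrm{Re}(\lambda)+c=\mathrm{Re}(\mu)\le|\mu|\le\mu_1=\lambda_1+c,
\]
hence $\mathrm{Re}(\lambda)\le\lambda_1$. To see this maximum is attained only by $\lambda_1$ itself (so that ``the eigenvalue with largest real part'' is unambiguous), suppose $\lambda=\lambda_1+\mathrm{i}\beta$ is an eigenvalue of $B$ with $\beta\neq 0$; since $\lambda_1+c=\mu_1>0$ we get $|\lambda+c|=\sqrt{(\lambda_1+c)^2+\beta^2}>\lambda_1+c=\mu_1$, contradicting $|\lambda+c|\le\mu_1$. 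Thus every eigenvalue with real part $\lambda_1$ is real and equals $\lambda_1$, and $\lambda_1$ is real as claimed.

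Finally, for the bound I would apply the estimate in Theorem \ref{theorem: PF} to $B+cI$:
\[
\mu_1\le\max_{1\le j\le d}\sum_{i=1}^d (B+cI)_{ij}=\max_{1\le j\le d}\left(\sum_{i=1}^d B_{ij}+c\right)=c+\max_{1\le j\le d}\sum_{i=1}^d B_{ij},
\]
and subtracting $c$ yields $\lambda_1\le\max_{1\le j\le d}\sum_{i=1}^d B_{ij}$. There is no substantial obstacle here; the only point needing a little care is the uniqueness step in the second paragraph, which is precisely where one exploits that Theorem \ref{theorem: PF} controls $|\mu|$ rather than $\mathrm{Re}(\mu)$, combined with the positivity of $\mu_1$ secured by choosing $c$ large.
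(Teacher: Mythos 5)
Your proof is correct and takes essentially the same route as the paper: shift $B$ by a multiple of the identity to land in the non-negative case, apply Theorem \ref{theorem: PF}, and transfer the conclusions back by undoing the spectral shift. The only difference is that you spell out the (elementary, and in the paper omitted) details of why the shift preserves ``real eigenvalue with largest real part'' and the column-sum bound; note, though, that your digression to secure $\mu_1>0$ via the trace is unnecessary, since $\mu_1\ge 0$ from Theorem \ref{theorem: PF} already gives $\sqrt{\mu_1^2+\beta^2}>\mu_1$ whenever $\beta\neq 0$.
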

\begin{proof}
Since only the diagonal entries of \(B\) can be negative, there exists a \(\gamma\geq 0\) such that \(B+\gamma I\) is a non-negative matrix. We can apply Theorem \ref{theorem: PF} to obtain the Perron-Frobenius eigenvalue \(\lambda_1+\gamma\) of \(B+\gamma\). This implies \(\lambda_1\) is an eigenvalue of~\(B\) and satisfies the properties of the corollary. 
\end{proof}

\noindent Recall the matrix \(A:=(a_j\mathbb{E}[\xi_{ji}])_{i,j=1}^d\) from the introduction. Since only the diagonal entries of \(A\) can be negative, there exists a real Perron-Frobenius eigenvalue \(\lambda_1\) by Corollary \ref{cor: PF}.

Our assumptions on the replacement structure of the urn are:
\begin{itemize}
    \item[](A1) For all \(i,j \in \{1,...,d\}\), \(\mathbb{E}[\xi_{ij}^4] < \infty\).
    \item[](A2) \(\xi_{ii}\geq -1, \text{ and } \xi_{ij}\geq 0\), \(i \neq j,\) holds a.s.
    \item[](A3) The Perron-Frobenius eigenvalue \(\lambda_1\) of \(A\) is positive.
\end{itemize}
In the \(n=o(N)\) and \(n\sim N\) regimes we assume (A1)-(A2). In the \(N=o(n)\) regime we assume (A1)-(A3). 

We use (A1) to show CLT conditions are satisfied during the proof of the functional limit theorem for the MCBP. This assumption is likely an artifact of our method, and our results likely still hold if one only assumes there exists some \(\varepsilon>0\) such that the \(2+\varepsilon\) moment is bounded. Assumption (A2) is necessary for the particles in the MCBP to be independent of each other (otherwise one particle death could cause the death of another). It is possible to relax this assumption, see Remark \ref{remark: tenability}.
 Assumption (A3) is only needed in the \(N=o(n)\) regime. If (A3) does not hold, it is well known (see Athreya \& Ney Chapter V Section 7.6 \cite{Athreya}) that the FIC urn goes extinct a.s.\ (If \(\lambda_1\leq 0\) the associated MCBP is (sub)critical, thus goes extinct a.s.) In the \(N=o(n)\) regime for the GIC urn, one can show this equates to (at least) extinction of the urn in probability. Therefore, to see a non-zero limit, (A3) is necessary. In the other two regimes, there are enough initial balls compared to the number of draws to see non-extinction in the limit without (A3).  
\subsection{Jordan spaces of the matrix $A$}
\label{Sec: Jordan spaces of A}
In the \(N=o(n)\) regime, projecting the urn into different Jordan spaces of \(A\) leads to different asymptotic behaviour. This is why the behaviour of the fluctuations of the urn are dependent on the spectral gap of \(A\). Therefore, to capture the full behaviour of the urn, one needs to study its projection on each of the Jordan spaces of \(A\). In this section, we introduce the necessary notation for working with the Jordan spaces of \(A\). See Janson \cite{Janson} for further details.

There exists a basis \((\bs v_1,...,\bs v_d)\) 
in which the operator $\bs x\mapsto A \bs x$ 
on $\mathbb C^d$ is represented by a Jordan matrix; 
we call this basis the Jordan basis of $A$.
This Jordan matrix is formed of diagonal blocks;
each of these blocks is of the form
\[J = \begin{pmatrix}
\lambda & 1 & 0 & \ldots  & 0\\
0 & \lambda & 1 & \ddots &  \vdots \\
\vdots & \ddots & \ddots & \ddots  & 0 \\
\vdots &  & \ddots & \ddots  & 1 \\
0 & \ldots & \ldots   & 0 & \lambda
\end{pmatrix}
\]
where $\lambda$ is an eigenvalue of $A$. Let \(\mathcal{J}_{\lambda}\) be the set of all Jordan blocks with diagonal coefficients \(\lambda\). We call \(\lambda\) the eigenvalue of the Jordan blocks in \(\mathcal{J}_{\lambda}\).
In the following, we fix such a Jordan block $J\in \mathcal{J}_{\lambda}$;
we let
$m$ denote its size, and let \((\bs v_{J,1},\bs v_{J,2},...,\bs v_{J,m})\) be
the subset of \((\bs v_1,...,\bs v_d)\) corresponding to this block.
In particular,
\begin{equation}\label{eq:Jordan}
A \bs v_{J,1} = \lambda \bs v_{J,1}\quad \text{ and }\quad
A \bs v_{J,i}=\lambda \bs v_{J,i}+\bs v_{J,i-1} \quad (\forall 2\leq i\leq m).
\end{equation}
In the following, we let $J$ denote both the matrix block and the subspace of $\mathbb C^d$ generated by \((\bs v_{J,1},\bs v_{J,2},...,\bs v_{J,m})\), which we call the Jordan space $J$. Since \((\bs v_1,..., \bs v_d)\) is a basis, 
for any vector \(\bs x \in \mathbb{C}^d\), there exists a unique vector~$(\alpha_1(\bs x), \ldots, \alpha_d(\bs x))\in~\mathbb C^d$, such that 
\begin{equation}\label{eq:def_alphas}
\bs x = \sum_{i=1}^d \alpha_i(\bs x) \bs v_i.
\end{equation}
For all $1\leq i \leq d$ and all $\bs x\in\mathbb C^d$,
we call $\alpha_i(\bs x)$ the coefficient of $\bs v_i$ in the Jordan basis representation of $\bs x$. 

We define the projection on the Jordan space \(J\) as follows
\begin{equation}
\label{eq: coef func}
P_J\bs x = \alpha_{J,1}(\bs x)\bs v_{J,1}+\alpha_{J,2}(\bs x)\bs v_{J,2}+...+\alpha_{J,m}(\bs x)\bs v_{J,m}.
\end{equation}
By \eqref{eq:def_alphas},
\begin{equation}
    \sum_{\lambda \in \Lambda}\sum_{J \in \mathcal{J}_{\lambda}} P_{J}=I, \label{eq:identity}
\end{equation}
where \(I\) is the identity map. By \eqref{eq:Jordan}, we have
\begin{equation}
\label{eq: projection}
    AP_J = P_JA = \lambda P_J + N_{A}P_J,
\end{equation}
where $N_A$ is defined on any Jordan space $J$ by
\begin{equation}
\label{eq:nilpotent operator}
N_A\bs v_{J,1} = 0
\quad\text{ and }\quad 
N_A\bs v_{J,i}=\bs v_{J,i-1}\quad (\forall 2\leq i\leq m).
\end{equation}
By definition, $N_A$ is a nilpotent operator on~$J$; its nilpotency is equal to $m$ (the dimension of the Jordan space $J$). (Recall that the nilpotency of an operator $N$ is the smallest integer \(k\geq 0\) such that \(N^k=0\).) 

Since both \(P_J\) and \(N_A\) are linear maps, they can be represented as matrices in \(\mathbb{C}^{d\times d}\) for any given basis. From now on, we take \(P_J\) and \(N_A\) to also denote their matrix representation with respect to the canonical basis. 

When \(J\) is of size \(1\), the only basis vector of \(J\), \(\bs v_{J,1} \), is a right eigenvector of \(A\) by \eqref{eq:Jordan}. In this case, there is a corresponding left eigenvector of \(A\) denoted \(\bs u_{J,1}\), such that
\begin{equation}
    P_J = \frac{\bs v_{J,1} \bs u_{J,1}'}{\bs v_{J,1}\cdot \bs u_{J,1}}. \label{eq: eigenvector proj}
\end{equation}
Since \(A\) is real, by taking the complex conjugate in \eqref{eq:Jordan}, we see there is a Jordan space of \(A\) generated by \((\overline{\bs v}_{J,1},...,\overline{\bs v}_{J,m})\) with eigenvalue \(\overline{\lambda}\). We call this the conjugate Jordan space to \(J\) and use \(\overline{J}\) to denote it. For any Jordan basis vector \(\bs v_i\), we see
\begin{align}
\label{eq: conjugate block}
    \overline{P}_{J}\bs v_i=\overline{P_{J} \overline{\bs v}_i} = \bs 1_{\overline{\bs v}_i \in \{\bs v_{J,1},\bs v_{J,2},...,\bs v_{J,m}\}}\bs v_i= P_{\overline{J}}\bs v_i,
\end{align}
therefore we have the relation \(\overline{P}_J=P_{\overline{J}}\). We also see, for any Jordan space \(J\), \(1\leq i \leq m\),
\begin{equation*}
    \overline{N}_A \bs v_{J,i} = \overline{N_A \overline{\bs v}_{J,i}} = \bs 1_{i\geq 2}\bs v_{J,i-1}=N_A \bs v_{J,i},
\end{equation*}
so \(N_A\) is a real matrix.
 
By \eqref{eq: projection}, for \(1 \leq \kappa \leq m\), and \(t \in \mathbb{R}\),
\begin{equation}
   N_A^{m-\kappa} P_J \mathrm{e}^{At}= N_A^{m-\kappa} \mathrm{e}^{(\lambda +N_A)t}P_J = \mathrm{e}^{\lambda t}N_A^{m-\kappa} \sum_{i=0}^{m-1}\frac{(tN_A)^{i}}{i!}P_J=\mathrm{e}^{\lambda t}\sum_{i=0}^{\kappa-1}\frac{t^{i}N_A^{i+m-\kappa}}{i!}P_J. \label{eq: mat exp bound start}
\end{equation}
This implies, for all \(\bs x \in \mathbb{C}^d\), and \(t \in \mathbb{R}\),
\begin{equation}
\label{eq: matrix exp bounds}
\|  N_A^{m-\kappa} P_J\mathrm{e}^{At}\bs x\|_2 \leq \cst(1+|t|)^{\kappa-1}\mathrm{e}^{\mathrm{Re}\lambda t} \|\bs x\|_2,
\end{equation}
 where
\begin{equation*}
\|\bs x\|_2 = \F(\sum_{i=1}^d |x_i|^2\R)^{1/2}
\end{equation*}
is the euclidean norm on \(\mathbb{C}^d\). Furthermore, \eqref{eq:identity} and \eqref{eq: matrix exp bounds} imply
\begin{equation}
\|\mathrm{e}^{At}\bs x\|_2 \leq \sum_{\lambda \in \Lambda}\sum_{P_{J}\in \mathcal{J}_{\lambda}}\|P_{J}\e^{At}\bs x\|_2\leq \cst (1+|t|)^{m_1-1} \mathrm{e}^{\lambda_1 t}\|\bs x\|_2,
\label{eq: matrix exp bounds all}
\end{equation}
where we have used \(\lambda_1\) is the Perron-Frobenius eigenvalue of \(A\), and we let \(m_1\) denote the size of the largest Jordan block across all Jordan blocks with eigenvalue \(\lambda_1\).

\begin{lemma}
\label{lemma: balanced frob}
For a balanced replacement structure, the Perron-Frobenius eigenvalue \(\lambda_1=S\) with left eigenvector \(\bs u_1 = \bs a\).
\end{lemma}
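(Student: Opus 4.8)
The statement has two halves: that $\bs a$ is a left eigenvector of $A$ for the eigenvalue $S$, and that $S$ is the Perron--Frobenius eigenvalue $\lambda_1$. The plan is to get the first by a direct one-line computation, extract a non-negative right eigenvector of $\lambda_1$ from Corollary~\ref{cor: PF}, and then pin down $\lambda_1=S$ by pairing the two eigenvectors against each other.

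First I would record that $A$ is well defined and that Corollary~\ref{cor: PF} applies to it: by (A1) every $\E[\xi_{ij}]$ is finite, and by (A2) the off-diagonal entries $A_{ij}=a_j\E[\xi_{ji}]$ ($i\neq j$) are non-negative while the diagonal entries $A_{ii}=a_i\E[\xi_{ii}]$ are real. Hence $A$ has a real Perron--Frobenius eigenvalue $\lambda_1$ with $\mathrm{Re}\,\lambda\leq\lambda_1$ for every eigenvalue $\lambda$ of $A$, and running the proof of Corollary~\ref{cor: PF} --- pass to the non-negative matrix $A+\gamma I$ and apply Theorem~\ref{theorem: PF} --- produces a non-negative, nonzero right eigenvector $\bs v_1$ of $A$ with $A\bs v_1=\lambda_1\bs v_1$.

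Next I would verify directly that $\bs a'A=S\,\bs a'$: for each $1\leq j\leq d$,
\begin{equation*}
(\bs a'A)_j=\sum_{i=1}^d a_i A_{ij}=\sum_{i=1}^d a_i a_j\E[\xi_{ji}]=a_j\,\E\!\left[\sum_{i=1}^d a_i\xi_{ji}\right]=a_j S,
\end{equation*}
the third equality being linearity of expectation and the last the balanced property applied to $\bs\xi_j$. Thus $\bs a$, which has strictly positive entries, is a left eigenvector of $A$ for the eigenvalue $S$. The one place that needs care is the index bookkeeping: in $A=(a_j\E[\xi_{ji}])_{i,j=1}^d$ the variable $\xi_{ji}$ sits in column $j$, so summing a column of $A$ against the weights $\bs a$ is precisely the balanced sum $\sum_i a_i\xi_{ji}=S$; getting this matching right (rather than, say, summing a row) is the only genuinely fiddly step.

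Finally I would combine the two eigenvectors. Pairing $\bs a'A=S\bs a'$ with $A\bs v_1=\lambda_1\bs v_1$ gives
\begin{equation*}
S\,(\bs a'\bs v_1)=(\bs a'A)\bs v_1=\bs a'(A\bs v_1)=\lambda_1\,(\bs a'\bs v_1),
\end{equation*}
and since $\bs a$ has strictly positive entries while $\bs v_1$ is non-negative and nonzero we have $\bs a'\bs v_1=\sum_i a_i(\bs v_1)_i>0$; hence $\lambda_1=S$. As $\bs a$ is then a strictly positive left eigenvector of $A$ for $\lambda_1$, we may take $\bs u_1=\bs a$, which is the assertion of the lemma. (If $\lambda_1$ is simple this choice is forced up to scaling; in general $\bs a$ is still a legitimate non-negative Perron--Frobenius left eigenvector.)
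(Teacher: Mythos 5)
Your argument is correct, but it takes a genuinely different route from the paper's. The paper passes to the diagonally similar matrix $B := (A_{ij}a_i/a_j)_{i,j} = (\E[\xi_{ji}]a_i)_{i,j}$, notes that the column sums of $B$ all equal $S$, and invokes the quantitative bound recorded in Corollary~\ref{cor: PF} (the Perron--Frobenius eigenvalue is at most the largest column sum) to conclude that $S$ is the Perron--Frobenius eigenvalue of $B$; it then transfers this back to $A$ because diagonal similarity preserves the spectrum. You instead pair the strictly positive left eigenvector $\bs a$ of $S$ against a non-negative, nonzero right Perron eigenvector $\bs v_1$ of $A$ for $\lambda_1$ (obtained by shifting to $A+\gamma I$ and applying Theorem~\ref{theorem: PF}), and since $\bs a'\bs v_1>0$ the identity $S(\bs a'\bs v_1)=\lambda_1(\bs a'\bs v_1)$ forces $\lambda_1=S$. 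Your route is shorter and bypasses the similarity transformation, at the cost of relying on the existence of a non-negative right Perron eigenvector, which is available from Theorem~\ref{theorem: PF} and the proof of Corollary~\ref{cor: PF} but is not actually stated in the corollary itself; the paper only needs the column-sum bound, which is in the corollary's statement. Both are standard, correct ways of recognising a Perron--Frobenius eigenvalue from a positive left eigenvector, and your computation showing $\bs a'A=S\bs a'$ matches the paper's opening observation.
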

\begin{proof}
By the balanced property, it is clear \(S\) is an eigenvalue of \(A\) with left eigenvector \(\bs a\). Therefore, it is left to check this is the Perron-Frobenius eigenvalue. Let \(B:= (A_{ij}a_i/a_j)_{i,j=1}^d=(\mathbb{E}[ \xi_{ji}]a_i)_{i,j=1}^d\). By the balanced property, the column sums of \(B\) are equal to \(S\), so \(S\) must be an eigenvalue of \(B\) with left eigenvector \(\bs 1\). By definition of \(A\), only the diagonal entries of \(B\) can be negative. Therefore, by Corollay \ref{cor: PF}, the largest real eigenvalue of \(B\) is bounded by the max of the column sums of \(B\) which is \(S\). This implies \(S\) is the Perron-Frobenius eigenvalue of \(B\). Let \(\lambda\) be an eigenvalue of \(A\) with left eigenvector \(\bs v\), and let \(\bs w:=(v_1 a_1^{-1}, v_2 a_2^{-1},...,v_d a_d^{-1})\). We see, for \(1 \leq j \leq d\),
\begin{equation*}
    (\bs w' B)_j = a_j^{-1}\sum_{i=1}^d w_i a_i A_{ij} = a_j^{-1} \sum_{i=1}^d v_i A_{ij}= \lambda a_j^{-1} v_j=\lambda w_j.
\end{equation*}
Thus, \(\lambda\) is also an eigenvalue of \(B\). This, and the fact that \(S\) is the Perron-Frobenius eigenvalue of \(B\) imply that the Perron-Frobenius eigenvalue of \(A\) is bounded by \(S\), and therefore must be \(S\).
\end{proof}

\begin{definition}
 We call an eigenvalue \(\lambda\) of \(A\) (and any Jordan block with eigenvalue \(\lambda\)) small if \(\mathrm{Re} \lambda < \lambda_1/2\), critical if~\(\mathrm{Re} \lambda = \lambda_1/2\), and large if \(\mathrm{Re} \lambda > \lambda_1/2\). Let \(\Lambda\) be the set of eigenvalues of \(A\), and set
\begin{align*}
    &\Lambda_s = \{\lambda \in \Lambda : \mathrm{Re}\lambda< \lambda_1/2\}, \quad
    \Lambda_c = \{\lambda \in \Lambda : \mathrm{Re}\lambda= \lambda_1/2\}, \quad 
    \Lambda_{\ell} = \{\lambda \in \Lambda : \mathrm{Re}\lambda> \lambda_1/2\}.
\end{align*}
\end{definition}
\noindent 
As seen in the introduction, the asymptotic behaviour of the GIC urn in the \(N=o(n)\) regime depends on whether the eigenvalue in \(\Lambda\setminus\{\lambda_1\}\) with largest real part is small, critical, or large.
\begin{example}[Friedman's Urn]
\label{example: friedmans urn}
A balanced irreducible urn that has seen much interest is Friedman's urn \cite{Friedman}. This is a 2-colour urn and the replacement structure is as follows: Balls are drawn uniformly at random with \(\alpha\) balls of the colour drawn being added into the urn along with \(\gamma\) balls of the other colour. Therefore, we have 
\begin{equation*}
  A =
  \left( {\begin{array}{cc}
    \alpha & \gamma \\
    \gamma & \alpha \\
  \end{array} } \right).
\end{equation*}
To satisfy (A1) and (A3), we must take \(\alpha \geq -1\), and \(\alpha+\gamma >0\). To exclude the identity urn, we also take \(\gamma >0\). By Lemma~\ref{lemma: balanced frob}, we have \(\lambda_1 = \alpha +\gamma\) and \(\bs u_1 = (1,1)\). One can further check \(\bs v_1 = \frac{1}{2}(1,1)\), and the other eigenvalue and left and right eigenvectors of \(A\) are \(\lambda_2 = \alpha-\gamma\), \(\bs u_2 = (1,-1)\) and \(\bs v_2 = \frac{1}{2}(1,-1)\). Since both of the eigenvalues of \(A\) are simple, \eqref{eq: eigenvector proj} implies the projection matrices satisfy
\begin{equation*}
    P_{J_1} = \bs v_1 \bs u_1' = \frac{1}{2}\left( {\begin{array}{cc}
    1 & 1 \\
    1 & 1 \\
  \end{array} } \right),\quad  P_{J_2} = \bs v_2 \bs u_2' = \frac{1}{2}\left( {\begin{array}{cc}
    1 & -1 \\
    -1 & 1 \\
  \end{array} } \right),
\end{equation*}
where \(J_1\) has eigenvalue \(\lambda_1\), and \(J_2\) has eigenvalue \(\lambda_2\). Moreover, \(\lambda_2\) is small/critical/large depending on whether \(\alpha\) is less than/equal to/greater than \(3\gamma\). Once we have stated our results, we return to this example to show how our results describe the asymptotic behaviour of Friedman's urn with number of initial balls tending to infinity.
\end{example}
\noindent In the final part of this section, we spend some time discussing an object which appears in many of our main results in the~\(N=o(n)\) regime. Recall \(m_1\), the size of the largest Jordan block across all Jordan blocks which have eigenvalue~\(\lambda_1\). Let \(P_{\lambda_1}~=~\sum_{J\in \mathcal{J}_{\lambda_1}}P_J\). The object is
\begin{equation}
    \label{eq: imp ob 1}
   \bs v(\bs \mu) :=  \frac{N_A^{m_1-1}P_{\lambda_1}\bs \mu}{(m_1-1)!}.
\end{equation}
This is the dominating term of \(\e^{At}\bs\mu\) as \(t\rightarrow \infty\) (See \eqref{eq: mat exp bound start}), which appears due to its heavy involvement in the moments of the MCBP (see Section \ref{Sec: Prelim for proofs}). We discuss this object now due to its prevalence in our results, and its simpler
forms when the replacement structure is balanced/irreducible/the identity, which will often be the case in application. We start with the balanced case.
\begin{lemma}
\label{lemma: import ob}
For a balanced replacement structure, the Jordan blocks of the matrix \(A\) with eigenvalue \(S\) are all of size 1.
\end{lemma}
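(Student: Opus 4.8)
The plan is to reduce the claim to the classical fact that the dominant eigenvalue of a stochastic matrix is semisimple (equivalently, has only $1\times1$ Jordan blocks). As in the proof of Lemma~\ref{lemma: balanced frob}, let $B:=(A_{ij}a_i/a_j)_{i,j=1}^d=(a_i\mathbb{E}[\xi_{ji}])_{i,j=1}^d$ and let $D:=\mathrm{diag}(a_1,\dots,a_d)$, which is invertible since each $a_i>0$. Then $B=DAD^{-1}$, so $A$ and $B$ are similar and share the same Jordan form; in particular the Jordan blocks of $A$ with eigenvalue $S$ all have size $1$ if and only if the same holds for $B$. Thus it suffices to prove the statement for $B$.

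By (A2) the off-diagonal entries of $B$ are non-negative, and by the balanced property the columns of $B$ all sum to $S$ (this is exactly the computation in the proof of Lemma~\ref{lemma: balanced frob}). Fix $\gamma\ge0$ large enough that $B+\gamma I$ has non-negative entries and $S+\gamma>0$, and put $C:=(S+\gamma)^{-1}(B+\gamma I)$. Then $C\ge0$ with every column summing to $1$, so its transpose $C'$ is a (row-)stochastic matrix. Since $B=(S+\gamma)C-\gamma I$ is an affine image of $C$ under the map $X\mapsto(S+\gamma)X-\gamma I$---which only shifts and rescales eigenvalues while preserving the sizes of all Jordan blocks---and since transposition also preserves Jordan structure, the Jordan blocks of $B$ with eigenvalue $S$ correspond exactly to the Jordan blocks of $C'$ with eigenvalue $1$. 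Hence the lemma follows once we know that the eigenvalue $1$ of the stochastic matrix $C'$ is semisimple.

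This last point is standard: every power $(C')^n$ is again stochastic, hence $\|(C')^n\|_\infty=1$ for all $n\ge1$ and the sequence of powers is bounded; but a Jordan block of size $k\ge2$ with eigenvalue $\mu$ of modulus $|\mu|\ge1$ would make some entry of $(C')^n$ grow at least like $\binom{n}{k-1}\to\infty$, a contradiction. Since $1$ is an eigenvalue of $C'$ (because $C'\bs1=\bs1$), it is therefore semisimple, and unwinding the reductions shows that all Jordan blocks of $A$ with eigenvalue $S$ have size $1$.

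I do not expect a serious obstacle here; the only points needing care are verifying the identity $B=DAD^{-1}$, noting that an affine transformation $X\mapsto\alpha X+\beta I$ preserves Jordan block sizes, making the growth estimate for a large Jordan block of $(C')^n$ precise, and choosing $\gamma$ so that $S+\gamma>0$ even when $S<0$. The one genuine idea is recognising that the balanced condition turns the conjugated matrix $B$ into a column-stochastic matrix up to an affine shift, after which semisimplicity of the top eigenvalue is classical.
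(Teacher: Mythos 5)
Your proof is correct, but it takes a genuinely different route from the paper's. The paper proves Lemma~\ref{lemma: import ob} in Appendix~\ref{Appendix: import obj} by contradiction using the functional limit machinery developed in Sections~5--7: assuming a Jordan block of size $\geq 2$ with eigenvalue $S$, the limit theorems (re-derived under that assumption) would force $\bs U_n(\lfloor nt\rfloor)$ to be of order $n\log(n/N)$, contradicting the fact that the balanced urn adds only mass $S$ per draw. Your argument is purely linear-algebraic: conjugate $A$ by $D=\mathrm{diag}(\bs a)$ to obtain $B$ (the same $B$ as in the proof of Lemma~\ref{lemma: balanced frob}), shift and rescale to get a column-stochastic matrix, and invoke the classical fact that the eigenvalue $1$ of a stochastic matrix is semisimple because its powers stay bounded. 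Both arguments are valid. Yours is more elementary and self-contained, and sidesteps the dependency bookkeeping the paper has to do (the paper's proof must carefully note which later equations remain valid without the lemma's conclusion, since Lemma~\ref{lemma: import ob} is invoked earlier in the text, e.g.\ to simplify $\bs v(\bs\mu)$ in~\eqref{eq: import ob 3} and to set $m_1=1$ in Section~\ref{Sec: proof of discrete time}); the paper's approach avoids appealing to the stochastic-matrix fact at the cost of a longer chain of dependencies. One small point of precision: the statement that a Jordan block of size $k\geq 2$ on the unit circle ``would make some entry of $(C')^n$ grow'' is a slight shortcut --- the growth happens in $J^n$ where $J$ is the Jordan form, and the contradiction comes from the fact that $J^n = V^{-1}(C')^nV$ must also be bounded for the fixed similarity $V$; this is standard, but worth saying when writing it up.
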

\noindent 
The proof of this Lemma requires results presented later in this work and is given in Appendix \ref{Appendix: import obj}. Suppose \(A\) has \(L\) Jordan blocks with eigenvalue \(S\), then Lemma \ref{lemma: import ob} and \eqref{eq: eigenvector proj} imply
\begin{align}
     &\bs v(\bs \mu) = \sum_{r=1}^L \beta_r \bs v_r, \quad \beta_r = \bs u_r \cdot \bs \mu, \quad \bs u_r \cdot \bs v_r =1, \quad \text{ (Balanced) } \label{eq: import ob 3}
\end{align}
where the \((\bs u_r,\bs v_r)\) are left and right eigenvector pairs of \(A\) with eigenvalue \(S\). Moreover, by Lemma \ref{lemma: balanced frob}, w.l.o.g.\ we can take \(\bs u_1 = \bs a\), \(\beta_1 = \bs a \cdot \bs \mu>0\) (since the colour weights are positive). Importantly, this implies \(\bs v(\bs \mu)\neq 0\) for any choice of~\(\bs \mu\), which implies many of the limits seen in our results in Section \ref{Sec: main results} are non-zero for all choices of \(\bs \mu\). In the identity and irreducible cases, \eqref{eq: imp ob 1} has an even nicer form. In the irreducible case, \(\lambda_1\) is simple by Theorem \ref{theorem: PF}. Therefore, 
\begin{equation}
\label{eq: import ob 4}
    \bs v(\bs \mu) = \beta_1 \bs v_1, \quad \beta_1 = \bs u_1 \cdot \bs \mu, \quad \bs u_1 \cdot \bs v_1 = 1, \quad \text{ (Irreducible) }
\end{equation}
where \((\bs u_1, \bs v_1)\) is the positive left and right eigenvector pair of \(\lambda_1\). If one further assumes a balanced irreducible replacement structure, we have that \(\bs u_1=a\) and \(\lambda_1=S\) by Lemma \ref{lemma: balanced frob}. In the identity case, \(A\) is a diagonal matrix with a single eigenvalue \(S\), therefore \(P_{\lambda_1}=I\) and \(m_1=1\). This implies
\begin{equation}
\label{eq: import ob 5}
   \bs v(\bs \mu)= \bs \mu. \quad \text{ (Identity)}
\end{equation}
\section{Main Results}
\label{Sec: main results}
Let \(D\) denote the space of  right-continuous functions with left-hand limits (c\`adl\`ag). Unless stated otherwise, we assume these functions take values in \(\mathbb{C}^d\) equipped with the euclidean norm, and that \(D\) is equipped with the Skorohod \(J_1\)-topology. Recall the distance between two functions \(f,g\) in \(D[a,b]\) is 
\begin{equation*}
    d(f,g) = \inf_{\upsilon \in \Upsilon}\F\{\|\upsilon\|+\sup_{t \in [a,b]}\|f(t)-g(\upsilon(t))\|_2\R\},
\end{equation*}
where \(\Upsilon\) is the set of strictly increasing continuous maps from \([a,b]\) to itself with
\begin{equation*}
    \|\upsilon\| = \sup_{s\neq t}\F|\log \F(\frac{\upsilon(t)-\upsilon(s)}{t-s}\R)\R|.
\end{equation*}A sequence of processes \((\bs X_n)_{n\geq 1}\) converges to \(\bs X\) in \(D(-\infty,\infty)\), if, for all \(T \geq 0\), \((\bs X_n)_{n\geq 1}\) converges to \(\bs X\) in \(D[-T,T]\). For \(T \geq 0\), \((\bs X_n)_{n\geq 1}\) converges to \(\bs X\) in \(D(0,T]\), if, for all \(0<\varepsilon<T\), \((\bs X_n)_{n\geq 1}\) converges to \(\bs X\) in \(D[\varepsilon,T]\). We always use the variable \(t\) to index time in our functional limit results even if not stated. Note, for results in \(D[0,\infty)\), the converging sequence \((\bs X_n)_{n\geq 1}\) may not be well defined at all \(t\in [0,\infty)\) for all \(n\geq 1\), but for each \(t\in [0,\infty)\), \(\bs X_n(t)\) will be well defined for all \(n\) large enough. 
\\~\\
In Section \ref{Sec: main main result}, we give our main result on balanced GIC urns, Theorem \ref{theorem: main result simplified}. This is a functional limit theorem containing the behaviour of the asymptotic colour composition in each regime, and the size, shape, and scale of the highest order random fluctuations of the urn around its asymptotic colour composition. However, we omit the covariance functions of the limit processes until later, since they are derived in terms of the GIC MCBP limits. In Section \ref{Sec: results for the GIC MCBP}, we give a functional limit theorem for the asymptotic behaviour of the GIC MCBP. Lastly, in Section \ref{Sec: results for the polya urn}, we give some more results on the GIC urn: We give the covariance functions of the limits in Theorem \ref{theorem: main result simplified} in terms of processes defined in Section \ref{Sec: results for the GIC MCBP}. In the \(N=o(n)\) regime, we give a functional limit theorem for the balanced GIC urn when projected on to the Jordan spaces of \(A\), Theorem \ref{theorem: Main results discrete}. This is a more general result than the \(N=o(n)\) results presented in Theorem \ref{theorem: main result simplified}. In particular, Theorem \ref{theorem: Main results discrete} gives us access to the lower order random fluctuations seen in Jordan spaces that do not contribute to the highest order random fluctuations of the urn. Finally, we finish Section \ref{Sec: results for the polya urn} by giving some applications of our results.
\subsection{Our main functional limit theorem for balanced GIC P\'olya urns} 
\label{Sec: main main result}
\begin{theorem}
\label{theorem: main result simplified}
Let \(N=N(n)\) be a positive integer-valued sequence, such that \(N \rightarrow \infty\) as \(n \rightarrow \infty\). Let \(\bs \mu\) be a non-negative vector, such that \(\sum_{i=1}^d \mu_i=1\). Let \((\bs U_n)_{n \geq 1}\) be a sequence of balanced P\'olya urns with initial condition \(N\bs \mu\), and balanced replacement structure \((\bs a, (\bs \xi_i)_{i=1}^d)\), such that the amount of mass added per time step is \(S\in \mathbb{R}\setminus\{0\}\). Let \(\beta_1 = \bs a \cdot \bs \mu\),  \(\ell_1(n,t):=\log(1+Snt/\beta_1 N)\), and \(\ell_2(n,t) =\log(1+S(n/N)^t/\beta_1 )\).
\\~\\
\textbf{(IBD) Initial Ball Dominant:} Assume that \(n=o(N)\), and (A1)-(A2) hold. Then, as \(n \rightarrow \infty\),
    \begin{align}
  & n^{-1/2}(\bs U_n({\lfloor nt\rfloor})-N\e^{AS^{-1}\ell_1(n,t)}\bs \mu)\conindis \bs Y_1(t)\text{ in } D[0,\infty), \label{eq: colour comp IBD 2}\\
   &N^{-1}\bs U_n({\lfloor nt\rfloor})\coninprob \bs \mu\text{ in } D[0,\infty), \label{eq: colour comp IBD}
\end{align}
where \(\bs Y_1\) is a Brownian motion with correlated components.
\\~\\
\textbf{(TR) Transitional Regime:} Assume that \(n/N\rightarrow 1 \) as \(n \rightarrow \infty\), and (A1)-(A2) hold. Then, as \(n \rightarrow \infty\),
\begin{align}
  & N^{-1/2}(\bs U_n({\lfloor n t\rfloor})-N\e^{AS^{-1}\ell_1(n,t)}\bs \mu) \conindis \bs Y_2(t)\text{ in } D[0,b),   \label{eq: TR}\\
   & N^{-1}\bs U_n({\lfloor n t\rfloor}) \coninprob  \e^{AS^{-1}\log(1+St/\beta_1)}\bs \mu\text{ in } D[0,b), \label{eq: colour comp TR}
\end{align}
 where \(b = \infty\) if \(S>0\), and \(b = -\beta_1/S\) if \(S<0\). Furthermore, \(\bs Y_2\) is a mean-zero Gaussian process.
 \\~\\
 \textbf{(TSD) Time Step Dominant:} Assume that \(N=o(n)\) and (A1)-(A3) hold. Recall the definition of \(\bs v(\bs \mu)\) in \eqref{eq: imp ob 1}. Then, as \(n\rightarrow \infty\),
\begin{equation}
N^{-1}(n/N)^{-t}\bs U_n(\lfloor N(n/N)^t \rfloor)\coninprob  (S/\beta_1)\bs v(\bs \mu)\text{ in } D(0,\infty). \label{eq: colour comp TSD}
\end{equation}
\textbf{TSD Small Urns:} Assume that \(\Lambda_{\ell}= \{S\}\) with \(S\) simple, and that \(\Lambda_{c}=\emptyset \). Then, as \(n \rightarrow \infty\),
    \begin{equation}
                  n^{-1/2}(\bs U_n(\lfloor n t \rfloor)-N\e^{AS^{-1}\ell_1(n,t)}\bs \mu) \conindis t^{1/2} \bs Y_s(\log(t))\text{ in }  D(0,\infty), \label{eq: small time simp}
    \end{equation}
    where \(\bs Y_s\) is a mean-zero Ornstein-Uhlenbeck process. 
    \\~\\
    \textbf{TSD Critical Urns:} Assume that \(\Lambda_{\ell}=\{S\}\) with \(S\) simple, and that \(\Lambda_{c}=\{S/2\}\). Let \(m\) denote the size of the largest Jordan block across all Jordan blocks with eigenvalue S/2. Then, as \(n\rightarrow \infty\),
    \begin{align}
        & N^{-1/2}(n/N)^{-t/2}\log(n/N)^{-(m-1/2)}(\bs U_n(\lfloor N(n/N)^t \rfloor)-N\e^{AS^{-1}\ell_2(n,t)}\bs \mu)\conindis \bs Y_c(t)\text{ in } D(0,\infty),\label{eq: crit time simp}
    \end{align}
    where \(\bs Y_{c}\) is a mean-zero Gaussian processes.
      \\~\\
      \textbf{TSD Large Urns with \(S\) simple:} Assume that \(\Lambda_{\ell}\neq \{S\}\) and \(S\) is simple. Assume the eigenvalue of \(\Lambda_{\ell}\setminus\{S\}\) with largest real part denoted \(\lambda\) is real. Let \(m\) denote the size of the largest Jordan block across all Jordan blocks with eigenvalue \(\lambda\). Then, as \(n \rightarrow \infty\),
     \begin{align}
         &N^{-1/2}(n/N)^{-\lambda t/S}\log((n/N)^t)^{-(m-1)}(\bs U_n(\lfloor N(n/N)^t \rfloor)-N\e^{AS^{-1}\ell_2(n,t)}\bs \mu) \conindis \bs Z_{\ell}\text{ in } D(0,\infty), \label{eq: large time simp 1}
     \end{align}
 where \(\bs Z_{\ell}\) is a mean-zero Gaussian random variable.
    \\~\\
    \textbf{TSD Large Urns with \(S\) non-simple:} Assume that \(S\) non-simple. Then, as \(n \rightarrow \infty\),
     \begin{align}
         &N^{-1/2}(n/N)^{- t}(\bs U_n(\lfloor N(n/N)^t \rfloor)-N\e^{AS^{-1}\ell_2(n,t)}\bs \mu) \conindis \bs Z_{S}\text{ in } D(0,\infty), \label{eq: large time simp 2}
     \end{align}
 where \(\bs Z_{S}\) is a mean-zero Gaussian random variable.
\end{theorem}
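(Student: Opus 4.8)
Every statement will be obtained through the Athreya--Karlin embedding \eqref{eq: branchurn}: writing $\tau=\tau_{n,\lfloor nt\rfloor}$ (resp.\ $\tau=\tau_{n,\lfloor N(n/N)^t\rfloor}$) we have $\bs U_n(\lfloor nt\rfloor)\eqindis\bs X_n(\tau)$, so it is enough to analyse the MCBP $\bs X_n$ on the timescale of $\tau$, to analyse $\tau$ itself, and to combine the two. The first move in analysing $\bs X_n$ is the branching property \eqref{equ:branching property}, which lets us write $\bs X_n=\sum_{i=1}^d\sum_{j=1}^{N\mu_i}\bs X^{(i,j)}$ as a sum of $N$ independent one-particle MCBPs; limit theorems for $\bs X_n$ then become functional central limit theorems for triangular arrays of i.i.d.\ summands, and first-order statements become laws of large numbers. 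Thus the proof has three ingredients: (1) control of the time change $\tau$ to order $N^{1/2}$; (2) a functional CLT for $\bs X_n$ on each relevant timescale; (3) unembedding.

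\noindent\textbf{Step 1: the time change.} This is where balancedness is used. Because each death adds mass exactly $S$, the total mass of $\bs X_n$ equals $\beta_1N+Sk$ after its $k$-th death, and since the total instantaneous death rate always equals the current total mass, the inter-death times are \emph{independent}: $\tau_{n,k}-\tau_{n,k-1}\sim\mathrm{Exp}(\beta_1N+S(k-1))$; crucially they depend on $\bs X_n$ only through the number of deaths, hence are independent of its composition. From $\tau_{n,k}=\sum_{j=0}^{k-1}E_j/(\beta_1N+Sj)$ with $E_j$ i.i.d.\ standard exponential one reads off $\E[\tau_{n,k}]=\sum_{j<k}(\beta_1N+Sj)^{-1}$, which at $k=\lfloor nt\rfloor$ and $k=\lfloor N(n/N)^t\rfloor$ equals $S^{-1}\ell_1(n,t)$ and $S^{-1}\ell_2(n,t)$ up to a lower-order term, and $\Var(\tau_{n,k})=\sum_{j<k}(\beta_1N+Sj)^{-2}=O(1/N)$ on the relevant ranges. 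A Lindeberg/Donsker argument for this sum of independent summands then gives a functional CLT for $N^{1/2}(\tau_{n,\lfloor nt\rfloor}-\E\tau_{n,\lfloor nt\rfloor})$, which in the IBD regime, after the further rescaling by $n^{-1/2}$, is a Brownian motion; likewise along the $\ell_2$-parametrisation, where the increments' variances vanish so the limiting time fluctuation is a single constant-in-$t$ Gaussian variable. This is the ``$N^{1/2}$-order control of $\tau_{n,n}$'' flagged in the introduction, and the balanced assumption is exactly what makes it accessible without any equilibrium input.

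\noindent\textbf{Step 2: the functional CLT for the MCBP.} From the one-particle moment formulas — the mean $\E[\bs X(s)\mid\bs X(0)=\bs e_i]=\e^{As}\bs e_i$, an explicit integral expression for the covariance, and (the new ingredient, proved by martingale arguments extending Janson \cite{Janson}) sharp fourth-moment bounds — one obtains $\E[\bs X_n(s)]=N\e^{As}\bs\mu$ and $\Cov(\bs X_n(s))=N\,\Sigma(s)$, and applies a Lindeberg--Feller functional CLT to $\sum_{i=1}^d\sum_{j=1}^{N\mu_i}\bs X^{(i,j)}$; the fourth-moment bounds supply the tightness. One also needs the CLT to tolerate a random (independent of the summands) number of summands, which arises when the process is incremented over a time interval via the strong Markov property \eqref{eq: strong markov prop}, since the state at the start of the interval has a random particle count. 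The normalisation and the shape of the limit are then dictated by $\Sigma(s)$ on the relevant timescale: for $s\to0$ (IBD) $\Sigma(s)$ is asymptotically linear and, following $\bs X_n$ at its $k$-th death, the $N$ particles multiply to total mass $\beta_1N+Sk\asymp nt$, so the variance is of order $n$ and the limit is a Brownian motion at scale $n^{1/2}$; for $s\asymp1$ (TR) $\Sigma$ is a genuine nonlinear covariance function, giving a general mean-zero Gaussian process at scale $N^{1/2}$; for $s\to\infty$ (TSD) the growth of $\Sigma(s)$ is governed by the spectral gap of $A$, which under (A3) and Lemmas \ref{lemma: balanced frob}--\ref{lemma: import ob} ($\lambda_1=S$, with Jordan blocks of size one) yields, after projecting onto the Jordan spaces of $A$: an Ornstein--Uhlenbeck process in the all-small case (the ``small'' variance is of the population order $\e^{\lambda_1 s}$, producing the $t^{1/2}\bs Y_s(\log t)$ form), a $\log$-inflated general Gaussian process when $\Lambda_c=\{S/2\}$, and — because we sum $N\to\infty$ i.i.d.\ copies whose individual large-eigenspace fluctuations converge a.s.\ to non-Gaussian martingale limits — a constant-in-$t$ Gaussian variable when there is a large eigenvalue other than $\lambda_1$ or $S$ is non-simple.

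\noindent\textbf{Step 3: unembedding, first-order limits, and the main obstacle.} To finish, combine Steps 1 and 2. With $\psi_n(t)$ the deterministic centring of $\tau$ (namely $S^{-1}\ell_1(n,t)$ or $S^{-1}\ell_2(n,t)$, up to lower order), and using $\e^{A\tau}=\e^{A\psi_n(t)}(I+A(\tau-\psi_n(t))+\ldots)$, one splits $\bs U_n(\lfloor nt\rfloor)-N\e^{A\psi_n(t)}\bs\mu\eqindis\bs X_n(\tau)-N\e^{A\psi_n(t)}\bs\mu$, via a martingale decomposition of $\bs X_n$ and continuity at the CLT scale, into a composition-fluctuation part — which after the regime's normalisation converges to the Gaussian limit of Step 2, once $\tau-\psi_n(t)\to0$ is used to replace the random evaluation time by its deterministic proxy — and a time-change part $\asymp N(\tau-\psi_n(t))\,A\e^{A\psi_n(t)}\bs\mu$, converging to the limit of Step 1 times a deterministic direction. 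On the $\lambda_1$-eigendirection the centred urn is only $O(1)$ (balancedness makes $\bs a\cdot\bs U_n$ deterministic and the centring captures it exactly), so the genuine fluctuation lives on the other Jordan spaces, where the two parts are independent (composition $\perp$ inter-death times, again by balancedness) and combine to the stated process — a correlated-component Brownian motion in IBD, a general Gaussian process in TR, and a constant-in-$t$ Gaussian variable in the TSD-large cases (there the time-change part is of the same order as the composition part, whereas in the TSD-small case it is of strictly smaller order, so the OU limit comes from the composition part alone). The colour-composition statements \eqref{eq: colour comp IBD}, \eqref{eq: colour comp TR}, \eqref{eq: colour comp TSD} follow by dividing by $N$ (the fluctuations being $o(N)$), together with $\ell_1(n,t)\to0$ (IBD), $\ell_1(n,t)\to\log(1+St/\beta_1)$ (TR), and, via \eqref{eq: mat exp bound start}--\eqref{eq: matrix exp bounds} and $\bs v(\bs\mu)=P_{\lambda_1}\bs\mu\neq\bs 0$ (Lemma \ref{lemma: import ob} and \eqref{eq: import ob 3}), $(n/N)^{-t}\e^{AS^{-1}\ell_2(n,t)}\bs\mu\to(S/\beta_1)\bs v(\bs\mu)$ (TSD). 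The main obstacles are the two new pieces: establishing the sharp fourth-moment bounds and the consequent tightness of the growing sum of MCBPs on each timescale (together with the random-summands extension of the CLT), and carrying out the unembedding in the TR regime, where $\bs X_n$ has neither stayed near its initial composition nor reached equilibrium, so the joint behaviour of the composition and time-change fluctuations must be controlled directly on the $\asymp1$ timescale rather than read off from a stationary limit.
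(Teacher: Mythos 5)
Your overall strategy — embed in continuous time, use the branching property to reduce to a triangular-array CLT for the MCBP, exploit balancedness to control the death times $\tau_{n,k}$ to order $N^{1/2}$, and then unembed via a random time-change theorem — is the same as the paper's, and your direct analysis of $\tau_{n,k}=\sum_{j<k}E_j/(\beta_1N+Sj)$ from independent exponentials is a perfectly good (and arguably cleaner) restatement of what the paper does by taking the scalar product of the MCBP CLT with $\bs a$ and using $\bs a'\bs X_n(t)=\beta_1N+SB_n(t)$; the two routes access the same source of randomness, the death-counting process.

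The genuine gap is the claim at the end of Step 3 that, on the Jordan spaces other than the $\lambda_1$-eigendirection, the ``composition-fluctuation part'' and the ``time-change part'' of the Taylor decomposition are independent ``because composition $\perp$ inter-death times.'' That inference does not hold. The decomposition in question is $\bs U_n(\lfloor nt\rfloor)-N\e^{A\psi_n}\bs\mu=[\bs X_n(\tau)-N\e^{A\tau}\bs\mu]+N(\e^{A\tau}-\e^{A\psi_n})\bs\mu$, and the first bracket is \emph{not} a pure function of the jump chain: it equals $\bs U_n(\lfloor nt\rfloor)-N\e^{A\tau}\bs\mu$, whose second summand depends on $\tau$. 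In the paper both brackets are shown to converge to functionals of the \emph{same} limiting Brownian motion $\bs W_1$, namely $\beta_1^{-1/2}\bs W_1(t)$ and $-S^{-1}\beta_1^{-3/2}A\bs\mu\,(\bs a\cdot\bs W_1(t))$, and one checks that
\begin{equation*}
\Cov\bigl(\beta_1^{-1/2}\bs W_1(t),\,-S^{-1}\beta_1^{-3/2}A\bs\mu\,(\bs a\cdot\bs W_1(t))\bigr)=-t\,\beta_1^{-2}A\bs\mu(A\bs\mu)'\neq 0
\end{equation*}
in general, using $\Cov(\bs W_1(t))\bs a=tSA\bs\mu$ (which follows from balancedness, $\E[\bs\xi_i\bs\xi_i']\bs a=S\E[\bs\xi_i]$). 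So the two pieces are \emph{negatively} correlated, and it is precisely this cross term that produces the ``$-\beta_1^{-2}(\sum a_i\mu_i\E[\bs\xi_i])(\sum a_i\mu_i\E[\bs\xi_i])'$'' correction in the covariance of $\bs Y_1$ in Theorem \ref{theorem: main results discrete 2}; treating the pieces as independent would give a covariance that is a sum of positive semi-definite terms and hence the wrong answer. The same issue recurs in the TSD-large case, where $\bs Z_J=(S/\beta_1)^{\lambda/S}(\bs V_J-(\beta_1 S)^{-1}V_1AP_J\bs\mu)$ with $\bs V_J$ and $V_1$ \emph{dependent} members of the family $\mathcal V_\ell$. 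The intuition behind your claim is not wrong — the urn increments are indeed independent of the inter-death times, and indeed one can verify that the \emph{final} limit $\bs Y_1$ is uncorrelated with $\bs a\cdot\bs W_1$ — but that is a nontrivial cancellation between the two correlated pieces, not a consequence of them being independent, and the proof needs to track the joint convergence of both pieces to functionals of a single $\bs W_1$ (resp.\ a single $V_1$), as the paper does, rather than sum two putatively independent limits.
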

\begin{remark}[Complex eigenvalues for the TSD Critical Urn and Large Urn cases]
\label{remark: comp eigen for c and l}
The real eigenvalue assumption in the TSD Critical Urn and Large Urn cases is not necessary. However, for complex eigenvalues, the behaviour seen is more complicated and results of the form \eqref{eq: crit time simp} and \eqref{eq: large time simp 1} are not possible. The dominant fluctuations of the urn are contained in Jordan projections of the urn on Jordan spaces whose eigenvalues are in \(\Lambda_c\) (Critical Urns) or whose eigenvalue(s) have the largest real part in \(\Lambda_{\ell}\setminus\{S\}\) (Large Urns). If any of these eigenvalues are complex, then we have a contribution to the dominant fluctuations from Jordan space conjugate pairs (recall \eqref{eq: conjugate block}). Suppose \(\lambda\) is one such complex eigenvalue. It will be shown in Section \ref{Sec: results for the polya urn} that the contribution of any Jordan projection with eigenvalue \(\lambda\) scales with the complex~\((n/N)^{\lambda t/S}\). By \eqref{eq: conjugate block}, the complex part of this contribution cancels with the complex part of the contribution coming from the conjugate Jordan space. This leaves an oscillatory real contribution with period \(2\pi S/\mathrm{Im}\lambda \log(n/N)\) in \(t\). Therefore, for Critical Urns, we expect the l.h.s.\ of \eqref{eq: crit time simp} at large \(n\) to be close in distribution to the sum of components of the form
\begin{equation}
  \mathrm{Re}\bs Y_{c,\lambda}(t) \cos(\mathrm{Im}\lambda \log(n/N)t/S)-\mathrm{Im}\bs Y_{c,\lambda}(t) \sin(\mathrm{Im}\lambda \log(n/N)t/S), \quad  t\geq 0, \label{eq: remark complex simple}
\end{equation}
where \(\lambda \in \Lambda_c\), and the \(\bs Y_{c,\lambda}\) are complex mean-zero Gaussian processes that take the place of \(\bs Y_c\). For Large Urns, we have the same result with \(\bs Y_{c,\lambda}\) replaced by complex mean-zero Gaussian random variables \(\bs Z_{\ell,\lambda}\), where \(\lambda\) is now an eigenvalue in \(\Lambda_{\ell}\setminus\{S\}\) of largest real part. 
\end{remark}
\begin{remark}[Asymptotic colour composition]
\label{Remark: ACC}
The results \eqref{eq: colour comp IBD}, \eqref{eq: colour comp TR}, and \eqref{eq: colour comp TSD} give the asymptotic behaviour of the colour composition in each regime. Notice in the TSD regime, when \(S\) is simple (such as for irreducible urns), \eqref{eq: import ob 4} tells us that the limit in \eqref{eq: colour comp TSD} is independent of \(\bs \mu\). Whereas, for \(S\) non-simple, by \eqref{eq: import ob 3}, there is a dependence on \(\bs \mu\).
\end{remark}
\begin{remark}
\label{remark: TR to TSD}
Following on from Remark \ref{Remark: ACC}, it is interesting to know how quickly the asymptotic colour composition in the TR regime converges to the asymptotic colour composition in the TSD regime. This is especially useful when \(S\) is simple, since it gives a quantitative measure on how fast the urn forgets its initial composition. By \eqref{eq: mat exp bound start} and Lemma \ref{lemma: import ob}, we see that the limit process in \eqref{eq: colour comp TR} can be written as 
\begin{equation*}
    \e^{AS^{-1}\log(1+St/\beta_1)}\bs \mu = (1+St/\beta_1)P_{S}\bs\mu +(I-P_{S}) \e^{AS^{-1}\log(1+St/\beta_1)}\bs \mu=(1+St/\beta_1)\bs v (\bs \mu) +(I-P_{S}) \e^{AS^{-1}\log(1+St/\beta_1)}\bs \mu,
\end{equation*}
where \(P_S=\sum_{J \in \mathcal{J}_S}P_J\). Let \(\lambda_2\) be the real part of the eigenvalue(s) of \(A\) with second largest real part, and suppose \(m_2\) is the size of the largest Jordan block across all of these eigenvalues. Then, the second term on the r.h.s.\ can be bounded by \eqref{eq: matrix exp bounds} to give
\begin{equation*}
    \|(I-P_{S}) \e^{AS^{-1}\log(1+St/\beta_1)}\bs \mu\|_2 \leq \cst (1+t)^{\lambda_2/S}\log(1+t)^{m_2-1}.
\end{equation*}
This implies the asymptotic colour composition in the TR regime converges to the asymptotic colour composition in the TSD regime at a polynomial rate in \(t\). Furthermore, the speed of this convergence increases with the relative spectral gap \((S-\lambda_2)/S\).
\end{remark}
\begin{remark}
In the fluctuation results, we see the normalizing terms are not the colour composition limits, but more complex processes involving these \(\ell_1\) and \(\ell_2\) functions. This is because, depending on how \(N\) scales with \(n\), there can be deterministic terms of order between the largest random fluctuations and the colour composition. These terms appear in the GIC case but not the FIC case, since our limits are arising due to an underlying CLT. This causes the random fluctuations to scale with \(N^{1/2}\), whereas the deterministic terms scale with \(N\). This additional scaling the deterministic terms have over the random fluctuations in the GIC case can be enough for lower order deterministic terms to dominate the random fluctuations. The heuristics behind these lower order deterministic terms is discussed further in Section \ref{Sec: Heuristics}.
\end{remark}
\begin{remark}
\label{remark: converge to mu} Theorem 3.1 still holds if the initial colour composition only converges to \(\bs \mu\) as \(n\rightarrow \infty\). The asymptotic colour composition results (\eqref{eq: colour comp IBD}, \eqref{eq: colour comp TR}, and \eqref{eq: colour comp TSD}) are unchanged. However, in the fluctuation results, one needs to replace~\(N\bs \mu\) with \(\bs X_n(0)\) on the l.h.s. Indeed, take the IBD result \eqref{eq: colour comp IBD}. If the initial colour composition does not approach \(\bs\mu\) at a rate faster than \(N^{-1}n^{1/2}\), then these fluctuations will be non-negligible in \eqref{eq: colour comp IBD 2}.
\end{remark}

\subsection{Results for the GIC MCBP}
\label{Sec: results for the GIC MCBP}
In the following two theorems we give a functional limit result for the GIC MCBP at three timescales. These correspond to the timescales discussed in Remark \ref{remark: bp timescales}. The first theorem focuses on the functional limit result, including, for the timescale corresponding to the TSD (\(N=o(n)\)) regime, the asymptotic behaviour of the MCBP when projected on to each Jordan space of \(A\). The second theorem tells us how the limits seen in each of the Jordan spaces depend on each other.
\begin{theorem}
\label{Theorem: Main continuous time}
Let \(N=N(n)\) be a positive integer-valued sequence, such that \(N \rightarrow \infty\) as \(n \rightarrow \infty\). Let \(\bs \mu\) be a non-negative vector, such that \(\sum_{i=1}^d\mu_i = 1\). Let \((\bs X_n)_{n\geq 1}\) be a sequence of MCBPs with initial condition \(N\bs \mu\) and replacement structure~\((\bs a, (\bs \xi_i)_{i=1}^d)\).
\\~\\
\textbf{(ST) Small Time:} Let \(\varepsilon=\varepsilon(n)\) be a positive sequence such that \(\varepsilon \rightarrow 0\) and \(N\varepsilon \rightarrow \infty\) as \(n \rightarrow \infty\). Assume that (A1)-(A2) hold. Then, as \(n \rightarrow \infty\),
    \begin{equation}
    \label{eq: improve 32}
         (N\varepsilon)^{-1/2}(\bs X_n(\varepsilon t)-N\mathrm{e}^{A\varepsilon t}\bs \mu)\conindis \bs W_1(t)  \text{ in }  D[0,\infty).
    \end{equation}
 \textbf{(CT) Critical Time:} Assume that (A1)-(A2) hold. Then, as \(n \rightarrow \infty\),
    \begin{equation}
    \label{eq: discrete 10}
        N^{-1/2}(\bs X_n(t)-N\mathrm{e}^{At}\bs \mu) \conindis \bs W_2(t) \text{ in }  D[0,\infty).
    \end{equation}
\textbf{(LT) Large Time:} Let \(\omega = \omega(n)\) be a positive sequence that tends to \(\infty\) as \(n \rightarrow \infty\). Recall the Perron-Frobenius eigenvalue \(\lambda_1\) of \(A\) with \(m_1\) denoting the size of the largest Jordan block across all Jordan blocks with eigenvalue \(\lambda_1\). Assume that (A1)-(A3) hold. Then, \eqref{eq: first order MCBP}-\eqref{eq: improve 8000} hold jointly. As \(n \rightarrow \infty\),
     \begin{equation}
     \label{eq: first order MCBP}
            N^{-1/2}(\omega t)^{-(m_1-1)}\mathrm{e}^{-\lambda_1 \omega t}(\bs X_n(\omega t)-N\e^{A\omega t}\bs \mu) \conindis   \sum_{J\in \mathcal{J}_{\lambda_1}}\frac{N_A^{m_1-1}\bs V_J}{(m_1-1)!} \text{ in } D(0,\infty).
        \end{equation}
\textbf{(LT\textsubscript{s}) Small Components:} Let \(P_s = \sum_{\lambda \in \Lambda_s}\sum_{J\in \mathcal{J}_{\lambda}}P_{J}\). Then, as \(n \rightarrow \infty\),
        \begin{equation}
        \label{eq: small component convergence}
            N^{-1/2}\omega^{-(m_1-1)/2}\mathrm{e}^{-\lambda_1(\omega+t)/2}P_s( \bs X_n(\omega+t)-N\mathrm{e}^{A(\omega+t)}\bs \mu) \conindis \bs W_{s}(t)  \text{ in }  D(-\infty,\infty).
        \end{equation}
        Let \(K=K(n) \rightarrow \infty\) as \(n \rightarrow \infty\). Then, as \(n \rightarrow \infty\),
        \begin{equation}
            \label{eq: small component convergence in probability}
           K^{-1} N^{-1/2}\omega^{-(2m_1-1)/4}\mathrm{e}^{-\lambda_1 \omega t/2}P_s( \bs X_n(\omega t)-N\mathrm{e}^{A \omega t}\bs \mu) \coninprob 0  \text{ in }  D[0,\infty).
        \end{equation}
 \textbf{(LT\textsubscript{c}) Critical Components:} Let \(\lambda \in \Lambda_c\), \(J\in \mathcal{J}_{\lambda}\) with size \(m\), and \(1 \leq \kappa \leq m\). Then, as \(n\rightarrow \infty\),
        \begin{equation}
            N^{-1/2}\omega^{-(m_1+2\kappa -2)/2}\e^{-\lambda \omega t}N_A^{m-\kappa}P_{J}(\bs X_n(\omega t)-N\e^{A\omega t}\bs \mu)\conindis \bs W_{J,\kappa}(t) \text{ in }  D[0,\infty).
            \label{eq: improve 40}
        \end{equation}
 \textbf{(LT\textsubscript{\(\ell\)}) Large Components:} Let \(\lambda \in \Lambda_{\ell}\), \(J\in \mathcal{J}_{\lambda}\) with size \(m\). Then, as \(n \rightarrow \infty\),
   \begin{equation}
             N^{-1/2} P_{J}(\e^{-A\omega t}\bs X_n(\omega t)-N\bs \mu)\conindis \bs V_{J}  \text{ in }  D(0,\infty).
             \label{eq: improve 80}
        \end{equation}
        Furthermore, by \eqref{eq: mat exp bound start},
        \begin{equation}
            N^{-1/2}\mathrm{e}^{-\lambda \omega t}P_{J}\left(\bs X_n(\omega t)-N\e^{A\omega t}\bs \mu\right) = N^{-1/2}\sum_{i=0}^{m-1}\frac{(\omega t)^i}{i!}N_A^iP_{J}(\e^{-A\omega t}\bs X_n(\omega t)-N\bs \mu). \label{eq: improve 30}
            \end{equation}
           This and \eqref{eq: improve 80} imply, as \(n\rightarrow \infty\), for \(1 \leq \kappa \leq m\),
            \begin{equation}
                \label{eq: improve 8000}
                N^{-1/2}(\omega t)^{-(\kappa-1)}\mathrm{e}^{-\lambda \omega t}N_A^{m-\kappa}P_{J}\left(\bs X_n(\omega t)-N\e^{A\omega t}\bs \mu\right)\conindis \frac{N_A^{m-1}\bs V_{J}}{(\kappa-1)!}  \text{ in }  D(0,\infty).
            \end{equation}
   \noindent The process \(\bs W_1\) is a  Brownian motion. The process \(\bs W_{s}\) is a mean-zero, real,  Ornstein-Uhlenbeck process. The processes~\(\bs W_2\) and \(\bs W_{J,\kappa} \) are mean-zero Gaussian processes. The random variable \(\bs V_{J}\) is a mean-zero Gaussian variable. Moreover, \( \bs W_{J,\kappa}\) and \(\bs V_{J}\) are real if the eigenvalue corresponding to the Jordan space is real and complex otherwise. 
    
    The covariance functions are as follows, for \(0 \leq t_1 \leq t_2 <\infty\) (or \(-\infty < t_1\leq t_2 <\infty\) for Small Components): 
    \begingroup
    \allowdisplaybreaks
    \begin{align*}
        &\Cov(\bs W_1(t_2),\bs W_1(t_1)) = t_1\sum_{i=1}^d \mu_i a_i \E[\bs \xi_i \bs \xi_i']  ,\\
        &\Cov(\bs W_2(t_2),\bs W_2(t_1)) = \sum_{i=1}^d \mathrm{e}^{A(t_2-t_1)}\int_{0}^{t_1}\mathrm{e}^{A( t_1-v)}\E[\bs \xi_i \bs \xi_i']\mathrm{e}^{A'(t_1-v)}a_i(\mathrm{e}^{Av}\bs \mu)_i\mathrm{d}v,\\
        &\Cov(\bs W_{s}(t_2),\bs W_{s}(t_1)) = \sum_{i=1}^d\mathrm{e}^{(A-\lambda_1/2) (t_2-t_1)}\int_{0}^{\infty}P_s\mathrm{e}^{Av}a_i v(\bs\mu)_i\E[\bs \xi_i \bs \xi_i']\e^{A'v}P_s'\mathrm{e}^{-\lambda_1 v}\mathrm{d}v.
        \end{align*}
        For each \(\lambda\in\Lambda_c\), \(J \in \mathcal{J}_{\lambda}\) with size \(m\), and \(1 \leq \kappa \leq m\),
        \begin{align*}
        & \Cov(\bs W_{J,\kappa}(t_2),\overline{\bs W}_{J,\kappa}(t_1))= \sum_{i=1}^dN_A^{m-1}P_{J}a_i v(\bs\mu)_i\E[\bs \xi_i \bs \xi_i'] P_{J}^*N_A'^{m-1}\int_{0}^{t_1}\frac{(t_1-v)^{\kappa-1}(t_2-v)^{\kappa-1}v^{m_1-1}}{(\kappa-1)!^2}\mathrm{d}v,\\
        & \Cov(\bs W_{J,\kappa}(t_2),\bs W_{J,\kappa}(t_1)) = 0 \quad \text{for \(\lambda\in \mathbb{C}\setminus\mathbb{R}\)},
        \end{align*}
        where \(P_{J}^* = \overline{P}_{J}'\). Lastly, for each \(J \in \cup_{\lambda \in \Lambda_{\ell}}\mathcal{J}_{\lambda}\),
        \begin{align*}
        &\Var(\bs V_{J}) = \sum_{i=1}^d  \int_{0}^{\infty}P_{J}e^{-Av}\E[\bs \xi_i \bs \xi_i']e^{-A'v}P_{J}'a_i(e^{Av}\bs \mu)_i\mathrm{d}v,\\
        &\Cov(\bs V_{J},\overline{\bs V}_{J})=\sum_{i=1}^d  \int_{0}^{\infty}P_{J}e^{-Av}\E[\bs \xi_i \bs \xi_i']e^{-A'v}P_{J}^*a_i(e^{Av}\bs \mu)_i\mathrm{d}v.
    \end{align*}
    \end{theorem}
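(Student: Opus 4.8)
The plan is to prove every part of the theorem through a single mechanism: the branching decomposition together with a functional central limit theorem for triangular arrays. By the branching property \eqref{equ:branching property}, for each $n$ we may realise $\bs X_n(t) \eqindis \sumij \Xn(t)$, where the $\Xn$ are i.i.d.\ MCBPs started from a single type-$i$ particle with the common replacement structure. The centering in each display of the theorem is exactly $N$ times the common mean $\E[\Xn(t)] = \e^{At}\bs e_i$ (a classical consequence of the forward equations for the MCBP first moment, which also gives $\E[\bs X_n(t)] = N\e^{At}\bs\mu$), so in each regime we are analysing a normalised sum $\sumij(\Xn(\theta_n t) - \e^{A\theta_n t}\bs e_i)$ for a deterministic time-change $\theta_n$ — $\theta_n = \varepsilon$ for (ST), $\theta_n\equiv 1$ for (CT), and $\theta_n = \omega$ for (LT). All the limit laws are therefore Gaussian by construction, and what remains is to identify their covariances and to prove functional convergence.

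First I would assemble the moment toolkit for a single MCBP: the first moment $\e^{At}\bs e_i$; the second-moment (covariance) formulas, of the form $\Cov(\bs X(t_2),\bs X(t_1)) = \e^{A(t_2-t_1)}\int_0^{t_1}\e^{A(t_1-v)}\E[\bs\xi_i\bs\xi_i']\e^{A'(t_1-v)}a_i(\e^{Av}\bs e_i)\,\intd v$ for $t_1\le t_2$, building on Janson's results; and the new sharp fourth-moment bounds for the centered projected process $P_J(\bs X(t)-\e^{At}\bs x)$, proved via the martingale decomposition of the MCBP. Summing the second-moment formula over the $\sumij$ independent summands, dividing by the appropriate power of $N$, and letting $\theta_n$ run to its limit in $n$ produces exactly the kernels stated in the theorem — the $\int_0^{t_1}$ integrals for $\bs W_2$ and $\bs W_{J,\kappa}$, the $\int_0^\infty$ integrals for $\bs W_s$ and $\bs V_J$ after the exponential change of variables $v\mapsto \omega t - v$, and the linear-in-$t_1$ kernel $t_1\sum_i\mu_ia_i\E[\bs\xi_i\bs\xi_i']$ for $\bs W_1$ (to first order the MCBP makes $\approx a_i\varepsilon t$ jumps of size $\bs\xi_i$ from each of $N\mu_i$ initial particles). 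The fourth-moment bounds then do the two remaining jobs: (i) a Lyapunov condition delivering convergence of finite-dimensional distributions from the triangular-array CLT, and (ii) an increment bound $\E\|\cdot(t_2)-\cdot(t_1)\|^4\le\cst(t_2-t_1)^2$ feeding a standard moment criterion for tightness in the Skorokhod $J_1$ topology. From the covariance forms one then reads off that $\bs W_1$ is a Brownian motion, $\bs W_s$ an Ornstein--Uhlenbeck process (the $\e^{(A-\lambda_1/2)(t_2-t_1)}$ prefactor), and the others the stated Gaussian processes.

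For the (LT) regime the extra ingredient is the Jordan decomposition. I would split $\bs X_n = \sum_{\lambda\in\Lambda}\sum_{J\in\mathcal J_\lambda}P_J\bs X_n$ and, using the matrix-exponential estimates \eqref{eq: matrix exp bounds}--\eqref{eq: matrix exp bounds all}, show that $P_J\bs X_n(\omega t)$ fluctuates on the scale $N^{1/2}\e^{(\mathrm{Re}\lambda\vee\lambda_1/2)\omega t}$ times a power of $\omega$; this dichotomy is precisely what forces the three cases (LT$_s$), (LT$_c$), (LT$_\ell$) with their differing normalisations, and it is why the dominant fluctuations in \eqref{eq: small component convergence} live on the additive timescale $\omega+t$ (with the $\omega t$ scale degenerate, \eqref{eq: small component convergence in probability}). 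The identities \eqref{eq: improve 30} are purely algebraic once \eqref{eq: improve 80} is proved, \eqref{eq: improve 8000} follows from them, and \eqref{eq: first order MCBP} is the sum of the large-time Jordan limits over $\mathcal J_{\lambda_1}$. Joint convergence of all the displays needs no new argument: apply the same triangular-array CLT to the single stacked vector of all the normalised projections, the covariance computation giving the joint Gaussian limit and their cross-covariances.

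The main obstacle, and the genuine departure from the FIC analysis of Janson, is establishing the triangular-array CLT and tightness \emph{uniformly over the moving timescale}: in the (ST) and (CT) regimes the individual MCBPs have not reached their quasi-stationary growth regime, so one cannot invoke the classical single-MCBP limit theory and then sum, and one must instead control the rescaled variances and fourth moments of $\Xn(\theta_n t)$ directly and uniformly in $n$. Two further technical points feed into this: proving the sharp fourth-moment bounds themselves via the MCBP martingale decomposition and verifying they survive the rescaling, and — in order to restart the process at intermediate times for the (LT$_s$) argument, which lives in $D(-\infty,\infty)$ — extending the array CLT to allow an independent random number of summands. This is where I expect the bulk of the work to lie.
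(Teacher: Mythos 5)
Your overall architecture is genuinely different from the paper's. You propose a single mechanism — the branching decomposition into $\sumij \Xn$ plus a triangular-array CLT with a Kolmogorov-type fourth-moment tightness criterion — applied to every regime. The paper uses two methods: for ST, CT, LT$_c$ and LT$_\ell$ it runs the martingale functional CLT (Lemma \ref{proposition: Janson 9.1}, from Janson) on the rescaled martingales $\e^{-At}\bs X_n(t)$, checking convergence of the quadratic variation via the matrix-valued martingale identity of Lemma \ref{lemma: Jan 9.3}; the branching decomposition plus a Lyapunov CLT with a random number of summands (Lemma \ref{Lemma: MCLT}) is used only for LT$_s$, where the martingale approach breaks down because of the $\omega + t$ timescale. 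In the martingale route tightness and continuity of the limit come for free from Lemma \ref{proposition: Janson 9.1}; in the branching route the paper still does not use a plain fourth-moment increment bound but rather the product-of-increments criterion of Theorem \ref{Theorem:Bill con} (Billingsley Thm~13.5), which exploits conditional independence of disjoint increments. Your unified approach would be attractive if it worked, since it avoids the quadratic-variation computations entirely.

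The gap is in the tightness step. The increment criterion you propose, $\E\|\bs Z_n(t_2)-\bs Z_n(t_1)\|_2^4 \le \cst (t_2-t_1)^2$, does not hold for the rescaled processes here. By the strong Markov and branching properties, the centered increment over $[t_1,t_2]$ is a sum of $\mathcal O(N)$ independent centered single-particle increments scaled by $N^{-1/2}$, and Lemma \ref{Lemma: fourth moments} gives a fourth moment of order $\|\bs X(0)\|_2^{2}(1\wedge(t_2-t_1))$ — that is, linear and not quadratic in $(t_2-t_1)$ for short intervals. Intuitively this is unavoidable: a single summand's increment is Poisson-like with fourth moment $\asymp(t_2-t_1)$, and the $N^{-2}\sum_i\E\|Y_i\|^4 \asymp N^{-1}(t_2-t_1)$ term dominates $(\sum_i N^{-1}\E\|Y_i\|^2)^2 \asymp (t_2-t_1)^2$ whenever $t_2-t_1 \lesssim N^{-1}$. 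The paper sidesteps this by using the conditional product bound $\E[\|\bs Z_n(t_2)-\bs Z_n(t_1)\|_2^2\,\|\bs Z_n(t_3)-\bs Z_n(t_2)\|_2^2]\le\cst(t_3-t_1)^{3/2}$, which works precisely because the two increments are conditionally independent given $\mathcal F_{t_2}^{(n)}$, so each factor contributes a single power of the interval length. Your proposal therefore needs to replace the plain fourth-moment bound with either the Billingsley product criterion or the martingale FCLT; as stated, the functional convergence in each regime is not established.
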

        \begin{remark}
    The timescales chosen in Theorem \ref{Theorem: Main continuous time} match the order of the \(\tau_{n,n}\) as given in Remark 1.2. Indeed, if~\(n=o(N)\) we can take \(\varepsilon = n/N\), and if \(N=o(n)\) we can take \(\omega = \log(n/N)\).
    \end{remark}
    \begin{remark}
    Similarly to Remark \ref{remark: converge to mu}, we can take the initial colour composition converging to \(\bs \mu\) as long as we replace~\(N\bs \mu\) by \(\bs X_n(0)\) in the l.h.s.\ of the limit theorems. Importantly, the covariance functions are unaffected.
    \end{remark}
    \begin{remark}
    \label{remark: BM}
    In the LT\textsubscript{c} case, if one takes \(\kappa=1\), \(m_1=1\), we see that the covariance function of \(\bs W_{J,\kappa}\) is that of a Brownian motion.
    \end{remark}
    \begin{theorem}
\label{theorem: cont time 2}
Take the setting and assumptions of Theorem \ref{Theorem: Main continuous time} LT. The families
 \begin{align*}
      &\mathcal{W}_s := \{\bs W_s\}, \\
 & \mathcal{W}_{c,|\rho|} :=   \cup_{\lambda \in \{\rho,\overline{\rho}\}}  \cup_{J\in \mathcal{J}_{\lambda}}\cup_{\kappa=1}^{m_J}\{\bs W_{J,\kappa}\}, \text{ for different conjugate pairs \(\rho,\overline{\rho} \in \Lambda_c\)},\\
  &\mathcal{V}_{\ell}      := \cup_{\lambda \in \Lambda_{\ell}}\cup_{J\in \mathcal{J}_{\lambda}}\{\bs V_{J}\},
\end{align*}
    are pairwise independent, where \(m_J\) denotes the size of the Jordan block \(J\). Within dependent families we have the following covariance functions:
\\~\\
Let \(B:= \sum_{i=1}^d a_i v(\bs\mu)_i\E[\bs \xi_i\bs \xi_i']\). For \(0\leq t_1\leq t_2 <\infty\), \(\rho \in \Lambda_c\), and \(\bs W_{J_1,\kappa_1},\bs W_{J_2,\kappa_2} \in\mathcal{W}_{c,|\rho|}\) with \(J_1,J_2\) having eigenvalues \(\rho_1,\rho_2\in \{\rho,\overline{\rho}\}\) and sizes \(d_{1},d_{2}\) respectively,
\begin{align*}
 &\Cov(\bs W_{J_{2},\kappa_2}(t_2),\overline{\bs W}_{J_{1},\kappa_1}(t_1)) = \bs 1_{\{\rho_{1}=\rho_{2}\}} N_A^{d_{2}-1}P_{J_2}BP_{J_1}^*N_A'^{d_{1}-1}\int_{0}^{t_1}\frac{(t_1-v)^{\kappa_1-1}(t_2-v)^{\kappa_2-1}v^{m_1-1}}{(\kappa_2-1)!(\kappa_1-1)!}\mathrm{d}v,\\
  &\Cov(\bs W_{J_2,\kappa_2}(t_2),\bs W_{J_1,\kappa_1}(t_1))  = \bs 1_{\{\rho_{1}=\overline{\rho}_{2}\}} N_A^{d_{2}-1}P_{J_2}BP_{J_1}'N_A'^{d_{1}-1}\int_{0}^{t_1}\frac{(t_1-v)^{\kappa_1-1}(t_2-v)^{\kappa_2-1}v^{m_1-1}}{(\kappa_2-1)!(\kappa_1-1)!}\mathrm{d}v.
\end{align*}
For \(\bs V_{J_2},\bs V_{J_1}\in \mathcal{V}_{\ell}\),
\begin{align*}
   & \Cov(\bs V_{{J_2}},\overline{\bs V}_{J_1})=\sum_{i=1}^d  \int_{0}^{\infty}P_{J_2}\e^{-Av}\E[\bs \xi_i \bs \xi_i']\e^{-A'v}P_{J_1}^*a_i(\e^{Av}\bs \mu)_i\mathrm{d}v,\\
    &\Cov(\bs V_{J_2},\bs V_{J_1})=\sum_{i=1}^d \int_{0}^{\infty}P_{J_2}\e^{-Av}\E[\bs \xi_i \bs \xi_i']\e^{-A'v}P_{J_1}'a_i(\e^{Av}\bs \mu)_i\mathrm{d}v.
\end{align*}
    \endgroup
\end{theorem}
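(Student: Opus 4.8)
\noindent\emph{Proof proposal.} The plan is to read Theorem \ref{theorem: cont time 2} off the joint functional martingale central limit theorem that already underlies the proof of Theorem \ref{Theorem: Main continuous time}. Every limit object in the statement --- \(\bs W_s\), the \(\bs W_{J,\kappa}\), and the \(\bs V_J\) --- is the limit of a deterministic linear map, composed with a deterministic time change, of one and the same object: the fundamental martingale \(\bs M_n(t):=\e^{-At}\bs X_n(t)-N\bs\mu\) of the MCBP \(\bs X_n\), which by the branching property \eqref{equ:branching property} is a sum of \(N\) i.i.d.\ single-particle martingales. Since the CLT in question produces the \emph{joint} convergence of all these images to a centred Gaussian family, all that remains is to identify the limiting covariance structure. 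Using \(\bs X_n(\tau)-N\e^{A\tau}\bs\mu=\e^{A\tau}\bs M_n(\tau)\), and noting that for a large Jordan space \(P_J\bs M_n\) is \(L^2\)-bounded --- this is precisely the inequality \(\mathrm{Re}\lambda>\lambda_1/2\), which makes the relevant time integral converge --- so that the terminal values \(P_J\bs M_n(\infty)\) exist, every covariance we must compute is a limit of a normalized block of the predictable quadratic variation, of the form \(P_{J_2}\langle\bs M_n\rangle_{\tau_n}P_{J_1}^{*}\) or \(P_{J_2}\langle\bs M_n\rangle_{\tau_n}P_{J_1}'\) (flanked by powers of \(N_A\), by \(\e^{A\tau_n}\), and by the scalar normalizations of Theorem \ref{Theorem: Main continuous time}), where \(\tau_n\) is \(\omega t\), \(\omega+t\) or \(\infty\) according to the two families involved.

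The computation rests on the compensator formula \(\langle\bs M_n\rangle_t=\int_0^t\sum_{i=1}^d a_i X_n(s)_i\,\e^{-As}\E[\bs\xi_i\bs\xi_i']\e^{-A's}\,\intd s\), whose expectation is \emph{exactly} \(N\int_0^t\sum_i a_i(\e^{As}\bs\mu)_i\,\e^{-As}\E[\bs\xi_i\bs\xi_i']\e^{-A's}\,\intd s\) because \(\E\bs X_n(s)=N\e^{As}\bs\mu\); the concentration of \(\langle\bs M_n\rangle_t\) about this deterministic mean (which is what makes the limiting covariances deterministic) comes from the fourth-moment bounds proved for Theorem \ref{Theorem: Main continuous time}. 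For \(t_1\le t_2\) the martingale property gives \(\E[\bs M_n(t_2)\bs M_n(t_1)^{*}]=\E[\langle\bs M_n\rangle_{t_1}]\), so every covariance becomes an integral of \(P_{J_2}\e^{-As}\E[\bs\xi_i\bs\xi_i']\e^{-A's}P_{J_1}^{*}(\e^{As}\bs\mu)_i\) (or the same with \(P_{J_1}'\)) against the appropriate normalization. Two facts drive the asymptotics: \(P_J\e^{-As}=\e^{-\rho s}\e^{-N_A s}P_J\) for a Jordan space \(J\) with eigenvalue \(\rho\) (immediate from \eqref{eq: projection}), and the leading asymptotics \(\e^{As}\bs\mu\sim s^{m_1-1}\e^{\lambda_1 s}\bs v(\bs\mu)\) as \(s\to\infty\) (from \eqref{eq: mat exp bound start}, \eqref{eq: imp ob 1}), which is the source of the matrix \(B=\sum_i a_i v(\bs\mu)_i\E[\bs\xi_i\bs\xi_i']\). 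A \(J_1\)--\(J_2\) cross-integrand then behaves like \(s^{\,\mathrm{poly}}\e^{(\lambda_1-\rho_1-\overline{\rho_2})s}\) times a fixed matrix built from \(B\), and after the substitution \(s=\tau_n-v\) the integral concentrates either near the horizon \(\tau_n\) (when \(\mathrm{Re}(\rho_1+\rho_2)\le\lambda_1\)) or over a bounded window of \(v\) (the large-component case, where the \(v\)-integral converges and one must keep the full \((\e^{Av}\bs\mu)_i\), not merely its leading term).

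Pairwise independence of \(\mathcal W_s\), the \(\mathcal W_{c,|\rho|}\) and \(\mathcal V_\ell\) is then a matter of residual exponents. Because \(\mathrm{Re}\rho<\lambda_1/2\) for small, \(=\lambda_1/2\) for critical and \(>\lambda_1/2\) for large eigenvalues, and the family-specific normalizations genuinely differ (\(\e^{-\lambda_1(\omega+t)/2}\) for small components, \(\e^{-\rho\omega t}\) for critical ones, only \(N^{-1/2}\) for large ones), each cross-family normalized block carries a residual factor that tends to \(0\): whichever end the quadratic-variation integral concentrates at, the leftover exponent is a difference of real parts forced to be \(<0\) by the strict separation \(\mathrm{Re}\rho^{(\mathrm{small})}<\lambda_1/2<\mathrm{Re}\rho^{(\mathrm{large})}\), and for the small-versus-critical and critical-versus-large pairs the argument is identical except that one of the two concentration scenarios gives only a polynomial rate (a power of \(\omega\)) in place of an exponential one, which is still enough. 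For two critical components attached to different conjugate pairs \(|\rho|\ne|\rho'|\), one has \(\mathrm{Re}(\rho_1+\overline{\rho_2})=\lambda_1\) but \(\mathrm{Im}(\rho_1+\overline{\rho_2})\ne0\) (and \(\mathrm{Im}(\rho_1+\rho_2)\ne0\) for the pseudo-covariance), so the residual is an oscillatory integral \(\int_0^{\omega t}s^{\,p}\e^{\mathrm{i}\theta s}\,\intd s=O((\omega t)^{\,p})\), a full power of \(\omega t\) below the resonant case \(\theta=0\) to which the normalization is calibrated; this makes the cross term vanish at a polynomial rate. The resonance condition \(\theta=0\) is exactly what produces the indicator \(\bs 1_{\{\rho_1=\rho_2\}}\) in \(\Cov(\bs W_{J_2,\kappa_2},\overline{\bs W}_{J_1,\kappa_1})\) and \(\bs 1_{\{\rho_1=\overline{\rho}_2\}}\) in \(\Cov(\bs W_{J_2,\kappa_2},\bs W_{J_1,\kappa_1})\); within a dependent family the resonant/convergent terms survive, and the remaining \(v\)-integrals are precisely the displayed covariance formulas --- with \(B\) in the critical case (since there the integral sits near the horizon, where \(\e^{As}\bs\mu\sim s^{m_1-1}\e^{\lambda_1 s}\bs v(\bs\mu)\) is legitimate) and with the full \((\e^{Av}\bs\mu)_i\) in the large case.

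The substance of the proof --- and its main obstacle --- is the exponential-and-polynomial bookkeeping of the previous paragraph. A Cauchy--Schwarz bound on the off-diagonal blocks of \(\langle\bs M_n\rangle\) is far too lossy to detect the cross-family cancellations, so one must use the exact rate \(\e^{(\lambda_1-\rho_1-\overline{\rho_2})s}\), split into the (sub)cases according to which end of \([0,\tau_n]\) the integral concentrates at, match the resulting powers of \(\tau_n\) against the precise normalizations of Theorem \ref{Theorem: Main continuous time}, and check that the leftover exponent is non-positive in every case. The single most delicate point is the oscillatory cancellation that decouples distinct conjugate pairs of critical eigenvalues (and, under the complex-eigenvalue extension in Remark \ref{remark: comp eigen for c and l}, the analogous cancellation for large urns). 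Everything else --- the branching-property decomposition, the functional martingale CLT, and the mean and predictable-quadratic-variation formulas for \(\bs M_n\) --- is already in hand from the proof of Theorem \ref{Theorem: Main continuous time}.
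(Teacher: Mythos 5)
Your central organizing claim---that every limit object, including \(\bs W_s\), ``is the limit of a deterministic linear map, composed with a deterministic time change, of one and the same object: the fundamental martingale \(\bs M_n\)'' and hence can be read off a single joint martingale functional CLT---is the weak link. Write out what that map is for the small component: it is
\(N^{-1/2}\omega^{-(m_1-1)/2}\,\e^{-\lambda_1(\omega+t)/2}\e^{A(\omega+t)}P_s\,\bs M_n(\omega+t)\), and the prefactor \(\e^{-\lambda_1(\omega+t)/2}\e^{A(\omega+t)}\) depends on \(t\). This is not a time-changed martingale but a time-dependent \emph{reweighting} of one, so Lemma~\ref{proposition: Janson 9.1} simply does not apply to it; the paper says this explicitly at the start of Section~\ref{Sec: Prelim for proofs} (``In the LT\textsubscript{s} case, the martingale approach is not applicable'') and proves the LT\textsubscript{s} convergence by a completely different route (Lyapunov CLT under random summand count plus Theorem~\ref{Theorem:Bill con}, giving an OU limit which is not a martingale). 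Because the \(\bs W_s\) limit does not come out of the same martingale CLT as the others, ``uncorrelated'' does not yield ``independent'': you never establish that \(\bs W_s\) is \emph{jointly} Gaussian with the critical and large limits, so your residual-exponent bookkeeping---which is the right way to compute asymptotic covariances---cannot on its own deliver the independence assertions that involve \(\mathcal W_s\).

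This is precisely where the paper diverges from you. For all pairings involving \(\mathcal W_s\) (and also for \(\mathcal V_\ell\) vs \(\mathcal W_{c,|\rho|}\)), the paper does not argue via covariances at all. It uses the strong Markov property together with the decomposition \(\bs Z_{ns}(t_i)=\e^{(A-\lambda_1/2)K}\bs Z_{ns}(t_i-K)+\bigl(\bs Z_{ns}(t_i)-\e^{(A-\lambda_1/2)K}\bs Z_{ns}(t_i-K)\bigr)\), then shows the second piece is asymptotically independent of the \(\sigma\)-algebra generated up to time \(\omega+t_i-K\) (this is the content of the conditional Lyapunov CLT step \eqref{eq: may need 5}), and that the first piece vanishes because \(\|\e^{(A-\lambda_1/2)K}P_s\|\) decays as \(K\to\infty\). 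Since the other family's limit depends only on \(\bs X_n\) at earlier times (\(\omega/2\), or \(\omega\varepsilon_n\), or \(\omega(-T-K)\)), independence follows by conditioning, not by vanishing covariance. That is a genuinely different mechanism, and nothing in your exponent calculus substitutes for it.

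The rest of your proposal does match the paper: the covariance formulas within \(\mathcal W_{c,|\rho|}\) and within \(\mathcal V_\ell\), and the independence of \(\mathcal W_{c,|\rho_1|}\) from \(\mathcal W_{c,|\rho_2|}\) when \(|\rho_1|\neq|\rho_2|\), are obtained exactly as you describe: a martingale CLT with Cram\'er--Wold (Lemma~\ref{Theorem: large time} and Theorem~\ref{theorem: cramer wold}) applied to linear combinations of critical projections, with the oscillatory term \(\int_0^{\omega t}v^{p}\e^{(\lambda_1-\rho_k-\overline{\rho}_\ell)v}\,\intd v\) producing the resonance indicators \(\bs 1_{\{\rho_k=\rho_\ell\}}\) and \(\bs 1_{\{\rho_k=\overline\rho_\ell\}}\). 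So your diagnosis of the key cancellation is right for the within-critical structure. But you should either replace your claimed unification by the paper's conditioning argument for the small-component pairings, or, if you want to keep a pure CLT approach, you must first prove a genuinely joint Gaussian limit theorem for the vector of all normalized quantities on their incompatible timescales (bounded window around \(\omega\) for small vs.\ \([\,\omega\varepsilon,\omega T\,]\) for critical and large)---which is a nontrivial extension the paper deliberately avoids.
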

    \begin{remark}[Importance of \(N_A^{m-\kappa}\) in the LT\textsubscript{c} and LT\textsubscript{\(\ell\)} cases]
    \label{remark: Nilop}
    Recall the coefficient functions for the Jordan basis of~\(A\) \eqref{eq:def_alphas}. By linear independence of the Jordan basis, for any Jordan space \(J\), when we study the projection of the MCBP on to this space \((P_J \bs X_n)\), we are simply studying the joint behaviour of \(\alpha_{J,i}(\bs X_n)\) for \(1\leq i \leq m\). For critical and large Jordan spaces, as \(n\rightarrow \infty\), the \(\alpha_{J,i}(\bs X_n(\omega ))\) (and their fluctuations) are of decreasing orders of magnitude for \(1\leq i\leq m\). Therefore,~\(P_J \bs X_n\) is too rough an object to capture the full behaviour of the MCBP in this Jordan space, since only the behaviour of \(\alpha_{J,1}(\bs X_n(\omega ))\) will appear in the limit (i.e.\ we only see the asymptotic behaviour of the MCBP projected on the Jordan basis vector \(\bs v_{J,1}\)). By applying \(N_A^{m-\kappa}\) to \(P_J \bs X_n\), we remove the first \(m-\kappa\) coefficient functions from \(P_J \bs X_n\) (see \eqref{eq: coef func} and \eqref{eq:nilpotent operator}). Therefore, the dominant term becomes \(\alpha_{J,m-\kappa+1}(\bs X_n(\omega ))\), and we gain access to the asymptotic behaviour of the MCBP projected on the Jordan basis vector \(\bs v_{J,m-\kappa+1}\). This being asymptotically the \(\kappa\)th smallest out of all Jordan basis vector projections within \(J\). 
    
   However, by applying \(N_A^{m-\kappa}\) to \(P_J \bs X_n\), we have attached the coefficient function \(\alpha_{J,m-\kappa+i}\) to the Jordan basis vector~\(\bs v_{J,i}\) for \(1\leq i \leq \kappa\). Therefore, the limit we see in Theorem \ref{Theorem: Main continuous time} resides in the space \(\mathrm{span} \{\bs v_{J,1},...,\bs v_{J,\kappa}\}\) (in fact, the limit must be in \(\mathrm{span} \{\bs v_{J,1}\}\) by the previous paragraph), whereas the process this limit is describing originally lived in \(\mathrm{span}\{\bs v_{J,m-\kappa+1},...,\bs v_{J,m}\}\) before application of the nilpotent operator. We can account for this by applying the operator~\((N_A^{\dagger})^{m-\kappa}\) to the limit, where
\begin{equation*}
    N_A^{\dagger}\bs v_{J,m}=0,\quad N_A^{\dagger}\bs v_{J,i}=\bs v_{J,i+1}, \quad 1\leq i\leq m-1.
\end{equation*}
Indeed, applying \((N_A^{\dagger})^{m-\kappa}\) to \(N_A^{m-\kappa}P_J \bs X_n\) returns each coefficient function to its associated basis vector.
    \end{remark}
    \begin{remark}[Behaviour within each Jordan basis vector]
    \label{remark: within each bv}
   From Theorem \ref{Theorem: Main continuous time} LT one can recover the asymptotic behaviour of the MCBP projected on each Jordan basis vector of \(A\). For critical and large basis vectors, this was explained in Remark~\ref{remark: Nilop}. For small basis vectors, the asymptotic behaviour of each is within the same order of magnitude. Therefore, each small basis vector is represented in the limit \eqref{eq: small component convergence}, and can be individually recovered by applying the continuous mapping theorem with the basis vector's coefficient function as the map.  
    \end{remark}
    \begin{remark}[Importance of \eqref{eq: small component convergence in probability}]
    \label{Remark: Small importance}
    Since the fluctuations of the small component are on the timescale of \(\omega +t\), \(t \in (-\infty,\infty)\), a problem is posed when we want to show the limiting behaviour of the MCBP projected on the union of multiple Jordan spaces (for example, showing \eqref{eq: first order MCBP}). By Theorem \ref{Theorem: Main continuous time}, it is clear the fluctuations of the small component will be dominated by the fluctuations of any critical or large component. However, \eqref{eq: small component convergence} only implies this is true on the timescale of \(\omega +t\), \(t \in (-\infty,\infty)\), whereas we would like this to be true on the timescale of the critical and large component fluctuations being \(\omega t\), \(t \in [0,\infty)\). This is exactly what \eqref{eq: small component convergence in probability} gives us. Indeed, excluding small components, the smallest possible fluctuations occur from critical components when \(\kappa=1\); these being of order \(\omega^{1/2}\e^{\lambda_1\omega t/2}\). If we take \(K=\omega^{1/4}\) in \eqref{eq: small component convergence in probability}, we see the fluctuations of the small components are \(o_p(\omega^{1/2}\e^{\lambda_1\omega t/2})\), thus are dominated by the fluctuations of critical and large components over the timescale of interest.  
    \label{Remark: rmneed1}
    The notation \(\mathcal{O}_p,o_p\) is defined as \(\mathcal{O},o\) in probability.  
    \end{remark}
\subsection{Additional results for balanced GIC P\'olya urns}
\label{Sec: results for the polya urn}
In the first part of this section we state two theorems extending the results of Theorem \ref{theorem: main result simplified}. The first focuses on the TSD regime, where we give a functional limit theorem for the urn projected on to each Jordan space of \(A\). In the second theorem, we define the limits seen in the first theorem and Theorem \ref{theorem: main result simplified} in terms of the limits seen for the MCBP in Theorem \ref{Theorem: Main continuous time}. After, we go over some examples where these results can be applied.
\begin{theorem}
\label{theorem: Main results discrete}
Let \(N=N(n)\) be a positive integer-valued sequence, such that \(N \rightarrow \infty\) as \(n \rightarrow \infty\). Let \(\bs \mu\) be a non-negative vector, such that \(\sum_{i=1}^d \mu_i=1\). Let \((\bs U_n)_{n \geq 1}\) be a sequence of balanced P\'olya urns with initial condition \(N\bs \mu\), and balanced replacement structure \((\bs a, (\bs \xi_i)_{i=1}^d)\), such that the amount of mass added per time step is \(S\in \mathbb{R}\setminus\{0\}\). Assume that \(N=o(n)\) and (A1)-(A3) hold. Let \(\beta_1=\bs a \cdot \bs \mu\), \(\ell_1(n,t)=\log(1+S{ n t}/\beta_1 N),\) and \(\ell_2(n,t) =\log(1+Sn^t/ \beta_1 N^{t})\).
\\~\\
\textbf{(TSD\textsubscript{s}) Small Components:} Let \(P_s = \sum_{\lambda \in \Lambda_s}\sum_{J \in \mathcal{J}_{\lambda}}P_J\). Then, as \(n \rightarrow \infty\),
    \begin{equation}
    \label{eq: small urn flucts}
   n^{-1/2}P_s(\bs U_n(\lfloor n t \rfloor)-N\e^{AS^{-1}\ell_1(n,t)}\bs \mu) \conindis t^{1/2}\bs Y_s(\log(t))\text{ in } D(0,\infty),
    \end{equation}
    where \(\bs Y_s\) is the same Ornstein-Uhlenbeck process as in \eqref{eq: small time simp}.  Suppose \(K=K(n)\rightarrow \infty\) as \(n\rightarrow \infty\). Then, as \(n \rightarrow \infty\),
     \begin{equation}
     \label{eq: TSDs big con}
  N^{-1/2}(n/N)^{-t/2}\log(n/N)^{-1/4}P_s(\bs U_n(\lfloor N(n/N)^t\rfloor)-N\e^{AS^{-1}\ell_2(n,t)}\bs \mu) \coninprob 0 \text{ in } D(0,\infty).
    \end{equation}
   \textbf{(TSD\textsubscript{c}) Critical Components:} Let \(\lambda \in \Lambda_c\), \(J \in \mathcal{J}_{\lambda}\) with size \(m\), and \(1 \leq \kappa \leq m\). Then, as \(n \rightarrow \infty\),
    \begin{equation}
    \label{eq: critty comps}
  N^{-1/2}(n/N)^{-\lambda t/S} \log( n /N)^{-(\kappa -1/2)}N_A^{m-\kappa}P_{J}(\bs U_n(\lfloor N (n/N)^t\rfloor)-N\e^{AS^{-1}\ell_2(n,t)}\bs \mu)\conindis \bs Y_{J,\kappa}(t)\text{ in } D(0,\infty),
\end{equation}
where \(\bs Y_{J,\kappa}\) is a mean-zero Gaussian process, and is complex if \(J\) has complex eigenvalue. 
\\~\\
 \textbf{(TSD\textsubscript{\(\ell\)}) Large Components:} Let \(\lambda \in \Lambda_{\ell}\), \(J \in \mathcal{J}_{\lambda}\) with size \(m\), and \(1 \leq \kappa \leq m\). Then, as \(n \rightarrow \infty\),
\begin{equation}
   N^{-1/2}(n/N)^{-\lambda t/S} \log( (n/N)^t )^{-(\kappa -1)}N_A^{m-\kappa}P_J(\bs U_n(\lfloor N(n/N)^t\rfloor-N\e^{AS^{-1}\ell_2(n,t)}\bs \mu )\conindis  \frac{N_A^{m-1}\bs Z_J}{S^{\kappa-1}(\kappa-1)!}\text{ in } D(0,\infty),
\end{equation}
where \(\bs Z_J\) is a mean-zero Gaussian random variable, and is complex if \(J\) has complex eigenvalue.
\end{theorem}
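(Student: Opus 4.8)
The plan is to derive Theorem~\ref{theorem: Main results discrete} from the continuous-time results of Theorem~\ref{Theorem: Main continuous time} LT via the embedding \eqref{eq: branchurn}, together with sharp control of the death times $\tau_{n,n}$. First I would recall the branching-urn identity: for each $n$, $(\bs X_n(\tau_{n,i}))_{i\geq 1} \eqindis (\bs U_n(i))_{i\geq 1}$, so that $\bs U_n(\lfloor N(n/N)^t\rfloor) \eqindis \bs X_n(\tau_{n,\lfloor N(n/N)^t\rfloor})$. Because the replacement structure is balanced, the total mass $\bs a\cdot \bs X_n(t)$ is a deterministic function of the number of deaths: after $k$ deaths the total mass is $\beta_1 N + Sk$ (recall $\beta_1 = \bs a\cdot\bs\mu$, so the initial mass is $\beta_1 N$). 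Meanwhile, in continuous time, deaths occur at rate equal to the current total mass (each type-$i$ particle dies at rate $a_i$, so the total death rate is $\sum_i a_i X_n(t)_i = \bs a\cdot\bs X_n(t)$). This ODE for the mass, $\tfrac{d}{dt}(\text{mass}) = \text{mass}$ between the continuous-time analogue, is replaced by a pure-jump description: the time between the $k$-th and $(k+1)$-th death is exponential with rate $\beta_1 N + Sk$, independently across $k$. Hence $\tau_{n,k} = \sum_{j=0}^{k-1} \frac{E_j}{\beta_1 N + Sj}$ with $E_j$ i.i.d.\ standard exponentials. From this explicit representation I would extract both a law-of-large-numbers statement, $\tau_{n,k}\approx S^{-1}\log(1 + Sk/\beta_1 N)$, and — crucially for the GIC setting — the second-order fluctuation: $\sqrt{N}(\tau_{n,\lfloor N(n/N)^t\rfloor} - S^{-1}\ell_2(n,t)) \conindis$ a Gaussian process (a suitably time-changed Brownian motion), since the variance of $\tau_{n,k}$ is $\sum_j (\beta_1 N + Sj)^{-2} = \Theta(1/N)$ on the relevant range. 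This is the step where the balanced assumption is indispensable: it decouples $\tau_{n,n}$ from the composition of the branching process, so we never need the branching process to be in equilibrium.

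Next I would combine this with Theorem~\ref{Theorem: Main continuous time} LT applied with $\omega = \log(n/N)$, so that the continuous-time timescale $\omega t = \log((n/N)^t)$ matches the order of $\tau_{n,\lfloor N(n/N)^t\rfloor}$. For each Jordan space $J$ and each $1\leq\kappa\leq m$ one has, from \eqref{eq: improve 40}, \eqref{eq: improve 80}, \eqref{eq: improve 8000} and \eqref{eq: small component convergence in probability}, a functional limit for the appropriately normalized quantity $N_A^{m-\kappa}P_J(\bs X_n(\omega t) - N\e^{A\omega t}\bs\mu)$. The task is to replace the deterministic time $\omega t$ by the random time $\tau_{n,\lfloor N(n/N)^t\rfloor}$. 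I would write
\[
\bs X_n(\tau_{n,\lfloor N(n/N)^t\rfloor}) - N\e^{AS^{-1}\ell_2(n,t)}\bs\mu
= \bigl(\bs X_n(\tau) - N\e^{A\tau}\bs\mu\bigr) + N\bigl(\e^{A\tau} - \e^{AS^{-1}\ell_2(n,t)}\bigr)\bs\mu,
\]
abbreviating $\tau = \tau_{n,\lfloor N(n/N)^t\rfloor}$. For the first bracket, the continuous-time limit theorem together with tightness/the continuous mapping theorem for random time changes (the time change $\tau$ converges, after centering and rescaling, hence in particular $\tau \to S^{-1}\log((n/N)^t)$ in probability locally uniformly) transfers the limit from deterministic to random time; the $N^{-1/2}$ and the polynomial-in-$\log$ and exponential-in-$\omega$ normalizations are continuous functions of $\tau$ up to negligible corrections. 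For the second bracket, Taylor-expand the matrix exponential: $N(\e^{A\tau} - \e^{AS^{-1}\ell_2})\bs\mu \approx N A(\tau - S^{-1}\ell_2)\e^{AS^{-1}\ell_2}\bs\mu$, and now the $\sqrt N$-fluctuation of $\tau$ established above produces a Gaussian contribution of exactly the order $N^{1/2}(n/N)^{\lambda t/S}\log(\cdot)^{\kappa-1}$ after applying $N_A^{m-\kappa}P_J$, since $N_A^{m-\kappa}P_J A \e^{AS^{-1}\ell_2}\bs\mu$ picks out the dominant Jordan direction with the appropriate power of $\log(n/N)$ (via \eqref{eq: mat exp bound start}). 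Adding the two contributions — which are asymptotically independent because $\tau$ depends on $\bs X_n$ only through the (deterministic) mass while the centered fluctuation in the first bracket is asymptotically independent of the driving exponential clock — gives the stated Gaussian limits $\bs Y_{J,\kappa}$, $\bs Z_J$, and their covariance structure is read off from that of $\bs W_{J,\kappa}$, $\bs V_J$ in Theorem~\ref{Theorem: Main continuous time} plus the Brownian term coming from the $\tau$-fluctuation. For the small components, \eqref{eq: small component convergence in probability} with $K = \log(n/N)^{1/4}$ shows the small-component fluctuations are of strictly smaller order on the $\omega t$ timescale, giving \eqref{eq: TSDs big con}; the separate statement \eqref{eq: small urn flucts} on the $\lfloor nt\rfloor$ timescale is the case $\omega + t$ of \eqref{eq: small component convergence}, unembedded with the LT time change, where the change of variables $s = \log t$ accounts for the $t^{1/2}\bs Y_s(\log t)$ form.

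The main obstacle I anticipate is making the random-time-change argument rigorous in the Skorohod $J_1$-topology on $D(0,\infty)$ while simultaneously tracking the second-order fluctuation of $\tau$. Standard random-time-change lemmas (e.g.\ Billingsley, Whitt) give convergence of $\bs X_n(\tau_n(\cdot))$ when $\tau_n \to \tau$ uniformly and $\bs X_n \Rightarrow \bs X$ jointly; but here I need a \emph{two-term} expansion — the limit of $\bs X_n$ at the random time is the limit at the deterministic time \emph{plus} a correction driven by $\sqrt N(\tau - S^{-1}\ell_2)$ — so I must establish joint convergence of the triple (centered branching fluctuation at deterministic time, driving exponential clock / death-time fluctuation, deterministic remainder) and verify that the error in the Taylor expansion of $\e^{A\tau}$ beyond first order is $o_p$ of the fluctuation scale uniformly on compact $t$-intervals away from $0$. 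The behaviour near $t = 0$ (i.e.\ near $\omega t$ small, where the branching process has not escaped its initial condition) is delicate and is exactly why the statements are in $D(0,\infty)$ rather than $D[0,\infty)$; I would handle it by the same $\varepsilon$-truncation used to define convergence in $D(0,\infty)$ and appeal to the ST/CT regimes of Theorem~\ref{Theorem: Main continuous time} to control the contribution of the first few deaths. Verifying asymptotic independence of the death-time fluctuation from the branching fluctuation — intuitively clear since, under balancedness, $\tau_n$ is a function of an independent family of exponential holding times while the composition fluctuation is driven by the offspring randomness — requires care in the filtration bookkeeping, but should follow from conditioning on the jump times and noting that the offspring vectors are independent of the holding times.
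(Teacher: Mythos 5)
Your overall scheme matches the paper's: embed into continuous time, use the balanced identity $\bs a'\bs X_n(t)=\beta_1 N+SB_n(t)$ to control the death times, apply a random-time-change theorem (Billingsley), and Taylor-expand $\e^{A\tau}$ around $\e^{AS^{-1}\ell_2}$ to isolate the contribution of the $\sqrt N$-fluctuation of $\tau$. Your explicit representation $\tau_{n,k}=\sum_{j<k}E_j/(\beta_1 N+Sj)$ with i.i.d.\ exponentials is correct under balancedness and is in fact a cleaner route to the second-order behaviour of $\tau$ than the paper's, which obtains it indirectly by taking the scalar product of the continuous-time limit theorem with $\bs a$; but the paper's route has the advantage of producing the $\tau$-fluctuation limit $V_1=\bs V_{\bs a}\cdot\bs a$ \emph{jointly} with the other Jordan-space limits $\bs V_J,\bs W_{J,\kappa}$ via Theorem~\ref{theorem: cont time 2}, which turns out to be essential (see below). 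Your observations that the $\tau$-correction washes out for critical and small components, and that this whole analysis hinges on balancedness, are both correct.

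There is, however, a genuine error in the independence claim. After the random time change, the limit of your first bracket $\bs X_n(\tau)-N\e^{A\tau}\bs\mu$ (suitably normalized and projected) is the MCBP limit $\bs V_J$, and the limit of your second bracket is governed by $V_1$. These are \emph{not} asymptotically independent: both are functionals of the same continuous-time process, and a direct computation from the covariance formula in Theorem~\ref{theorem: cont time 2} (using $\bs a'\e^{-Av}=\e^{-Sv}\bs a'$, $\E[\bs\xi_i\bs\xi_i']\bs a=S\E[\bs\xi_i]$ by balancedness, and $\sum_i a_i\E[\bs\xi_i]x_i=(A\bs x)_{\cdot}$) gives $\Cov(\bs V_J,V_1)=P_J A\bs\mu$, which is generically non-zero. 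The source of the error is that your two brackets are not functions of disjoint sources of randomness: the first bracket $\bs U_n(k)-N\e^{A\tau}\bs\mu$ contains $\tau$ in its centering, so it depends on the exponential clock just as the second bracket does. (The urn process $\bs U_n(\cdot)$ \emph{is} independent of the clock under balancedness, as you note, but that is a statement about a different decomposition, $\bs U_n(k)-N\e^{AS^{-1}\ell_2}\bs\mu$ versus the clock, and neither of your two brackets is the pure-urn term.) If you were to sum variances as if the brackets were independent you would omit the cross term $-2(\beta_1S)^{-1}\Cov(\bs V_J,V_1)(AP_J\bs\mu)'$ appearing in $\Var(\bs Z_J)$, and hence get the wrong law for $\bs Z_J$ in Theorem~\ref{theorem: main results discrete 2} TSD$_{\ell}$ \eqref{eq: large fluctuations depend}. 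The fix is simply to drop the independence claim and instead invoke the joint Gaussianity of $(\bs V_J,V_1)$ furnished by Theorem~\ref{theorem: cont time 2}, which is exactly what the paper does.
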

\noindent Analogous versions of Remarks \ref{remark: converge to mu}, \ref{remark: Nilop}, \ref{remark: within each bv}, and \ref{Remark: Small importance} hold for Theorem \ref{theorem: Main results discrete}. In particular, the importance of Remark~\ref{Remark: Small importance} is now made clear, since \eqref{eq: TSDs big con} is necessary to recover the results of Theorem \ref{theorem: main result simplified} TSD Critical Urns and Large Urns from Theorem \ref{theorem: Main results discrete} and Slutsky's lemma. 

In the following theorem, we state the limit processes seen in Theorems \ref{theorem: main result simplified} and \ref{theorem: Main results discrete} in terms of the limits seen in Theorems \ref{Theorem: Main continuous time} and \ref{theorem: cont time 2}. This allows one to find the covariance functions for the urn limits by using the covariance functions for the MCBP limits.
\begin{theorem}
\label{theorem: main results discrete 2}
With reference to the limit processes in Theorems \ref{theorem: main result simplified}, \ref{Theorem: Main continuous time}, \ref{theorem: cont time 2}, and \ref{theorem: Main results discrete}:
\\~\\
 \textbf{IBD:} We have
    \begin{equation}
     \bs Y_1(t) \eqindis \beta_1^{-1/2}\bs W_1(t)- S^{-1}\beta_1 ^{-3/2} A\bs \mu \sum_{i=1}^d a_i W_1(t)_i, \quad t\geq 0.
     \label{eq: limit for IBD}
\end{equation}
This implies the process \(\bs Y_1\) is a Brownian motion with covariance function, for \(0\leq  t_1 \leq t_2\),
\begin{equation}
    \Cov (\bs Y_1(t_2), \bs  Y_1(t_1)) = t_1\F( \beta_1^{-1}\sum_{i=1}^d   a_i \mu_i \E[\bs \xi_i \bs \xi_i'] - \beta_1^{-2} \F(\sum_{i=1}^d   a_i \mu_i \E[\bs \xi_i]\R)\F(\sum_{i=1}^d   a_i \mu_i \E[\bs \xi_i]\R)'\R).
\end{equation}
 \textbf{TR:} We have
\begin{equation}
\label{eq: logo time}
    \bs Y_2(t) \eqindis \bs W_2(S^{-1}\log(1+St/\beta_1))- \frac{\e^{AS^{-1}\log(1+St/\beta_1)}A\bs \mu}{\beta_1 S+S^2t} \sum_{i=1}^da_i W_2(S^{-1}\log(1+S t/\beta_1))_i, \quad t\geq 0.
\end{equation}
 \textbf{TSD:} For the TSD regime, all equalities in distribution hold jointly, where by this we mean the joint convergence given in Theorem \ref{theorem: cont time 2} still holds for the urn limits.
 \\~\\
 \textbf{TSD\textsubscript{s}:} We have
\begin{equation}
     \label{eq: limit for TSDs}
    \bs Y_s(\log(t)) \eqindis (S/\beta_1)^{1/2}\bs W_s(\log(St/\beta_1 )/S)=: (S/\beta_1)^{1/2}\hat{\bs W}_s(\log(t)/S), \quad t>0,
\end{equation}
where \(\hat{\bs W}_s\) has the same distribution of \(\bs W_s\) since Ornstein-Uhlenbeck processes are stationary.
\\~\\
 \textbf{TSD\textsubscript{c}:} For \(\lambda \in \Lambda_c\), \(J\in \mathcal{J}_{\lambda}\) with size \(m\), and \(1\leq \kappa \leq m\), we have
\begin{equation}
     \label{eq: limit for TSDc}
    \bs Y_{J,\kappa}(t) \eqindis S^{-(\kappa-1/2)}(S/\beta_1)^{\lambda/S} \bs W_{J,\kappa}(t), \quad t>0.
\end{equation}
Furthermore, the limit in Theorem \ref{theorem: main result simplified} TSD Critical Urns is defined as follows. Let \(m_c\) be the size of the largest Jordan block across all Jordan blocks with eigenvalue in \(\Lambda_c\). For \(\lambda \in \Lambda_c\), let \(J_1,...,J_k\) be the list of Jordan blocks with eigenvalue \(\lambda\) and size \(m_c\) (it is possible for \(k=0\) for some \(\lambda \in \Lambda_c\) but not all). Then, as in \eqref{eq: remark complex simple},
\begin{equation}
    \bs Y_{c,\lambda}(t) \eqindis \sum_{i=1}^k  \bs Y_{J_i,m_c}(t), \quad t>0.
\end{equation}
In particular, if \(\Lambda_c=\{S/2\}\), then \eqref{eq: crit time simp} holds with \((\bs Y_{c}(t))_{t>0} = (\bs Y_{c,S/2}(t))_{t>0}\).
\\~\\
 \textbf{TSD\textsubscript{\(\ell\)}:} Let \(J_{\bs a}\) be the Jordan space corresponding to the left eigenvector \(\bs a\) with eigenvalue \(S\) (see Lemma \ref{lemma: balanced frob}). Let \(V_1= \bs V_{\bs a}\cdot \bs a\), where \(\bs V_{\bs a}\) is as in \eqref{eq: improve 80}. For \(\lambda \in \Lambda_{\ell}\), \(J\in \mathcal{J}_{\lambda}\), we have
\begin{equation}
\label{eq: large fluctuations depend}
\bs Z_J \eqindis (S/\beta_1)^{\lambda/S}(\bs V_{J}-(\beta_1 S)^{-1}V_1AP_J\bs \mu ).
\end{equation}
Furthermore, the limits in Theorem \ref{theorem: main result simplified} TSD Large Urns are defined as follows:
\\~\\
\textbf{Case of \(S\) simple:} Let \(\Lambda_{\ell,\mathrm{max}}\) be the set of eigenvalues in \(\Lambda_{\ell}\setminus\{S\}\) with largest real part. Let \(m_{\ell}\) be the size of the largest Jordan block across all Jordan blocks with eigenvalue in \(\Lambda_{\ell,\mathrm{max}}\). For \(\lambda \in \Lambda_{\ell,\mathrm{max}}\), let \(J_1,...,J_k\) be the list of Jordan blocks with eigenvalue \(\lambda\) and size \(m_{\ell}\). Then, as in Remark \ref{remark: comp eigen for c and l},
\begin{equation}
    \bs Z_{\ell, \lambda}\eqindis \sum_{i=1}^{k} \frac{N_A^{m_{\ell}-1}\bs Z_J}{S^{m_{\ell}-1}(m_{\ell}-1)!}.
\end{equation}
In particular, if \(\Lambda_{\ell,\mathrm{max}}=\{\lambda\}\) and \(\lambda\) is real, then \eqref{eq: large time simp 1} holds with \(\bs Z_{\ell}= \bs Z_{\ell, \lambda}\).
\\~\\
\textbf{Case of \(S\) non-simple:} We have
\begin{equation}
    \bs Z_S = \sum_{J \in \mathcal{J}_S}\bs Z_J.
\end{equation}
\end{theorem}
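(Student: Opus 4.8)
The plan is to ``unembed'' the continuous-time results: combine the branching embedding $\bs U_n(k)\eqindis\bs X_n(\tau_{n,k})$ with Theorems \ref{Theorem: Main continuous time} and \ref{theorem: cont time 2}, and identify each discrete-time limit of Theorems \ref{theorem: main result simplified} and \ref{theorem: Main results discrete} as an explicit functional of the corresponding MCBP limit. What makes this tractable is the balanced assumption: then $\tau_{n,k}-\tau_{n,k-1}\sim\mathrm{Exp}(\beta_1 N+S(k-1))$ is independent of the embedded chain, equivalently the death-count process $D_n$ obeys the \emph{exact} identity $\bs a\cdot\bs X_n(s)=\beta_1 N+S\,D_n(s)$, so the random time change is a deterministic function of the ``mass coordinate'' $\bs a\cdot\bs X_n$ of the branching process.

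Fix a regime; let $m$ be the number of draws and put $\bar s_n(t):=S^{-1}\log(1+Sm/(\beta_1 N))$, the deterministic time after which $m$ deaths are expected. Then $N\e^{A\bar s_n(t)}\bs\mu$ is, up to $o$ of the relevant scale, precisely the centering used in the theorems and satisfies $\bs a\cdot\big(N\e^{A\bar s_n(t)}\bs\mu\big)=\beta_1 N+Sm$, and in the TSD regime $\bar s_n(t)=S^{-1}\log(n/N)\,t+c+o(1)$ with $c:=S^{-1}\log(S/\beta_1)$. I would split
\[\bs X_n(\tau_{n,m})-N\e^{A\bar s_n(t)}\bs\mu=\underbrace{\big(\bs X_n(\bar s_n(t))-N\e^{A\bar s_n(t)}\bs\mu\big)}_{\text{(I)}}+\underbrace{\big(\bs X_n(\tau_{n,m})-\bs X_n(\bar s_n(t))\big)}_{\text{(II)}}.\]
Term (I) is Theorem \ref{Theorem: Main continuous time} (ST, CT or LT, as appropriate) evaluated at the deterministic time $\bar s_n(t)$: one composes the stated limit with the deterministic time change $t\mapsto\bar s_n(t)$ and uses continuity of composition in the Skorohod topology (the limit being continuous); in the TSD regime, since $\bar s_n(t)=S^{-1}\log(n/N)\,t+c+o(1)$ is affine in $t$, the constant shift $c$ produces, on each Jordan space $J$ of eigenvalue $\lambda$, exactly the factor $\e^{\lambda c}=(S/\beta_1)^{\lambda/S}$ (the accompanying nilpotent factor $\e^{N_A c}$ being of lower order after $N_A^{m-\kappa}P_J$, cf.\ Remark \ref{remark: Nilop}), while the rescaling $t\leftrightarrow t/S$ of the exponent produces the powers of $S$. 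For (II) I would use the Doob decomposition $\bs X_n(\tau_{n,m})-\bs X_n(\bar s_n(t))=\int_{\bar s_n(t)}^{\tau_{n,m}}A\bs X_n(u)\,\mathrm{d}u+(\text{martingale})$, show the martingale increment and the deviation of the integrand from $A\,N\e^{A\bar s_n(t)}\bs\mu$ are $o_p$ of the scale of (I) — using $|\tau_{n,m}-\bar s_n(t)|=O_p(N^{-1/2})$, which follows from the exact relation $D_n(\bar s_n(t))-m=S^{-1}\bs a\cdot(\text{I})$ together with death rate $\sim\beta_1 N\e^{S\bar s_n(t)}$ at $\bar s_n(t)$ — and thereby obtain, to leading order,
\[(\text{II})=-\frac{\bs a\cdot(\text{I})}{\beta_1 S\,\e^{S\bar s_n(t)}}\,A\e^{A\bar s_n(t)}\bs\mu+o_p(\cdot).\]

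Since (I) and (II) are, to leading order, continuous functionals of the \emph{same} continuous-time limit, joint convergence follows from the continuous mapping theorem, and it remains to collect terms. In IBD and TR both pieces survive at the same scale, giving \eqref{eq: limit for IBD} and \eqref{eq: logo time} (the covariance in \eqref{eq: limit for IBD} then drops out of a short computation using $\bs a'A=S\bs a'$ and $\bs a\cdot\bs\xi_i=S$ a.s.). In the TSD small and critical components (II) is of strictly smaller order — smaller by $\bar s_n(t)^{-1/2}$ in the critical case, since $\bs a\cdot\bs X_n$ carries no polynomial factor in $\bar s_n(t)$, all Jordan blocks of eigenvalue $S$ having size $1$ (Lemma \ref{lemma: import ob}) — so only (I) contributes and yields \eqref{eq: limit for TSDs} (stationarity of the Ornstein--Uhlenbeck limit absorbs the shift $c$) and \eqref{eq: limit for TSDc}. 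In the TSD large components (I) and (II) are of the same order; projecting (II) by $N_A^{m-\kappa}P_J$ and extracting the leading nilpotent order turns $\bs a\cdot(\text{I})$ into $V_1=\bs V_{\bs a}\cdot\bs a$ and $A\e^{A\bar s_n(t)}\bs\mu$ into $AP_J\bs\mu$, giving \eqref{eq: large fluctuations depend}. The aggregated limits $\bs Y_{c,\lambda}$, $\bs Z_{\ell,\lambda}$, $\bs Z_S$ are then the sums over top-size Jordan blocks (with the conjugate-pair cancellation of Remark \ref{remark: comp eigen for c and l} when eigenvalues are complex), and \eqref{eq: TSDs big con} ensures the small components do not disturb these identities over the relevant timescale.

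The step I expect to be the main obstacle is term (II) in the TSD regime: one must control the martingale remainder and the integrand error \emph{uniformly in $t$ over all of $D(0,\infty)$}, where the process size $N\e^{S\bar s_n(t)}$, the gap $\tau_{n,m}-\bar s_n(t)$, and the per-Jordan-space normalisations all vary with $t$; and one must verify that the deterministic correction, once hit with $N_A^{m-\kappa}P_J$ and with the factor $\e^{N_A\bar s_n(t)}$ from $\e^{A\bar s_n(t)}P_J$, lands at exactly order $(\log(n/N)\,t)^{\kappa-1}(n/N)^{\lambda t/S}$ with coefficient $N_A^{m-1}AP_J\bs\mu$ (equivalently $\lambda N_A^{m-1}P_J\bs\mu$), all lower nilpotent orders being genuinely negligible. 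The other delicate point is securing joint-in-$J$ convergence with the right cross-covariances, i.e.\ that $\{\bs Z_J\}$ inherits the joint Gaussian law of $\{\bs V_J\}$ from Theorem \ref{theorem: cont time 2}; this is forced once (I) and (II) are written through one continuous-time limit, but requires carrying the joint convergence of all Jordan projections of $\bs X_n$ simultaneously (including the $\bs V_{\bs a}$ driving the time change).
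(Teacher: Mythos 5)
Your proposal takes a genuinely different route from the paper's. Both rest on the branching embedding and on the balanced identity $\bs a\cdot\bs X_n(s)=\beta_1 N+SB_n(s)$ (so that the random time change is a deterministic function of the mass coordinate of the MCBP), but they split the object of interest differently. The paper writes
\begin{equation*}
\bs U_n(m)-N\e^{AS^{-1}\ell}\bs\mu
=\bigl(\bs X_n(\tau_{n,m})-N\e^{A\tau_{n,m}}\bs\mu\bigr)
+N\bigl(\e^{A\tau_{n,m}}-\e^{AS^{-1}\ell}\bigr)\bs\mu,
\end{equation*}
controls the first term by first proving that $\phi_n(t)=\tau_{n,\lfloor n t\rfloor}$ (suitably rescaled) converges to a deterministic time change, invoking Theorem \ref{theorem: bill 2} (Billingsley), and then handles the second by taking the $\bs a$-projection of the resulting limit and Taylor-expanding $\log$ around the deterministic centering. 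You instead split at the deterministic time $\bar s_n(t)=S^{-1}\ell$: your (I) is the MCBP functional CLT composed with a \emph{deterministic} time change (no random-time-change theorem needed), and your (II) is the increment $\bs X_n(\tau_{n,m})-\bs X_n(\bar s_n(t))$, controlled via Doob's decomposition plus the concentration $\tau_{n,m}-\bar s_n(t)=O_p(N^{-1/2})$ extracted from the same balanced identity. I checked the IBD, TR and TSD$_\ell$ algebra of your decomposition: the $(S/\beta_1)^{\lambda/S}$ factors, the $S^{-(\kappa-1)}$ factors from rescaling time by $1/S$, and the identification $N_A^{m-1}A P_J=\lambda N_A^{m-1}P_J$ all land exactly on the statements \eqref{eq: limit for IBD}, \eqref{eq: logo time} and \eqref{eq: large fluctuations depend}, and your remark that joint convergence follows because (I) and (II) are functionals of the same continuous-time limit is the right way to obtain the TSD joint laws from Theorem \ref{theorem: cont time 2}. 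What the paper's route buys is that uniformity in $t$ and tightness are absorbed into the black-box Theorem \ref{theorem: bill 2}, and the $\tau$-fluctuations come ``for free'' by dotting the already-proven functional limit with $\bs a$; your route has to prove the martingale-remainder and integrand-error bounds in (II) by hand, uniformly over $t$ on compacts in each regime, which you correctly flag as the main obstacle (and which would require roughly the same fourth-moment and Doob-maximal estimates the paper develops in Lemma \ref{Lemma: fourth moments}). What your route buys is a more transparent account of where the correction term comes from — it is precisely the drift accumulated over the random-time gap $\tau_{n,m}-\bar s_n(t)$, whose sign and magnitude are read off from $B_n(\bar s_n(t))-m=S^{-1}\bs a\cdot(\text{I})$ — which matches the heuristic picture of Section \ref{Sec: Heuristics}.
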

\begin{remark}
By taking the identity urn in Theorems \ref{theorem: main result simplified} and \ref{theorem: main results discrete 2}, we recover Corollary 1 of Borovkov \cite{Borov}. However, we note Borovkov \cite{Borov} is able to show the fluctuations converge in probability using additional properties of the identity urn.
\end{remark}
\begin{remark}
\label{remark: random walk}
In the IBD regime, the Brownian motion \(\bs Y_1\) seen in the limit is identical to the Brownian motion one would see by renormalizing the random walk with jump probabilities \((a_i\mu_i/\beta_1)_{i=1}^d\) and jump distribution \((\bs \xi_i)_{i=1}^d\). This is no surprise, since due to the urns overwhelming initial composition, the urns colour composition remains close to \(\bs\mu\) throughout the process, hence the urns behaviour is close to this random walk.
\end{remark}
\begin{remark}
\label{remark: TSD REMARK 27} 
In the TSD regime, it is interesting to know which of the limits in Theorem \ref{theorem: main results discrete 2} depend on the initial colour composition \(\bs \mu\). Since these limits correspond to the fluctuations of the urn, a dependence on \(\bs \mu\) implies the fluctuations depend on the early draws of the urn. Whereas, no dependence on \(\bs\mu\) implies the fluctuations depend on the asymptotic behaviour of the urn.
\\~\\
\textbf{TSD\textsubscript{\(\ell\)}:} For \(\lambda\in \Lambda_{\ell}\), \(J\in \mathcal{J}_{\lambda}\), it is clear that \(\bs Z_J\) depends on \(\bs \mu\). In \eqref{eq: large fluctuations depend} \(\bs \mu\) directly appears and appears through the variances of \(\bs V_J\) and \(V_1\) (See Theorem \ref{Theorem: Main continuous time}). One can check these don't cancel with each other. Therefore, the fluctuations of TSD Large urns depend on~\(\bs \mu\) whether \(S\) is simple or not.
\\~\\
\textbf{TSD\textsubscript{c}:}
By Theorem \ref{theorem: main results discrete 2} TSD\textsubscript{c} and Theorem \ref{Theorem: Main continuous time}, we have, for \(\lambda \in \Lambda_c\), \(J\in \mathcal{J}_{\lambda}\) with size \(m\), \(1\leq \kappa \leq m\), and \(0<t_1\leq t_2 <\infty\),
\begin{equation*}
    \Cov(\bs Y_{J,\kappa}(t_2),\overline{\bs Y}_{J,\kappa}(t_1))= (S/\beta_1)\sum_{i=1}^dN_A^{m-1}P_{J}a_i v(\bs\mu)_i\E[\bs \xi_i \bs \xi_i'] P_{J}^*N_A'^{m-1}\int_{0}^{t_1}\frac{(t_1-v)^{\kappa-1}(t_2-v)^{\kappa-1}v^{m_1-1}}{(\kappa-1)!^2}\mathrm{d}v.
\end{equation*}
In the case of \(S\) simple, we have, by \eqref{eq: import ob 3}, that \(\bs v(\bs \mu) = \beta_1 \bs v_1\). Therefore, if \(S\) is simple, the covariance function is independent of \(\bs\mu\), and so the TSD Critical Urn fluctuations are independent of \(\bs \mu\). If \(S\) is non-simple, then there is a dependence on~\(\bs \mu\) through \(\bs v(\bs\mu)\). However, one can argue that this is still only a dependence on the asymptotic colour composition, which in itself depends on \(\bs v(\bs \mu)\) (see \eqref{eq: colour comp TSD}).
\\~\\
\textbf{TSD\textsubscript{s}:} By Theorem \ref{theorem: main results discrete 2} TSD\textsubscript{s} and Theorem \ref{Theorem: Main continuous time}, for \(0< t_1\leq t_2\),
\begin{align*}
    &\Cov(\bs Y_s(\log(t_2)),\bs Y_s(\log (t_1))) = (S/\beta_1)\sum_{i=1}^d\mathrm{e}^{(A-\lambda_1/2) (t_2-t_1)}\int_{0}^{\infty}P_s\mathrm{e}^{Av}a_i v(\bs\mu)_i\E[\bs \xi_i \bs \xi_i']\e^{A'v}P_s'\mathrm{e}^{-\lambda_1 v}\mathrm{d}v.
\end{align*} 
We see the dependence on \(\bs\mu\) is identical to the TSD\textsubscript{c} case. Independent of \(\bs \mu\) if \(S\) is simple, and dependent on \(\bs \mu\) through \(\bs v(\bs \mu)\) if \(S\) is non-simple.

From these results, we see the fluctuations in the TSD regime depend on the initial composition when the relative spectral gap \((S-\lambda_1)/S\) is too small. The intuitive reason behind this will be discussed in our heuristics in Section \ref{Sec: Heuristics}.
\end{remark}
\begin{remark}
Our results do not give a speed of convergence with respect to the number of initial balls and time steps of the urn (\(N\) and \(n\)). This is an interesting open problem, since in application we would like to know when we have enough initial balls/time steps to approximate the behaviour of the urn by the results of this paper. Using additional results about the identity urn, in Borovkov \cite{Borov}, they show, for the identity urn, a rate of \(n^{-1/2}+N^{-1/2}\) for the speed of convergence of the fluctuations in Theorem \ref{theorem: main result simplified} to a Gaussian. In the general case, faster speeds in \(n\) have been shown for specific replacement rules for FIC Urns (see Hwang \cite{Hwang}). It is unclear how this translates to the GIC~case. It seems reasonable to see this faster rate in the TSD regime where the FIC case behaviour is relevant, whereas in the IBD and TR regimes, we expect to see the same rates as shown by Borovkov \cite{Borov}.
\end{remark}

\begin{remark}[Relaxation of assumption (A2)]
\label{remark: tenability}
Assumption (A2) can be relaxed to allow us to remove more than one ball of the colour drawn or colours other than the ball drawn. What is important is that the replacement structure and initial composition is such that the urn can never attempt to remove a ball that does not exist (i.e.\ go into a negative number of balls for some colour). This is often called a ``tenability assumption" in the literature. In fact, as long as this assumption holds, one can further take the replacement rule of the urn to be arbitrary real numbers instead of integers.

Under this assumption, as explained in Remark 4.2 of Janson \cite{Janson}, the continuous-time process can still be defined. It is no longer a MCBP, however the process still satisfies the strong Markov property and still satisfies an important martingale to be given in Lemma \ref{prop: martingale bp}. Furthermore, the process satisfies a weaker version of the branching property, where as long as one splits the initial composition into two tenable compositions, then they evolve independently from each other. These properties are enough to generalize the proofs of Section \ref{Sec: Prelim for proofs} onward to tenable urns. However, in the \(N=o(n)\) regime, we also need to assume the eigenvalue \(\lambda_1\) is real, since this no longer comes for free from the Perron-Frobenius theorem (Theorem \ref{theorem: PF}).
\end{remark}
\begin{example}[Friedman's urn in the TSD regime.]
Recall Friedman's urn defined in Example \ref{example: friedmans urn}. In this example, we focus on the urns behaviour in the TSD\textsubscript{\(\ell\)} case (\(\alpha > 3\gamma\)). (The TSD\textsubscript{s} and TSD\textsubscript{c} cases have Gaussian fluctuations in the FIC~case, this leads to similar behaviour in the GIC case. The behaviour for Friedman's urn in the FIC case can be found in Janson \cite{Janson} Example 3.27.) We have that the eigenvalues of \(A\), \(\lambda_1 = \alpha+\gamma\) and \(\lambda_2 = \alpha-\gamma\) are both simple. Therefore, Theorem \ref{theorem: main result simplified} TSD Large Urns with \(S\) simple implies
\begin{equation*}
  N^{-1/2}(n/N)^{-\frac{\alpha-\gamma}{\alpha+\gamma}t}(\bs U_n(\lfloor N(n/N)^t \rfloor)-N\e^{A(\alpha+\gamma)^{-1}\ell_2(n,t)}\bs \mu) \conindis \bs Z\text{ in } D(0,\infty)
  \end{equation*}
as \(n\rightarrow \infty\), where \(\bs Z\) is a real Gaussian random variable, \(S=\alpha+\gamma\), \(\lambda = \alpha-\gamma\), and \(\beta_1 =1\). Next, using \eqref{eq:identity} in the first line and Theorem \ref{theorem: main results discrete 2} TSD\textsubscript{\(\ell\)} in the second, we see
  \begin{align*}
      &\e^{A(\alpha+\gamma)^{-1}\ell_2(n,t)}=(P_{J_1}+P_{J_2})\e^{A(\alpha+\gamma)^{-1}\ell_2(n,t)}= (1+(\alpha+\gamma)(n/N)^t)P_{J_1}+(1+(\alpha+\gamma)(n/N)^t)^{\frac{\alpha-\gamma}{\alpha+\gamma}}P_{J_2},\\
      &\bs Z = (\alpha+\gamma)^{\frac{\alpha-\gamma}{\alpha+\gamma}}(\bs V_{J_2}-(\alpha+\gamma)^{-1}V_1AP_{J_2}\bs \mu).
  \end{align*}
  To calculate the variance of \(\bs Z\), we first use Theorems \ref{Theorem: Main continuous time} and \ref{theorem: cont time 2} to see
  \begin{align*}
      &\Var(\bs V_{J_2}) = \frac{(\alpha-\gamma)^2}{4(\alpha-3\gamma)}\left( {\begin{array}{cc}
    1 & -1 \\
    -1 & 1 \\
  \end{array} } \right) = \frac{(\alpha-\gamma)^2}{\alpha-3\gamma}\bs v_2 \bs v_2',\\
  & \Var(\bs V_{J_1}) = \Var( V_{1}\bs v_1) =  \frac{(\alpha+\gamma)^2}{4(\alpha-3\gamma)}\left( {\begin{array}{cc}
    1 & 1 \\
    1 & 1 \\
  \end{array} } \right)= \frac{(\alpha+\gamma)^2}{\alpha-3\gamma}\bs v_1 \bs v_1' , \\
  &\Cov(\bs V_{J_2},  \bs V_{J_1}) =\frac{(\alpha +\gamma)(\mu_1-\mu_2)}{4}\left( {\begin{array}{cc}
    1 & 1 \\
    -1 & -1 \\
  \end{array} } \right)  = (\alpha +\gamma)(\mu_1-\mu_2) \bs v_2 \bs v_1'.
  \end{align*}
 These calculations imply 
  \begin{align*}
      &\Cov(\bs V_{J_2}, (\alpha+\gamma)^{-1}V_1AP_{J_2}\bs \mu) = \frac{(\alpha-\gamma)(\mu_1-\mu_2)}{\alpha+\gamma}\Cov(\bs V_{J_2}, V_1\bs v_2) = (\alpha-\gamma)(\mu_1-\mu_2)^2\bs v_2 \bs v_2',\\
      &\Var((\alpha+\gamma)^{-1}V_1AP_{J_2}\bs \mu)) = \frac{(\alpha-\gamma)^2(\mu_1-\mu_2)^2}{(\alpha+\gamma)^2}\Var( V_1 \bs v_2) = \frac{(\alpha-\gamma)^2(\mu_1-\mu_2)^2}{\alpha -3\gamma }\bs v_2 \bs v_2'.
  \end{align*}
  Therefore, we see
  \begin{align*}
      \Var(\bs Z) &= (\alpha+\gamma)^{\frac{2(\alpha-\gamma)}{\alpha+\gamma}}\Var(\bs V_{J_2}-(\alpha+\gamma)^{-1}V_1AP_{J_2}\bs \mu)\\
      &= (\alpha+\gamma)^{\frac{2(\alpha-\gamma)}{\alpha+\gamma}}\frac{(\alpha-\gamma)^2(1+(\mu_1-\mu_2)^2)-2(\alpha-\gamma)(\alpha-3\gamma)(\mu_1-\mu_2)^2}{\alpha-3\gamma}\bs v_2 \bs v_2'\\
      &=(\alpha+\gamma)^{\frac{2(\alpha-\gamma)}{\alpha+\gamma}}\frac{(\alpha-\gamma)^2(1+(\mu_1-\mu_2)^2)-2(\alpha-\gamma)(\alpha-3\gamma)(\mu_1-\mu_2)^2}{2(\alpha-3\gamma)} \left({\begin{array}{cc}
    1 & 1 \\
    -1 & -1 \\
  \end{array} } \right).
  \end{align*}
\end{example}
\begin{example}[An application to matching problems]
\label{example: matching problem}
Random matching problems have been of significant interest since the seminal paper of Gale and Shapley \cite{Gale}. In these problems, one has a collection of \(d\) colours of points and looks to match these points according to some predetermined rule. One such rule has a nice interpretation in terms of an urn process. Let \((\bs U_n)_{n\geq 1}\) be a sequence of \(d\)-dimensional P\'olya urns with initial composition \((N/d)\bs 1\) (assume integer valued), and replacement structure \((\bs 1, (-\bs e_i)_{i=1}^d)\). Therefore, at each draw, we select a ball uniformly at random to be removed from the urn. The urn after every two draws can be seen as matching pairs of balls in the system, where each ball is equally likely to be chosen for matching. Theorem \ref{theorem: Main results discrete} TR gives us the asymptotic behaviour of the number of balls of each colour throughout the matching process as the number of initial balls to be matched tends to infinity. In this model, we have \(A=-I\), \(S=-1\), \(\beta_1 = 1\). Therefore, by Theorems \ref{theorem: main result simplified} TR and \ref{theorem: main results discrete 2} TR,
\begin{equation}
    n^{-1/2}(\bs U_{n}(\lfloor nt\rfloor) - d^{-1}n(1-t)\bs 1 )\conindis \bs W_2(-\log(1-t))-d^{-1}\sum_{i=1}^d W_2(-\log(1-t))_i\bs 1\text{ in } D[0,1) \label{eq: brownian bridge}
\end{equation}
as \(n\rightarrow \infty\). Using Theorem \ref{Theorem: Main continuous time}, one can check that the components of \(\hat{\bs W}_2(t):=d^{1/2}\bs W_2(-\log(1-t))\), \(t \in [0,1)\), are independent Brownian bridge processes. Indeed, a Brownian bridge makes sense here. By \eqref{eq: brownian bridge}, we have that the colour composition to first order remains a constant \(d^{-1}\bs 1\) throughout the process, therefore each colour should be close to equally likely to be removed from the urn. This suggests, if we look at a specific colour, we should see i.i.d.\ fluctuations that must eventually revert to 0 (since the urn will eventually empty) which naturally suggests a Brownian bridge. However, the colours are not independent. When we remove a colour, we forgo the opportunity to remove any of the other colours. This is what the second term in \eqref{eq: brownian bridge} represents (i.e.\ we cannot have too many of our Brownian bridges doing well, since the number of balls removed at any given time step is deterministic).
\end{example}

\section{Heuristic interpretation of the main results}
\label{Sec: Heuristics}
We can gain a good intuition of the behaviour seen in all three regimes in the GIC case by thinking about the following representation of the urn: For \(b>0\), let \(\bs U\) be a P\'olya urn with initial number of balls \(Nb\) and colour composition \(\bs \mu\), where we assume \(b\bs \mu\) is integer valued. Now suppose we take the initial \(Nb\) balls and place them into \(N\) total urns, where we place composition \(b\bs\mu\) into each urn. Denote these urns \(U_i\), \(1\leq i \leq N\). Then, we run the P\'olya urn process as usual, but we always note which of the \(N\) urns the ball drawn was chosen from, and we always add the set of new balls into the same urn from which the ball was chosen (i.e.\ we are keeping track of the progeny of each collection of \(b \bs \mu\) balls). For~\(1\leq i \leq N\), let \(D_i(n)\) be the number of times we have drawn from urn \(\bs U_i\) after time step \(n\) of \(\bs U\). Then, we have the following representation of the urn at time step \(n\)
\begin{equation}
    \bs U(n) = \sum_{i=1}^N \bs U_i(D_i(n)). \label{eq: heuristic}
\end{equation}
With this representation in mind, the first part of this heuristic is noting that the three regimes correspond to the \((\bs U_i)_{i=1}^N\) having seen, on average, 0 draws (IBD), a finite positive number of draws (TR), or infinitely many draws (TSD) by time step \(n\) as \(n\rightarrow \infty\). The second part of the heuristic is the fact that, if the original urn has balanced replacement structure, then conditionally on the \((D_i(n))_{i=1}^N\), the \((\bs U_i(D_i(n))_{i=1}^N\) are independent. To see this, notice that choosing the~\((\bs U_i)_{i=1}^N\) to be drawn from at time step \(n\) only depends on the mass inside each of the
 \((\bs U_i)_{i=1}^N\) at time step \(n-1\). Therefore, the~\((D_i(n))_{i=1}^N\) only depend on the mass of the \(((\bs U_i(D_i(m))_{m=0}^{n-1})_{i=1}^N\). 
Since in the balance case the total mass of the urn is a deterministic function of the number of draws of the urn, knowing \(D_i(n)\)
only tells us the number of draws of \(\bs U_i(D_i(n))\), but gives no information on the outcome of those draws. Of course, this does not imply the summands in \eqref{eq: heuristic} are indepedent, since they depend on each other through the \((D_i(n))_{i=1}^N\). However, since our limits in Section \ref{Sec: main results} are Gaussian, this heuristic suggests that as \(n\) gets large the \(D_i(n)\) behave ``independently enough" such that we see a CLT in \eqref{eq: heuristic}, where the behaviour of the summands depends on which regime we are in.
\begin{remark}
\label{remark: nonbal heuristic}
The heuristic \eqref{eq: heuristic} suggests the non-balanced case could display completely different behaviour to the balanced case. Indeed, in the
non-balanced case, if a particular \(D_i(n)\) is large, it is likely that \(\bs U_i(D_i(n))\) drew colours that added the most possible
mass to the urn, since this gives the urn a higher probability of being drawn in the future. This additional dependence may be enough to 
dominate the CLT behaviour we believe we are seeing in the balanced case. 
\end{remark}
\noindent We now discuss each regime in detail.
\subsection{Heuristics of the IBD regime}
\label{Sec: Heuristics IBD}
Firstly, we discuss the renormalizing term in \eqref{eq: colour comp IBD 2}, being \(n^{-1}N\e^{AS^{-1}\ell_1(n,t)}\). By taking a Taylor expansion of the logarithm around 1 and using the exponential power series, this term can be written as
\begin{equation*}
  n^{-1/2}N\bigg(\bs \mu + nt (A/\beta_1)\bs \mu+ \sum_{k=2}^{r}\frac{(nt)^{k}}{N^{k-1}}\sum_{m=1}^k c_{km}(S,\beta_1) A^m\bs \mu+\mathcal{O}\F((n/N)^{r+1}\R)\bigg),
\end{equation*}
for some real constants \(c_{km}(S,\beta_1)\), and any \(r \geq 2\). The terms can be seen as a LLNs for the r.h.s.\ of \eqref{eq: heuristic}. In particular, for~\(\ell\geq 0\), the terms involving \(A^{r}\), \(0\leq r\leq \ell\), come from summing over urns in (4.1) that have seen at most \(\ell\) draws. To see why this is the case, notice that the number of urns that receive 0 draws in \eqref{eq: heuristic} by time step \(n\) will be of order \(N\). Therefore, throughout the urn process we pick
an urn that has had 0 draws with probability almost 1. So, by time step \(n\), we expect to see order \(n\) urns that have received 1 draw. Next, notice that if we have order \(n\)
urns with 1 draw, then the probability we pick an urn that has received 1 draw is of order \(n/N\). Therefore, by time step \(n\), we expect to see order~\(n^2/N\) urns that have received 2 draws.
One can repeat this heuristic to see by time \(n\) the number of urns to receive \(k\) draws is expected to be on the order of \(n^{k+1}/N^k\). Interestingly, since it is possible to scale \(n\) and \(N\) such that
\(n^{k+1}/N^k\rightarrow \infty\) for all \(k\geq 0\), there can exist infinitely many urns in \eqref{eq: heuristic} that receive infinitely many draws by time step \(n\) as \(n\rightarrow \infty\). Furthermore, since this implies \(n^{k+1/2}/N^k\rightarrow \infty\) for all \(k\geq 0\), the deterministic trend terms of these infinite urns will dominate the fluctuations of the urn.
 
 The Brownian fluctuations in \eqref{eq: colour comp IBD 2} suggest we are seeing a CLT in \eqref{eq: heuristic} over all urns that have received exactly 1 draw. Indeed, the first draw of each urn is an i.i.d.\ random variable with probability \(a_i \mu_i/\beta_1\) of choosing colour \(i\). This agrees with the random walk analogy presented in Remark \ref{remark: random walk}.
\subsection{Heuristics of the TR regime}
\label{Sec: Heuristics TR}
Recall that in the TR regime the colour composition converges to a deterministic process (the limit process of \eqref{eq: colour comp TR} renormalized so that the components of the vector sum to one). This value can be loosely interpreted as the average colour composition of a FIC urn after \(t\) draws, which started with 1 initial ball, and whose initial colour composition and replacement structure is identical to the GIC urn. (Of course, this does not make formal sense, since the number of steps and initial composition would be fractional.) Heuristically, this follows since we are seeing a LLNs in \eqref{eq: heuristic}, where each of the \((\bs U_i)_{i=1}^d\) ``on average" have had \(t\) draws by time step \(\lfloor nt\rfloor\). 

The heuristic for the Gaussian fluctuations seen in \eqref{eq: TR} is similar. Now we are seeing a CLT in \eqref{eq: heuristic}, where again, ``on average" each urn has had \(t\) draws by time step \(\lfloor nt\rfloor\). For this heuristic to be true, the covariance function of the Gaussian process in \eqref{eq: TR} should be controlled by the variances of the urn at finite numbers of draws. Indeed this does occur, for example take the identity replacement structure: As shown by Borovkov \cite{Borov}, the fluctuations in \eqref{eq: TR} are the sum of an independent Gaussian random variable and Brownian motion. Also, it is well known (see Borovkov \cite{Borov}), that conditionally on the Dirichlet colour composition limit (Recall \(\bs V\) from Section \ref{Section: Introduction}: Identity urn), the draws of the identity urn are i.i.d.\ with probability \(V_i\) of drawing colour \(i\). Therefore, each urn in \eqref{eq: heuristic} has fluctuations coming from the randomness of the Dirichlet limit which appears as the Gaussian random variable, and the randomness of the ``on average" \(t\) i.i.d.\ draws of the urn which appears as the Brownian motion. Unlike the identity urn, for a general replacement structure, the urns local behaviour between draws is more complex and can significantly vary depending on the replacement structure chosen. Hence, unlike the identity urn, the Gaussian process in \eqref{eq: TR} does not have a nice general form.

\subsection{Heuristics of the TSD regime}
\label{Sec Heuristics TSD}
In the TSD regime, the urns in \eqref{eq: heuristic} have had ``on average" infinitely many draws as \(n\rightarrow \infty\). Therefore, it is of no surprise that the behaviour in the TSD regime is similar to the behaviour of the FIC urn. The intuitive reason behind the behaviour seen in both the FIC case and the TSD regime is as follows:
\\~\\
\textbf{Case of \(S\) simple:} First assume that the urn is irreducible. In this case, the colour composition converges in both the FIC case and the TSD regime to the right Perron-Frobenius eigenvector of \(A\), \(\bs v_1\), which is independent of the initial composition. This follows since even if one colour is lucky enough to dominate early, the irreducibility condition forces balls of all other colours to still be added through draws of the dominating colour. This pulls any colour composition towards the stable state \(\bs v_1\). 

If the urn is not irreducible, then by the Jordan normal form of a matrix, we can rearrange the colours such that \(A\) is block-lower triangular with irreducible matrices on the diagonal. On average, the colours corresponding to the irreducible matrix with eigenvalue \(S\) add more mass (balls if \(\bs a=1\)) to the urn than any of the other colours. This leads to them dominating in the limit no matter the early draws of the urn. Then, similarly to the irreducible case, this dominating set of irreducible colours pushes any colour composition towards \(\bs v_1\). Importantly, the only colours seen in the limit are these dominating colours, and any colours that can be added through the dominating set (i.e.\ the colour can appear in an urn with initial composition only containing the dominating colours). 

For the fluctuations, recall that we have three different cases depending on whether the real part \(\lambda_2\) of the eigenvalue(s) of~\(A\) with second largest real part is smaller, equal to, or larger than \(S/2\). Furthermore, when \(\lambda_2>S/2\), the fluctuations depend on the initial colour composition. Whereas, when \(\lambda_2\leq S/2\), the fluctuations depend on the asymptotic colour composition. The intuitive reason as to why we see this behaviour follows from a heuristic on the relative spectral gap~\((S-\lambda_2)/S\). By our previous discussion on the asymptotic colour composition, we know the urn wants to pull any colour composition towards~\(\bs v_1\). The relative spectral gap can be seen as the rate at which the urn is pulled towards~\(\bs v_1\), with a larger relative spectral gap corresponding to a faster rate. (Indeed, we saw this rate appear as the rate at which the TR colour composition converged to the TSD colour composition in Remark \ref{remark: TR to TSD}.) 

When \(\lambda_2>S/2\), the relative spectral gap is too small leading to the urn being pulled towards \(\bs v_1\) too slowly. This gives enough time for fluctuations from early draws of the urn to build up and dominate. Heuristically, we have that the rate at which the fluctuations diverge is larger than the rate at which the fluctuations are pulled towards \(\bs v_1\). This is why we see a non-Gaussian random variable dependent on the initial composition in the FIC case, and in the GIC case this random variable becomes Gaussian due to the CLT occurring in \eqref{eq: heuristic}. 

When \(\lambda_2=S/2\), the urn is pulled towards \(\bs v_1\) just fast enough such that the fluctuations no longer depend on the initial composition. Heuristically, we have that the rate at which the fluctuations diverge is the same as the rate at which the fluctuations are pulled towards \(\bs v_1\). Thus, given enough time, the fluctuations from the early draws of the process are pulled close enough to \(\bs v_1\) and are forgotten. This is akin to starting a Brownian motion at a large finite value - since you are equally likely to increase or decrease, you will inevitably hit 0 and forget your large initial condition. (It is now of no surprise that Brownian fluctuations are possible as shown in Remark \ref{remark: BM}.) Once the fluctuations are ``close enough" to~\(\bs v_1\), we enter an asymptotic regime. Here, the fluctuations occur due to the fact that although the urn is pulled towards \(\bs v_1\), this pull is random since the urn process is random. This leads to the creation of new fluctuations around \(\bs v_1\). Similarly to the fluctuations from the early draws of the urn, these new fluctuations attempt to diverge but are pulled back to \(\bs v_1\) at the same rate. Eventually, the new fluctuations get too close to \(\bs v_1\) and are forgotten after which the process repeats. 

 Lastly, when \(\lambda_2<S/2\), the rate at which the fluctuations diverge is now smaller than the rate at which the fluctuations are pulled towards \(\bs v_1\). This is why we see fluctuations over a much smaller timescale than in the \(\lambda_2=S/2\) case. These fluctuations have no chance to diverge, being pulled towards \(\bs v_1\) quickly and forgotten. This is why we see an Ornstein-Uhlenbeck process in the limit. Indeed, an Ornstein-Uhlenbeck process can be seen as a modification of a Brownian motion with mean-reverting behaviour towards some mean state, where the pull towards the mean state is larger the further the process strays from the mean state. Here, \(\bs v_1\) takes the role of our mean state in the Ornstein-Uhlenbeck process. Furthermore, a simple calculation shows that the correlation coefficient of the limit \(\bs Y_s(\log(\cdot))\) (see Theorem \ref{theorem: main results discrete 2} TSD\textsubscript{s}) at two time points \(0<t_1\leq t_2 <\infty\) decays like \((t_2/t_1)^{(\lambda_2-S/2)/S}\). This agrees with the idea that the larger the relative spectral gap, the stronger the pull to \(\bs v_1\), so the quicker we forget past fluctuations. 
\\~\\
\textbf{Case of \(S\) non-simple:} First consider the FIC identity urn. Recall for this urn, the colour composition converges a.s.\ to a Dirichlet random variable. Heuristically, this is because of the following: Since each ball only adds balls of its own colour, it is likely that colours that currently dominate the urn will also dominate the urn in the future. Also, when the urn has
few number of balls, any draw will result in a significant change in the colour composition. These imply that the first few draws
heavily decide asymptotic colour composition, while the later draw-by-draw fluctuations in the colour composition are
less significant and smooth out. Hence, we see a random limit dependent on the initial composition. In the GIC case, since the initial number of balls is already so large, the early draws of the urn have significantly less impact, hence we see the first moment of the Dirichlet random variable in the limit. For non-identity urns, the heuristic is identical, but now the random limit seen in the FIC case is not necessarily Dirichlet with mean given by the initial composition. To see this, take the Jordan normal form of \(A\). We have two or more irreducible matrices with eigenvalue \(S\) on the diagonal. Each one of these matrices (in particular the colours corresponding to the matrix) is analogous to one colour in the identity urn, where now the early draws of the urn dictate the proportion of balls each one of these irreducible matrices receive. The simplest non-identity example is if \(A\) is block-diagonal with two irreducible blocks. The the number of total balls across block~1 and block 2 evolves like an identity urn with two colours, and the colour composition within each block converges to the Perron-Frobenius right-eigenvector of the irreducible matrix of that block.

The fluctuations seen in the case of \(S\) non-simple are due to random fluctuations in the early draws of the process: Although the large initial composition stops the random behaviour seen at the level of the colour composition, the initial draws still have a significant enough impact to create the dominant fluctuations of the urn. 
\section{Preliminaries for the proofs of Theorems \ref{Theorem: Main continuous time} and \ref{theorem: cont time 2}.}
\label{Sec: Prelim for proofs}
In this section, we give a sketch proof of Theorem \ref{Theorem: Main continuous time} to introduce tools needed for the proofs of Theorems \ref{Theorem: Main continuous time} and \ref{theorem: cont time 2}. The functional convergence in Theorem \ref{Theorem: Main continuous time} will be shown using one of two methods: The first method uses naturally arising martingales in the MCBP setting along with a martingale functional CLT introduced in Janson \cite{Janson}. This approach is taken for all but the LT\textsubscript{s} case. In the LT\textsubscript{s} case, the martingale approach is not applicable due to the timescale of the fluctuations; instead, we show convergence of the finite dimensional distributions using a CLT under Lyapunov conditions along with showing tightness conditions for the process. The martingale method is sketched in Section \ref{Sec: Prelim mart}, and the other method is sketched in Section \ref{Sec: Prelim class}. Lastly, in Section \ref{Sec: prelim moments}, we introduce all necessary MCBP moment results needed for the assumptions of these methods. 
\subsection{Sketch proof of Theorem \ref{Theorem: Main continuous time}: ST, CT, LT\textsubscript{c}, and LT\textsubscript{\(\ell\)} (martingale functional CLT)}
\label{Sec: Prelim mart}
Firstly, we introduce the martingale satisfied by the MCBP.
\begin{lemma}
\label{prop: martingale bp}
Let \((\bs X(t))_{t\geq 0}\) be a MCBP with initial condition \(\bs X(0)\) and replacement structure \((\bs a, (\bs \xi_i)_{i=1}^d)\) such that (A2) holds and, for \(1\leq i,j\leq d\), \(\E[\xi_{ij}]<\infty\). Let \(\mathcal{F}_t=\sigma(\{\bs X(s):0\leq s \leq t\})\). Then, \((\bs Y(t))_{t\geq 0} = (\e^{-At}\bs X(t))_{t\geq 0}\) is a martingale with respect to \(\mathcal{F}_t\), and, for all \(t \geq 0\), 
\begin{equation}
\label{eq: CMBP First moment}
    \E[\bs X(t)]=\mathrm{e}^{At}\bs X(0).
\end{equation}
\end{lemma}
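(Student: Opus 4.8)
The plan is to first establish the first-moment identity \eqref{eq: CMBP First moment} and then read the martingale property off from it. By the branching property \eqref{equ:branching property} and linearity of expectation it suffices to prove \eqref{eq: CMBP First moment} when $\bs X(0)=\bs e_k$ for a single type $k$, since a general initial condition is a superposition of $X(0)_k$ independent copies started from single particles of each type. Write $\bs n_k(t):=\E[\bs X(t)\mid \bs X(0)=\bs e_k]$ and condition on the lifetime $\tau\sim\mathrm{Exp}(a_k)$ of the initial particle and on its offspring vector $\bs\xi_k+\bs e_k$: on $\{t<\tau\}$ we have $\bs X(t)=\bs e_k$, while on $\{\tau\leq t\}$ the strong Markov property \eqref{eq: strong markov prop} together with the branching property expresses $\bs X(t)$ as a sum of independent MCBPs, one per offspring particle, each run for time $t-\tau$. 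Taking expectations — the interchange of the inner sum and expectation being legitimate as all terms are nonnegative — yields the renewal equation
\[
\bs n_k(t)=\e^{-a_kt}\bs e_k+a_k\int_0^t\e^{-a_k(t-u)}\sum_{j=1}^d\big(\E[\xi_{kj}]+\bs 1_{\{k=j\}}\big)\bs n_j(u)\,\intd u .
\]

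Multiplying through by $\e^{a_kt}$ and differentiating in $t$ — justified once one knows the $\bs n_j$ are finite and locally bounded — turns this into the linear system $\bs n_k'(t)=\sum_{j=1}^d a_k\E[\xi_{kj}]\,\bs n_j(t)$ with $\bs n_k(0)=\bs e_k$. Arranging the vectors $\bs n_k(t)$ as the rows of a matrix $N(t)$, this reads $N'(t)=CN(t)$, $N(0)=I$, with $C_{kj}:=a_k\E[\xi_{kj}]$, hence $N(t)=\e^{Ct}$. Since $A_{ij}=a_j\E[\xi_{ji}]=C_{ji}$, i.e.\ $C=A'$, we get $(\bs n_k(t))_i=(\e^{At})_{ik}$, equivalently $\bs n_k(t)=\e^{At}\bs e_k$; summing over the $X(0)_k$ initial particles of each type gives \eqref{eq: CMBP First moment}. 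For the martingale claim, fix $0\leq s\leq t$: by the strong Markov property \eqref{eq: strong markov prop} applied at the deterministic time $s$, conditionally on $\mathcal F_s$ the shifted process $(\bs X(s+u))_{u\geq0}$ is an MCBP started from $\bs X(s)$, so applying \eqref{eq: CMBP First moment} conditionally (again via the branching property, as $\bs X(s)$ is $\mathcal F_s$-measurable) gives $\E[\bs X(t)\mid\mathcal F_s]=\e^{A(t-s)}\bs X(s)$; multiplying by $\e^{-At}$ yields $\E[\e^{-At}\bs X(t)\mid\mathcal F_s]=\e^{-As}\bs X(s)$, which is the martingale property, integrability being supplied by \eqref{eq: CMBP First moment} itself.

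The only non-routine point is the a priori finiteness and local boundedness of the first moments $\bs n_k(t)$: although the offspring laws are integrable, one must rule out that the expected total population explodes in finite time, which is also what makes the differentiation step above legitimate. I would handle this by first truncating each $\bs\xi_i$, for which the renewal equation and a Gronwall estimate applied to $t\mapsto\sum_i(\bs n_k(t))_i$ give a bound uniform in the truncation level, and then passing to the limit by monotone convergence; alternatively one may invoke the classical treatment of moments of multitype continuous-time branching processes in Athreya \& Ney \cite{Athreya}. Once this is in place, everything that remains is the routine linear-ODE computation and the bookkeeping between $A$ and its transpose.
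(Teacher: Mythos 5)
Your proof is correct. For the martingale claim you use exactly the paper's argument: apply the strong Markov property \eqref{eq: strong markov prop} at time $s$, then apply the first-moment identity \eqref{eq: CMBP First moment} conditionally on $\mathcal F_s$, and multiply by $\e^{-At}$. The genuine difference is in how you treat \eqref{eq: CMBP First moment}. The paper does not prove it at all; it simply cites Athreya \& Ney, Chapter~V Section~7. You instead derive it from scratch: reduce to a single-particle initial condition via the branching property, condition on the first death time and offspring to obtain a renewal equation for $\bs n_k(t)=\E[\bs X(t)\mid \bs X(0)=\bs e_k]$, differentiate to get the linear ODE $\bs n_k'(t)=\sum_j a_k\E[\xi_{kj}]\bs n_j(t)$, and solve to recognize $\e^{Ct}$ with $C_{kj}=a_k\E[\xi_{kj}]$. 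The transpose bookkeeping $A=C'$, forced by the paper's convention $A_{ij}=a_j\E[\xi_{ji}]$, is the spot most people get wrong, and you handle it correctly, arriving at $\bs n_k(t)=\e^{At}\bs e_k$. You also correctly identify the one non-trivial technical point that a citation conceals: one must know in advance that the first moments are finite and locally bounded before the differentiation step is legitimate, and your truncation-plus-Gronwall sketch (or falling back on the cited reference) is a sound way to supply it. In short, your proof is a fully self-contained version of what the paper outsources to a reference; the paper's version is shorter but imports the renewal/Gronwall machinery implicitly through the citation, while yours makes that machinery explicit.
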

\begin{proof}
 This is a well known result. Equation \eqref{eq: CMBP First moment} is given in Chapter V Section 7 of \cite{Athreya}. The martingale property follows as a consequence of the strong Markov property \eqref{eq: strong markov prop}, and \eqref{eq: CMBP First moment}:
\begin{equation*}
    \E[\mathrm{e}^{-At}\bs X(t)|\mathcal{F}_s]=\E[\mathrm{e}^{-At}\widehat{\bs X}(t-s)|\mathcal{F}_s]=\mathrm{e}^{-As}\bs X(s),
\end{equation*}
where \(\widehat{\bs X}\) is an MCBP with replacement structure \((\bs a, (\bs \xi_i)_{i=1}^d)\) that only depends on \(\mathcal{F}_s\) through its initial condition~\(\bs X(s)\).
\end{proof}
\noindent 
From Lemma \ref{prop: martingale bp}, we can construct martingale sequences whose limits imply the limits of Theorem \ref{Theorem: Main continuous time}. For example, take the CT case in Theorem \ref{Theorem: Main continuous time}, by Lemma \ref{prop: martingale bp}, we have that, for each \(n\geq 1\), the process
\begin{equation}
    \bs Y_n(t) = N^{-1/2}(\e^{-At}\bs X_n(t) -N\bs\mu), \quad t\geq 0 \label{eq: critical time mart}
\end{equation}
is a martingale. If we can show a martingale functional CLT for this sequence of martingales, the limit of Theorem \ref{Theorem: Main continuous time} follows by multiplying by \(\e^{At}\). The other cases follow a similar idea. We now introduce the martingale functional CLT that will be used. To do so, we start by introducing relevant martingale theory. 

Let \(f: \mathbb{R}_{\geq 0} \rightarrow \mathbb{C}^d\) be a path on \(\mathbb{C}^d\). For \(T>0\), define a partition of size \(n\) on \([0,T]\) to be a real \(n\)-tuple \((x_1,...,x_{n})\), such that \(x_1 = 0\), \(x_{n}=T\), and \(x_1<x_2<...<x_n\). Let \(\Pi_n\) be the set of all partitions of size \(n\) on \([0,T]\); let \(\Pi=\bigcup_{n=2}^{\infty}\Pi_n\). We say that \(f\) has bounded variation on the time interval \([0,T]\) if
\begin{equation}
\label{eq:finite var}
   \sup_{n\geq 2} \sup_{ (x_0,...,x_{n}) \in \Pi}\sum_{i=0}^{n-1}\|f(x_{i+1})-f(x_i)\|_2 < \infty.
\end{equation}
The path \(f\) is of finite variation, if, for all \(T \geq 0\), \(f\) has bounded variation on \([0,T]\). A stochastic process \((\bs Y(t))_{t\geq 0}\) taking values in \(\mathbb{C}^d\) is said to have finite variation if the process has path-wise finite variation a.s.\ (there exists a set \(A\), such that~\(\mathbb{P}(A)=1\), and for all \(\omega \in A\), \(\bs Y(\cdot,\omega)\) is of finite variation). For any finite-variation stochastic processes \(\bs Y\) and \(\bs Z\), we define their quadratic covariation as
\begin{equation}
\label{eq:quadratic variation}
[\bs Y,\bs Z]_t = \sum_{0\leq s \leq t} \Delta \bs Y(s) \Delta \bs Z(s)', \quad t\geq 0,   
\end{equation}
where \(\Delta \bs Y(s) = \bs Y(s)-\bs Y(s-)\). The quadratic variation of \(\bs Y\) at time \(t \geq 0\) is defined as \([\bs Y,\overline{\bs Y}]_t\). The r.h.s.\ of (\ref{eq:quadratic variation}) is formally uncountable, however since we will only work with processes that have a countable number of jumps, the r.h.s.\ will have at most a countable number of non-zero entries. Suppose further that \(\bs Y\) and \(\bs Z\) are both martingales, then, for~\(t \geq 0\),
\begin{equation}
\label{eq: second moment martingale}
\E[\bs Y(t) \bs Z(t)'] = \E [[\bs Y,\bs Z]_t] + \E[\bs Y(0) \bs Z(0)'].    
\end{equation}
Both \eqref{eq:quadratic variation} and \eqref{eq: second moment martingale} extend to matrix-valued stochastic processes, where the process is of finite variation if (\ref{eq:finite var}) holds under the euclidean matrix norm (euclidean norm in \(d \times d\) dimensional space). Now we state the martingale functional CLT.
\begin{lemma}[Proposition 9.1 in \cite{Janson}]
\label{proposition: Janson 9.1}
For each \(n\geq 1\), let \((\bs M_n(t))_{t\geq 0}\) be a \(d\)-dimensional complex martingale on \([0,\infty)\) with \(\bs M_n(0)=0\). Assume that \(\Sigma_1(t)\) and \(\Sigma_2(t)\), \(t \geq 0\), are (deterministic) continuous matrix-valued functions, such that, for every fixed \(t\geq 0\),
\begin{enumerate}
    \item\([\bs M_n,\overline{\bs M}_n]_t \coninprob \Sigma_1(t) \quad \text{as }n \rightarrow \infty.\)
    \item\([\bs M_n,\bs M_n]_t \coninprob \Sigma_2(t) \quad \text{as }n \rightarrow \infty.\)
    \item \(\sup_n \E[\|\bs M_n(t)\|_2^2]<\infty.\)
\end{enumerate}
Then as \(n \rightarrow \infty\), \(\bs M_n \conindis \bs M\) in \(D[0,\infty)\), where \(\bs M\) is a continuous \(d\)-dimensional mean-zero Gaussian process with covariance matrices, for \(t_1,t_2 \geq 0\),
\begin{align*}
   & \E[\bs M(t_1) \overline{\bs M}(t_2)']=\Sigma_1(t_1 \wedge t_2),\quad \E[\bs M(t_1)\bs M(t_2)']=\Sigma_2(t_1\wedge t_2).
\end{align*}
\end{lemma}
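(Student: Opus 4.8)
The plan is to deduce this from a classical functional martingale central limit theorem for \emph{real}-valued square-integrable martingales (in the form found in, e.g., Jacod \& Shiryaev or Ethier \& Kurtz), whose two hypotheses are: convergence of the quadratic variation process to a deterministic continuous limit, and asymptotic negligibility of the jumps. Since the statement is Proposition~9.1 of \cite{Janson}, one may simply cite it; but the self-contained route proceeds in three steps — real decomposition, verification of the two hypotheses above for the decomposed process, and transfer back to the complex setting.

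First I would write $\bs M_n = \bs M_n^{R} + \mathrm{i}\bs M_n^{I}$ with $\bs M_n^{R},\bs M_n^{I}$ real $d$-dimensional martingales, and stack them into the $2d$-dimensional real martingale $\widetilde{\bs M}_n := \big((\bs M_n^{R})',(\bs M_n^{I})'\big)'$, which starts at $0$. Expanding $\Delta\bs M_n = \Delta\bs M_n^{R}+\mathrm{i}\Delta\bs M_n^{I}$ inside the definition \eqref{eq:quadratic variation} and separating real and imaginary parts writes each of the blocks $[\bs M_n^{R},\bs M_n^{R}]$, $[\bs M_n^{I},\bs M_n^{I}]$, $[\bs M_n^{R},\bs M_n^{I}]$ of $[\widetilde{\bs M}_n,\widetilde{\bs M}_n]$ as a fixed real-linear combination of $\mathrm{Re}\,[\bs M_n,\overline{\bs M}_n]$, $\mathrm{Im}\,[\bs M_n,\overline{\bs M}_n]$, $\mathrm{Re}\,[\bs M_n,\bs M_n]$ and $\mathrm{Im}\,[\bs M_n,\bs M_n]$. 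Hypotheses 1 and 2 then give, for each fixed $t$, $[\widetilde{\bs M}_n,\widetilde{\bs M}_n]_t \coninprob \widetilde{\Sigma}(t)$ for a deterministic continuous $2d\times 2d$ function $\widetilde\Sigma$ built from $\Sigma_1,\Sigma_2$; and $\widetilde\Sigma(t)-\widetilde\Sigma(s)$ is positive semidefinite for $s\le t$, being a limit of such matrices.

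The step I expect to be the main obstacle is obtaining the jump-negligibility hypothesis, since no Lindeberg condition is assumed. The idea is to extract it from the \emph{continuity} of the limit: $F_n(t):=\mathrm{tr}\,[\bs M_n,\overline{\bs M}_n]_t = \sum_{s\le t}\|\Delta\bs M_n(s)\|_2^2$ is non-decreasing, and by hypothesis 1 it converges in probability, for each $t$, to the continuous non-decreasing function $t\mapsto\mathrm{tr}\,\Sigma_1(t)$. Pointwise convergence of monotone functions to a continuous limit is uniform on compacts, and a routine finite-partition argument promotes this to uniform convergence on compacts in probability; since the limit is continuous, this forces $\sup_{s\le t}|\Delta F_n(s)| = \sup_{s\le t}\|\Delta\bs M_n(s)\|_2^2 \coninprob 0$, which is exactly the conditional Lindeberg condition for $\widetilde{\bs M}_n$. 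Hypothesis 3 supplies the uniform $L^2$ bound needed for the accompanying uniform-integrability requirement.

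With both inputs verified, the real-valued functional martingale CLT gives $\widetilde{\bs M}_n \conindis \widetilde{\bs M}$ in $D[0,\infty)$, where $\widetilde{\bs M}$ is a continuous, mean-zero Gaussian process with independent increments and $\Cov(\widetilde{\bs M}(t_1),\widetilde{\bs M}(t_2)) = \widetilde\Sigma(t_1\wedge t_2)$. Finally $\bs M_n$ is the image of $\widetilde{\bs M}_n$ under the continuous map $(\bs x,\bs y)\mapsto\bs x+\mathrm{i}\bs y$, so the continuous mapping theorem yields $\bs M_n\conindis\bs M$ with $\bs M$ continuous Gaussian; reversing the real/imaginary bookkeeping of the second step recovers the stated identities $\E[\bs M(t_1)\overline{\bs M}(t_2)']=\Sigma_1(t_1\wedge t_2)$ and $\E[\bs M(t_1)\bs M(t_2)']=\Sigma_2(t_1\wedge t_2)$.
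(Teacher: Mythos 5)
The paper does not prove this lemma; it invokes it as Proposition~9.1 of Janson~\cite{Janson} and applies it directly, so there is no in-paper proof to compare against. Your reconstruction is correct and, I believe, is essentially the argument Janson gives. The crux is the step you flag: extracting asymptotic jump negligibility from hypothesis~1 alone, since no Lindeberg-type condition is assumed. Your argument works because $F_n(t)=\mathrm{tr}[\bs M_n,\overline{\bs M}_n]_t$ is non-decreasing, so convergence in probability at a finite partition of $[0,T]$, together with continuity of the deterministic limit, forces $\sup_{s\le T}\Delta F_n(s)=\sup_{s\le T}\|\Delta\bs M_n(s)\|_2^2\coninprob 0$; and hypothesis~3 together with Doob's maximal inequality makes $\{\sup_{s\le T}\|\Delta\bs M_n(s)\|_2\}_n$ bounded in $L^2$, hence uniformly integrable, so the convergence in probability upgrades to the $L^1$ jump-smallness condition required by the real-valued martingale functional CLT (e.g.\ Ethier--Kurtz Theorem~7.1.4, or Jacod--Shiryaev Theorem~VIII.3.11) --- this is where hypothesis~3 is genuinely used and why the result needs it when stated with the optional rather than predictable quadratic variation. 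The real/imaginary stacking, the observation that $\widetilde\Sigma(t)-\widetilde\Sigma(s)$ is positive semidefinite as a limit of increments of optional quadratic variations, and the continuous-mapping step back to $\mathbb{C}^d$ are all routine and handled correctly, so I see no gap.
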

\noindent Recall the sequence of CT martingales defined in \eqref{eq: critical time mart}. To apply Lemma \ref{proposition: Janson 9.1} to this sequence we need good control over its quadratic variation. Since the MCBP only jumps when a particle dies, the quadratic variation of \eqref{eq: critical time mart} satisfies
\begin{equation}
  [\bs Y_n,\overline{\bs Y}_n]_t = \sum_{i=1}^d\sum_{k:\tau_{ik}\leq t}\e^{A\tau_{ik}}\Delta \bs X_n(\tau_{ik})\Delta \bs X_n(\tau_{ik})'\e^{A'\tau_{ik}}, \label{eq: quad var r 1}
\end{equation}
where \((\tau_{ik})_{k\geq 1}\) are the ordered death times of particles of type \(i\). Sums of this form were studied in Janson \cite{Janson} to prove the asymptotic behaviour of irreducible FIC urns; they show these sums satisfy a certain matrix-valued martingale. This is presented in the following lemma which is a simple extension of Lemma 9.3 in \cite{Janson}.
\begin{lemma}
\label{lemma: Jan 9.3}
Fix \(i \in \{1,...,d\}\) and \(p\) a positive integer. For every \(j \in \{1,...,p\}\), let \(M_{1j}(t)\) and \(M_{2j}(t)\) be continuous complex matrix-valued functions
defined on \([0, \infty)\), and let \(N_{j}(\bs x)\) be a complex matrix-valued function defined on \(\mathbb{R}^d\).
Suppose further that the dimensions of \(M_{1j}(t), N_j(\bs x)\), and \(M_{2j}(t)\) are such that the product \(\prod_{j=1}^pM_{1j}(t)N_j(\bs x)M_{2j}(t)\) is defined. Let \((\bs X(t))_{t\geq 0}\) be a MCBP with replacement structure \((\bs a,(\bs \xi_i)_{i=1}^d)\). Let \((\tau_{ik})_{k\geq 1}\) be the ordered death times of particles of type \(i\) in \(\bs X\). Assume \(\mathbb{E}\F[\prod_{j=1}^pM_{1j}(t)N_j(\bs \xi_i)M_{2j}(t)\R]< \infty\). Let
\begin{equation}
    Z(t) := \sum_{k:\tau_{ik}\leq t}\prod_{j=1}^pM_{1j}(\tau_{ik})N_j(\Delta\bs X(\tau_{ik}))M_{2j}(\tau_{ik}) - \int_{0}^t\mathbb{E}\bigg[\prod_{j=1}^pM_{1j}(v)N_j(\bs \xi_i)M_{2j}(v)\bigg] a_i X(v)_i \mathrm{d}v, \quad t \geq 0. \label{eq: improve 1}
\end{equation}
Then, \(Z\) is a (matrix-valued) martingale; in particular 
\begin{align*}
    \E\bigg[\sum_{k:\tau_{ik}\leq t}\prod_{j=1}^pM_{1j}(\tau_{ik})N_j(\Delta\bs X(\tau_{ik}))M_{2j}(\tau_{ik})\bigg] &=\int_{0}^t\mathbb{E}\bigg[\prod_{j=1}^pM_{1j}(v)N_j(\bs \xi_i)M_{2j}(v)\bigg] \E[ a_i X
    (v)_i] \mathrm{d}v\\
    &=\int_{0}^t\mathbb{E}\bigg[\prod_{j=1}^pM_{1j}(v)N_j(\bs \xi_i)M_{2j}(v)\bigg](a_i \mathrm{e}^{Av}\bs X(0))_i \mathrm{d}v.
\end{align*}
\end{lemma}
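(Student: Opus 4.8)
The plan is to first observe that the two displayed expectation identities are immediate consequences of the martingale claim: taking expectations in the definition of $Z$ and using $Z(0)=0$ together with $\E[Z(t)]=0$ yields the first, and substituting $\E[a_iX(v)_i]=(a_i\mathrm{e}^{Av}\bs X(0))_i$ from \eqref{eq: CMBP First moment} gives the second. So everything reduces to showing that $Z$ is a martingale. Since the statement is a mild extension of Lemma 9.3 in \cite{Janson}, I would follow that proof, the only change being that the functional $M_1(v)N(\bs x)M_2(v)$ is replaced by $g(v,\bs x):=\prod_{j=1}^pM_{1j}(v)N_j(\bs x)M_{2j}(v)$ — still a fixed matrix-valued function that is deterministic and continuous in $v$ and measurable in $\bs x$ — so the argument is structurally unaffected. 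Working entrywise, it suffices to treat scalar-valued $Z$.

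For the martingale property I would use the Lévy-system structure of the MCBP. Conditionally on $\mathcal{F}_v$, each of the $X(v)_i$ particles of type $i$ dies independently in $[v,v+h)$ with probability $a_ih+o(h)$, and a dying type-$i$ particle contributes an independent copy of $\bs\xi_i$ to $\Delta\bs X$ (since it is replaced by $\bs\xi_i+\bs e_i$ particles, for a net increment $\bs\xi_i$). Hence the marked point process $\{(\tau_{ik},\Delta\bs X(\tau_{ik}))\}_{k\geq1}$ has $(\mathcal{F}_t)$-compensator $a_iX(v-)_i\,\mathrm{d}v\otimes\mathcal{L}(\bs\xi_i)(\mathrm{d}\bs x)$. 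By the standard theory of integration against compensated integer-valued random measures, for the deterministic integrand $g$ the process
\[
t\mapsto \sum_{k:\tau_{ik}\leq t}g(\tau_{ik},\Delta\bs X(\tau_{ik}))-\int_0^t\E[g(v,\bs\xi_i)]\,a_iX(v)_i\,\mathrm{d}v
\]
is a martingale, and this is precisely $Z$. Alternatively one can verify $\E[Z(t)\mid\mathcal{F}_s]=Z(s)$ by hand: condition on $\mathcal{F}_s$, restart the MCBP from $\bs X(s)$ via the strong Markov property \eqref{eq: strong markov prop}, reduce to $s=0$ after time-shifting the $M_{ij}$, and run the computation of \cite{Janson}.

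The only real work is the integrability bookkeeping needed to conclude that $Z$ is a genuine (not merely local) martingale and that the interchanges of summation, integration and expectation above are legitimate. This is where the hypotheses enter: continuity of the $M_{ij}$ on the compact interval $[0,t]$ together with the moment assumption on $\prod_j M_{1j}(\cdot)N_j(\bs\xi_i)M_{2j}(\cdot)$ (and (A1)-type moments on the $N_j(\bs\xi_i)$) give $\sup_{v\in[0,t]}\E\|g(v,\bs\xi_i)\|_2<\infty$, while \eqref{eq: CMBP First moment} gives $\E[X(v)_i]=(\mathrm{e}^{Av}\bs X(0))_i$, which is bounded on $[0,t]$; combining these bounds both $\E\int_0^t\|g(v,\bs\xi_i)\|_2\,a_iX(v)_i\,\mathrm{d}v$ and $\E\sum_{k:\tau_{ik}\leq t}\|g(\tau_{ik},\Delta\bs X(\tau_{ik}))\|_2$, after which the compensation argument applies and Fubini is justified. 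I do not expect any genuine difficulty — the product structure is inert, which is exactly why this is only a small extension of Janson's lemma — so the main point is to state the Lévy-system/compensation fact cleanly and carry out this routine bookkeeping.
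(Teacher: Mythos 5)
Your proof is correct and matches the paper's approach: the paper simply defers to Janson's Lemma~9.3, whose proof is precisely the compensated-marked-point-process argument you sketch, with the compensator $a_iX(v-)_i\,\mathrm{d}v\otimes\mathcal{L}(\bs\xi_i)$ and the observation that the product structure of the integrand is inert. Your reduction of the expectation identities to $\E[Z(t)]=0$ via \eqref{eq: CMBP First moment} and the entrywise reduction to scalar martingales are both standard and sound.
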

\noindent The proof of this lemma immediately follows from the proof of Lemma 9.3 in \cite{Janson}. This lemma should not be understated. Firstly, by \eqref{eq: quad var r 1}, it gives us immediate access to the first moment of the quadratic variation of MCBP related martingales. Therefore, one can already construct a suitable limit for Conditions 1 and 2 in Lemma \ref{proposition: Janson 9.1}. Secondly, since the second term on the r.h.s.\ of \eqref{eq: improve 1} is continuous and of finite variation, the quadratic variation of \eqref{eq: improve 1} is also a sum of the form to apply Lemma \ref{lemma: Jan 9.3}. Indeed, the jumps of \(Z\) are exactly the summands in \eqref{eq: improve 1}, therefore the quadratic variation of \(Z\) is a sum with these summands squared. This, together with \eqref{eq: second moment martingale}, gives us access to the second moment of the quadratic variation of MCBP related martingales. Showing convergence to 0 of this second moment is how we are going to show convergence of the quadratic variation in Lemma \ref{proposition: Janson 9.1}. 
\subsection{Sketch Proof of Theorem \ref{Theorem: Main continuous time}: LT\textsubscript{s}}
\label{Sec: Prelim class}
The aim is to apply the following functional convergence result to the sequence of processes in \eqref{eq: small component convergence}.
\begin{theorem}[Theorem 13.5 of Billingsley \cite{Bill}]
\label{Theorem:Bill con}
Let \((\bs Y_n)_{n\geq 1}\) be a sequence of processes in \(D[a,b]\), and \(\bs Y\) a process in~\(D[a,b]\). Suppose the following conditions hold:
\begin{enumerate}
\item  
For all increasing sequences of times, \(a\leq t_1 \leq ... \leq t_k \leq b\), \((\bs Y_n(t_1),...,\bs Y_n(t_k)) \conindis (\bs Y(t_1),...,\bs Y(t_k))\) as \(n \rightarrow \infty\).
\item \(\bs Y(b)-\bs Y(b-\delta) \conindis 0\)
as \(\delta \rightarrow 0\).
\item For every \(n \geq 1\), \(a \leq t_1 \leq t_2 \leq t_3 \leq b\), \begin{equation*}
    \E \F [\| \bs Y_n(t_2)-\bs Y_n(t_1)\|_2^2 \| \bs Y_n(t_3)-\bs Y_n(t_2)\|_2^2\R] \leq \cst (t_3-t_1)^{3/2}.
\end{equation*}
\end{enumerate}
Then,
\begin{equation*}
    \bs Y_n(t) \conindis \bs Y(t)\text{ in } D[a,b].
\end{equation*}
\end{theorem}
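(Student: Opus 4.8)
The plan is to observe that, after the affine change of variable mapping $[a,b]$ onto $[0,1]$ and the identification of $\mathbb{C}^d$ with $\mathbb{R}^{2d}$, this is Theorem 13.5 of \cite{Bill}, and to recall how one assembles that result from its two standard ingredients: tightness of $\{\bs Y_n\}$ in $D[a,b]$, and convergence of the finite-dimensional distributions at the continuity points of the limit.

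For the second ingredient I would invoke Condition 1 directly, as it gives finite-dimensional convergence at \emph{every} $k$-tuple of times; Condition 2, that $\bs Y(b)-\bs Y(b-\delta)\conindis 0$ as $\delta\to0$, records that the limit is a.s.\ continuous at the right endpoint (being c\`adl\`ag it has left limits, so $\bs Y(b-\delta)\to\bs Y(b-)$ a.s.), which is the customary hypothesis making $b$ one of the continuity points at which finite-dimensional convergence may be read off. Granting tightness, the standard Skorohod-space criterion --- tightness together with finite-dimensional convergence on a dense set of continuity points of the limit that contains the endpoints --- then gives $\bs Y_n\conindis\bs Y$ in $D[a,b]$; equivalently, I would pass to subsequential weak limits via Prokhorov compactness and identify each such limit $\bs Z$ with $\bs Y$ by matching finite-dimensional distributions at continuity points of $\bs Z$ and extending to all times using right-continuity of $\bs Z$ and $\bs Y$ together with Condition 1 at all times.

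The only point of real content is tightness. There I would check that $\lim_{\delta\to0}\limsup_n\mathbb{P}(w'_{\bs Y_n}(\delta)\geq\varepsilon)=0$ for each $\varepsilon>0$, with $w'$ the one-jump Skorohod modulus, alongside tightness of the marginals $\{\bs Y_n(a)\}$ (immediate from Condition 1 with $k=1$). The modulus estimate is exactly the output of the Chentsov--Billingsley moment lemma behind Theorem 13.5 of \cite{Bill}: its proof is a dyadic chaining over the interval that uses only the triangle inequality for $\|\cdot\|_2$, so it is insensitive to the dimension of the target and applies as soon as one has, uniformly in $n$ and in $a\leq t_1\leq t_2\leq t_3\leq b$, a bound
\begin{equation*}
\E\F[\,\|\bs Y_n(t_2)-\bs Y_n(t_1)\|_2^{2}\,\|\bs Y_n(t_3)-\bs Y_n(t_2)\|_2^{2}\,\R]\leq \cst\,(t_3-t_1)^{3/2},
\end{equation*}
the essential feature being that the exponent $3/2$ is strictly larger than $1$ (in the notation of \cite{Bill} this is $\gamma=1$, $2\alpha=3/2>1$, with the continuous control function $F(t)=\cst\cdot t$). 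This is precisely Condition 3, so tightness follows. The hard part, if one insisted on a self-contained argument, is reproving this Chentsov-type inequality; as it is classical and contained in \cite{Bill}, the statement is in practice cited rather than reproved, the only additional care being the endpoint bookkeeping handled by Condition 2 and the routine passage from the $\mathbb{R}$-valued $D[0,1]$ setting to the $\mathbb{C}^d$-valued $D[a,b]$ setting.
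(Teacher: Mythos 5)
The paper does not prove this statement: it is quoted verbatim (up to notation) as Theorem~13.5 of Billingsley \cite{Bill} and used as a black box. Your proposal correctly recognises this — you reduce to the $D[0,1]$, $\mathbb{R}$-valued setting of \cite{Bill} by an affine time change and by identifying $\mathbb{C}^d\cong\mathbb{R}^{2d}$, note that the Chentsov--Billingsley moment lemma behind Theorem~13.5 goes through verbatim for any norm (it uses only the triangle inequality and a dyadic chaining), and observe that in practice the result is cited rather than reproved. That matches exactly how the paper handles it.

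Two small remarks for precision. First, in Billingsley's notation the exponent on each increment factor here is $\beta=2$ (you wrote $\gamma=1$), but what actually matters is that the total exponent on the time increment is $3/2>1$, i.e.\ $2\alpha=3/2$ so $\alpha=3/4>\tfrac12$, and you do say that correctly. Second, Condition~2 is what makes the right endpoint a continuity point of the limit, so that finite-dimensional convergence on a dense set of continuity points containing $b$ suffices to identify every subsequential weak limit with $\bs Y$ — you state this accurately, though in the present paper it is immediate anyway because every limit process $\bs Y$ used (Gaussian with continuous covariance, or constant) is a.s.\ continuous on all of $[a,b]$.
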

\noindent We focus on the sketch proof of Condition 1. (Condition 2 holds since our limiting process is Gaussian with continuous covariance function. Condition 3 can be shown without much difficulty if one knows the behaviour of the moments of the MCBP which are given in Section \ref{Sec: prelim moments}.)  Take \(\bs X_n\) as in \eqref{eq: small component convergence}, by the branching property \eqref{equ:branching property},
\begin{equation*}
    \bs X_n(\omega+ t) \eqindis \sum_{i=1}^d\sum_{j=1}^{N\mu_i}\bs X_{nij}(\omega+ t), \quad t\geq 0,
\end{equation*}
where the \(\bs X_{nij}\) are independent branching processes with initial conditions \(\bs e_i\) and identical replacement structure to \(\bs X_n\). Since \(N\rightarrow \infty\) as \(n\rightarrow \infty\), we can show Condition 1 for a single time point (k=1) by proving a CLT holds for this sum at a fixed time \(t\). When \(k\geq 1\), the proof is more subtle: It is no longer enough to show a CLT holds at each of the two fixed time points, since this does not give us any information on how they depend on each other. However, the strong Markov property comes to our rescue, since this property along with the branching property implies 
\begin{equation*}
    \bs X_n(\omega+t_1 +t)-\bs X_n(\omega+t_1) \eqindis \sum_{i=1}^d\sum_{j=1}^{ X_n(\omega+t_1)_i}(\hat{\bs X}_{nij}(t)-\bs e_i),\quad t\geq 0,
\end{equation*}
where the \(\hat{\bs X}_{nij}\) are independent branching processes with initial conditions \(\bs e_i\), identical replacement structure to \(\bs X_n\), and are independent of the MCBP up to time \(\omega+t_1\). If we can show in some respect (in probability will be enough) that~\(\bs X_n(\omega+t_1)\) tends to \(\infty\) with \(n\) and does not fluctuate too much, then one should be able to obtain a CLT for this sum for fixed \(t\) with limit independent of \(\bs X_n(\omega +t_1)\) (since the fluctuations in \(\bs X_n(\omega+t_1)\) will have a zero contribution to the sum in the limit). Hence, obtaining how the limits of the CLTs depend on each other. For this method to work, we need a CLT for sums with a random number of summands. This motivates the following extension of Lyapunov's CLT to allow for a random number of summands.
\begin{lemma}[Lyapunov's multivariate CLT with random number of summands]
\label{Lemma: MCLT}
Let \((k_n)_{n\geq 0}\) be a random positive integer-valued sequence. Let \(((\boldsymbol{Y}_{n,k})_{k=1}^{k_n})_{n\geq 0}\) be a triangular array of random variables in \(\mathbb{C}^d\) that satisfy row-wise independence, and are independent of the \(k_n\) (the \(k_n\) only determine the number of random variables not their outcome). Assume that, as~\(n\rightarrow \infty\):
\begin{enumerate}
\item \(k_n \coninprob \infty\).
\item 
There exist deterministic matrices \(\Sigma_1, \Sigma_2 \in \mathbb{C}^{d\times d}\), such that \[
    \sum_{k=1}^{k_n} \mathrm{Cov}(\boldsymbol{Y}_{n,k},\boldsymbol{Y}_{n,k})  \coninprob \Sigma_1,
\quad 
    \sum_{k=1}^{k_n} \mathrm{Cov}(\boldsymbol{Y}_{n,k},\overline{\boldsymbol{Y}}_{n,k}) \coninprob \Sigma_2.
\]
\item There exists \(p > 2\), such that
\(
\sum_{k=1}^{k_n}\mathbb{E}[\|\boldsymbol{Y}_{n,k}-\mathbb{E}[\boldsymbol{Y}_{n,k}]\|_2^p] \coninprob  0.
\)
\end{enumerate}
Then, for any \(\bs W\)-continuity set \(B \subseteq \mathbb{C}^d\), the row sums \(\boldsymbol{S}_n = \sum_{k=1}^{k_n} \boldsymbol{Y}_{n,k}-\mathbb{E}[\boldsymbol{Y}_{n,k}]\) satisfy
    \begin{equation*}
        \mathbb{P}(\bs S_n \in B | k_n) \coninprob \mathbb{P}(\bs W \in B)
    \end{equation*}
    as \(n\rightarrow \infty\), where \(\boldsymbol{W}\) is Gaussian with 
\begin{equation*}
    \Var(\bs W)=\Sigma_1,
    \quad \Cov (\bs W, \overline{\bs W}) = \Sigma_2.
\end{equation*}
(Recall \(B\) is a \(\bs W\)-continuity set if \(\mathbb{P}(\bs W \in \partial B)=0\), where \(\partial B\) is the boundary of \(B\).) 
\end{lemma}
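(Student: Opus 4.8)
The plan is to condition on $k_n$, invoke the classical (deterministic number of summands) multivariate Lindeberg--Feller CLT on each realization of $k_n$, and absorb the randomness of the index via a subsequence argument. As a preliminary I would pass from $\mathbb{C}^d$ to $\mathbb{R}^{2d}$ under the canonical identification: the pair $(\Sigma_1,\Sigma_2)$ then determines a single real positive semidefinite covariance matrix $\Sigma$, so that $\boldsymbol W$ is a centred real Gaussian vector $\mathcal N(0,\Sigma)$, weak convergence is preserved, and $\boldsymbol W$-continuity sets correspond to $\boldsymbol W$-continuity sets. From here all vectors are real, and it suffices to show that for a fixed $\boldsymbol W$-continuity set $B$ the bounded random variable $Z_n := \mathbb{P}(\boldsymbol S_n \in B \mid k_n)$ converges in probability to the constant $c := \mathbb{P}(\boldsymbol W \in B)$.

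I would then use the standard fact that $Z_n \coninprob c$ is equivalent to: every subsequence of $(Z_n)$ has a further subsequence converging to $c$ almost surely. Fix a subsequence. The random quantities in Conditions 1--3 depend on $\omega$ only through $k_n$: since $\boldsymbol Y_{n,k}$ is independent of $k_n$, the maps $k\mapsto \mathrm{Cov}(\boldsymbol Y_{n,k},\boldsymbol Y_{n,k})$, $k\mapsto \mathrm{Cov}(\boldsymbol Y_{n,k},\overline{\boldsymbol Y}_{n,k})$ and $k\mapsto \E[\|\boldsymbol Y_{n,k}-\E\boldsymbol Y_{n,k}\|_2^p]$ are deterministic and only the upper limit $k_n$ is random. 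Hence, passing to a further subsequence $(n_l)$, there is an event $\Omega_0$ with $\mathbb{P}(\Omega_0)=1$ on which, writing $m_l:=k_{n_l}$,
\begin{equation*}
m_l \to \infty, \qquad \sum_{k=1}^{m_l}\mathrm{Cov}(\boldsymbol Y_{n_l,k},\boldsymbol Y_{n_l,k}) \to \Sigma, \qquad \sum_{k=1}^{m_l}\E[\|\boldsymbol Y_{n_l,k}-\E\boldsymbol Y_{n_l,k}\|_2^p] \to 0
\end{equation*}
as $l\to\infty$ (the $\Sigma_2$-part being folded into convergence to $\Sigma$ after the real reduction).

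Now fix $\omega\in\Omega_0$. Since the $\boldsymbol Y_{n,k}$ are independent of $k_n$, conditioning on $\{k_{n_l}=m_l\}$ leaves their joint law intact, so $Z_{n_l}(\omega) = \mathbb{P}\bigl(\sum_{k=1}^{m_l}(\boldsymbol Y_{n_l,k}-\E\boldsymbol Y_{n_l,k}) \in B\bigr)$, an honest sum of $m_l$ independent centred vectors. This triangular array has row-covariances converging to $\Sigma$ and satisfies the Lyapunov condition of order $p>2$, which implies the Lindeberg condition since for every $\varepsilon>0$,
\begin{equation*}
\sum_{k=1}^{m_l}\E\bigl[\|\boldsymbol Y_{n_l,k}-\E\boldsymbol Y_{n_l,k}\|_2^2\,\boldsymbol{1}_{\{\|\boldsymbol Y_{n_l,k}-\E\boldsymbol Y_{n_l,k}\|_2 > \varepsilon\}}\bigr] \leq \varepsilon^{-(p-2)}\sum_{k=1}^{m_l}\E[\|\boldsymbol Y_{n_l,k}-\E\boldsymbol Y_{n_l,k}\|_2^p] \to 0 .
\end{equation*}
The multivariate Lindeberg--Feller CLT then gives $\sum_{k=1}^{m_l}(\boldsymbol Y_{n_l,k}-\E\boldsymbol Y_{n_l,k}) \conindis \boldsymbol W$, and, $B$ being a $\boldsymbol W$-continuity set, the portmanteau theorem gives $Z_{n_l}(\omega)\to c$. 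Since this holds for all $\omega\in\Omega_0$, $Z_{n_l}\to c$ almost surely, closing the subsequence argument and hence the proof.

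The main obstacle is bookkeeping around the random index: Conditions 2 and 3 are convergence-in-probability statements about random partial sums (random upper limit $k_n$), so no classical CLT applies directly; the resolution is the subsequence extraction together with the fact that, because the $\boldsymbol Y_{n,k}$ are independent of $k_n$, conditioning on $k_n$ only fixes how many summands to add without perturbing their joint distribution --- this is precisely where the independence hypothesis is used and cannot be dropped. The remaining ingredients (the real-vector reduction, Lyapunov $\Rightarrow$ Lindeberg, and portmanteau) are routine; I would avoid the alternative route through conditional characteristic functions and a Lévy-continuity theorem for random measures, as the subsequence argument is cleaner.
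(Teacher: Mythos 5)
Your proof is correct and takes a genuinely different route from the paper's. The paper splits $\bs S_n=\bs S_{n1}+\bs S_{n2}$, where $\bs S_{n1}$ is the partial sum up to a deterministic truncation index $\ell_n$ constructed from ``good events'' $E_{n,q}^{(i)}$ encoding Conditions~1--3 at precision $\delta_q$; it applies the deterministic-index Lyapunov CLT to $\bs S_{n1}$, shows $\mathbb{P}(\|\bs S_{n2}\|_2>\varepsilon\mid k_n)\coninprob 0$, and glues these via a covering of the continuity set $B$ by thickened/shrunken sets $B^{\pm}_\varepsilon$ together with the portmanteau theorem. You instead exploit the observation that, because the $\bs Y_{n,k}$ are independent of $k_n$, the conditional probability $Z_n=\mathbb{P}(\bs S_n\in B\mid k_n)$ is exactly the deterministic function $m\mapsto\mathbb{P}\bigl(\sum_{k\le m}(\bs Y_{n,k}-\E\bs Y_{n,k})\in B\bigr)$ evaluated at $k_n$, and that the quantities in Conditions~1--3 are likewise deterministic functions of $k_n$ converging in probability; the subsequence-of-subsequence criterion then upgrades these to almost-sure convergence along a further subsequence, after which the deterministic Lindeberg--Feller theorem and portmanteau apply $\omega$-by-$\omega$. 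This dispenses with both the truncation construction and the $B^{\pm}_\varepsilon$ covering; the price is a non-constructive subsequence extraction where the paper is explicit. Both arguments are complete. Your preliminary real reduction $\mathbb{C}^d\to\mathbb{R}^{2d}$ (under which the pair $(\Sigma_1,\Sigma_2)$ becomes a single real covariance and continuity sets correspond) and the Lyapunov-to-Lindeberg implication are routine and handled correctly.
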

\begin{proof}
The case of a deterministic \(k_n\) is the standard Lyapunov CLT and can be found in \cite{Kolmogorov}. We use this result to extend to the case of random \(k_n\). Our approach is to split the row sums \(\bs S_n\) into two components. We let
\begin{align*}
    &\bs S_n = \bs S_{n1}+\bs S_{n2},\\
    &\bs S_{n1}=\sum_{k=1}^{\ell_n}\bs Y_{n,k}-\E[\bs Y_{n,k}], \quad \bs S_{n2}=\bs S_n - \bs S_{n1},
\end{align*}
where the \((\ell_n)_{n\geq 1}\) is a positive non-random integer-valued sequence that tends to infinity as \(n\rightarrow \infty\) to be defined later. We are going to show that the version of Lyapunov's CLT with deterministic number of summands can be applied to the~\((\bs S_{n1})_{n\geq 1}\) giving 
\begin{equation}
    \bs S_{n1} \conindis \bs W \label{eq: lyapunov 1}
\end{equation}
as \(n\rightarrow \infty\). We will also show, for any \(\varepsilon>0\), 
\begin{equation}
    \mathbb{P}(\|\bs S_{n2}\|_2<\varepsilon | k_n) \coninprob 1 \label{eq: lyapunov 2}
\end{equation} 
as \(n\rightarrow \infty\). For now, assume that \eqref{eq: lyapunov 1} and \eqref{eq: lyapunov 2} both hold. The lemma can be shown using these two results as follows. Let \(B\subseteq \mathbb{C}^d\) be a \(\bs W\)-continuity set. For \(\varepsilon>0\), define the sets 
\begin{align*}
    &\partial B_{\varepsilon} :=\{\bs x \in \mathbb{R}^d: \|\bs x - \bs y\|_2 \leq \varepsilon \text{ for some } \bs y \in \partial B\}, \quad B^+_{\varepsilon}= B \cup \partial B_{\varepsilon} ,\quad B^-_{\varepsilon}= B\setminus\partial B_{\varepsilon}.
\end{align*}
The set \(B^+_{\varepsilon}\) is an extension of \(B\), where we include the closed balls of radius \(\varepsilon\) at each of the boundary points of \(B\). The set \(B^-_{\varepsilon}\) is a retraction, where we remove these closed balls from \(B\). This implies, a.s.\ for any \(\varepsilon>0\),
\begin{equation}
   \mathbb{P}(\bs S_{n1} \in B^{-}_{\varepsilon})-\mathbb{P}(\|\bs S_{n2}\|_2 \geq \varepsilon |k_n) \leq \mathbb{P}(\bs S_n \in B |k_n) \leq  \mathbb{P}(\bs S_{n1} \in B^{+}_{\varepsilon})+\mathbb{P}(\|\bs S_{n2}\|_2 \geq \varepsilon |k_n), \label{eq: lyapunov 3}
\end{equation}
where we have used that \(\bs S_{n1}\) is independent of \(k_n\) to drop the conditioning. By construction, the set \(B^+_{\varepsilon}\) is closed and \(B^-_{\varepsilon} \) is open. Therefore, by the portmanteau theorem,
\begin{align}
    &\limsup_n \mathbb{P}(\bs S_{n1} \in B^{+}_{\varepsilon})\leq \mathbb{P}(\bs W \in B^{+}_{\varepsilon}),\nonumber \\
    & \liminf_n \mathbb{P}(\bs S_{n1} \in B^{-}_{\varepsilon})\geq  \mathbb{P}(\bs W \in B^{-}_{\varepsilon}). \label{eq: lyapunov 4}
\end{align}
Let \(B^{\degree}\) and \(B^{\top}\) denote the interior and closure of \(B\) respectively. We have \(B^{\degree} =\bigcup_{i=1}^{\infty}B^{-}_{2^{-i}}\), and \(B^{\top}=\bigcap_{i=1}^{\infty }B^{+}_{2^{-i}}\). This, continuity of the probability measure, and the fact that \(\mathbb{P}(\bs W \in \partial B)=0\) imply that, as \(\varepsilon \rightarrow 0\),
\begin{align}
     & \mathbb{P}(\bs W \in B^{-}_{\varepsilon}) \rightarrow \mathbb{P}(\bs W \in B),\nonumber\\
      &\mathbb{P}(\bs W \in B^{+}_{\varepsilon}) \rightarrow \mathbb{P}(\bs W \in B). \label{eq: lyapunov 5}
\end{align}
Using \eqref{eq: lyapunov 4} and \eqref{eq: lyapunov 5} in \eqref{eq: lyapunov 3} implies the existence of an \(\eta:= \eta(\varepsilon,n)>0\), such that \(\eta \rightarrow 0\) as \(\varepsilon\rightarrow 0\) and \(n\rightarrow \infty\), and
\begin{equation*}
     |\mathbb{P}(\bs S_n \in B |k_n) - \mathbb{P}(\bs W \in B)|\leq \eta + \mathbb{P}(\|\bs S_{n2}\|_2\leq \varepsilon|k_n)
\end{equation*}The lemma follows by this and \eqref{eq: lyapunov 2}. We are left to show \eqref{eq: lyapunov 1} and \eqref{eq: lyapunov 2}. First we show how to construct the \((\ell_n)_{n\geq 1}\) such that the summands in \((\bs S_{n1})_{n\geq 1}\) satisfy the conditions of Lemma \ref{Lemma: MCLT} for a deterministic number of summands. For~\(q \in \mathbb{Z}_{\geq 0}\), let \(\delta_q = 2^{-q}\). Define the events
\begin{align*}
    &E_{n,q}^{(1)}=\{k_n>\delta_q^{-1}\}, &&E_{n,q}^{(2)} = \F\{\F\|\sum_{k=1}^{k_n}\mathrm{Cov}(\boldsymbol{Y}_{n,k},\boldsymbol{Y}_{n,k}) - \Sigma_1\R\|_2<\delta_q\R\},\\
    &E_{n,q}^{(3)}=\F\{\F\|\sum_{k=1}^{k_n}\mathrm{Cov}(\boldsymbol{Y}_{n,k},\overline{\boldsymbol{Y}}_{n,k}) - \Sigma_2\R\|_2<\delta_q\R\}, &&E_{n,q}^{(4)}=\F\{\sum_{k=1}^{k_n}\mathbb{E}[\|\boldsymbol{Y}_{n,k}-\mathbb{E}[\boldsymbol{Y}_{n,k}]\|_2^p]<\delta_q\R\}.
\end{align*}Since Assumptions 1, 2 and 3 hold in probability, there exists some positive, finite, non-decreasing sequence \((m_{q})_{q\geq 0}\), such that, for each \(q\geq 0\), and all \(n \geq m_{q}\),
\begin{equation}
    \mathbb{P}(E_{n,q}^{(i)}) \geq 1-\delta_q, \quad i \in \{1,2,3,4\}. \label{eq: CLT events}
\end{equation}
Let \(E_{n,q}=\cap_{i=1}^4 E_{n,q}^{(i)}\). Let \((C_{n})_{n\geq 1}:= (E_{n,q_n})_{n\geq 1}\), where \(q_n = \max \{q : m_{q}\leq n\}\). Then by \eqref{eq: CLT events}, \(\mathbb{P}(C_n)\geq 1-4\delta_{q_n}\). Furthermore, since, as \(n\rightarrow \infty\), \(q_n\rightarrow \infty\), we have \(\mathbb{P}(C_{n})\rightarrow 1\) as \(n\rightarrow \infty\). Let \(\ell_n := \min \{k \in \mathbb{Z}_{\geq 1} : C_n\text{ occurs }\}\), and take~\(\ell_n=1\) if this set is empty (this corresponds to \(\mathbb{P}(C_n)=0\)). By definition, \((\ell_n)_{n\geq 1}\) tends to infinity as \(n\rightarrow \infty\) (\(\ell_n\) is at least~\(\delta_{q_n}^{-1}\), and \(\mathbb{P}(C_n)=0\) can hold for only finitely many \(n\)). Furthermore, since \(\{k_n=\ell_n\} \in C_n\), and \(q_n\rightarrow \infty\) as \(n\rightarrow \infty\), the row sums \((\bs S_{n1})_{n\geq 1}\) satisfy Lyapunov's conditions with covariance matrices \(\Sigma_1\), and \(\Sigma_2\). This implies \eqref{eq: lyapunov 1}. For \eqref{eq: lyapunov 2}, first note on \(C_n\), we have that \( \bs S_{n2}=\sum_{k=\ell_n+1}^{k_n}\bs Y_{n,k}-\E[\bs Y_{n,k}]\), with
\begin{equation*}
    \sum_{k=\ell_n+1}^{k_n}\Cov(\bs Y_{n,k},\overline{\bs Y}_{n,k}) \leq 2 \delta_{q_n}\rightarrow 0
\end{equation*}
as \(n\rightarrow \infty\). This implies, for all \(\varepsilon,\eta>0\), there exists an \(n^*\), such that, for all \(n>n^*\),
\begin{equation*}
    \mathbb{P}(\mathbb{P}(\|\bs S_{n2}\|_2 >\varepsilon |k_n) <\eta |C_n) = 1.
\end{equation*}
This and the fact that \(\mathbb{P}(C_n)\rightarrow 1\) as \(n\rightarrow \infty\) give \eqref{eq: lyapunov 2}.
\end{proof}
\subsection{Results on the second and fourth moments of the MCBP}
\label{Sec: prelim moments}
Many of the results we are going to use to prove Theorem \ref{Theorem: Main continuous time} have a moment assumption (e.g.\ Lemma \ref{proposition: Janson 9.1} Condition 3, Theorem \ref{Theorem:Bill con} Condition 3, and Lemma \ref{Lemma: MCLT} Conditions 2 and 3). In this section, we introduce all MCBP moment results we need to show these assumptions hold.
\begin{lemma}
\label{lemma: nolabel}
Let \((\bs X(t))_{t\geq 0}\) be a MCBP with initial condition \(\bs X(0)\) and replacement structure \((\bs a, (\bs \xi_i)_{i=1}^d)\), such that (A2) holds, and, for \(1\leq i,j \leq d\), \(\E[\xi_{ij}]<\infty\). Let \(M_1\in~ \mathbb{C}^{d\times d}\). Then, for every \(t\geq 0\),
\begin{align}
    &\E \F [\bs X(t)\bs X(t)'\R] = \sum_{i=1}^d \int_{0}^t \mathrm{e}^{A(t-v)}\E[\bs \xi_i \bs \xi_i']\mathrm{e}^{A'(t-v)} (a_i\mathrm{e}^{Av}\bs X(0))_i\mathrm{d}v+\mathrm{e}^{At}\bs X(0) \bs X(0)'\mathrm{e}^{A't}, \label{eq: second moment 1}\\
    &\E \F [\|M_1\bs X(t)\|_2^2\R] = \sum_{i=1}^d \int_{0}^t \E[\|M_1\mathrm{e}^{A(t-v)}\bs \xi_i \|_2^2] (a_i\mathrm{e}^{Av}\bs X(0))_i\mathrm{d}v+\|M_1\mathrm{e}^{At}\bs X(0)\|_2^2  \label{eq: second moment 2},\\
    & \Var(\bs X(t)) = \sum_{i=1}^d \int_{0}^t \mathrm{e}^{A(t-v)}\E[\bs \xi_i \bs \xi_i']\mathrm{e}^{A'(t-v)} (a_i\mathrm{e}^{Av}\bs X(0))_i\mathrm{d}v. \label{eq: MCBP var}
\end{align}
\end{lemma}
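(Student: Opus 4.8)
The plan is to deduce all three identities from the martingale $\bs Y(t):=\e^{-At}\bs X(t)$ of Lemma~\ref{prop: martingale bp}, combined with the second-moment identity for finite-variation martingales \eqref{eq: second moment martingale} and the expectation formula of Lemma~\ref{lemma: Jan 9.3}. Throughout one may assume $\E[\bs\xi_i\bs\xi_i']<\infty$ for every $i$, since otherwise the relevant entries of both sides of \eqref{eq: second moment 1} and \eqref{eq: second moment 2} are $+\infty$ and the statement is vacuous. The first step is to check that $\bs Y$ is a finite-variation martingale on every interval $[0,T]$: the factor $\e^{-At}$ is $C^1$, hence of bounded variation on $[0,T]$, and $\bs X$ is a.s.\ piecewise constant with finitely many jumps on $[0,T]$ (the MCBP has finite offspring means, so does not explode in finite time), whence the product is of bounded variation; being a martingale it then satisfies the hypotheses of \eqref{eq: second moment martingale}.

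Since $\e^{-At}$ is continuous, the jumps of $\bs Y$ occur exactly at the death times $(\tau_{ik})_{k\ge 1}$ of type-$i$ particles, with $\Delta\bs Y(\tau_{ik})=\e^{-A\tau_{ik}}\Delta\bs X(\tau_{ik})$, and $\Delta\bs X(\tau_{ik})$ has the law of $\bs\xi_i$. Hence, by the definition \eqref{eq:quadratic variation},
\[
[\bs Y,\bs Y]_t=\sum_{i=1}^d\sum_{k:\tau_{ik}\le t}\e^{-A\tau_{ik}}\Delta\bs X(\tau_{ik})\Delta\bs X(\tau_{ik})'\e^{-A'\tau_{ik}},
\]
which is precisely a sum of the form treated by Lemma~\ref{lemma: Jan 9.3}. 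Applying that lemma with $p=1$, $M_{11}(v)=\e^{-Av}$, $N_1(\bs x)=\bs x\bs x'$ and $M_{21}(v)=\e^{-A'v}$ gives
\[
\E\big[[\bs Y,\bs Y]_t\big]=\sum_{i=1}^d\int_0^t\e^{-Av}\E[\bs\xi_i\bs\xi_i']\e^{-A'v}\,(a_i\e^{Av}\bs X(0))_i\,\mathrm{d}v.
\]

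Then \eqref{eq: second moment martingale}, applied with both martingales equal to $\bs Y$ (so $\bs Y(0)=\bs X(0)$), yields $\E[\bs Y(t)\bs Y(t)']=\E\big[[\bs Y,\bs Y]_t\big]+\bs X(0)\bs X(0)'$; multiplying on the left by $\e^{At}$ and on the right by $\e^{A't}$ and using $\e^{At}\e^{-Av}=\e^{A(t-v)}$ (and its transpose on the right) turns this into \eqref{eq: second moment 1}. For \eqref{eq: second moment 2} I would write $\|M_1\bs X(t)\|_2^2=\mathrm{tr}\big(M_1^*M_1\,\bs X(t)\bs X(t)'\big)$, using that $\bs X(t)$ is real, take expectations, substitute \eqref{eq: second moment 1}, and simplify each trace by cyclicity together with $A'=A^*$ (valid since $A$ is real), which gives $\mathrm{tr}\big(M_1^*M_1\e^{A(t-v)}\E[\bs\xi_i\bs\xi_i']\e^{A'(t-v)}\big)=\E\big[\|M_1\e^{A(t-v)}\bs\xi_i\|_2^2\big]$ and similarly $\mathrm{tr}\big(M_1^*M_1\e^{At}\bs X(0)\bs X(0)'\e^{A't}\big)=\|M_1\e^{At}\bs X(0)\|_2^2$. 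Finally, \eqref{eq: MCBP var} follows by subtracting $\E[\bs X(t)]\E[\bs X(t)]'=\e^{At}\bs X(0)\bs X(0)'\e^{A't}$, given by \eqref{eq: CMBP First moment}, from \eqref{eq: second moment 1}; the boundary terms cancel.

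The argument is essentially routine once Lemma~\ref{lemma: Jan 9.3} is available, so there is no deep obstacle; the two points that require some care are (i) confirming that $\bs Y$ genuinely is a finite-variation martingale whose quadratic covariation reduces to the pure-jump sum above — i.e.\ that the continuous factor $\e^{-At}$ contributes nothing — so that \eqref{eq: second moment martingale} may be invoked, and (ii) the bookkeeping of conjugates and transposes when reducing $\|M_1\,\cdot\,\|_2^2$ to traces for complex $M_1$.
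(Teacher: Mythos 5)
Your proof is correct. For \eqref{eq: second moment 2} and \eqref{eq: MCBP var} it matches the paper's proof essentially verbatim (trace identity plus \eqref{eq: CMBP First moment}); the only difference is cosmetic placement of $M_1,M_1^*$ under the trace, which is harmless by cyclicity. For \eqref{eq: second moment 1} the paper simply cites Section~9 of Janson, whereas you unfold the citation into the standard martingale argument: $\bs Y(t)=\e^{-At}\bs X(t)$ is a finite-variation martingale (Lemma~\ref{prop: martingale bp}), its quadratic variation is a pure-jump sum over particle deaths, Lemma~\ref{lemma: Jan 9.3} with $p=1$, $N_1(\bs x)=\bs x\bs x'$ identifies $\E\big[[\bs Y,\bs Y]_t\big]$, and \eqref{eq: second moment martingale} together with $\bs Y(0)=\bs X(0)$ gives the claim after conjugating by $\e^{At}$. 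This is precisely the argument the paper develops and reuses elsewhere (compare \eqref{eq: quad var r 1}--\eqref{eq: Yn component 1}), so it is the same route rather than a genuinely different one; your version is just self-contained. Your side remarks on non-explosion (so that $[\bs Y,\bs Y]_t$ is a finite sum and $\bs Y$ is of finite variation) and on the real/complex conjugate bookkeeping for $M_1\in\mathbb C^{d\times d}$ are both correct and do merit the care you flag.
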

\begin{proof}
Equation (\ref{eq: second moment 1}) is shown in Section 9 of Janson \cite{Janson}. For a matrix \(A\), let \(\mathrm{tr}(A)\) be the trace of \(A\). For any \(\bs x \in \mathbb{C}^d\), we have that \(\mathrm{tr}(\bs x \bs x^*) = \|\bs x\|_2^2\). Since the trace is linear, (\ref{eq: second moment 2}) follows by taking the trace of \(M_1 \E \F [\bs X(t)\bs X(t)'\R] M_1^*\) and using (\ref{eq: second moment 1}). Lastly, \eqref{eq: MCBP var} follows from \eqref{eq: CMBP First moment} and \eqref{eq: second moment 1}.
\end{proof}

\noindent The final tool we need before proving Theorems \ref{Theorem: Main continuous time} and \ref{theorem: cont time 2} is a bound on the fourth moment of the MCBP. This is addressed in the following lemma whose proof is given in Appendix \ref{Appendix: proof of fourth moments}.
\begin{lemma}
\label{Lemma: fourth moments}
Let \((\bs X(t))_{t\geq 0}\) be a MCBP with initial condition \(\bs X(0)\) and replacement structure \((\bs a, (\bs \xi_i)_{i=1}^d)\), such that (A1)-(A3) are satisfied. Let \(\Xi_4 = \sum_{i=1}^d\E[\|\bs \xi_i\|_2^4]\).
\begin{enumerate}
    \item[] \textbf{Small Components:} Let \(P_s = \sum_{\lambda \in \Lambda_s}\sum_{J \in \mathcal{J}_{\lambda}}P_{J}\). Then, for all \(t\geq 0\),
    \begin{align*}
     & \E \F[\|  (1+t)^{-(m_1-1)/2}\mathrm{e}^{-\lambda_1t/2}P_s \bs X(t)\|_2^4\R]\leq \cst (1+ \Xi_4^{3/2})\|\bs X(0)\|^4_2, \\
      &\E \F[\|(1+t)^{-(m_1-1)/2}\mathrm{e}^{-\lambda_1t/2}P_s (\bs X(t)-\mathrm{e}^{At}\bs X(0))\|_2^4\R]\leq \cst (1+ \Xi_4^{3/2}) \|\bs X(0)\|^2_2(1\wedge t ).
    \end{align*}
        \item[] \textbf{Critical Components:} Let \(\lambda \in \Lambda_c\), \(J \in \mathcal{J}_{\lambda}\) with size \(m\), \(1\leq \kappa \leq m\). Then, for all \(t\geq 0\),
    \begin{align*}
      &\E \F[\|  (1+t)^{-(m_1+2\kappa-2)/2}\mathrm{e}^{-\lambda_1t/2}N_A^{m-\kappa}P_J \bs X(t) \|_2^4\R]\leq \cst (1+ \Xi_4^{3/2})\|\bs X(0)\|^4_2, \\
      &  \E \F[\|(1+t)^{-(m_1+2\kappa-2)/2}\mathrm{e}^{-\lambda_1t/2}N_A^{m-\kappa}P_J (\bs X(t)- \mathrm{e}^{At}\bs X(0))\|_2^4\R]\leq \cst (1+ \Xi_4^{3/2}) \|\bs X(0)\|^2_2(1\wedge t ).
    \end{align*}
            \item[] \textbf{Large Components:} Let \(P_{\ell}=\sum_{\lambda \in \Lambda_{\ell}}\sum_{J \in \mathcal{J}_{\lambda}}P_J\). Then, for all \(t\geq 0\),
    \begin{align*}
       & \E \F[\|P_{\ell}\mathrm{e}^{-At} \bs X(t)\|_2^4\R]\leq \cst (1+ \Xi_4^{3/2})\|\bs X(0)\|^4_2, \\
        & \E \F[\|P_{\ell}(\mathrm{e}^{-At} \bs X(t)-\bs X(0))\|_2^4\R]\leq \cst (1+ \Xi_4^{3/2}) \|\bs X(0)\|^2_2(1\wedge t ).
    \end{align*}
\end{enumerate}
\end{lemma}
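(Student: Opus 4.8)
I would prove all three families of bounds by a single martingale argument, deriving the dependence on $\bs X(0)$ from iterating the compensation identity of Lemma \ref{lemma: Jan 9.3} and the dependence on $t$ from the matrix-exponential estimates \eqref{eq: mat exp bound start}--\eqref{eq: matrix exp bounds all}, which are exactly what the normalising maps in the statement are calibrated against. First I would reduce the uncentered bounds to the centered ones: writing $\bs X(t)=\e^{At}\bs X(0)+(\bs X(t)-\e^{At}\bs X(0))$ and denoting by $L_t$ the relevant linear map ($(1+t)^{-(m_1-1)/2}\e^{-\lambda_1 t/2}P_s$, or $(1+t)^{-(m_1+2\kappa-2)/2}\e^{-\lambda_1 t/2}N_A^{m-\kappa}P_J$, or $P_\ell\e^{-At}$), the deterministic leading term satisfies $\|L_t\e^{At}\bs X(0)\|_2\le\cst\|\bs X(0)\|_2$ — this is precisely what \eqref{eq: matrix exp bounds}--\eqref{eq: matrix exp bounds all} give once one notes that the $\e^{-\lambda_1 t/2}$ (resp.\ $\e^{-At}$) in $L_t$ kills the growth of $\e^{At}$ on $\Lambda_s\cup\Lambda_c$ (resp.\ is trivial) — so a $\cst(1+\Xi_4^{3/2})\|\bs X(0)\|_2^4$ bound on $\|L_t\bs X(t)\|_2^4$ follows from the $\cst(1+\Xi_4^{3/2})\|\bs X(0)\|_2^2(1\wedge t)$ bound on $\|L_t(\bs X(t)-\e^{At}\bs X(0))\|_2^4$ together with $\|\bs X(0)\|_2\ge1$. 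For the centered bounds, fix $t$ and use Lemma \ref{prop: martingale bp}: since $\e^{-As}\bs X(s)$ is a martingale, so is
\begin{equation*}
\bs N^{(t)}(s):=L_t\e^{A(t-s)}\bs X(s)-L_t\e^{At}\bs X(0),\qquad s\in[0,t],
\end{equation*}
with $\bs N^{(t)}(0)=0$, terminal value $\bs N^{(t)}(t)=L_t(\bs X(t)-\e^{At}\bs X(0))$, and jump $L_t\e^{A(t-\tau)}\bs\xi_i$ at a death time $\tau\le t$ of a type-$i$ particle.

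For the second moment (needed as an intermediate), the quadratic variation $[\bs N^{(t)},\overline{\bs N}^{(t)}]_t=\sum_{i,k:\tau_{ik}\le t}\|L_t\e^{A(t-\tau_{ik})}\bs\xi_i^{(k)}\|_2^2$ is exactly of the form to which Lemma \ref{lemma: Jan 9.3} applies (with $p=1$), so by \eqref{eq: improve 1} and \eqref{eq: second moment martingale}, $\E\|\bs N^{(t)}(t)\|_2^2=\sum_i\int_0^t\E\|L_t\e^{A(t-v)}\bs\xi_i\|_2^2(a_i\e^{Av}\bs X(0))_i\,\intd v$ (this is \eqref{eq: second moment 2} in disguise). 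Bounding $L_t\e^{A(t-v)}$ by the appropriate case of \eqref{eq: mat exp bound start}--\eqref{eq: matrix exp bounds all} (the $N_A^{m-\kappa}P_J$ in the critical and large normalisations being handled through \eqref{eq: mat exp bound start}) and $(a_i\e^{Av}\bs X(0))_i$ through \eqref{eq: matrix exp bounds all}, one checks that each $L_t$ is chosen so that $\int_0^t\|L_t\e^{A(t-v)}\|^2(1+v)^{m_1-1}\e^{\lambda_1 v}\,\intd v\le\cst(1\wedge t)$: for $\Lambda_s,\Lambda_c$ the factor $\e^{-\lambda_1 t/2}$ and the powers of $(1+t)$ absorb the growth, while for $\Lambda_\ell$ the exponent $\lambda_1-2\mathrm{Re}\lambda<0$ makes the integral converge over all of $[0,\infty)$. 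Hence $\E\|\bs N^{(t)}(t)\|_2^2\le\cst\Xi_4^{1/2}\|\bs X(0)\|_2(1\wedge t)$.

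The fourth moment is the crux. By the $L^4$ Burkholder inequality, $\E\|\bs N^{(t)}(t)\|_2^4\le\cst\,\E[([\bs N^{(t)},\overline{\bs N}^{(t)}]_t)^2]$, and expanding the square of the increasing quadratic variation produces a diagonal part $\sum_{i,k}\|L_t\e^{A(t-\tau_{ik})}\bs\xi_i^{(k)}\|_2^4$ — again handled by Lemma \ref{lemma: Jan 9.3} (a quartic in $\bs\xi_i$), giving $\le\cst\Xi_4\|\bs X(0)\|_2(1\wedge t)$ by the same integral estimates with $\|L_t\e^{A(t-v)}\|^4$ — and an off-diagonal part $2\sum_{i,k:\tau_{ik}\le t}\|L_t\e^{A(t-\tau_{ik})}\bs\xi_i^{(k)}\|_2^2\,[\bs N^{(t)},\overline{\bs N}^{(t)}]_{\tau_{ik}-}$. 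The compensation formula of Lemma \ref{lemma: Jan 9.3}, now carrying the predictable multiplier $[\bs N^{(t)},\overline{\bs N}^{(t)}]_{v-}$, rewrites this as $\int_0^t\sum_i\E\|L_t\e^{A(t-v)}\bs\xi_i\|_2^2\,a_i\,\E\big[[\bs N^{(t)},\overline{\bs N}^{(t)}]_v X(v)_i\big]\,\intd v$; applying Cauchy--Schwarz against $\E\|\bs X(v)\|_2^2\le\cst(1+\Xi_4^{1/2})(1+v)^{2(m_1-1)}\e^{2\lambda_1 v}\|\bs X(0)\|_2^2$ (from \eqref{eq: CMBP First moment} and \eqref{eq: second moment 1}) and using monotonicity of the quadratic variation to replace the time-$v$ quantity by the time-$t$ one, I obtain a Gr\"onwall/Volterra inequality
\begin{equation*}
\psi(t)^2\le\cst\,\Xi_4\|\bs X(0)\|_2(1\wedge t)+\cst\,(1+\Xi_4^{3/4})\|\bs X(0)\|_2\,\psi(t)\,(1\wedge t),\qquad \psi(t):=\E\big[([\bs N^{(t)},\overline{\bs N}^{(t)}]_t)^2\big]^{1/2}.
\end{equation*}
Solving this quadratic inequality and using $\|\bs X(0)\|_2\ge1$ gives $\psi(t)^2\le\cst(1+\Xi_4^{3/2})\|\bs X(0)\|_2^2(1\wedge t)$, hence $\E\|\bs N^{(t)}(t)\|_2^4\le\cst\psi(t)^2$ is the claimed bound; the power $\Xi_4^{3/2}$ appears precisely from squaring the $\Xi_4^{3/4}$-order coefficient of the off-diagonal term (which in turn comes from one factor of $\sum_i a_i\E\|\bs\xi_i\|_2^2$ times the square root of the $\bs X(v)$ second moment). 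The three cases are structurally identical and differ only in which of \eqref{eq: mat exp bound start}--\eqref{eq: matrix exp bounds all} one invokes to estimate $L_t\e^{A(t-v)}$.

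The main obstacle is obtaining the estimate \emph{uniformly in $t$}. In the fixed-initial-composition setting one could lean on the stationary-like behaviour of the MCBP, but here (as stressed in Section \ref{Sec: methods}) the branching process need not be anywhere near equilibrium on the relevant timescales, so the bound must hold for every $t\ge0$ with only $\|\bs X(0)\|_2$ on the right-hand side. Two things make the argument go through: (i) the Volterra closure above, which trades the genuine exponential growth of $\E[[\bs N^{(t)},\overline{\bs N}^{(t)}]_v X(v)_i]$ against the decay built into $L_t$; and (ii) keeping $L_t\e^{A(t-\tau)}$ together as a single operator rather than splitting off $\|\e^{-A\tau}\|$ — for a small Jordan block the factor $\e^{-A\tau}$ acting on a jump at time $\tau$ can itself grow rapidly, and only because $\bs N^{(t)}$ is built from $\e^{A(t-s)}\bs X(s)$ (the martingale $\e^{-As}\bs X(s)$ shifted by the fixed matrix $\e^{At}$) does the favourable combined exponential $\e^{A(t-\tau)}$, which the $\e^{-\lambda_1 t/2}$ and the $(1+t)$-powers in $L_t$ are precisely designed to dominate, ever appear.
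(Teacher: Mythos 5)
Your approach is correct and uses the same starting ingredients as the paper (the centred martingale $\bs M_{\cdot,t}(s)$, the reduction of the uncentered bound to the centred one via the triangle inequality and \eqref{eq: matrix exp bounds}--\eqref{eq: matrix exp bounds all}, Lemma \ref{Lemma: BDG} for the fourth moment, Lemma \ref{lemma: Jan 9.3} for compensation), but the handling of $\E\big[\mathrm{tr}\big([\bs M,\overline{\bs M}]_t^2\big)\big]$ is genuinely different. The paper decomposes the quadratic variation as $[\bs M,\overline{\bs M}]_t = Z_\ell(t)+U_\ell(t)$ with $Z_\ell$ a martingale and $U_\ell$ its (predictable, absolutely continuous) compensator, then applies Cauchy--Schwarz on the cross term $\E[\langle Z_\ell, U_\ell\rangle_{\mathrm{tr}}]$ and bounds $\mathrm{tr}(V_\ell)=\E[\mathrm{tr}(Z_\ell^2)]$ and $\E[\mathrm{tr}(U_\ell^2)]$ separately — a ``linear'' argument requiring two applications of Lemma \ref{lemma: Jan 9.3} at the second level but no self-referential closure. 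You instead expand $\big(\sum_\sigma a_\sigma\big)^2$ directly into a diagonal sum $\sum_\sigma a_\sigma^2$ (whose expectation is again computed by Lemma \ref{lemma: Jan 9.3} and is precisely the paper's $\mathrm{tr}(V_\ell)$) and an off-diagonal sum $2\sum_\sigma a_\sigma [\bs N^{(t)},\overline{\bs N}^{(t)}]_{\sigma-}$, and you compensate the latter with the predictable multiplier $[\bs N^{(t)},\overline{\bs N}^{(t)}]_{v-}$ still in place, producing the quadratic inequality $\psi^2\le a + b\psi$ in $\psi(t)=\E[([\bs N^{(t)},\overline{\bs N}^{(t)}]_t)^2]^{1/2}$. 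Both routes yield the same $(1+\Xi_4^{3/2})$ factor, for the same underlying reason (one $\Xi_4$-half-power comes from the jump squared, another from the second moment of $\bs X(v)$). Two small points to tighten. First, Lemma \ref{lemma: Jan 9.3} is stated for deterministic continuous matrix kernels; extending it to carry a predictable stochastic multiplier is standard compensated-Poisson-integral material, but you should say so explicitly rather than cite the lemma as if it applied verbatim. Second, Lemma \ref{Lemma: BDG} controls $\E\|\bs N^{(t)}(t)\|_2^4$ by $\E[\mathrm{tr}([\bs N^{(t)},\overline{\bs N}^{(t)}]_t^2)]$, a matrix trace, while your off/diagonal expansion treats $[\bs N^{(t)},\overline{\bs N}^{(t)}]_t$ as the scalar $\sum_\sigma a_\sigma$; since $\mathrm{tr}(Q^2)\le(\mathrm{tr}\,Q)^2$ for positive semi-definite $Q$, the scalar bound dominates the matrix one and your argument goes through, but the passage deserves a line.
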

\noindent
\begin{remark}

In Lemma \ref{Lemma: fourth moments}, we have given no indication of how the bound depends on \(\bs a\) or \(d\). From our method of proof, it is not difficult to show that one can at least get a bound of \(\cst d^2 \|\bs a\|_2^2\). 
\end{remark}
\noindent
Equations \eqref{eq:identity} and (\ref{eq: projection}) give the following immediate corollary of Lemma~\ref{Lemma: fourth moments}. For all \(t\geq 0\),
\begin{align}
   &\E\F[\|(1+t)^{-(m_1-1)}\mathrm{e}^{-\lambda_1t}\bs X(t)\|_2^4\R]\leq \cst (1+ \Xi_4^{3/2}) \|\bs X(0)\|^4_2, \nonumber\\
   &\E \F[\|(1+t)^{-(m_1-1)}\mathrm{e}^{-\lambda_1t}(\bs X(t)-\mathrm{e}^{At}\bs X(0))\|_2^4\R]\leq \cst (1+ \Xi_4^{3/2})\|\bs X(0)\|^2_2(1\wedge t ). \label{eq: fourth moment main}
\end{align}
\section{Proofs of Theorems \ref{Theorem: Main continuous time} and \ref{theorem: cont time 2}}
\label{Sec: proof of cont time}
We now have all the tools needed to show Theorems \ref{Theorem: Main continuous time} and \ref{theorem: cont time 2}. We start with the proof of Theorem \ref{Theorem: Main continuous time}.
\subsection{Proof of Theorem \ref{Theorem: Main continuous time} ST}
Let 
\begin{equation*}
    \bs Z_n(t) = (N\varepsilon)^{-1/2}(\e^{-A\varepsilon t}\bs X_n(\varepsilon t)- N\bs \mu), \quad t \geq 0.
\end{equation*}
 This is a martingale for each fixed \(n\) by Lemma \ref{prop: martingale bp}. We are going to apply Lemma \ref{proposition: Janson 9.1} to this sequence of martingales with limit \(\bs W_1\). We start by showing Condition 2 (Condition 1 can be skipped since the martingales are real). By definition of~\(\bs W_1\), we must show, for each \(t\geq 0\),
\begin{equation}
[\bs Z_n,\bs Z_n]_t \coninprob t\sum_{i=1}^d \mu_i a_i \E[\bs \xi_i \bs \xi_i'] \label{eq: Zn quad var}
\end{equation}
as \(n\rightarrow \infty\). Since a time-independent shift does not change the quadratic variation, we have
\begin{equation}
    [\bs Z_n,\bs Z_n]_t = (N\varepsilon)^{-1}\sum_{i=1}^d\sum_{k:\tau_{nik}\leq \varepsilon t} \e^{-A\tau_{nik}}\Delta\bs X_n(\tau_{nik})\Delta \bs X_n(\tau_{nik})' \e^{-A'\tau_{nik}}, \label{eq: Zn quad var 2}
\end{equation}
where the \((\tau_{nik})_{k\geq 1}\) are the ordered death times of particles of type \(i\) in \(\bs X_n\). If, in Lemma \ref{lemma: Jan 9.3}, we take \(p=1\), \(M_1(t)~=~M_2(t)'=\e^{-At}\), and \(N(\bs x) = \bs x \bs x'\), then the first term on the r.h.s.\ of \eqref{eq: improve 1} is equal to
\begin{equation*}
   (N\varepsilon)^{-1} \sum_{k:\tau_{nik}\leq \varepsilon t} \e^{-A\tau_{nik}}\Delta\bs X_n(\tau_{nik})\Delta \bs X_n(\tau_{nik})' \e^{-A'\tau_{nik}}.
\end{equation*}
This, Lemma \ref{lemma: Jan 9.3}, and \eqref{eq: Zn quad var 2} imply
\begin{equation}
    Y_n(t) =  [\bs Z_n,\bs Z_n]_t - (N\varepsilon)^{-1}\sum_{i=1}^d\int^{\varepsilon t}_0 \e^{-Av}\E[\bs \xi_i\bs \xi_i']\e^{-A'v}a_i X_n(v)_i \mathrm{d}v, \quad t\geq 0 \label{eq: Zn quad var 3}
\end{equation}
is a matrix-valued martingale for each \(n\). We prove \eqref{eq: Zn quad var} by showing, for each \(t\geq 0\),
\begin{align}
   &Y_n(t) \coninprob 0, \label{eq: Yn component}\\
   &U_n(t):=(N\varepsilon)^{-1}\sum_{i=1}^d\int^{\varepsilon t}_0 \e^{-Av}\E[\bs \xi_i\bs \xi_i']\e^{-A'v}a_i( X_n(v)_i-N(\e^{A v}\bs \mu)_i) \mathrm{d}v \coninprob 0, \label{eq: Un component}\\
   &\varepsilon^{-1}\sum_{i=1}^d\int^{\varepsilon t}_0 \e^{-Av}\E[\bs \xi_i\bs \xi_i']\e^{-A'v}(a_i\e^{A v}\bs \mu)_i \mathrm{d}v \rightarrow t\sum_{i=1}^d \mu_i a_i \E[\bs \xi_i \bs \xi_i'], \label{eq: other component}
\end{align}
as \(n\rightarrow \infty\). For the rest of the proof, fix \(t\geq 0\). We start by showing \eqref{eq: Yn component}. Since the second term of \(Y_n\) does not contribute to the quadratic variation (it is continuous and of finite variation), we have
\begin{equation*}
    [Y_n,Y_n]_t=\F[[\bs Z_n,\bs Z_n],[\bs Z_n,\bs Z_n]\R]_t =(N\varepsilon)^{-2}\sum_{i=1}^d\sum_{k:\tau_{nik}\leq \varepsilon t} (\e^{-A\tau_{nik}}\Delta\bs X_n(\tau_{nik})\Delta \bs X_n(\tau_{nik})' \e^{-A'\tau_{nik}})^2.
\end{equation*}
This, and Lemma \ref{lemma: Jan 9.3} imply
\begin{equation}
    \E [Y_n(t)Y_n(t)'] =  \E [Y_n,Y_n]_t =(N\varepsilon)^{-2}\sum_{i=1}^d\int^{\varepsilon t}_0 \F(\e^{-Av}\E[\bs \xi_i\bs \xi_i']\e^{-A'v}\R)^2a_i N(\e^{Av}\bs \mu)_i \mathrm{d}v, \label{eq: Yn component 1}
\end{equation}
where the first equality follows from \eqref{eq: second moment martingale} since \(Y_n\) is started from 0. Let \(\varepsilon_{\mathrm{max}}=\max \{\varepsilon(n): n\geq 1\}\) (this exists and is finite since \(\varepsilon \rightarrow 0\) as \(n\rightarrow \infty\)). By (A1), we have
\begin{equation*}
 (N\varepsilon)^{-2}\sum_{i=1}^d\int^{\varepsilon t}_0 \F\|\F(\e^{-Av}\E[\bs \xi_i\bs \xi_i']\e^{-A'v}\R)^2\R\|a_i N(\e^{Av}\bs \mu)_i \mathrm{d}v   \leq \cst (N\varepsilon)^{-2}\int^{\varepsilon t}_0 N \leq \cst (N\varepsilon)^{-1}\rightarrow 0
\end{equation*}
as \(n\rightarrow \infty\), where we have uniformly bounded the integrand by its maximum value in \([0,\varepsilon_{\mathrm{max}}t]\). This and \eqref{eq: Yn component 1} imply the second moment of \(\|Y_n(t)\|_2\) tends to \(0\) as \(n\rightarrow \infty\), so \eqref{eq: Yn component} holds. Next, we show \eqref{eq: Un component}. Firstly, by (A1),
\begin{equation*}
    \E[\|U_n(t)\|_2] \leq \cst (N\varepsilon)^{-1}\sum_{i=1}^d\int^{\varepsilon t}_0 \E[| X_n(v)_i-N(\e^{A v}\bs \mu)_i|] \mathrm{d}v,
\end{equation*}
where we have uniformly bound the components of the integrand not present on the r.h.s.\ by their maximum value in~\([0,\varepsilon_{max}t]\). By \eqref{eq: CMBP First moment} and Jensen's inequality, the integrand can be bounded by \(\sum_{i=1}^d \Var(X_n(v)_i)^{1/2}\). By \eqref{eq: MCBP var} and~(A1), this can be bounded by~\(N^{1/2}\cst\), where the bound follows by taking the maximum over all \(v\in [0,\varepsilon_{max}t]\). Therefore,
\begin{equation*}
    \E[\|U_n(t)\|_2] \leq \cst N^{1/2} \rightarrow 0
\end{equation*}
as \(n\rightarrow \infty\). This implies \eqref{eq: Un component}. Lastly, one sees \eqref{eq: other component} holds by the fact that, since \(\varepsilon \rightarrow 0\) as \(n\rightarrow \infty\), the exponential terms in the integrand all tend to 1 uniformly in \(v \in [0,\varepsilon t]\) as \(n\rightarrow \infty\). Thus Condition 2 holds. For Condition~3, by \eqref{eq: second moment 2} and \eqref{eq: CMBP First moment}, we have
\begin{equation*}
    \E[\|\bs Z_n(t)\|_2^2] = \varepsilon^{-1} \sum_{i=1}^d \int_{0}^{\varepsilon t} \E[\|\e^{A(t-v)}\bs \xi_i\|_2^2]a_i(\e^{Av}\bs \mu)_i \mathrm{d}v.
\end{equation*}
The r.h.s.\ is uniformly bounded by a constant for all \(n\), which is seen by bounding the integrand by its maximum in~\([0,\varepsilon_{\mathrm{max}t}]\). Therefore, Condition 3 of Lemma \ref{proposition: Janson 9.1} holds. Thus, Lemma \ref{proposition: Janson 9.1} implies
\begin{equation*}
    \bs Z_n(t) \conindis \bs W_1(t) \text{ in } D[0,\infty)
\end{equation*}as \(n\rightarrow \infty\) which is Theorem~\ref{Theorem: Main continuous time}~ST. 
\subsection{Proof of Theorem \ref{Theorem: Main continuous time} CT}
This proof is similar to the ST case and can be found in Appendix \ref{Appendix: proof of CT}. 
\subsection{Proof of Theorem \ref{Theorem: Main continuous time} LT\textsubscript{s}}
We first give the proof of \eqref{eq: small component convergence}, and then the proof of \eqref{eq: small component convergence in probability}.
\begin{proof}[Proof of Theorem \ref{Theorem: Main continuous time} LT\textsubscript{s} \eqref{eq: small component convergence}] Set
\begin{align*}
   & \bs Z_{n}(t) = N^{-1/2}\omega^{-(m_1-1)/2}\mathrm{e}^{-\lambda_1(\omega+t)/2}P_s( \bs X_n(\omega+t)-N\mathrm{e}^{A(\omega+t)}\bs \mu), \quad t\geq -\omega,
\end{align*}
and
\begin{equation*}
    \mathcal{F}_{t}^{(n)}=\sigma\{\bs X_n(r):0\leq r\leq \omega+t\}.
\end{equation*}
Fix an arbitrary \(T>0\). In the following, \(\cst\) can (and often will) depend on \(T\). We are going to apply Theorem \ref{Theorem:Bill con} to the sequence \(\bs Z_{n}\) with limit \(\bs W_{s}\) on the interval \([-T,T]\). (We assume \(n\) is large enough such that \(\omega > T\), so \(\bs Z_n(-T)\) is well defined.)
\\~\\
\textbf{Condition 1 of Theorem \ref{Theorem:Bill con}:} For Condition 1, we start by showing that the \(\bs Z_n(t)\) converge for some arbitrary time~\(-T\leq t\leq T\) (i.e.\ \(k=1\)), then extend to showing \(\bs Z_n\) converges jointly at two fixed time points (\(k=2\)), and lastly extend to a general \(k\) time points. Once we have the result for \(k\) time points, we show the random variables in the limit are equal in distribution to \(\bs W_s\) taken at the same \(k\) time points.
\\~\\
\underline{Convergence of \(\bs Z_n\) for \(k=1\):} Fix \(-T\leq t \leq T\). By the branching property \eqref{equ:branching property}, we have
\begin{align}
&\bs Z_{n}(t) \eqindis N^{-1/2} \sum_{i=1}^d\sum_{j=1}^{N\mu_i} \omega^{-(m_1-1)/2}\mathrm{e}^{-\lambda_1(\omega+t)/2} P_s(\bs X_{ijn}(\omega +t)-\mathrm{e}^{A(\omega+ t)}\bs e_i), \label{eq: apply branching prop ls} 
\end{align}
where the \(\bs X_{ijn}\) are independent MCBPs with initial conditions \(\bs e_i\), and identical replacement structures \((\bs a, (\bs \xi_i)_{i=1}^d)\). Therefore, by \eqref{eq: CMBP First moment}, the summands in the r.h.s.\ are a triangular array of mean-zero random variables index by \(n\). We are going to apply Lemma \ref{Lemma: MCLT} to this array. Condition 1 clearly holds so we start by showing Condition 2. Recall \(\bs v(\bs \mu)\) \eqref{eq: imp ob 1}. We are going to show 
\begin{align}
& N^{-1} \sum_{i=1}^d\sum_{j=1}^{N\mu_i} \Var(\omega^{-(m_1-1)/2}\mathrm{e}^{-\lambda_1(\omega+t)/2} P_s\bs X_{ijn}(\omega +t)) \rightarrow \sum_{k=1}^d\int_{0}^{\infty}P_s\mathrm{e}^{Av}a_kv(\bs\mu)_k\E[\bs \xi_k \bs \xi_k']\e^{A'v}P_s'\mathrm{e}^{-\lambda_1 v}\mathrm{d}v \label{eq: eq}
\end{align}
as \(n\rightarrow \infty\). We do this by showing each summand in \eqref{eq: apply branching prop ls} converges to the r.h.s.\ of \eqref{eq: eq} with \(\bs v(\bs \mu)\) replaced by \(\bs v(\bs e_i)\) and using that
\begin{equation*}
    \bs v(\bs\mu) = \sum_{i=}^d \mu_i \bs v(\bs e_i).
\end{equation*}By \eqref{eq: MCBP var}, for all \(i=1,...,d\), and \(j=1,...,N\mu_i\), we have
\begin{align}
   &\Var(\omega^{-(m_1-1)/2}\e^{-\lambda_1(\omega+t)/2}P_s\bs X_{ijn}(\omega+ t))\nonumber\\
   &\qquad\qquad\qquad\qquad\qquad= \omega^{-(m_1-1)}\sum_{k=1}^d \int_{0}^{\omega+t} P_s\mathrm{e}^{A(\omega+ t-u)}\E[\bs \xi_k \bs \xi_k']\mathrm{e}^{A'(\omega + t-u)}P_s' a_k(\mathrm{e}^{Au}\bs e_i)_k\e^{-\lambda_1(\omega+t)}\mathrm{d}u. \label{eq: small time variance ls}
\end{align}
The change of variables \(\omega +t-u =v \) in \eqref{eq: small time variance ls} gives 
\begin{equation}
 \Var(\omega^{-(m_1-1)/2}\e^{-\lambda_1(\omega+t)/2}P_s\bs X_{ijn}(\omega+ t))= \omega^{-(m_1-1)}\sum_{k=1}^d \int_{0}^{\omega+t} P_s\mathrm{e}^{Av}\E[\bs \xi_k \bs \xi_k']\mathrm{e}^{A'v}P_s' a_k(\mathrm{e}^{A(\omega+t-v)}\bs e_i)_k\e^{-\lambda_1(\omega+t)}\mathrm{d}v. \label{th1.1 ls 1}
\end{equation}
For \(1\leq i \leq d\), and \(t\geq 0\), let 
\begin{equation}
  \bs r_i(t) := \e^{-\lambda_1t}t^{-(m_1-1)}\mathrm{e}^{At}\bs e_i - \bs v(\bs e_i). \label{th1.1 ls 2}
\end{equation}
  By \eqref{eq: matrix exp bounds}, \(\bs r_i(t)\rightarrow 0\) as \(t\rightarrow \infty\). Using this definition in \eqref{th1.1 ls 1} gives
  \begin{align}
      &\Var(\omega^{-(m_1-1)/2}\e^{-\lambda_1(\omega+t)/2}P_s\bs X_{ijn}(\omega+ t))\nonumber\\
      &\qquad\qquad\qquad\qquad\qquad\qquad=\sum_{k=1}^d \int_{0}^{\omega+t} \frac{(\omega+t-v)^{m_1-1}}{\omega^{m_1-1}}P_s\mathrm{e}^{Av}\E[\bs \xi_k \bs \xi_k']\mathrm{e}^{A'v}P_s' a_k(\bs v(\bs e_i)+\bs r_i(\omega+t-v))_k\e^{-\lambda_1v}\mathrm{d}v \label{eq: Var bound}
  \end{align}We show this integral converges as \(n\rightarrow \infty\) using the following two results: Let \(\hat\lambda\) be the real part of the eigenvalue with largest real part in \(\Lambda_s\). Then, by definition, \(2\hat\lambda < \lambda_1\). Therefore, by \eqref{eq: matrix exp bounds}, there exists some \(\eta >0\), such that, for all \(t\geq 0\),
\begin{equation}
   \|\e^{-\lambda_1 v}P_s\mathrm{e}^{Av}\E[\bs \xi_k \bs \xi_k']\mathrm{e}^{A'v}\|_2\leq \cst \e^{-\eta v}. \label{eq: improve 17}
\end{equation}
Thus, for all \(n\), the ``tail" of the integral in \eqref{eq: Var bound} satisfies
\begin{equation}
    \sum_{k=1}^d \int_{C}^{\infty} \frac{(\omega+t-v)^{m_1-1}}{\omega^{m_1-1}}P_s\mathrm{e}^{Av}\E[\bs \xi_k \bs \xi_k']\mathrm{e}^{A'v}P_s' a_k(\bs v(\bs e_i)+\bs r_i(\omega+t-v))_k\e^{-\lambda_1v}\mathrm{d}v\leq \cst \int_{C}^{\infty}\e^{-\eta v}\mathrm{d}v \rightarrow 0 \label{eq: integral tail}
\end{equation}
as \(C\rightarrow \infty\). For the integral's ``head", notice that for any fixed \(M>0\), both
\begin{equation*}
    \frac{(\omega+t-v)^{m_1-1}}{\omega^{m_1-1}} \rightarrow 1, \text{ and } \bs r_i(\omega+t-v)\rightarrow 0
\end{equation*}
as \(n\rightarrow \infty\), uniformly for \(v\in [0,M]\). This and \eqref{eq: integral tail} imply
\begin{equation*}
    \Var(\omega^{-(m_1-1)/2}\e^{-\lambda_1(\omega+t)/2}P_s\bs X_{ijn}(\omega+ t)) \rightarrow  \sum_{k=1}^d\int_{0}^{\infty}P_s\mathrm{e}^{Av}a_kv(\bs e_i)_k\E[\bs \xi_k \bs \xi_k']\e^{A'v}P_s'\mathrm{e}^{-\lambda_1 v}\mathrm{d}v
\end{equation*}
as \(n\rightarrow \infty\). Thus, \eqref{eq: eq} holds which is Condition 2 of Lemma \ref{Lemma: MCLT}. Set
\begin{align}
    &\Sigma = \sum_{k=1}^d\int_{0}^{\infty}P_s\mathrm{e}^{Av}a_kv(\bs\mu)_k\E[\bs \xi_k \bs \xi_k']\e^{A'v}P_s'\mathrm{e}^{-\lambda_1 v}\mathrm{d}v. \label{eq: sigma thm 1.1}
\end{align}
For Condition 3 of Lemma \ref{Lemma: MCLT} we take \(p=4\). With this choice Condition 3 immediately follows by Lemma \ref{Lemma: fourth moments} for small components, since 
\begin{equation*}
    N^{-2} \sum_{i=1}^d\sum_{j=1}^{N\mu_i} \E\F[\F\|\omega^{-(m_1-1)/2}\mathrm{e}^{-\lambda_1(\omega+t)/2} P_s(\bs X_{ijn}(\omega +t)-\mathrm{e}^{A(\omega+ t)}\bs e_i)\R\|_2^4\R] \leq \cst N^{-1} \rightarrow 0
\end{equation*}
as \(n \rightarrow \infty\). Therefore, Lemma \ref{Lemma: MCLT} implies
\begin{align}
   &\bs  Z_{n}(t) \conindis{} \bs Y_{1} \sim \mathcal{N}(0, \Sigma)\label{eq: theorem 1.1 convergence 1}
\end{align}
as \(n \rightarrow \infty\).
\\~\\
\underline{Convergence of \(\bs Z_n\) for \(k=2\):} Fix \(-T\leq t_1 \leq t_2 \leq T\). Our plan is to show \(\bs Z_n(t_2) - \e^{(A-\lambda_1/2)(t_2-t_1)}\bs Z_n(t_1)\) converges independently of \(\bs Z_n(t_1)\). Once this is done, we use that
\begin{equation*}
    \bs Z_n(t_2) =  (\bs Z_n(t_2) - \e^{(A-\lambda_1/2)(t_2-t_1)}\bs Z_n(t_1))+\e^{(A-\lambda_1/2)(t_2-t_1)}\bs Z_n(t_1)
\end{equation*}
 and \eqref{eq: theorem 1.1 convergence 1} to obtain the joint convergence of \((\bs Z_n(t_1),\bs Z_n(t_2))\). By the branching property \eqref{equ:branching property} and the strong Markov property \eqref{eq: strong markov prop}, we have 
\begin{align}
    \bs Z_{n}(t_2) - &\e^{(A-\lambda_1/2)(t_2-t_1)}\bs Z_{n}(t_1)= N^{-1/2}\omega^{-(m_1-1)/2}e^{-\lambda_1(\omega+t_2)/2}P_s(\bs X_n(\omega+t_2) - \e^{A(t_2-t_1)}\bs X_n(\omega+ t_1)) \nonumber\\
    &\quad = N^{-1/2}\omega^{-(m_1-1)/2}\e^{-\lambda_1(\omega+t_1)/2} \sum_{i=1}^d\sum_{j=1}^{X_n(\omega+ t_1)_i}\e^{-\lambda_1(t_2-t_1)/2}P_s(\widehat{\bs X}_{ijn}(t_2- t_1)-\e^{A(t_2-t_1)} \bs e_i), \label{eq: rowsumstuff}
\end{align}
where the \(\widehat{\bs X}_{ijn}\) are independent branching processes with initial conditions \(\bs e_i\), identical replacement structures \((\bs a, (\bs \xi_i)_{i=1}^d)\), and are independent of \(\mathcal{F}_{t_1}^{(n)}\). We see, conditionally on \(\bs X_n(\omega+t_1)\), the summands in the r.h.s.\ of \eqref{eq: rowsumstuff} are a triangular array of mean-zero independent random variables independent of \(\bs X_n(\omega+t_1)\) indexed by \(n\). This is the setup required to apply Lemma \ref{Lemma: MCLT}, we are left to check the conditions hold. We show the three conditions by proving, as \(n\rightarrow \infty\):
\begin{enumerate}
    \item  \(N^{-1}\omega^{-(m_1-1)}\e^{-\lambda_1(\omega+t_1)}\bs X_n(\omega+t_1) \coninprob \bs v(\bs \mu)\).
    \item\( N^{-1}\omega^{-(m_1-1)}\e^{-\lambda_1(\omega+t_1)} \sum_{i=1}^d\sum_{j=1}^{X_n(\omega+ t_1)_i}\Var(\e^{-\lambda_1(t_2-t_1)/2}P_s\widehat{\bs X}_{ijn}(t_2- t_1))\coninprob  \Sigma(t_2-t_1) \),
    \begin{equation*}
       \Sigma(t)= \sum_{i=1}^d  v(\bs\mu)_i \Sigma_{i}(t), \quad\Sigma_{i}(t):= \sum_{k=1}^d \int_{0}^{t} P_s\mathrm{e}^{A(t-v)}\E[\bs \xi_k \bs \xi_k']\mathrm{e}^{A'(t-v)}P_s' a_k(\mathrm{e}^{Av}\bs e_i)_k\e^{-\lambda_1t}\mathrm{d}v.
    \end{equation*}
    \item \(N^{-2} \omega^{-2(m_1-1)}\e^{-2\lambda_1(\omega+t_1)}\sum_{i=1}^d\sum_{j=1}^{X_n(\omega+ t_1)_i}\E[\|\e^{-\lambda_1(t_2-t_1)/2}P_s(\widehat{\bs X}_{ijn}(t_2-t_1)-\mathrm{e}^{A (t_2-t_1)}\bs e_i)\|_2^4] \coninprob 0.\)
\end{enumerate}
Here, 1, 2, and 3 are Conditions 1, 2, and 3 of Lemma \ref{Lemma: MCLT} respectively, where in Condition 3 we have taken \(p=4\). Starting with Condition~1, by the branching property \eqref{equ:branching property} and Jensen's inequality in the first inequality, and \eqref{eq: fourth moment main} in the second, we have
\begin{align}
    \Var(N^{-1}\omega^{-(m_1-1)}\e^{-\lambda_1(\omega+t_1)}X_n(\omega+ t_1)_k) &\leq  N^{-2}\sum_{i=1}^d \sum_{j=1}^{N\mu_i}\E[\omega^{-4(m_1-1)}\e^{-4\lambda_1(\omega+t_1)} (X_{ijn}(\omega+ t_1)-\e^{A(\omega+t_1)}\bs e_i)_k^4]^{1/2} \nonumber \\
    &\leq \cst N^{-1}  \rightarrow 0 \label{eq: improve 8}
\end{align}
as \(n\rightarrow \infty\). Condition 1 follows from this, \eqref{eq: CMBP First moment}, and \eqref{th1.1 ls 2}. For Condition 2, we have by \eqref{eq: MCBP var}
\begin{equation*}
    N^{-1}\omega^{-(m_1-1)}\e^{-\lambda_1(\omega+t_1)} \sum_{i=1}^d\sum_{j=1}^{X_n(\omega+ t_1)_i}\Var(\e^{-\lambda_1(t_2-t_1)/2}P_s\widehat{\bs X}_{ijn}(t_2- t_1)) = N^{-1}\sum_{i=1}^d \omega^{-(m_1-1)}\e^{-\lambda_1(\omega+t_1)}X_n(\omega+t_1)_i \Sigma_{i}(t_2-t_1).
\end{equation*}
Condition 2 follows by applying Condition 1 to the r.h.s. Lastly, for Condition 3, by Lemma \ref{Lemma: fourth moments} for small components, we have
\begin{align}
 N^{-2} \omega^{-2(m_1-1)}\e^{-2\lambda_1(\omega+t_1)}\sum_{i=1}^d\sum_{j=1}^{X_n(\omega+ t_1)_i}&\E[\|\e^{-\lambda_1(t_2-t_1)/2}P_s(\widehat{\bs X}_{ijn}(t_2-t_1)-\mathrm{e}^{A (t_2-t_1)}\bs e_i)\|_2^4]\nonumber\\
 &\qquad\qquad\qquad\qquad\leq \cst N^{-2}  \sum_{i=1}^d \omega^{-2(m_1-1)}\e^{-2\lambda_1(\omega+t_1)}X_n(\omega+ t_1)_i.\label{eq: theorem 1.1 interim 3 ls}
\end{align}
Condition 3 follows by applying Condition 1 to the r.h.s.\ Therefore, by Lemma \ref{Lemma: MCLT}, for any \(\bs Y_2\)-continuity set \(B\subseteq \mathbb{R}^d\), we have as \(n \rightarrow \infty\)
\begin{equation*}
    \mathbb{P}\F(\bs Z_n(t_2) - \e^{(A-\lambda_1/2)(t_2-t_1)}\bs Z_n(t_1)\in B\M|\bs X_n(\omega+ t_1)\R)- \mathbb{P}\F(\bs Y_2 \in B\R) \coninprob 0,
\end{equation*}
where \(\bs Y_{2} \sim \mathcal{N}(0, \Sigma(t_2-t_1))\). Using the fact that the summands in \eqref{eq: rowsumstuff} are independent of \(\mathcal{F}_{t_1}^{(n)}\), we can extend this result to 
\begin{equation}
\label{eq: may need 5}
        \mathbb{P}\F(\bs Z_n(t_2) - \e^{(A-\lambda_1/2)(t_2-t_1)}\bs Z_n(t_1)\in B\M| \mathcal{F}_{t_1}^{(n)}\R)- \mathbb{P}\F(\bs Y_2 \in B\R) \coninprob 0
\end{equation}
as \(n \rightarrow \infty\). We are going to use this and \eqref{eq: theorem 1.1 convergence 1} to show, for any \(\bs Y_1,\bs Y_2\)-continuity sets \(B_1,B_2\subseteq \mathbb{R}^d\), 
\begin{align}
    &\mathbb{P}\F(\bs Z_n(t_1) \in B_1, \bs Z_n(t_2) - \e^{(A-\lambda_1/2)(t_2-t_1)}\bs Z_n(t_1) \in B_2 \R)& \nonumber\\
   & \hspace{4cm}=\mathbb{P}\F(\bs Z_n(t_1) \in B_1\R)\mathbb{P}\F(\bs Z_n(t_2) - \e^{(A-\lambda_1/2)(t_2-t_1)}\bs Z_n(t_1) \in B_2 \M| \bs Z_n(t_1) \in B_1 \R) \nonumber \\
  & \hspace{4cm}\rightarrow \mathbb{P}\F(\bs Y_{1} \in B_1\R) \mathbb{P}\F( \bs Y_{2} \in B_2\R) \label{eq: may need 10}
\end{align}
 as \(n\rightarrow \infty\). Firstly, by \eqref{eq: theorem 1.1 convergence 1},
\begin{equation}
\label{eq: may need 61}
\mathbb{P}\F(\bs Z_n(t_1) \in B_1\R)\rightarrow \mathbb{P}(\bs Y_{1} \in B_1)
\end{equation} as \(n \rightarrow \infty\). So, if \(\mathbb{P}(\bs Y_{1} \in B_1)=0\), then \eqref{eq: may need 10} holds by \eqref{eq: may need 61}. If \(\mathbb{P}(\bs Y_{1} \in B_1)\neq 0\), then \(\mathbb{P}\F(\bs Z_n(t_1) \in B_1\R) \not\rightarrow 0\) by \eqref{eq: may need 61}. This, the fact that \(\{\bs Z_n(t_1) \in B_1\}\subseteq \mathcal{F}_{t_1}^{(n)}\), and that \eqref{eq: may need 5} holds in probability implies  
\begin{equation*}
   \mathbb{P}\F(\bs Z_n(t_2) - \e^{(A-\lambda_1/2)(t_2-t_1)}\bs Z_n(t_1) \in B_2 \M| \bs Z_n(t_1) \in B_1 \R) \rightarrow \mathbb{P}\F(\bs Y_2 \in B_2\R)  
\end{equation*}
as \(n \rightarrow \infty\). Thus, \eqref{eq: may need 10} holds. The result \eqref{eq: may need 10} implies that jointly, as \(n \rightarrow \infty\),
\begin{align}
    &\bs Z_n(t_1) \conindis \bs Y_1,\nonumber\\
    &\bs Z_n(t_2)= \e^{(A-\lambda_1/2)(t_2-t_1)} \bs Z_n(t_1)+(\bs Z_n(t_2) - \e^{(A-\lambda_1/2)(t_2-t_1)} \bs Z_n(t_1))  \conindis \e^{(A-\lambda_1/2)(t_2-t_1)} \bs Y_1 + \bs Y_2, \label{eq: t1.1 second conv}
\end{align}
where \(\bs Y_1\) and \(\bs Y_2\) are independent of each other and have distributions \(\bs Y_1 \sim \mathcal{N}(0,\Sigma)\), \(\bs Y_2 \sim \mathcal{N}(0,\Sigma(t_2-t_1))).\)
\\~\\
\underline{Convergence of \(\bs Z_n\) for an arbitrary \(k\):} Let \(-T \leq t_1 \leq ...  \leq t_k \leq T\) be a sequence of fixed times. We take the sequences of random variables\[ \hat{\bs Z}_{n,1}:=\bs Z_n(t_1),\quad \hat{\bs Z}_{n,i}:=\bs Z_n(t_i) -\e^{(A-\lambda_1/2)(t_i-t_{i-1})}\bs Z_n(t_{i-1}),\quad i=2,...,k.\] By \eqref{eq: theorem 1.1 convergence 1}, we have, as \(n \rightarrow \infty\),
\begin{equation}
    \hat{\bs Z}_{n,1} \conindis \bs Y_1, \label{eq: th1.1 multitime 1}
\end{equation}
where \(\bs Y_1 \sim \mathcal{N}(0,\Sigma)\). By the arguments that showed \eqref{eq: may need 5}, for each \(i\in \{2,...,k\}\), and any \(\bs Y_i\)-continuity set \(B \in \mathbb{R}^d\),
\begin{equation}
     \mathbb{P}(\hat{\bs Z}_{n,i} \in B| \mathcal{F}^{(n)}_{t_{i-1}})-\mathbb{P}(\bs Y_{i} \in B) \coninprob 0
     \label{eq: th1.1 multitime 2}
\end{equation}
as \(n \rightarrow \infty\), where \(\bs Y_{i}\sim \mathcal{N}(0,\Sigma(t_i-t_{i-1}))\). By the same argument that gave \eqref{eq: may need 10}, we have, for any \(\bs Y_1,...,\bs Y_k\)~-continuity sets \(B_1,...,B_k \subseteq \mathbb{R}^d\), 
\begin{align*}
    \mathbb{P}(\hat{\bs Z}_{n,1} \in B_1,...,\hat{\bs Z}_{n,k}\in B_k) &= \mathbb{P}(\hat{\bs Z}_{n,1} \in B_1)\mathbb{P}(\hat{\bs Z}_{n,2}\in B_2| \hat{\bs Z}_{n,1} \in B_1)\times...\\
    &\quad\times \mathbb{P}(\hat{\bs Z}_{n,k} \in B_k|\cap_{i=1}^{k-1}\{\hat{\bs Z}_{n,i}\in B_{i}\})\\
    & \rightarrow \mathbb{P}(\bs Y_1 \in B_1)\times ...\times \mathbb{P}(\bs Y_{k}\in B_k)
\end{align*}
as \(n \rightarrow \infty\). (We use that, for \(j\in \{2,...,k\}\), \(\cap_{i=1}^{j-1}\{\hat{\bs Z}_{n,i}\in B_{i}\}\in \mathcal{F}_{t_{j-1}}^{(n)}\), along with \eqref{eq: th1.1 multitime 2}.) This implies, jointly, as \(n \rightarrow \infty\),
\begin{align*}
    &\bs Z_n(t_1) \conindis \bs Y_1\\
    &\bs Z_n(t_i) =\e^{(A-\lambda_1/2)(t_i-t_{1})}\bs Z_n(t_1)+ \sum_{j=2}^{i}\e^{(A-\lambda_1/2)(t_i-t_{j})}(\bs Z_n(t_j) - \e^{(A-\lambda_1/2)(t_j-t_{j-1})} \bs Z_n(t_{j-1})) \\
    &\quad\quad\quad\conindis e^{(A-\lambda_1/2)(t_i-t_1)}\bs Y_1+\sum_{j=2}^{i} e^{(A-\lambda_1/2)(t_i-t_j)}\bs Y_{j}, \quad i \in \{2,...,k\},
\end{align*}
where the \(\bs Y_1,\bs Y_{j}\), \(j \in \{2,...,k\}\) are pairwise independent, and, for \(j \in \{2,...,k\}\), \(\bs Y_1\sim \mathcal{N}(0,\Sigma)\), \(\bs Y_{j}\sim \mathcal{N}\F(0, \Sigma(t_j-t_{j-1})\R).\) To show Condition 1 of Theorem \ref{Theorem:Bill con}, we are left to check 
\begin{align*}
    &\bigg(\bs Y_1,...,e^{(A-\lambda_1/2)(t_k-t_1)}\bs Y_1+\sum_{j=2}^{k} e^{(A-\lambda_1/2)(t_i-t_j)}\bs Y_{j}\bigg) \eqindis (\bs W_s(t_1),...,\bs W_s(t_k)).
    \end{align*}
Since both sides are Gaussian, we only need to check the covariance matrices of each possible choice of two components are equal. For \(1\leq \ell \leq m \leq k\),
\begin{align}
& \Cov\bigg(\e^{(A-\lambda_1/2)(t_m-t_1)}\bs Y_1+\sum_{j=2}^m \e^{(A-\lambda_1/2)(t_m-t_j)}\bs Y_{j}, \bs \e^{(A-\lambda_1/2)(t_{\ell}-t_1)}\bs Y_1+\sum_{j=2}^{\ell}\e^{(A-\lambda_1/2)(t_{\ell}-t_j)}\bs Y_{j}\bigg) \nonumber
 \\ &= \e^{(A-\lambda_1/2)(t_{m}-t_{\ell})}\bigg(\Var(\e^{(A-\lambda_1/2)(t_{\ell}-t_1)}\bs Y_1)+\sum_{j=2}^{ \ell}\Var(\e^{(A-\lambda_1/2)(t_{\ell}-t_j)}\bs Y_{j})\bigg) \nonumber\\
    &=\e^{(A-\lambda_1/2)(t_{m}-t_{\ell})}\bigg(\e^{(A-\lambda_1/2)(t_{\ell}-t_1)}\Sigma \e^{(A'-\lambda_1/2)(t_{\ell}-t_1)}+\sum_{j=2}^{\ell}\e^{(A-\lambda_1/2)(t_{\ell}-t_j)}\Sigma(t_j-t_{j-1})\e^{(A'-\lambda_1/2)(t_{\ell}-t_j)}\bigg). \label{eq: covariance small 1}
\end{align}
The summands in the final line satisfy the following equalities.
\begin{align}
 \e^{(A-\lambda_1/2)(t_{\ell}-t_1)}\Sigma \e^{(A'-\lambda_1/2)(t_{\ell}-t_1)}&=  \sum_{k=1}^d\int_{0}^{\infty} P_s\mathrm{e}^{A(v+t_{\ell}-t_1)}a_kv(\bs\mu)_k\E[\bs \xi_k \bs \xi_k']\mathrm{e}^{A'(v+t_{\ell}-t_1)}P_s' \mathrm{e}^{-\lambda_1 (v+t_{\ell}-t_1)}\mathrm{d}v\nonumber\\
 &= \sum_{k=1}^d\int_{t_{\ell}-t_1}^{\infty} P_s\mathrm{e}^{Au}a_kv(\bs\mu)_k\E[\bs \xi_k \bs \xi_k']\mathrm{e}^{A'u}P_s' \mathrm{e}^{-\lambda_1 u}\mathrm{d}u, \label{eq: covariance small 2}
\end{align}
where we have used the change of variables \(u=v+t_{\ell}-t_1\). For \(j \in \{2,...,\ell\}\), 
\begin{align}
&\e^{(A-\lambda_1/2)(t_{\ell}-t_j)}\Sigma(t_j-t_{j-1})\e^{(A'-\lambda_1/2)(t_{\ell}-t_j)}\nonumber\\&=\sum_{k=1}^d \int_{0}^{t_j-t_{j-1}} P_s\mathrm{e}^{A(t_{\ell}-t_{j-1}-v)}\E[\bs \xi_k \bs \xi_k']\mathrm{e}^{A'(t_{\ell}-t_{j-1}-v)}P_s'  a_k(\mathrm{e}^{Av}\bs v(\bs\mu))_k\e^{-\lambda_1(t_{\ell}-t_{j-1})}\mathrm{d}v \nonumber \\
 &=\sum_{k=1}^d \int_{t_{\ell}-t_{j}}^{t_{\ell}-t_{j-1}} P_s\mathrm{e}^{Au}a_kv(\bs\mu)_k\E[\bs \xi_k \bs \xi_k']\mathrm{e}^{A'u}P_s' \mathrm{e}^{-\lambda_1 u}\mathrm{d}u, \label{eq: covariance small 3}
\end{align}
where in the final line, we have used the change of variables \(u = t_{\ell}-t_{p-1}-v\), and that \((\mathrm{e}^{Av}\bs v(\bs\mu))_k = \e^{\lambda_1 v}\bs v(\bs\mu)_k\). (Indeed, this follows since  \(\e^AN_A^{m_1-1}P_{\lambda_1}=\e^{\lambda_1}P_{\lambda_1}.\)) Using \eqref{eq: covariance small 2} and \eqref{eq: covariance small 3} in \eqref{eq: covariance small 1} gives
\begin{align*}
 &\Cov\bigg(\e^{(A-\lambda_1/2)(t_m-t_1)}\bs Y_1+\sum_{j=2}^m \e^{(A-\lambda_1/2)(t_m-t_j)}\bs Y_{j}, \bs \e^{(A-\lambda_1/2)(t_{\ell}-t_1)}Y_1+\sum_{j=2}^{\ell}\e^{(A-\lambda_1/2)(t_{\ell}-t_j)}\bs Y_{j}\bigg)\\
 &=\e^{(A-\lambda_1/2)(t_{\ell}-t_{m})}\Sigma =\Cov(\bs W_{s}(t_{m}),\bs W_{s}(t_{\ell})),
\end{align*}
thus Condition 1 of Theorem \ref{Theorem:Bill con} holds. \\~\\
\textbf{Condition 2 of Theorem \ref{Theorem:Bill con}:} Since \(\bs W_s\) is a Gaussian processes with continuous covariance function, Condition 2 holds. \\~\\
\textbf{Condition 3 of Theorem \ref{Theorem:Bill con}:} Fix an arbitrary \(n \geq 1\), \(-T \leq t_1 \leq t_2 \leq t_3 \leq T\). By the Cauchy-Schwarz inequality, we have that
\begin{align}
   \E  [\| \bs Z_n(t_2)-\bs Z_n(t_1)\|_2^2 \| \bs Z_n(t_3)-\bs Z_n(t_2)\|_2^2&]=\E \F [\| \bs Z_n(t_2)-\bs Z_n(t_1)\|_2^2 \E[\| \bs Z_n(t_3)-\bs Z_n(t_2)\|_2^2|\mathcal{F}_{t_2}^{(n)}]\R]\nonumber\\
    &\leq \E \F [\| \bs Z_n(t_2)-\bs Z_n(t_1)\|_2^4\R]^{1/2}\E\F[\E[\| \bs Z_n(t_3)-\bs Z_n(t_2)\|_2^2|\mathcal{F}_{t_2}^{(n)}]^2\R]^{1/2}.
    \label{eq: t1.1 tightness 0ss}
\end{align}
To prove Condition 3 of Theorem \ref{Theorem:Bill con}, we are going to show
\begin{align}
    &\E[\| \bs Z_n(t_2)-\bs Z_n(t_1)\|_2^4]^{1/2} \leq \cst (t_2-t_1)^{1/2} \label{eq: need bound crit 1},\\
    &\E\F[\E[\| \bs Z_n(t_3)-\bs Z_n(t_2)\|_2^2|\mathcal{F}_{t_2}^{(n)}]^{2}\R]^{1/2} \leq \cst (t_2-t_1), \label{eq: need bound crit 2}
\end{align}
which along with \eqref{eq: t1.1 tightness 0ss} implies the condition holds. We start by showing \eqref{eq: need bound crit 1}. By the triangle inequality, we have
\begin{align}
   \E \F [\| \bs Z_n(t_2)-\bs Z_n(t_1)\|_2^4\R] \leq  \E \F [\| \bs Z_n(t_2)-\e^{(A-\lambda_1/2)(t_2-t_1)}\bs Z_n(t_1)\|_2^4\R] +  \E \F [\| (\e^{(A-\lambda_1/2)(t_2-t_1)}-1)\bs Z_n(t_1)\|_2^4\R]. \label{eq: improve 9}
\end{align}
For the second term on the r.h.s., by Lemma \ref{Lemma: fourth moments} for small components, we have 
\begin{equation}
     \E \F [\| (\e^{(A-\lambda_1/2)(t_2-t_1)}-1)\bs Z_n(t_1)\|_2^4\R] \leq \cst (t_2-t_1)^4. \label{eq: improve 10}
\end{equation}
Next, by the strong Markov property \eqref{eq: strong markov prop}, we can apply Lemma \ref{Lemma: fourth moments} for small components conditionally on \(\mathcal{F}_{t_1}^{(n)}\) to the branching process \(\bs X_n(\omega+t_1+\cdot)\)  with initial condition \(\bs X_n(\omega +t_1)\). This gives the following bound for the first term on the r.h.s.\ of \eqref{eq: improve 9}
\begin{align*}
    &\E \F [\| \bs Z_n(t_2)-\e^{(A-\lambda_1/2)(t_2-t_1)}\bs Z_n(t_1)\|_2^4\R] \\
   & \qquad\qquad\qquad\qquad\qquad\qquad= N^{-2}\omega^{-2(m_1-1)}\e^{-2\lambda_1(\omega+t_2)}\E\F[\E\F[\| P_s(\bs X_n(\omega+t_2)- \e^{A(t_2-t_1)}\bs X_n(\omega+t_1))\|_2^4\M|\mathcal{F}_{t_1}\R]\R]\\
    &\qquad\qquad\qquad\qquad\qquad\qquad\leq \cst N^{-2}\omega^{-2(m_1-1)}\e^{-2\lambda_1(\omega+t_1)}\E[\|\bs X_n(\omega+t_1)\|_2^2](t_2-t_1).
\end{align*}
Then, applying Jensen's inequality followed by \eqref{eq: fourth moment main} to the r.h.s.\ gives
\begin{equation}
  \E \F [\| \bs Z_n(t_2)-\e^{(A-\lambda_1/2)(t_2-t_1)}\bs Z_n(t_1)\|_2^4\R] \leq \cst (t_2-t_1).
  \label{eq: tightness final 3}
\end{equation}
Using \eqref{eq: improve 10} and \eqref{eq: tightness final 3} in \eqref{eq: improve 9} gives \eqref{eq: need bound crit 1}. For \eqref{eq: need bound crit 2}, we first have
\begin{align*}
&\E\F[ \F(\bs Z_n(t_3)-\e^{(A-\lambda_1/2)(t_3-t_2)}\bs Z_n(t_2)\R)\bs Z_n(t_2)'\M|\mathcal{F}_{t_2}^{(n)}\R] = \E\F[ \Big(\bs Z_n(t_3)-\e^{(A-\lambda_1/2)(t_3-t_2)}\bs Z_n(t_2)\Big)\M|\mathcal{F}_{t_2}^{(n)}\R] \bs Z_n(t_2)'  \\
&\qquad\qquad\qquad\qquad=\omega^{-(m_1-1)/2}e^{-\lambda_1(\omega+t_3)/2}N^{-1/2}\sum_{i=1}^d \sum_{j=1}^{X_n(\omega+ t_2)_i}\mathbb{E}\F[\widehat{\bs X}_{ijn}(t_3-t_2)-\e^{A(t_3-t_2)}\bs e_i\R]\bs Z_n(t_2)' = 0,
\end{align*}
where the second equality follows from \eqref{eq: rowsumstuff}, and in the final equality the expectation vanishes by \eqref{eq: CMBP First moment}. This result implies the third equality in the following
\begin{align}
&\E[\| \bs Z_n(t_3)-\bs Z_n(t_2)\|_2^2|\mathcal{F}_{t_2}^{(n)}]= \mathrm{tr}\E[ (\bs Z_n(t_3)-\bs Z_n(t_2))(\bs Z_n(t_3)-\bs Z_n(t_2))'|\mathcal{F}_{t_2}^{(n)}] \nonumber\\
&=\mathrm{tr}\E\Big[ \Big(\bs Z_n(t_3)-\e^{(A-\lambda_1/2)(t_3-t_2)}\bs Z_n(t_2)+(\e^{(A-\lambda_1/2)(t_3-t_2)}-1)\bs Z_n(t_2)\Big) \nonumber\\
&\qquad\qquad\qquad\qquad\qquad\qquad\qquad\qquad\Big(\bs Z_n(t_3)-\e^{(A-\lambda_1/2)(t_3-t_2)}\bs Z_n(t_2)+(\e^{(A-\lambda_1/2)(t_3-t_2)}-1)\bs Z_n(t_2)\Big)'\Big|\mathcal{F}_{t_2}^{(n)}\Big]\nonumber\\
&= \mathrm{tr}\E\F[ \Big(\bs Z_n(t_3)-\e^{(A-\lambda_1/2)(t_3-t_2)}\bs Z_n(t_2)\Big) \Big(\bs Z_n(t_3)-\e^{(A-\lambda_1/2)(t_3-t_2)}\bs Z_n(t_2)\Big)'\M|\mathcal{F}_{t_2}^{(n)}\R]\nonumber\\
&\quad+\mathrm{tr}\F( (\e^{(A-\lambda_1/2)(t_3-t_2)}-1)\bs Z_n(t_2)\F((\e^{(A-\lambda_1/2)(t_3-t_2)}-1)\bs Z_n(t_2)\R)'\R). \label{eq: t1.1 tightness 1}
\end{align}
For the second term in the final equality on the r.h.s., we have
\begin{equation}
\mathrm{tr}\F( (\e^{(A-\lambda_1/2)(t_3-t_2)}-1)\bs Z_n(t_2)\F((\e^{(A-\lambda_1/2)(t_3-t_2)}-1)\bs Z_n(t_2)\R)'\R) \leq \cst  (t_3-t_2)^2 \mathrm{tr}(\bs Z_n(t_2) \bs Z_n(t_2)'). \label{eq: t1.1 tightness 2}
\end{equation}
For the first term in the final equality on the r.h.s.\ of \eqref{eq: t1.1 tightness 1}, we have
\begin{align}
  &\mathrm{tr}\E\F[ \Big(\bs Z_n(t_3)-\e^{(A-\lambda_1/2)(t_3-t_2)}\bs Z_n(t_2)\Big) \Big(\bs Z_n(t_3)-\e^{(A-\lambda_1/2)(t_3-t_2)}\bs Z_n(t_2)\Big)'\M|\mathcal{F}_{t_2}^{(n)}\R]\nonumber\\
  &=N^{-1}\omega^{-(m_1-1)}\e^{-\lambda_1(\omega+t_3)}\sum_{i=1}^d\sum_{j=1}^{X_n(\omega+ t_2)_i}\mathrm{tr}\E\F[\F(P_s(\widehat{\bs X}_{ijn}(t_3-t_2)-\e^{A(t_3-t_2)}\bs e_i)\R) \F(P_s(\widehat{\bs X}_{ij n}(t_3-t_2)-\e^{A(t_3-t_2)}\bs e_i)\R)'\R]\nonumber\\
  &= N^{-1}\omega^{-(m_1-1)}\e^{-\lambda_1(\omega+t_3)}\sum_{i=1}^d\sum_{j=1}^{X_n(\omega+ t_2)_i}\mathrm{tr}\Var\F(P_s(\widehat{\bs X}_{ij n}(t_3-t_2)-\e^{A(t_3-t_2)}\bs e_i)\R)\nonumber\\
  &\leq \cst N^{-1}\omega^{-(m_1-1)} e^{-\lambda_1(\omega+t_3)}\sum_{i=1}^d X_n(\omega+ t_2)_i (t_3-t_2), \label{eq: t1.1 tightness 3 smals} \end{align}
where in the second line we use \eqref{eq: rowsumstuff} (the cross terms vanish since they are the expectation of the product of two independent mean-zero random variables). In the final line, we use \eqref{eq: MCBP var}, where the constant arises by uniformly bounding the integrand in \eqref{eq: MCBP var} by its maximum over all choices of \(t_3-t_2 \in [0,2T]\). Using \eqref{eq: t1.1 tightness 2} and \eqref{eq: t1.1 tightness 3 smals} in \eqref{eq: t1.1 tightness 1} gives
\begin{align}
 \E\F[\E[\| \bs Z_n(t_3)-\bs Z_n(t_2)\|_2^2|\mathcal{F}_{t_2}^{(n)}]^2\R] &\leq \cst  (t_3-t_2)^4 \E[\mathrm{tr}(\bs Z_n(t_2) \bs Z_n(t_2)')^2]\nonumber\\
 &\quad+\cst N^{-2}(t_3-t_2)^2\omega^{-2(m_1-1)}e^{-2\lambda_1(\omega+t_3)}\E\F[\F(\sum_{i=1}^d X(\omega+ t_2)_i \R)^2\R]. \label{eq: t1.1 tightness 4}  
\end{align}
We bound the first term on the r.h.s.\ using Lemma \ref{Lemma: fourth moments} for small components, and the second term using Jensen's inequality (to get the fourth moment of the branching process) followed by \eqref{eq: fourth moment main}. This results in the bound 
\begin{equation}
 \E\F[\E[\| \bs Z_n(t_3)-\bs Z_n(t_2)\|_2^2|\mathcal{F}_{t_2}^{(n)}]^2\R]\leq  \cst ((t_3-t_2)^4+(t_3-t_2)^2) \leq \cst (t_3-t_2)^2,
\end{equation}
which is \eqref{eq: need bound crit 2}. Since \eqref{eq: need bound crit 1} and \eqref{eq: need bound crit 2} hold, Condition 3 of Theorem \ref{Theorem:Bill con} holds by \eqref{eq: t1.1 tightness 0ss}. Therefore, by Theorem \ref{Theorem:Bill con}, 
\begin{equation*}
    \bs Z_n(t) \conindis \bs W_{s}(t) \text{ in \(D[-T,T]\)}
    \end{equation*}
as \(n \rightarrow \infty\). Since \(T >0\) was arbitrary, the result holds in \(D(-\infty,\infty)\).
\end{proof}
\begin{proof}[Proof of Theorem \ref{Theorem: Main continuous time} LT\textsubscript{s} \eqref{eq: small component convergence in probability}]
For any integer \(T>0\), and \(n\geq 1\), we have that 
\begin{equation}
    \E\F[\sup_{0\leq t \leq T} \|N^{-1/2}\mathrm{e}^{-\lambda_1  t/2}P_s( \bs X_n( t)-N\mathrm{e}^{A  t}\bs \mu)\|^4_2\R] \leq \sum_{k=0}^{T-1}\E\F[\sup_{k\leq t \leq k+1} \|N^{-1/2}\mathrm{e}^{-\lambda_1  t/2}P_s( \bs X_n( t)-N\mathrm{e}^{A  t}\bs \mu)\|^4_2\R]. \label{eq: Bound 2}
\end{equation}
Also, for each fixed \(n\) and \(k\), the process \(N^{-1/2}\e^{A(k+1-t)}P_s(\bs X_n( t)-N\e^{At}\bs \mu)\), \(t\geq 0\), is a martingale by Lemma \ref{prop: martingale bp}. This and Doob's maximal inequality imply, for any \(n\geq 1\), and \(k\geq 0\),
\begin{align}
    \E\F[\sup_{k\leq t \leq k+1} \|N^{-1/2}\mathrm{e}^{-\lambda_1  t/2}P_s( \bs X_n( t)-N\mathrm{e}^{A  t}\bs \mu)\|^4_2\R] &\leq \cst \E\F[\sup_{k\leq t \leq k+1} \|N^{-1/2}\mathrm{e}^{-\lambda_1 k/2 }\e^{A(k+1-t)}P_s( \bs X_n( t)-N\mathrm{e}^{A  t}\bs \mu)\|^4_2\R] \nonumber\\
    &\leq \cst \E\F[ \|N^{-1/2}\mathrm{e}^{-\lambda_1 k/2 }P_s( \bs X_n( k+1)-N\mathrm{e}^{A (k+1)}\bs \mu)\|^4_2\R]\nonumber\\
    &\leq \cst (k+2)^{2(m_1-1)},\label{eq: Bound 3}
\end{align}
where in the first inequality we have used that \(\sup_{0\leq t \leq 1}\|\e^{At}\|_2\leq \cst\), and in the final inequality we have used Lemma \ref{Lemma: fourth moments} for small components. Fix \(T\geq 0\). Then, by \eqref{eq: Bound 2} and \eqref{eq: Bound 3}, we have
\begin{equation*}
     \E\F[\sup_{0\leq t \leq \omega T} \|N^{-1/2}\omega ^{-(2m_1-1)/4}K^{-1}\mathrm{e}^{-\lambda_1  t/2}P_s( \bs X_n( t)-N\mathrm{e}^{A  t}\bs \mu)\|^4_2\R] \leq \cst K^{-4}\omega^{-(2m_1-1)}\sum_{k=0}^{\omega T-1} (k+2)^{2(m_1-1)} \rightarrow 0
\end{equation*}
as \(n \rightarrow \infty\). This implies the result on \(D[0,T]\), and since \(T\) was arbitrary, the result holds on \(D[0,\infty)\).
\end{proof}
\subsection{Proof of Theorem \ref{Theorem: Main continuous time} LT\textsubscript{c}}
Instead of proving Theorem \ref{Theorem: Main continuous time} LT\textsubscript{c} directly, we prove a more general result on the functional convergence of linear combinations of different LT\textsubscript{c} components. We do so, since proving this more general result allows for an easy proof of the dependencies of different LT\textsubscript{c} components in Theorem \ref{theorem: cont time 2}. If we know how the linear combinations of these components behave we can apply a functional version of the Cr\'amer-Wold theorem (see Theorem \ref{theorem: cramer wold}) to get joint convergence. 
\begin{lemma}
 \label{Theorem: large time}
Take the setting of Theorem \ref{Theorem: Main continuous time} LT. Let \(p\geq 1\). For \(1\leq i \leq p\), let \(\lambda_{ci} \in \Lambda_c\), \(J_i\in \mathcal{J}_{\lambda_{ci}}\) with size \(m_{ci}\), and~\(1\leq \kappa_i\leq m_{ci}\). Also let \(d_1,...,d_p\in \mathbb{R}\). Then, as \(n \rightarrow \infty\),
        \begin{equation}
               N^{-1/2}\sum_{i=1}^p\omega^{-(m_1+2\kappa_i -2)/2}\e^{-\lambda_{ci} \omega t}d_iN_A^{m_{ci}-\kappa_i}P_{J_i}(\bs X_n(\omega t)-N\e^{A\omega t}\bs \mu) \conindis \bs W_{c}(t) \text{ in }  D[0,\infty).
        \end{equation}
Let \(B:= \sum_{i=1}^d a_i v(\bs\mu)_i\E[\bs \xi_i\bs \xi_i']\). The process \(\bs W_{c}\) is a mean-zero Gaussian processes with covariance function, for \(0 \leq t_1\leq t_2 <\infty\),
\begin{align}
   &\Cov(\bs W_{c}(t_2),\bs W_{c}(t_1))= \sum_{k,\ell=1}^{p}\bs 1_{\{\lambda_{ck}=\overline{\lambda}_{c\ell}\}}d_kd_{\ell}N_A^{m_{ck}-1}P_{J_k}B P_{J_{\ell}}'N_A'^{m_{c\ell}-1}\int_{0}^{t_1}\frac{(t_1-v)^{\kappa_{\ell}-1}(t_2-v)^{\kappa_{k}-1}v^{m_1-1}}{(\kappa_{k}-1)!(\kappa_{\ell}-1)!}\mathrm{d}v, \label{eq: covariance function of CT lem 1}
  \\
  &\Cov(\bs W_{c}(t_2),\overline{\bs W}_{c}(t_1))=\sum_{k,\ell=1}^{p}\bs 1_{\{\lambda_{ck}=\lambda_{c\ell}\}}d_kd_{\ell}N_A^{m_{ck}-1}P_{J_k}BP_{J_{\ell}}^*N_A'^{m_{c\ell}-1}\int_{0}^{t_1}\frac{(t_1-v)^{\kappa_{\ell}-1}(t_2-v)^{\kappa_{k}-1}v^{m_1-1}}{(\kappa_{k}-1)!(\kappa_{\ell}-1)!}\mathrm{d}v. \label{eq: covariance function of CT lem 2}
\end{align}
 \end{lemma}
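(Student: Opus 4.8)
The plan is to turn $\bs{M}_n$ (the left-hand side of the statement) into a Volterra integral driven by a sequence of martingales, apply the martingale functional CLT of Lemma~\ref{proposition: Janson 9.1} to those martingales, and pass to the limit by the continuous mapping theorem. First I would set $\bs{Y}_n(t):=N^{-1/2}(\e^{-A\omega t}\bs{X}_n(\omega t)-N\bs\mu)$; by Lemma~\ref{prop: martingale bp} and the fact that a deterministic time change preserves the martingale property this is a martingale (for $(\mathcal{F}_{\omega t})_{t\ge0}$) started from $\bs 0$. Writing the compensated jump process $\widehat{\bs{\mathcal M}}_n(s):=\sum_{\tau\le s}\Delta\bs{X}_n(\tau)-\int_0^s A\bs{X}_n(v)\,\intd v$, which is a martingale with jumps $\Delta\bs{X}_n(\tau)$ by the argument of Lemma~\ref{lemma: Jan 9.3}, a product-rule computation gives $\e^{-A\omega t}\bs{X}_n(\omega t)-N\bs\mu=\int_0^{\omega t}\e^{-Av}\,\intd\widehat{\bs{\mathcal M}}_n(v)$. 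Combining this with $\bs{X}_n(\omega t)-N\e^{A\omega t}\bs\mu=N^{1/2}\e^{A\omega t}\bs{Y}_n(t)$, the identity $\e^{-\lambda\omega t}N_A^{m-\kappa}P_J\e^{A\omega t}=\sum_{k=0}^{\kappa-1}\tfrac{(\omega t)^k}{k!}N_A^{m-\kappa+k}P_J$ from \eqref{eq: mat exp bound start}, and $N_A^{r}P_J\e^{-Av}=\e^{-\lambda v}\sum_{k'\ge0}\tfrac{(-v)^{k'}}{k'!}N_A^{r+k'}P_J$, a resummation (the binomial identity $\sum_{k+k'=\ell}\tfrac{(\omega t)^k(-v)^{k'}}{k!\,k'!}=\tfrac{(\omega t-v)^\ell}{\ell!}$) yields the exact identity
\[
\bs{M}_n(t)=\sum_{q=1}^p d_q\,\omega^{-(m_1+2\kappa_q-2)/2}N^{-1/2}\int_0^{\omega t}\e^{-\lambda_{cq}v}\sum_{\ell=0}^{\kappa_q-1}\frac{(\omega t-v)^\ell}{\ell!}N_A^{m_{cq}-\kappa_q+\ell}P_{J_q}\,\intd\widehat{\bs{\mathcal M}}_n(v).
\]

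Next I would isolate the leading ($\ell=\kappa_q-1$) term: changing variables $v=\omega u$ it equals $\bs{M}_n^*(t):=\sum_q d_q\int_0^t\tfrac{(t-u)^{\kappa_q-1}}{(\kappa_q-1)!}\,\intd\bs{\mathcal L}_n^{(q)}(u)$, where $\bs{\mathcal L}_n^{(q)}(u):=\omega^{-m_1/2}N^{-1/2}\int_0^{\omega u}\e^{-\lambda_{cq}v}N_A^{m_{cq}-1}P_{J_q}\,\intd\widehat{\bs{\mathcal M}}_n(v)$ is a $\mathbb C^d$-valued martingale started from $\bs 0$. For $\ell<\kappa_q-1$, expanding $(\omega t-v)^\ell$ binomially writes the corresponding term as a finite sum of $(\omega t)^a$ times martingales in $t$; Doob's maximal inequality with the second-moment formula of Lemma~\ref{lemma: Jan 9.3} and the asymptotics $(\e^{Av}\bs\mu)_i\sim v^{m_1-1}\e^{\lambda_1 v}v(\bs\mu)_i$ (from \eqref{eq: mat exp bound start}, \eqref{eq: imp ob 1}, \eqref{eq: CMBP First moment}) show each such term is $\mathcal{O}_p(\omega^{-(\kappa_q-1-\ell)})$ uniformly on compact time intervals, so $\bs{M}_n-\bs{M}_n^*\coninprob\bs 0$ in $D[0,\infty)$. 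I would then apply Lemma~\ref{proposition: Janson 9.1} to the stacked martingale $(\bs{\mathcal L}_n^{(q)})_{q=1}^p$: its Condition~3 and the limits in Conditions~1--2 follow by evaluating the (cross-)quadratic variations $[\bs{\mathcal L}_n^{(q)},\overline{\bs{\mathcal L}}_n^{(q')}]_u$ and $[\bs{\mathcal L}_n^{(q)},\bs{\mathcal L}_n^{(q')}]_u$ with Lemma~\ref{lemma: Jan 9.3}; using $\mathrm{Re}\,\lambda_{cq}=\lambda_1/2$, the asymptotics above, and $v=\omega u$, their expectations converge to $\tfrac{u^{m_1}}{m_1}$ times $\bs1_{\{\lambda_{cq}=\lambda_{cq'}\}}N_A^{m_{cq}-1}P_{J_q}BP_{J_{q'}}^*N_A'^{m_{cq'}-1}$, resp.\ $\bs1_{\{\lambda_{cq}=\overline{\lambda}_{cq'}\}}N_A^{m_{cq}-1}P_{J_q}BP_{J_{q'}}'N_A'^{m_{cq'}-1}$, with $B=\sum_{i=1}^d a_i v(\bs\mu)_i\E[\bs\xi_i\bs\xi_i']$, the indicator arising because the residual factor $\e^{(\lambda_1-\lambda_{cq}-\overline{\lambda}_{cq'})v}$ (resp.\ $\e^{(\lambda_1-\lambda_{cq}-\lambda_{cq'})v}$) is purely oscillatory with integral $o(\omega^{m_1})$ unless its frequency vanishes. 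Convergence in probability (not just in mean) of these quadratic variations follows by bounding their variances via a second application of Lemma~\ref{lemma: Jan 9.3} to the compensated quadratic-variation process, where $\int_0^\infty v^{m_1-1}\e^{-\lambda_1 v}\intd v<\infty$ (from (A3)) and $\E[\|\bs\xi_i\|_2^4]<\infty$ ((A1)) are used.

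Lemma~\ref{proposition: Janson 9.1} then gives $(\bs{\mathcal L}_n^{(q)})_q\conindis(\bs{\mathcal L}^{(q)})_q$, a continuous jointly Gaussian martingale with the above (cross-)quadratic variations. To finish, I would note that for $\kappa_q=1$ the map $\bs f_q\mapsto d_q\int_0^t\tfrac{(t-u)^{\kappa_q-1}}{(\kappa_q-1)!}\intd\bs f_q(u)$ is $\bs f_q\mapsto d_q\bs f_q(t)$, and for $\kappa_q\ge2$, after integrating by parts (using $\bs f_q(0)=\bs 0$), it is $\bs f_q\mapsto d_q\int_0^t\tfrac{(t-u)^{\kappa_q-2}}{(\kappa_q-2)!}\bs f_q(u)\,\intd u$; in either case it is a continuous map $D[0,\infty)\to D[0,\infty)$, so the continuous mapping theorem gives $\bs{M}_n^*\conindis\sum_q d_q\int_0^t\tfrac{(t-u)^{\kappa_q-1}}{(\kappa_q-1)!}\intd\bs{\mathcal L}^{(q)}(u)=:\bs{W}_c(t)$, whence $\bs{M}_n\conindis\bs{W}_c$ by Slutsky's lemma. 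Finally $\bs{W}_c$ is a continuous linear image of the Gaussian martingale $(\bs{\mathcal L}^{(q)})_q$, so it is Gaussian and its covariances are obtained by the Itô isometry: integrating $(t_2-v)^{\kappa_k-1}(t_1-v)^{\kappa_\ell-1}$ against the (cross-)quadratic-variation \emph{densities} $v^{m_1-1}\,\bs1_{\{\cdots\}}(\cdots)\,\intd v$ on the diagonal up to $t_1\wedge t_2=t_1$, which reproduces exactly \eqref{eq: covariance function of CT lem 1}--\eqref{eq: covariance function of CT lem 2}.

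The main obstacle is that $\bs{M}_n$ is \emph{not} a martingale when some $\kappa_q>1$, since the time weights $(\omega t)^k$ destroy it, so Lemma~\ref{proposition: Janson 9.1} cannot be applied directly; the crux is the resummation in the first step, which both exhibits $\bs{M}_n$ as a Volterra integral against the genuine martingales $\bs{\mathcal L}_n^{(q)}$ and makes visible the cancellations among the individually comparably-sized pieces $N_A^{m_{cq}-\kappa_q+\ell}P_{J_q}\bs{Y}_n$, leaving only the $\ell=\kappa_q-1$ terms at order $1$. The remaining difficulty is the two-fold moment bookkeeping through Lemma~\ref{lemma: Jan 9.3} --- needed both to control the $\mathcal{O}_p(\omega^{-(\kappa_q-1-\ell)})$ errors uniformly on a time window $[0,\omega T]$ that diverges, and to verify the quadratic-variation hypotheses of Lemma~\ref{proposition: Janson 9.1} --- which is where assumptions (A1) and (A3) are genuinely used; this part is routine but delicate.
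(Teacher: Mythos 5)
Your proposal is correct, and it takes a genuinely different route from the paper. The paper expands
\(\e^{-\lambda\omega t}N_A^{m-\kappa}P_J\e^{A\omega t}\) acting on the martingale \(\e^{-A\omega t}\bs X_n(\omega t)-N\bs\mu\) evaluated at its endpoint, producing the sum \eqref{eq: improve 23} in which \emph{every} index \(j\) contributes a nontrivial limit \(\bs W_{i,j}(t)\); it then proves the intermediate Lemma~\ref{lemma: intermediate lem 1} for the joint convergence of all the \(N_A^{m-\kappa'}P_J\)-projections (via arbitrary linear combinations, Lemma~\ref{proposition: Janson 9.1}, and the functional Cram\'er--Wold argument of Theorem~\ref{theorem: cramer wold}), and finally recombines those pieces and uses the binomial theorem to recover the integral form of the covariance. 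You instead push the Jordan expansion \emph{inside} the stochastic-integral representation of the martingale, which yields scalar weights \(\e^{-\lambda_{cq}v}(\omega t-v)^{\ell}/\ell!\) rather than matrix weights \(\|N_A^{m-\kappa+\ell}P_J\e^{-Av}\|\sim(1+v)^{\kappa-\ell-1}\e^{-\lambda_1 v/2}\). That shift of where the \((1+v)^{\kappa-1}\)-growth is accounted for is exactly why, in your decomposition, the \(\ell<\kappa_q-1\) terms are \(o_p(1)\) even though the corresponding \(j\)-terms in \eqref{eq: improve 23} are all \(O_p(1)\): the cancellations among the paper's \(j\)-terms are absorbed algebraically into the Volterra kernel. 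Your residual martingales \(\bs{\mathcal L}_n^{(q)}\) are precisely the \(\kappa'=1\) objects of Lemma~\ref{lemma: intermediate lem 1}, whose limits have the Brownian-like covariance \(\propto t_1^{m_1}/m_1\), so only one (stacked) application of Lemma~\ref{proposition: Janson 9.1} is needed, no Cram\'er--Wold, and the covariance of \(\bs W_c\) drops out of the It\^o isometry against the deterministic kernel --- this is tidier than the paper's binomial recombination. On the other hand, the paper's route goes through Lemma~\ref{lemma: intermediate lem 1}, which is also used to establish the Jordan-basis-vector asymptotics of Theorem~\ref{Theorem: Main continuous time} LT\(_c\) (see Remark~\ref{remark: Nilop}), so it pays for itself elsewhere.

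Two small points you should be explicit about when writing this up. First, after the binomial expansion of \((\omega t-v)^{\ell}\), the deterministic factor \((\omega t)^a\) must be pulled out before applying Doob's maximal inequality (the product is not a martingale in \(t\)); bounding \(\sup_{t\le T}\) by \((\omega T)^a\) times the sup of the inner martingale and combining with the normalization then gives the advertised \(\mathcal{O}_p(\omega^{-(\kappa_q-1-\ell)})\) on each \([0,T]\), which is the right notion for convergence in \(D[0,\infty)\). Second, the continuity of the map \(\bs f\mapsto\int_0^t\frac{(t-u)^{\kappa_q-1}}{(\kappa_q-1)!}\,\intd\bs f(u)\) on \(D[0,T]\) should be stated at continuity points of the limit, which suffices because \(\bs{\mathcal L}^{(q)}\) is a.s.\ continuous by Lemma~\ref{proposition: Janson 9.1}; your integration-by-parts rewriting makes this transparent.
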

\noindent To get Theorem \ref{Theorem: Main continuous time} LT\textsubscript{c} from Lemma \ref{Theorem: large time}  we take \(p=1\) and \(d_1=1\). We use the following intermediate result to prove Theorem \ref{Theorem: large time}.

\begin{lemma}
\label{lemma: intermediate lem 1}
Take the setting of Theorem \ref{Theorem: Main continuous time} LT. Let \(q \geq 1\). For \(1 \leq i \leq q\), let \(\lambda_{ci}\in \Lambda_c\), \(J_i \in \mathcal{J}_{\lambda_{ci}}\) with size \(m_{ci}\), \(1 \leq \kappa_i \leq m_{ci}\), and \(Q_{i}=N_A^{m_{ci}-\kappa_i}P_{J_i}\). Then, jointly, for all \(i \in \{1,...,q\}\), as \(n \rightarrow \infty\),
\begin{equation}
\label{eq: Convergence 2}
    N^{-1/2}\omega^{-(m_1+2\kappa_i-2)/2}Q_{i}(\e^{-A\omega t}\bs X_n(\omega t)-N\bs \mu) \conindis \bs W_i(t) \text{ in }D[0,\infty),
\end{equation}
where the \(\bs W_i\) are mean-zero Gaussian processes. Furthermore, for \(0 \leq t_1 \leq t_2 <\infty, 1 \leq k \leq \ell \leq q\),
\begin{align*}
    &\Cov(\bs W_{k}(t_2),\overline{\bs W}_{\ell}(t_1))=\bs 1_{\{\lambda_{ck}=\lambda_{c\ell}\}}F(t_1,\kappa_k,\kappa_{\ell}) N_A^{\kappa_{k}-1}Q_kBQ_{\ell}^*N_A'^{\kappa_{\ell}-1},\\
    &\Cov(\bs W_{k}(t_2),\bs W_{\ell}(t_1))=\bs 1_{\{\lambda_{ck}=\overline{\lambda}_{c\ell}\}}F(t_1,\kappa_k,\kappa_{\ell}) N_A^{\kappa_{k}-1}Q_kBQ_{\ell}'N_A'^{\kappa_{\ell}-1},
\end{align*}
where
\begin{align*}
  &F(t,\kappa_k,\kappa_{\ell})= (2-\kappa_{k}-\kappa_{\ell}-m_1)^{-1}((\kappa_{k}-1)!(\kappa_{\ell}-1)!)^{-1}(-t)^{\kappa_{k}+\kappa_{\ell}-1}t^{m_1-1}.  
\end{align*}
\end{lemma}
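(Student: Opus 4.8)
# Proof proposal for Lemma \ref{lemma: intermediate lem 1}

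\textbf{Overall strategy.} The plan is to realise each process in \eqref{eq: Convergence 2} as (essentially) a martingale and apply the martingale functional CLT, Lemma \ref{proposition: Janson 9.1}, jointly to the whole vector $(\bs W_1,\dots,\bs W_q)$ via a functional Cramér–Wold reduction (Theorem \ref{theorem: cramer wold}). Recall from Lemma \ref{prop: martingale bp} that $(\e^{-At}\bs X_n(t))_{t\geq 0}$ is a martingale; hence, for each $i$,
\begin{equation*}
 \bs M_{n,i}(t):=N^{-1/2}\omega^{-(m_1+2\kappa_i-2)/2}Q_i\bigl(\e^{-A\omega t}\bs X_n(\omega t)-N\bs\mu\bigr),\qquad t\geq 0,
\end{equation*}
is, for each fixed $n$, a $\mathcal F_{\omega t}$-martingale started at $0$ (the time-change $t\mapsto \omega t$ and the fixed linear map $N^{-1/2}\omega^{-(\cdots)/2}Q_i$ preserve the martingale property). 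The key scaling bookkeeping: $Q_i=N_A^{m_{ci}-\kappa_i}P_{J_i}$ kills the top $m_{ci}-\kappa_i$ coefficient functions in the Jordan block $J_i$ (as explained in Remark \ref{remark: Nilop}), so after applying $Q_i$ the dominating term lives on $\bs v_{J_i,1}$ and, by \eqref{eq: mat exp bound start}, the normalisation $\omega^{-(m_1+2\kappa_i-2)/2}$ is exactly what is needed to get a nondegenerate limit — the $m_1-1$ comes from the growth $\e^{\lambda_1\omega}(\omega)^{m_1-1}$ of the total mass $(\e^{A\omega v}\bs\mu)_i$ driving the quadratic variation, and the extra $\kappa_i-1$ comes from the polynomial-in-$\omega t$ factors produced by $N_A^{m_{ci}-\kappa_i}P_{J_i}\e^{-A\omega t}$.

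\textbf{Steps.} (1) Fix the test vector: for $\bs c=(c_1,\dots,c_q)\in\mathbb C^q$ consider $\bs M_n:=\sum_{i=1}^q c_i\bs M_{n,i}$, still a martingale with $\bs M_n(0)=0$. (2) Compute its quadratic covariation. Since the MCBP only jumps at particle deaths, $[\bs M_n,\overline{\bs M}_n]_t$ is a sum over death times of type-$k$ particles of terms of the form (products of the $Q_i$, a matrix exponential evaluated at the death time, and $\Delta\bs X_n\Delta\bs X_n^*$), exactly the shape to which Lemma \ref{lemma: Jan 9.3} applies. Apply that lemma with $p=1$, $M_1(t),M_2(t)$ the relevant (scaled, $N_A$-twisted) matrix exponentials and $N(\bs x)=\bs x\bs x^*$: this gives a compensator expression, $\int_0^{\omega t}\mathbb E[\cdots]\,a_kX_n(v)_k\,\mathrm dv$, plus a martingale remainder $Y_n(t)$. (3) Show the martingale remainder vanishes: $[Y_n,\overline Y_n]$ is again of Lemma \ref{lemma: Jan 9.3} form (its jumps are the squared summands), so $\E[\|Y_n(t)\|_2^2]=\E[[Y_n,\overline Y_n]_t]$ is an explicit integral; bounding it using (A1) and the matrix-exponential bounds \eqref{eq: matrix exp bounds} shows it is $o(1)$, because the extra factor of $N^{-1}$ (or $\omega^{-1}$) from the double normalisation beats the growth of the integrand. (4) Replace $X_n(v)_k$ by its mean $N(\e^{Av}\bs\mu)_k$ in the compensator: the error has $\E[\|\cdot\|]\leq\cst N^{-1/2}$ by \eqref{eq: MCBP var}, (A1) and Jensen, hence is negligible. (5) Evaluate the deterministic limit of the compensator: substitute $\bs\mu=\sum_k\mu_k\bs e_k$, use $(\e^{Av}\bs e_k)_k\sim \e^{\lambda_1 v}v^{m_1-1}(\cdots)$ via \eqref{eq: mat exp bound start}, change variables $v\mapsto \omega t-v$ (or rescale $v/\omega$), identify the surviving terms as those with $\lambda_{ci}=\overline{\lambda}_{cj}$ (so the complex exponentials cancel against the conjugate — everything else oscillates away after the rescaling limit), and recognise the emerging constant as $B=\sum_k a_k v(\bs\mu)_k\E[\bs\xi_k\bs\xi_k']$ together with the stated $F(t_1,\kappa_k,\kappa_\ell)$ and the $N_A^{\kappa-1}Q BQ^*N_A'^{\kappa-1}$ factors. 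This verifies Conditions 1 and 2 of Lemma \ref{proposition: Janson 9.1} for $\bs M_n$. (6) Condition 3, $\sup_n\E[\|\bs M_n(t)\|_2^2]<\infty$, follows from \eqref{eq: second moment 2}, \eqref{eq: CMBP First moment} and (A1) by the same uniform-bound-on-the-integrand argument. (7) Conclude $\bs M_n\conindis$ a mean-zero Gaussian process in $D[0,\infty)$ whose covariances are the quadratic forms in $\bs c$ of the claimed matrices; since $\bs c$ was arbitrary, the functional Cramér–Wold device (Theorem \ref{theorem: cramer wold}) upgrades this to joint convergence of $(\bs M_{n,1},\dots,\bs M_{n,q})$, which is \eqref{eq: Convergence 2}, and reading off the pairwise covariances gives the displayed formulas for $\Cov(\bs W_k,\overline{\bs W}_\ell)$ and $\Cov(\bs W_k,\bs W_\ell)$.

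\textbf{Main obstacle.} The routine steps are the vanishing of $Y_n$ and Condition 3; the genuinely delicate step is Step (5), the exact evaluation of the limit of the compensator integral — keeping careful track of which power of $\omega$ sits where after applying $N_A^{m_{ci}-\kappa_i}P_{J_i}\e^{-A\omega t}$ (this is where the factor $(\kappa_i-1)!$ and the polynomial weights $(t_1-v)^{\kappa_\ell-1}(t_2-v)^{\kappa_k-1}v^{m_1-1}$ are born), and verifying that the cross terms with $\lambda_{ck}\neq\overline{\lambda}_{c\ell}$ really do contribute $0$ in the limit because the residual factor $\e^{(\lambda_{ck}+\overline{\lambda}_{c\ell}-\lambda_1)\omega t}$ (real part $<\lambda_1$ unless the eigenvalues are conjugate, since both are critical) decays, or oscillates without mass, after normalisation. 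The sign pattern $(-t)^{\kappa_k+\kappa_\ell-1}$ and the constant $(2-\kappa_k-\kappa_\ell-m_1)^{-1}$ in $F$ should drop out of the $v$-integral $\int_0^{t}(t-v)^{\kappa_k-1}(t-v)^{\kappa_\ell-1}v^{m_1-1}\,\mathrm dv$ after the change of variables, and this Beta-type integral computation is the one place a small error would propagate, so it deserves care.
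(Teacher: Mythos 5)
Your proposal follows essentially the same route as the paper: the paper likewise fixes arbitrary real coefficients $d_1,\dots,d_q$, forms the linear combination $\bs Z_n(t)=N^{-1/2}\sum_k d_k\omega^{-(m_1+2\kappa_k-2)/2}Q_k(\e^{-A\omega t}\bs X_n(\omega t)-N\bs\mu)$, applies Lemma~\ref{lemma: Jan 9.3} to identify the compensator, kills the martingale remainder and the $X_n\mapsto N\e^{Av}\bs\mu$ error via second and fourth moment bounds, isolates the surviving pieces of the deterministic integral, and then upgrades to joint convergence by Theorem~\ref{theorem: cramer wold}. Two small corrections for precision: the Cram\'er--Wold coefficients must be taken real (as in Theorem~\ref{theorem: cramer wold}), not in $\mathbb C^q$; and the cross terms that survive in $[\bs Z_n,\overline{\bs Z}_n]$ are those with $\lambda_{ck}=\lambda_{c\ell}$ (not $\lambda_{ck}=\overline\lambda_{c\ell}$, which is the survival condition for $[\bs Z_n,\bs Z_n]$) --- the conjugate enters through $Q^*=\overline Q'$, so $\e^{-\lambda_{ck}v}\e^{-\overline\lambda_{c\ell}v}$ is non-oscillatory against $\e^{\lambda_1 v}$ exactly when $\mathrm{Im}\lambda_{ck}=\mathrm{Im}\lambda_{c\ell}$. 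Also, the integral in this lemma is a simple power integral $\int_0^{t}(-v)^{\kappa_k+\kappa_\ell-2}v^{m_1-1}\,\mathrm dv$ after rescaling, not a Beta integral; the Beta-type integral appears only in the final covariance formulas of Lemma~\ref{Theorem: large time}, obtained from this one by a binomial re-expansion.
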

\noindent We leave the proof of this result until after the proof of Lemma \ref{Theorem: large time}.
\begin{proof}[Proof of Lemma \ref{Theorem: large time}]By \eqref{eq: projection}, we have
\begin{align}
   N^{-1/2}\sum_{i=1}^p\omega^{-(m_1+2\kappa_i -2)/2}&\e^{-\lambda_{ci} \omega t}d_iN_A^{m_{ci}-\kappa_i}P_{J_i}(\bs X_n(\omega t)-N\e^{A\omega t}\bs \mu) \nonumber \\
    &=N^{-1/2}\sum_{i=1}^{p}\sum_{j=0}^{\kappa_i-1}\frac{t^j}{j!}\omega^{-(m_1+2\kappa_i-2j-2)/2}d_iN_A^{m_{ci}-\kappa_i+j}P_{J_i} (\e^{-A\omega t}\bs X_n(\omega t)-N\bs \mu).\label{eq: improve 23}
\end{align}
The summands on the r.h.s.\ are of the form of the l.h.s.\ of \eqref{eq: Convergence 2}. Indeed, if, in Lemma \ref{lemma: intermediate lem 1}, we take \(q = \sum_{i=1}^p\kappa_i\), and for~\(1 \leq k \leq p\), \(1 \leq \ell \leq \kappa_k\), take \(Q_{\sum_{i=1}^{k-1}\kappa_i+\ell}=N_A^{m_{ck}-\kappa_k+\ell-1} P_{J_k}\). Then Lemma \ref{lemma: intermediate lem 1} gives, for \(1 \leq i \leq p\), \(0 \leq j \leq \kappa_i-1\), the joint convergence of
\begin{equation*}
 N^{-1/2}\omega^{-(m_1+2\kappa_i-2j-2)/2}N_A^{m_{ci}-\kappa_i+j}P_{J_i} (\e^{-A\omega t}\bs X_n(\omega t)-N\bs \mu)  \conindis \bs W_{i,j}(t)\text{ in } D[0,\infty)
\end{equation*}
as \(n \rightarrow \infty\), where, for \(1 \leq k,\ell \leq p\), \(0 \leq a \leq \kappa_k-1\), \(0 \leq b \leq \kappa_{\ell}-1\), \(0 \leq t_1 \leq t_2 <\infty\),
\begin{align*}
    &\Cov(\bs W_{k,a}(t_2),\overline{\bs W}_{\ell,b}(t_1))=\bs 1_{\{\lambda_{ck}=\lambda_{c\ell}\}}F(t_1,\kappa_k-a,\kappa_{\ell}-b) N_A^{m_{ck}-1}P_{J_k} B P_{J_{\ell}}^*N_A'^{m_{c\ell}-1},\\
    &\Cov(\bs W_{k,a}(t_2),\bs W_{\ell,b}(t_1))=\bs 1_{\{\lambda_{ck}=\overline{\lambda}_{c\ell}\}}F(t_1,\kappa_k-a,\kappa_{\ell}-b) N_A^{m_{ck}-1}P_{J_k}BP_{J_{\ell}}'N_A'^{m_{c\ell}-1}.
\end{align*}
Using this in \eqref{eq: improve 23} gives
\begin{equation*}
     N^{-1/2}\sum_{i=1}^p\omega^{-(m_1+2\kappa_i -2)/2}\e^{-\lambda_{ci} \omega t}d_iN_A^{m_{ci}-\kappa_i}P_{J_i}(\bs X_n(\omega t)-N\e^{A\omega t}\bs \mu) \conindis \bs W_c(t) \text{ in } D[0,\infty)
\end{equation*}
as \(n \rightarrow \infty\), where, for \(0\leq t_1 \leq t_2\),
\begin{align*}
&\Cov(\bs W_c(t_2),\overline{\bs W}_c(t_1)) = \sum_{k,\ell=1}^p\sum_{a=0}^{\kappa_{k}-1}\sum_{b=0}^{\kappa_{\ell}-1}\bs 1_{\{\lambda_{ck} = \overline{\lambda}_{c\ell}\}}\frac{t_1^{b}t_2^{a}}{a!b!}F(t_1,\kappa_k-a,\kappa_{\ell}-b) N_A^{m_{ck}-1}P_{J_k}BP_{J_{\ell}}^*N_A'^{m_{{c\ell}}-1},\\
   & \Cov(\bs W_c(t_2),\bs W_c(t_1)) = \sum_{k,{\ell}=1}^p\sum_{a=0}^{\kappa_{k}-1}\sum_{b=0}^{\kappa_{{\ell}}-1}\bs 1_{\{\lambda_{ck} = \overline{\lambda}_{c{\ell}}\}}\frac{t_1^{b}t_2^{a}}{a!b!}F(t_1,\kappa_k-a,\kappa_{\ell}-b) N_A^{m_{ck}-1}P_{J_k}BP_{\ell}'N_A'^{m_{{c\ell}}-1}.
\end{align*}
To show that these agree with the covariance functions given in \eqref{eq: covariance function of CT lem 1} and \eqref{eq: covariance function of CT lem 2}, we use that
\begin{equation*}
   (2-\kappa_{k}+a-\kappa_{\ell}+b-m_1)^{-1} (-t_1)^{\kappa_{k}-a+\kappa_{\ell}-b-1}t_1^{m_1-1}=\int_{0}^{t_1}(-v)^{\kappa_{k}-a+\kappa_{\ell}-b-2}v^{m_1-1}\mathrm{d}v.
\end{equation*}
Then, we apply the binomial theorem twice, first to the sum with index \(a\), then the sum with index \(b\).
\end{proof}
\noindent 
We return to proving Lemma \ref{lemma: intermediate lem 1}. To do so, we need the following consequence of the Cram\'er-Wold theorem.
\begin{theorem}
\label{theorem: cramer wold}
Let \(a < b\) be real. Let \((\bs X_{n1},...,\bs X_{nk})_{n\geq 1}\) be sequences of processes in \(D[a,b]\) that take values in \(\mathbb{C}^d\). Assume for some processes \(\bs W_1,...,\bs W_k\) in \(D[a,b]\), and all possible \(k\)-tuples \((c_1,...,c_k)\in \mathbb{R}^k\),
\begin{equation*}
    \sum_{i=1}^k c_i \bs X_{ni}(t) \conindis \sum_{i=1}^k c_i \bs W_i(t) \text{ in } D[a,b]
\end{equation*}
as \(n \rightarrow \infty\). Then,
\begin{equation*}
    (\bs X_{n1}(t),...,\bs X_{nk}(t)) \conindis (\bs W_1(t),...,\bs W_k(t)) \text{ in \(D[a,b]^k\)}
\end{equation*}
as \(n \rightarrow \infty\).
\end{theorem}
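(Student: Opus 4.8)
The plan is the standard two-step route to weak convergence in a product Skorohod space: first establish that the joint laws of $(\bs X_{n1},\dots,\bs X_{nk})$ form a tight family in $D[a,b]^k$, and then identify every finite-dimensional limit by feeding the hypothesis into the ordinary (finite-dimensional) Cram\'er--Wold theorem. Tightness together with convergence of the finite-dimensional distributions along a dense set of times forces weak convergence in $D[a,b]^k$, and the finite-dimensional distributions determine the limiting law there, so these two ingredients suffice.

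For tightness, specialise the hypothesis to $(c_1,\dots,c_k)=\bs e_i$, the $i$th coordinate vector: this gives $\bs X_{ni}\conindis\bs W_i$ in $D[a,b]$ for each fixed $i$, so each marginal sequence $(\bs X_{ni})_{n\geq1}$ is tight in $D[a,b]$. Since a finite product of tight families of Borel probability measures on Polish spaces is tight, $((\bs X_{n1},\dots,\bs X_{nk}))_{n\geq1}$ is tight in $D[a,b]^k$.

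For the finite-dimensional distributions, let $T_0\subseteq[a,b]$ be the set of $t$ such that $\mathbb{P}(\bs W_i(t)=\bs W_i(t-))=1$ for every $i$; its complement is at most countable, it is therefore dense, and we may adjoin the endpoints $a,b$ since evaluation there is continuous on $D[a,b]$. Fix $t_1<\dots<t_m$ in $T_0$. For any $(c_1,\dots,c_k)\in\mathbb{R}^k$ each $t_j$ is, almost surely, a continuity point of $\sum_i c_i\bs W_i$ (a finite sum of processes almost surely continuous at $t_j$), so convergence of $\sum_i c_i\bs X_{ni}$ to $\sum_i c_i\bs W_i$ in $D[a,b]$ yields
\begin{equation*}
\Big(\sum_{i=1}^k c_i\bs X_{ni}(t_1),\dots,\sum_{i=1}^k c_i\bs X_{ni}(t_m)\Big)\ \conindis\ \Big(\sum_{i=1}^k c_i\bs W_i(t_1),\dots,\sum_{i=1}^k c_i\bs W_i(t_m)\Big)
\end{equation*}
in $(\mathbb{C}^d)^m$. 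Passing this through the finite-dimensional Cram\'er--Wold theorem, applied to the real and imaginary parts of the $\mathbb{C}^d$ coordinates, identifies the joint law of the $(\mathbb{C}^d)^{km}$-valued vector $(\bs X_{ni}(t_j))_{1\le i\le k,\,1\le j\le m}$ as converging to that of $(\bs W_i(t_j))_{1\le i\le k,\,1\le j\le m}$. Combining this with tightness gives $(\bs X_{n1},\dots,\bs X_{nk})\conindis(\bs W_1,\dots,\bs W_k)$ in $D[a,b]^k$.

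The only step with real content is the Cram\'er--Wold identification just described: one has to check that the linear combinations furnished by the hypothesis are rich enough to pin down the joint finite-dimensional laws. This is the substantive point, and for vector- and complex-valued processes it has to be set up with some care, since it is precisely here that one passes to the real and imaginary parts of the coordinates and exhausts all one-dimensional projections of the $(\mathbb{C}^d)^{km}$-vectors. Everything else is soft: tightness of a finite product of tight sequences, weak continuity of evaluation at finitely many continuity points, and the standard characterisation of weak convergence in $D[a,b]^k$ by tightness plus convergence of finite-dimensional distributions along a dense time set. Accordingly I expect essentially all the genuine work to sit in that identification step.
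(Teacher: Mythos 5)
Your proposal follows the same route as the paper's proof---tightness of the product from tightness of each marginal, then finite-dimensional convergence via Cram\'er--Wold---and reproduces the same gap at the step you yourself flag as ``the only step with real content.'' The hypothesis yields, for each fixed $c\in\mathbb{R}^k$, convergence of the $(\mathbb{C}^d)^m$-vector $\big(\sum_i c_i\bs X_{ni}(t_j)\big)_{j}$; applying ordinary Cram\'er--Wold to that gives convergence of the projections $\phi\cdot(\bs X_{ni}(t_j))_{i,j}$ only for $\phi$ of the factored form $\phi_{i,(j,l)}=c_i\theta_{j,l}$, i.e.\ rank-one tensors in $\mathbb{R}^k\otimes\mathbb{R}^{2dm}$. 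That is a proper, Lebesgue-null algebraic cone once $k\ge 2$, so you have not ``exhausted all one-dimensional projections'' of the $(\mathbb{C}^d)^{km}$-vector, and Cram\'er--Wold for that vector cannot be invoked from what is available. The paper's proof makes the identical leap: it asserts that \eqref{eq: Convergence 3} (valid for all $c,d$) is ``equivalent to'' the version with an arbitrary $pk$-tuple $\hat c$, whereas $\hat c_{i+(j-1)k}=d_jc_i$ is precisely a rank-one constraint.

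Moreover the statement itself appears to fail without further structure. Take $k=2$, $d=1$, $[a,b]=[0,1]$, constant-in-$n$ sequences $\bs X_{ni}=\bs Y_i$ and putative limits $\bs W_i=\bs Y'_i$, all piecewise constant with a single deterministic jump at $1/2$, where $(Y_{ij})_{1\le i,j\le 2}$ is centred Gaussian with covariance $I_4$ and $(Y'_{ij})$ is centred Gaussian with covariance $I_4+\varepsilon(E_{14}+E_{41})-\varepsilon(E_{23}+E_{32})$, indexing the four coordinates by $(11,12,21,22)$. A direct computation shows $\sum_ic_i\bs Y_i \eqindis \sum_ic_i\bs Y'_i$ in $D[0,1]$ for every $(c_1,c_2)\in\mathbb{R}^2$, yet $(\bs Y_1,\bs Y_2)$ and $(\bs Y'_1,\bs Y'_2)$ have different joint laws in $D[0,1]^2$, since $\Cov(Y_{11},Y_{22})=0\ne\varepsilon=\Cov(Y'_{11},Y'_{22})$. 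So neither your argument nor the paper's closes the finite-dimensional identification step; a correct version would need to use the additional structure present in the applications (the $\bs X_{ni}$ are different linear images of a single underlying process, whose joint covariances are computed separately), which a generic product-space Cram\'er--Wold lemma of this form does not see.
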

\begin{proof}
By assumption, we have, for each \(1\leq i \leq k\),
\begin{equation*}
    \bs X_{ni}(t) \conindis \bs W_{i}(t) \text{ in \(D[a,b]\)}.
\end{equation*}
Therefore, each of the \((\bs X_{ni})_{n\geq 1}\) are tight in \(D[a,b]\). This implies \( (\bs X_{n1},...,\bs X_{nk})_{n\geq 1}\) is tight in \(D[a,b]^k\). We are left to show convergence of the finite dimensional distributions. Let \(p\geq 1\) and \(a\leq t_1\leq ... \leq  t_p \leq b\). By assumption, we have convergence of the finite dimensional distributions of \(\sum_{i=1}^kc_i \bs X_{ni}\), therefore as \(n \rightarrow \infty\)
\begin{equation*}
    \F(\sum_{i=1}^k c_i \bs X_{ni}(t_1),...,\sum_{i=1}^k c_i \bs X_{ni}(t_p)\R) \conindis \F(\sum_{i=1}^k c_i \bs W_i(t_1),...,\sum_{i=1}^k c_i \bs W_i(t_p)\R).
\end{equation*}
The above implies, for all \(d_1,...,d_p \in \mathbb{R}\),
\begin{equation}
\label{eq: Convergence 3}
    \sum_{i=1}^k\sum_{j=1}^p d_j c_i \bs X_{ni}(t_j) \conindis \sum_{i=1}^k\sum_{j=1}^p d_j c_i \bs W_{i}(t_j)
\end{equation}
as \(n \rightarrow \infty\). Since the \((c_i)_{1\leq i \leq k}\) and \((d_j)_{1\leq j \leq p}\) were arbitrarily chosen, \eqref{eq: Convergence 3} is equivalent to the following result. For all \(pk\)-tuples, \((\hat{c}_1,...,\hat{c}_{pk})\in \mathbb{R}^{pk}\), we have
\begin{equation*}
    \sum_{i=1}^k\sum_{j=1}^p \hat{c
    }_{i+(j-1)k}\bs X_{ni}(t_j) \conindis  \sum_{i=1}^k\sum_{j=1}^p \hat{c
    }_{i+(j-1)k}\bs W_{i}(t_j)
\end{equation*}
as \(n \rightarrow \infty\). This is the required assumption to apply the Cram\'er-Wold theorem to give that
\begin{equation*}
 (\bs X_{n1}(t_1),....,\bs X_{nk}(t_1),...,\bs X_{n1}(t_p),...,\bs X_{nk}(t_p)) \conindis  (\bs W_{1}(t_1),....,\bs W_{k}(t_1),...,\bs W_{1}(t_p),...,\bs W_{k}(t_p))
\end{equation*}
as \(n \rightarrow \infty\). This is exactly convergence of the finite dimensional distributions, so since we have already shown tightness, functional convergence immediately follows.
\end{proof}
\begin{proof}[Proof of Lemma \ref{lemma: intermediate lem 1}]For arbitrary \(d_1,...,d_q \in \mathbb{R}\), let
\begin{equation}
\bs Z_n(t) = N^{-1/2}\sum_{k=1}^qd_k\omega^{-(m_1+2\kappa_k-2)/2}Q_k(\e^{-A\omega t}\bs X_n(\omega t)-N\bs \mu), \quad t\geq 0.\label{eq: int convergence}
\end{equation}
For ease of notation, let
\begin{equation}
    Q = \sum_{k=1}^qd_k\omega^{-(m_1+2\kappa_k-2)/2}Q_k. \label{eq: improve 15}
\end{equation}
We are going to show
\begin{equation}
    \bs Z_n(t) \conindis \sum_{k=1}^q d_k\bs W_k(t) \text{ in } D[0,\infty) \label{eq: need cramer apply}
\end{equation}
as \(n \rightarrow \infty\). Then, since the \((d_k)_{k=1}^q\) are arbitrary, the theorem will follow by \eqref{eq: need cramer apply} and Theorem \ref{theorem: cramer wold}. By Lemma \ref{prop: martingale bp}, for each \(n\), \(\bs Z_n\) is a martingale started from 0. To show \eqref{eq: need cramer apply}, we are going to apply Lemma \ref{proposition: Janson 9.1} to this sequence of martingales.
\\~\\
\underline{Condition 1 of Lemma \ref{proposition: Janson 9.1}:} We start by showing Condition 1. By definition, for any \(0\leq t_1 \leq t_2\),
\begin{equation*}
    \Cov\F(\sum_{k=1}^q d_k \bs W_k(t_2),\sum_{\ell=1}^q d_{\ell} \overline{\bs W}_{\ell}(t_1)\R) = \sum_{k,\ell=1}^q \bs 1_{\{\lambda_{ck}=\lambda_{c\ell}\}}d_kd_{\ell} F(t_1,\kappa_k,\kappa_{\ell})  N_A^{\kappa_{k}-1}Q_kB Q_{\ell}^*N_A'^{\kappa_{\ell}-1}.
\end{equation*}
Therefore, we need to show, for each \(t \geq 0\),
\begin{equation}
    [\bs Z_n,\overline{\bs Z}_n]_t \coninprob  \sum_{k,\ell=1}^q \bs 1_{\{\lambda_{ck}=\lambda_{c\ell}\}}d_kd_{\ell} F(t,\kappa_k,\kappa_{\ell})  N_A^{\kappa_{k}-1}Q_kB Q_{\ell}^*N_A'^{\kappa_{\ell}-1} \label{eq: quad cov need to show}
\end{equation}
as \(n \rightarrow \infty\). (Note, since \(F(\cdot, \kappa_k,\kappa_m)\) is continuous, the r.h.s.\ is continuous in \(t\), and thus a valid covariance function for Lemma~\ref{proposition: Janson 9.1}.) Since a time-independent shift does not change the quadratic covariation, for each \(n\), we have 
\begin{equation}
    [\bs Z_n,\overline{\bs Z}_n]_t = N^{-1}\sum_{i=1}^d\sum_{k:\tau_{nik}\leq t}Q\e^{-A\tau_{nik}}\Delta\bs X_n(\tau_{nik})\Delta \bs X_n(\tau_{nik})' \e^{-A'\tau_{nik}}Q^*, \quad t \geq 0, \label{eq: improve 12}
\end{equation}
where, for \(1 \leq i \leq d\), \((\tau_{nik})_{k\geq 1}\) are the ordered death times of particles of type \(i\) in \(\bs X_n\). If, in Lemma \ref{lemma: Jan 9.3}, we take \(p=1\), \(M_1(t) = M_2(t)^*= N^{-1/2}Q \mathrm{e}^{-At}\), and \(N(\bs x) = \bs x \bs x'\), then the first term on the r.h.s.\ of \eqref{eq: improve 1} is equal to
\[N^{-1}\sum_{k:\tau_{nik}\leq t}  Q \mathrm{e}^{-A\tau_{nik}}\Delta \bs X_n(\tau_{nik}) \Delta \bs X(\tau_{nik})'\mathrm{e}^{-A'\tau_{nik}}Q^*, \quad t\geq 0.\] 
This, Lemma \ref{lemma: Jan 9.3}, and \eqref{eq: improve 12} imply
\begin{equation}
    Y_n(t) := [\bs Z_n,\overline{\bs Z}_n]_t-N^{-1}\sum_{i=1}^d\int_{0}^{\omega t}Q\e^{-Av}\E[\bs \xi_i \bs \xi_i'] \e^{-A'v}Q^*a_iX_n(v)_i \mathrm{d}v, \quad t\geq 0 \label{eq: improve 14}
\end{equation}
is a matrix-valued martingale for each \(n\). By \eqref{eq: improve 14}, we can prove \eqref{eq: quad cov need to show} by showing, for each \(t \geq 0\),
\begin{align}
    & Y_n(t) \coninprob 0,\label{eq: showing quad variation 1}\\
    & U_n(t) :=N^{-1}\sum_{i=1}^d\int_{0}^{\omega t}Q\e^{-Av}\E[\bs \xi_i \bs \xi_i'] \e^{-A'v}Q^*a_i(X_n(v)_i-N(\e^{Av}\bs \mu)_i) \mathrm{d}v \coninprob 0,\label{eq: showing quad variation 2}\\
    & V_n(t):= \sum_{i=1}^d\int_{0}^{\omega t}Q\e^{-Av}\E[\bs \xi_i \bs \xi_i'] \e^{-A'v}Q^*a_i(\e^{Av}\bs \mu)_i \rightarrow \sum_{k,\ell=1}^d \bs 1_{\{\lambda_{ck}=\lambda_{c\ell}\}}d_kd_{\ell} F(t,\kappa_k,\kappa_{\ell})  N_A^{\kappa_{k}-1}Q_kB Q_{\ell}^*N_A'^{\kappa_{\ell}-1}\label{eq: showing quad variation 3}
\end{align}
as \(n \rightarrow \infty\). We start with \eqref{eq: showing quad variation 1}. Since \(Y_n(0) = 0\), by \eqref{eq: second moment martingale}, we have for \(t \geq 0\),
\begin{equation}
    \E[Y_n(t)\overline{Y}_n(t)']=\E[Y_n,\overline{Y}_n]_t. \label{eq: lcritical 11}
\end{equation}
Since the second term of \(Y_n\) does not contribute to its quadratic variation (since it is continuous and of finite variation), we have
\begin{equation}
  [Y_n,\overline{Y}_n]_t = \F[[\bs Z_n,\overline{\bs Z}_n],\overline{[\bs Z_n,\overline{\bs Z}_n]}\R]_t= N^{-2}\sum_{i=1}^d\sum_{k:\tau_{nik}\leq t}(Qe^{-A\tau_{nik}}\Delta \bs X_n(\tau_{nik}) \Delta \bs X_n(\tau_{nik})' e^{-A'\tau_{nik}}Q^*)^2, \quad t \geq 0,\label{eq: improve 13}
\end{equation}
where in the final equality, we have used that the jumps of the quadratic variation are hermitian. By the same argument that gave \eqref{eq: improve 14}, we have, for any \(t \geq 0\),
\begin{equation}
    \E[Y_n,\overline{Y}_n]_t=N^{-2}\sum_{i=1}^d\int_{0}^{\omega t}\E\F[\F(Qe^{-Av}\bs \xi_i \bs \xi_i' e^{-A'v}Q^*\R)^2\R]\mathbb{E}[a_iX_n(v)_i] \mathrm{d}v. \label{eq: lcritical 1}
\end{equation}
(In Lemma \ref{lemma: Jan 9.3}, we have taken \(p=2\), \(M_{11}(t)=M_{12}(t)=M_{21}(t)^*=M_{22}(t)^*=N^{-1/2}Q \mathrm{e}^{-At}\), \(N_1(\bs x)=N_2(\bs x)=\bs x \bs x'\).) By \eqref{eq: matrix exp bounds} and \eqref{eq: matrix exp bounds all} to bound the matrix exponential of \(A\), \eqref{eq: CMBP First moment} for the first moment of the branching process, and (A1), we get, for any \(1 \leq k,\ell \leq q\),
\begin{align*}
&N^{-2}\omega^{-2(m_1+\kappa_{k}+\kappa_{\ell}-2)}\sum_{i=1}^d\int_{0}^{\omega t}\E\F[\F\|\F(d_{k}Q_{k}e^{-Av}\bs \xi_i \bs \xi_i' e^{-A'v}d_{\ell}Q_{\ell}^*\R)^2\R\|_2\R]\mathbb{E}[a_iX_n(v)_i] \mathrm{d}v \\
&\qquad\qquad\qquad\qquad\qquad\qquad\qquad\qquad\qquad\qquad\qquad\leq \cst  N^{-1} \omega^{-2(m_1+\kappa_{k}+\kappa_{\ell}-2)}\int_{0}^{\omega t}\e^{-\lambda_1 v}v^{2(\kappa_{k}+\kappa_{\ell}-2)+(m_1-1)}\mathrm{d}v\\
&\qquad\qquad\qquad\qquad\qquad\qquad\qquad\qquad\qquad\qquad\qquad\leq \cst N^{-1}\omega^{-2(m_1+\kappa_{k}+\kappa_{\ell}-2)} \rightarrow 0
\end{align*}
as \(n \rightarrow \infty\). This and \eqref{eq: improve 15} imply 
\begin{equation*}
N^{-2}\sum_{i=1}^d\int_{0}^{\omega t}\E\F[\F\|\F(Qe^{-Av}\bs \xi_i \bs \xi_i' e^{-A'v}Q^*\R)^2\R\|_2\R]\mathbb{E}[a_iX_n(v)_i] \mathrm{d}v  \rightarrow 0
\end{equation*}
as \(n \rightarrow \infty\). Using this in \eqref{eq: lcritical 1}, and then in \eqref{eq: lcritical 11}, we get that the second moment of \(\|Y_n(t)\|_2\) converges to 0 as \(n \rightarrow \infty\). Thus, \eqref{eq: showing quad variation 1} holds. We move on to showing \eqref{eq: showing quad variation 2}. For \(1\leq k,\ell \leq q\), let
\begin{equation*}
    U_{nk\ell}(t) := N^{-1}\omega^{-(m_1+\kappa_{k}+\kappa_{\ell}-2)}\sum_{i=1}^d\int_{0}^{\omega t}d_{k}Q_ke^{-Av}\E[\bs \xi_i \bs \xi_i'] e^{-A'v}d_{\ell}Q_{\ell}^*a_i(X_n(v)_i-N(\e^{Av}\bs \mu)_i) \mathrm{d}v, \quad t \geq 0.
\end{equation*}
Using \eqref{eq: matrix exp bounds} to bound the matrix exponential of \(A\), we get for \(1 \leq k,\ell \leq q\), \(t \geq 0\),
\begin{equation}
    \E[\|U_{nk\ell}(t)\|_2]\leq \cst N^{-1}\omega^{-(m_1+\kappa_{k}+\kappa_{\ell}-2)}\int_{0}^{\omega t} v^{\kappa_{k}+\kappa_{\ell}-2}e^{-\lambda_1v}\E[\|\bs X_n(v)-Ne^{Av}\bs\mu\|_2]\mathrm{d}v. \label{eq: lcritical 2}
\end{equation}
We can bound the expectation in the integrand by \eqref{eq: fourth moment main} and Jensen's inequality, which gives, for all \(v \geq 0\),
\begin{equation*}
    \E[\|\bs X_n(v)-Ne^{Av}\bs\mu\|_2]\leq \E[\|\bs X_n(v)-Ne^{Av}\bs\mu\|_2^4]^{1/4}\leq \cst N^{1/2} (1+v)^{m_1-1}\e^{\lambda_1 v} .
\end{equation*}
Using this in \eqref{eq: lcritical 2} gives 
\begin{equation}
  \E[\|U_{nk\ell}(t)\|_2] \leq \cst_t N^{-1/2}  \rightarrow 0 \label{eq: Vij2}
\end{equation}
as \(n \rightarrow \infty\). This and the triangle inequality imply \(\E[\|U_n(t)\|_2]\rightarrow 0\) as \(n \rightarrow \infty\), therefore \eqref{eq: showing quad variation 2} holds. We move onto showing \eqref{eq: showing quad variation 3}. By \eqref{eq: projection}, for \(1 \leq k,\ell \leq q\), and \(v \geq 0\),
\begin{equation*}
   Q_k\e^{-Av}=\e^{-\lambda_{ck} v}\sum_{a=0}^{\kappa_{k}-1}\frac{(-vN_A)^{a}}{a!} Q_k, \quad \overline{Q}_{\ell}\e^{-Av}=\e^{-\overline{\lambda}_{c\ell} v}\sum_{b=0}^{\kappa_{\ell}-1}\frac{(-vN_A)^{b}}{b!}\overline{Q}_{\ell}.
\end{equation*}
This implies 
\begin{equation}
    V_{n}(t):= \sum_{k,\ell=1}^q \sum_{a=0}^{\kappa_k-1}\sum_{b=0}^{\kappa_{\ell}-1} V_{nk\ell ab}(t), \quad {t \geq 0},  \label{eq: improve 223}
\end{equation}
where 
\begin{equation*}
    V_{nk\ell ab}(t):= (a!b!)^{-1}\omega^{-(m_1+\kappa_{k}+\kappa_{\ell }-2)}d_kd_{\ell} \sum_{i=1}^d\int_{0}^{\omega t}(-v)^{a+b}\e^{-(\lambda_{ck}+\overline{\lambda}_{c\ell })v}N_A^{a}Q_k \E[\bs \xi_i \bs \xi_i']Q_{\ell }^*N_A'^ba_i(\e^{Av}\bs \mu)_i \mathrm{d}v, \quad {t \geq 0}.
\end{equation*}
The asymptotic behaviour of \(V_{nk\ell ab}\) as \(n\rightarrow \infty\) can be split into three cases: For \((a,b) \neq (\kappa_{k}-1,\kappa_{\ell}-1)\), and~\(t \geq 0\), we get
\begin{align}
\|V_{nk\ell ab}(t)\|_2 &\leq  \cst \omega^{-(m_1+\kappa_{k}+\kappa_{\ell}-2)}\sum_{i=1}^d\int_{0}^{\omega t} |(-v)^{a+b}(1+v)^{m_1-1}e^{(\lambda_1-\lambda_{ck}-\overline{\lambda}_{c\ell})v}| \mathrm{d}v  \\
& \leq \cst \omega^{-(m_1+\kappa_{k}+\kappa_{\ell}-2)}\int_{0}^{\omega t}(1+v)^{a+b+m_1-1}\mathrm{d}v \rightarrow 0 \label{eq: impove 222}
\end{align}
as \(n \rightarrow \infty\), where (A1) and \eqref{eq: matrix exp bounds} are used to bound the integrand in the first inequality. For \((a,b)=(\kappa_{k}-1,\kappa_{\ell}-1)\),  \(\lambda_{ck}\neq \lambda_{c\ell}\), and \(t \geq 0\), we get
\begin{equation}
 \|V_{nk\ell ab}(t)\|_2 \leq \cst \omega^{-(m_1+\kappa_{k}+\kappa_{\ell}-2)}\sum_{i=1}^d\F|\int_{0}^{\omega t} (-v)^{\kappa_{k}+\kappa_{\ell}-2}(1+v)^{m_1-1}e^{(\lambda_1-\lambda_{ck}-\overline{\lambda}_{c\ell})v}\mathrm{d} v\R| \rightarrow 0 \label{eq: impove 221}
\end{equation}
as \(n \rightarrow \infty\), where (A1) and \eqref{eq: matrix exp bounds} are used to bound the integrand. Note, the convergence holds since \(\lambda_1-\lambda_{ck}-\overline{\lambda}_{c\ell}\in \mathbb{C}/ \mathbb{R}\), so the integral is of order \(\omega^{m_1+\kappa_{k}+\kappa_{\ell}-3}\) as \(n\rightarrow \infty\). Lastly, take \((a,b)=(\kappa_{k}-1,\kappa_{\ell}-1)\) and \(\lambda_{ck} = \lambda_{c\ell}\). For \(1\leq i \leq d\), define~\(R_i(v)\) through the equation
\begin{equation}
    (\e^{Av}\bs \mu)_i =  \e^{\lambda_1v}v^{m_1-1} v(\bs \mu)_i (1+ R_i(v)), \quad v \geq 0.
\end{equation}
By \eqref{eq:identity} and \eqref{eq: matrix exp bounds}, for \(1\leq i \leq d \), \(R_i(v) \rightarrow 0\) as \(v\rightarrow \infty\). Thus, for \(t \geq 0\), we get
\begin{align}
V_{nk \ell ab}(t) &=  ((\kappa_k-1)!(\kappa_{\ell}-1)!)^{-1}\omega^{-(m_1+\kappa_{k}+\kappa_{{\ell}}-2)}d_kd_{\ell}\times...\\
&\quad ...\times  \sum_{i=1}^d\int_{0}^{\omega t}(-v)^{\kappa_k+\kappa_{\ell}-2}v^{m_1-1}N_A^{\kappa_k-1}Q_k a_i v(\bs \mu)_i\E[\bs \xi_i \bs \xi_i'] Q_{\ell}^*N_A'^{\kappa_{\ell}-1}(1+R_i(v)) \mathrm{d}v \nonumber \\
& \rightarrow d_kd_{\ell} F(t,\kappa_k,\kappa_{\ell}) N_A^{\kappa_{k}-1}Q_kB Q_{\ell}^*N_A'^{\kappa_{\ell}-1},  \label{eq: improve 22} 
\end{align}
as \(n \rightarrow \infty\). Using \eqref{eq: impove 222}, \eqref{eq: impove 221}, and \eqref{eq: improve 22} in \eqref{eq: improve 223} gives \eqref{eq: showing quad variation 3}. Therefore, \eqref{eq: quad cov need to show} holds which is Condition 1 of Lemma \ref{proposition: Janson 9.1}.
\\~\\
\underline{Condition 2 of Lemma \ref{proposition: Janson 9.1}:}
For this condition, we need to show, for each \(t \geq 0\),
\begin{equation*}
    [\bs Z_n,\bs Z_n]_t \coninprob  \sum_{k,\ell=1}^d \bs 1_{\{\lambda_{ck}=\overline{\lambda}_{c\ell}\}}d_kd_{\ell} F(t,\kappa_k,\kappa_{\ell}) N_A^{\kappa_{k}-1}Q_kB Q_{\ell}'N_A'^{\kappa_{\ell}-1}
\end{equation*}
as \(n \rightarrow \infty\). The arguments to show this are identical to the arguments used to show Condition 1 with \(\overline{\bs Z}_n\) and \(\overline{Q}\) replaced by~\(\bs Z_n\) and \(Q\). We omit the details, but note in \eqref{eq: improve 22} the non-zero terms come from when \(\lambda_{ck}=\overline{\lambda}_{c\ell}\) instead of \(\lambda_{ck} = \lambda_{c\ell}\). 
\\~\\
\underline{Condition 3 of Lemma \ref{proposition: Janson 9.1}:} This follows by Lemma \ref{Lemma: fourth moments} for critical components and Jensen's inequality, since for all \(n\geq 1\), and each \(t \geq 0\),
\begin{equation*}
  \E[\|\bs Z_n(t)\|_2^2]\leq \E[\|\bs Z_n(t)\|_2^4]^{1/2}\leq \cst_t.
\end{equation*}
Therefore, by Lemma \ref{proposition: Janson 9.1}, \eqref{eq: need cramer apply} holds. Thus, the lemma holds by Theorem \ref{theorem: cramer wold} as claimed.
\end{proof}
\subsection{Proof of Theorem \ref{Theorem: Main continuous time} LT\textsubscript{\(\ell\)}}
Since the Jordan projection matrices are continuous maps (as they are linear) and for any Jordan blocks \(J_1\neq J_2\), we have \(P_{J_1}P_{J_2}=0\), it suffices to show a functional convergence result for the MCBP under the projection \(P_{\ell}=\sum_{\lambda \in \Lambda_{\ell}}\sum_{J \in \mathcal{J}_{\lambda}}P_J\). From here, one can recover the behaviour for a single Jordan block by applying the continuous mapping theorem.
\begin{lemma}
\label{lemma: big time}
Take the setting of Theorem \ref{Theorem: Main continuous time} LT.  Let \(P_{\ell}=\sum_{\lambda \in \Lambda_{\ell}}\sum_{J \in \mathcal{J}_{\lambda}}P_J\). Then, as \(n \rightarrow \infty\),
        \begin{equation}
            N^{-1/2}P_{\ell}\left(\e^{-A\omega t}\bs X_n(\omega t)-N\bs \mu\right) \conindis \bs V_{\ell}  \text{ in }  D(0,\infty). \label{eq: improve 31}
        \end{equation}
        The random variable \(\bs V_{\ell}\) is mean-zero, real, and Gaussian. We have
\begin{align}
& \Var(\bs V_{\ell})=\sum_{i=1}^d  \int_{0}^{\infty}P_{\ell}\mathrm{e}^{-Av}\E[\bs \xi_i \bs \xi_i']\mathrm{e}^{-A'v}P_{\ell}'(\mathrm{e}^{Av}\bs \mu)_i\mathrm{d}v. \label{eq: improve 50}
\end{align}
\end{lemma}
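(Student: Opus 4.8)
The plan is to use the martingale
\[
\bs Z_n(t) = N^{-1/2}P_{\ell}\bigl(\e^{-A\omega t}\bs X_n(\omega t)-N\bs\mu\bigr),\quad t\geq 0,
\]
which is a martingale for each fixed $n$ by Lemma~\ref{prop: martingale bp} (apply $P_\ell$ on the left, noting $P_\ell$ is a linear map commuting with $\e^{-At}$). The difference from the LT\textsubscript{c} and CT cases is that we want convergence in $D(0,\infty)$ to a \emph{random variable} $\bs V_\ell$ (constant in $t$), so we expect the quadratic variation to converge to a limit that does not depend on $t$. Specifically, I would apply Lemma~\ref{proposition: Janson 9.1} to the sequence $(\bs Z_n)_{n\geq 1}$ on $[\varepsilon,\infty)$ for arbitrary $\varepsilon>0$ (or rather, after a shift, on $[0,\infty)$ with the time origin moved to $\varepsilon$), showing the covariance functions $\Sigma_1,\Sigma_2$ are constant in $t$, which forces the Gaussian limit to be a single random variable.

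The key steps: first, express the quadratic covariation as
\[
[\bs Z_n,\overline{\bs Z}_n]_t = N^{-1}\sum_{i=1}^d\sum_{k:\tau_{nik}\leq \omega t}P_\ell\e^{-A\tau_{nik}}\Delta\bs X_n(\tau_{nik})\Delta\bs X_n(\tau_{nik})'\e^{-A'\tau_{nik}}P_\ell^*,
\]
using that a time-independent shift does not change it. Apply Lemma~\ref{lemma: Jan 9.3} with $p=1$, $M_1(t)=M_2(t)^* = N^{-1/2}P_\ell\e^{-At}$, $N(\bs x)=\bs x\bs x'$ to split off a martingale $Y_n$ plus the compensator
\[
N^{-1}\sum_{i=1}^d\int_0^{\omega t}P_\ell\e^{-Av}\E[\bs\xi_i\bs\xi_i']\e^{-A'v}P_\ell^* a_i X_n(v)_i\,\mathrm{d}v.
\]
Then show, for each fixed $t>0$: (i) $Y_n(t)\coninprob 0$ by computing $\E[Y_n,\overline Y_n]_t$ via Lemma~\ref{lemma: Jan 9.3} again (with $p=2$) and bounding using (A1), $\eqref{eq: matrix exp bounds}$, and $\eqref{eq: CMBP First moment}$ — here the crucial point is that $P_\ell\e^{-Av}$ decays like $\e^{-\lambda v}$ with $\mathrm{Re}\,\lambda>\lambda_1/2$ for $\lambda\in\Lambda_\ell$, while $X_n(v)_i$ grows like $N\e^{\lambda_1 v}v^{m_1-1}$, so the integrand in the second-moment bound is integrable (the $\e^{(\lambda_1-\lambda-\bar\lambda)v}$ factor is exponentially decaying since $2\,\mathrm{Re}\,\lambda>\lambda_1$), giving a bound of order $N^{-1}\to0$; (ii) replacing $X_n(v)_i$ by its mean $N(\e^{Av}\bs\mu)_i$ in the compensator introduces an error of order $N^{-1/2}\to 0$ by $\eqref{eq: fourth moment main}$ and Jensen, exactly as in the ST proof; (iii) the resulting deterministic integral
\[
N^{-1}\sum_{i=1}^d\int_0^{\omega t}P_\ell\e^{-Av}\E[\bs\xi_i\bs\xi_i']\e^{-A'v}P_\ell^*\,a_i N(\e^{Av}\bs\mu)_i\,\mathrm{d}v
\]
converges, as $n\to\infty$ (hence $\omega\to\infty$), to $\sum_{i=1}^d\int_0^\infty P_\ell\e^{-Av}\E[\bs\xi_i\bs\xi_i']\e^{-A'v}P_\ell'(\e^{Av}\bs\mu)_i\,\mathrm{d}v$, which is \emph{independent of $t$} — this is the punchline, and convergence of the integral on $[0,\infty)$ follows from the same exponential-decay estimate as in (i). Condition~3 of Lemma~\ref{proposition: Janson 9.1} follows from Lemma~\ref{Lemma: fourth moments} (Large Components) and Jensen. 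For the $\Sigma_2$ (the $[\bs Z_n,\bs Z_n]_t$) computation one repeats the argument with $P_\ell^*$ replaced by $P_\ell'$, yielding $\Cov(\bs V_\ell,\overline{\bs V}_\ell)$; but since $\bs V_\ell$ is stated to be real, one checks $P_\ell$ is a real matrix (true because $\Lambda_\ell$ is conjugation-closed and $N_A$ is real), so the two coincide. The reality of $\bs V_\ell$ then follows because a complex Gaussian vector whose "pseudo-covariance" equals its covariance is real.

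Finally, since the limit covariance is constant in $t$, Lemma~\ref{proposition: Janson 9.1} gives $\bs Z_n\conindis \bs V_\ell$ in $D[0,\infty)$ where $\bs V_\ell$ is a mean-zero Gaussian process with $\Cov(\bs V_\ell(t_1),\overline{\bs V}_\ell(t_2)) = \Var(\bs V_\ell)$ for all $t_1,t_2$; a Gaussian process with this covariance structure is a.s.\ constant, i.e.\ genuinely a single Gaussian random variable, which gives $\eqref{eq: improve 31}$ and $\eqref{eq: improve 50}$. One small technical point: the statement asks for convergence in $D(0,\infty)$ rather than $D[0,\infty)$, because at $t=0$ the object $\e^{-A\omega\cdot 0}\bs X_n(0)-N\bs\mu = 0$ is degenerate; working on $[\varepsilon,\infty)$ for each $\varepsilon>0$ and using the strong Markov property to restart at time $\omega\varepsilon$ handles this cleanly, or one simply notes Lemma~\ref{proposition: Janson 9.1} on $[0,\infty)$ already implies convergence on every $[\varepsilon,\infty)$.

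\textbf{Main obstacle.} The delicate step is (i)–(iii): verifying that all the bounds are uniform in $n$ and that the integrals converge \emph{on the infinite horizon} $[0,\omega t]$ with $\omega\to\infty$, which hinges entirely on the strict inequality $2\,\mathrm{Re}\,\lambda>\lambda_1$ for $\lambda\in\Lambda_\ell$ making $\e^{(\lambda_1-\lambda-\bar\lambda)v}$ exponentially small; this is where (A3) and the definition of "large" are essential, and it is also what makes the limit $t$-independent. Controlling the martingale remainder $Y_n$ uniformly — rather than just at a fixed $t$ — and confirming the fourth-moment input from Lemma~\ref{Lemma: fourth moments} is applied with the correct normalization (note $P_\ell\e^{-A\omega t}\bs X_n(\omega t)$ corresponds to the un-normalized "$\mathrm{e}^{-At}\bs X(t)$" in the Large Components bound, so no extra polynomial factors appear) is the part requiring care.
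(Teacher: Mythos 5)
Your approach takes a genuinely different route from the paper — you want to apply the martingale functional CLT (Lemma~\ref{proposition: Janson 9.1}) to the martingale $\bs Z_n(t) = N^{-1/2}P_\ell(\e^{-A\omega t}\bs X_n(\omega t) - N\bs\mu)$, whereas the paper decomposes via the fixed cut point $\omega a$: it writes the process as $\bs Y_n + \bs Z_n^{\mathrm{paper}}(t)$ with $\bs Y_n = N^{-1/2}P_\ell(\e^{-A\omega a}\bs X_n(\omega a) - N\bs\mu)$ and $\bs Z_n^{\mathrm{paper}}(t) = N^{-1/2}P_\ell(\e^{-A\omega t}\bs X_n(\omega t) - \e^{-A\omega a}\bs X_n(\omega a))$, shows $\bs Z_n^{\mathrm{paper}} \coninprob 0$ via Doob's inequality and fourth-moment bounds, shows $\bs Y_n \conindis \bs V_\ell$ via the branching property and the Lyapunov CLT with random summands (Lemma~\ref{Lemma: MCLT}), and finishes with Slutsky letting $a \downarrow 0$, $b \uparrow \infty$.

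There is a genuine gap in your proposal. The hypothesis of Lemma~\ref{proposition: Janson 9.1} is that the quadratic variation converges in probability to a \emph{continuous} deterministic function $\Sigma_1(t)$. In your setup, the compensated quadratic variation converges for each fixed $t > 0$ to
\[
\Sigma := \sum_{i=1}^d\int_0^\infty P_\ell\e^{-Av}\E[\bs\xi_i\bs\xi_i']\e^{-A'v}P_\ell'(\e^{Av}\bs\mu)_i\,\mathrm{d}v,
\]
because the upper limit $\omega t \to \infty$; but at $t = 0$ the quadratic variation is identically $0$. So the limit function $\Sigma_1(t)$ jumps from $0$ to $\Sigma$ at $t=0$, and the hypothesis fails. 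Your workaround — ``restart at time $\omega\varepsilon$'' and apply Lemma~\ref{proposition: Janson 9.1} on $[\varepsilon,\infty)$ — doesn't rescue the argument: after you recenter by subtracting $\bs Z_n(\varepsilon)$ (which is needed, since the lemma requires $\bs M_n(0)=0$), the accumulated quadratic variation of the recentered martingale over $[\varepsilon,t]$ is an integral from $\omega\varepsilon$ to $\omega t$ whose \emph{both} limits escape to infinity, and this converges to $0$. The martingale CLT then only yields $\bs Z_n(t) - \bs Z_n(\varepsilon) \coninprob 0$ in $D[\varepsilon, b]$ (which is exactly the paper's first half, proved there via Doob's inequality), but gives you no information about the distribution of $\bs Z_n(\varepsilon)$ itself. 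The missing piece is precisely the convergence $\bs Z_n(\varepsilon) \conindis \bs V_\ell$, and that is not a consequence of the martingale functional CLT because the ``interesting'' variance is accumulated at times of order $1$ (not $\omega\varepsilon$), i.e.\ instantaneously in the rescaled clock. The paper handles this with an essentially different tool: the branching property expresses $\bs X_n(\omega a)$ as a sum of $N$ i.i.d.\ subtrees, to which the Lyapunov-type CLT (Lemma~\ref{Lemma: MCLT}) is applied, with the required second- and fourth-moment conditions coming from \eqref{eq: MCBP var} and Lemma~\ref{Lemma: fourth moments} for large components. Without some replacement for that step (e.g.\ an unrescaled martingale CLT at a fixed large time $T$, plus a uniform-in-$n$ tail estimate allowing a ``diagonal'' limit $T \to \infty$), your proof is incomplete.
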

\begin{proof}
Take \(0<a<1\leq b\) and let
\begin{align*}
&\bs Z_n(t) := N^{-1/2}P_{\ell}( \e^{-A\omega t}\bs X_n(\omega t)-\e^{-A\omega a }\bs X_n(\omega a )), \quad t\geq a,\\
& \bs Y_n :=N^{-1/2}P_{\ell}(\e^{-A\omega a } \bs X_n(\omega a) -N\bs \mu).
\end{align*}
We are going to show 
\begin{align}
    &\bs Z_n(t) \coninprob 0 \text{ in } D[a,b], \label{eq: slutsk 1}\\
    &\bs Y_n \conindis \bs V_{\ell}, \label{eq: slutsk 2}
\end{align}
as \(n \rightarrow \infty\). Then, the lemma will follow by combining \eqref{eq: slutsk 1} and \eqref{eq: slutsk 2} with Slutsky's lemma and sending \(a \downarrow 0\), \(b \uparrow \infty\). We start by showing \eqref{eq: slutsk 1}. By Lemma \ref{prop: martingale bp}, for each \(n\), the process \(\bs Z_n\) is a martingale initiated from 0. Hence, by Doob's martingale inequality,
\begin{equation}
    \mathbb{P}\F(\sup_{a \leq t \leq b}\|\bs Z_n(t)\|_2 > c\R) \leq \frac{\mathbb{E}[\|\bs Z_n(b)\|_2^2]}{c^2}. \label{eq: doobs inequality}
\end{equation}
To show \eqref{eq: slutsk 1}, we show that the r.h.s.\ of \eqref{eq: doobs inequality} tends to \(0\) for all \(c > 0\). By the strong Markov property \eqref{eq: strong markov prop}, we can apply Lemma \ref{Lemma: fourth moments} for large components to the branching process \((\bs X_n(\omega a + t))_{t\geq 0}\) conditionally on \(\bs X_n(\omega a)\). This gives
\begin{align*}
 \mathbb{E}[\|\bs Z_n(b)\|_2^2]& \leq \mathbb{E}[\E[\|\bs Z_n(b)\|_2^4|\bs X_n(\omega a)]^{1/2}]\\
 &= \E[\mathbb{E}[\|N^{-1/2}P_{\ell}( \e^{-A\omega t}\bs X_n(\omega t)-\e^{-A\omega a }\bs X_n(\omega a ))\|_2^4|\bs X_n(\omega a)]^{1/2}]\\
 &\leq \cst \|P_{\ell}\e^{-A\omega a}\|_2^2\E[\mathbb{E}[\|N^{-1/2}P_{\ell}( \e^{-A\omega (t-\alpha)}\bs X_n(\omega t)-\bs X_n(\omega a ))\|_2^4|\bs X_n(\omega a)]^{1/2}]\\
 &\leq \cst \|P_{\ell}\e^{-A\omega a}\|_2^2\E[N^{-1} \|\bs X_n(\omega a)\|_2]\\
 &\leq \cst \|P_{\ell}\e^{-A\omega a}\|_2^2\e^{\lambda_1 \omega a}(1+\omega a)^{m_1-1} \rightarrow 0
\end{align*}
as \(n \rightarrow \infty\). In the first line, we have used Jensen's inequality. In the fourth line, we have applied Lemma \ref{Lemma: fourth moments}. In the inequality in the final line, we have used Jensen's inequality followed by \eqref{eq: fourth moment main}, and in the convergence in the final line, we have used that, since \(P_{\ell}\) consists of only large projections, \eqref{eq: matrix exp bounds} implies \(\|P_{\ell}\e^{-A\omega a}\|_2\leq \cst \e^{-\lambda \omega a}\), where \(2\lambda >\lambda_1\). Therefore, for any~\(c > 0\), the r.h.s.\ of \eqref{eq: doobs inequality} tends to 0 as \(n \rightarrow \infty\). This implies \eqref{eq: slutsk 1} as claimed. We now show \eqref{eq: slutsk 2}. By the branching property \eqref{equ:branching property},
\begin{equation*}
    \bs Y_n= N^{-1/2}\sum_{i=1}^d\sum_{j=1}^{N\mu_i}P_{\ell}(\e^{-A\omega a } \bs X_{ijn}(\omega a)-\bs e_i),
\end{equation*}
where the \(\bs X_{ijn}\) are independent branching processes with initial conditions \(\bs e_i\), and identical replacement structures \((\bs \alpha, (\bs \xi_i)_{i=1}^d)\). The summands in the r.h.s.\ are a triangular array of mean-zero independent random variables indexed by \(n\). We are going to apply Lemma \ref{Lemma: MCLT} to this array. By \eqref{eq: MCBP var}, we have that, for all \(1\leq i \leq d,1\leq j \leq N\mu_i\),
\begin{equation*}
    \lim_{n \rightarrow \infty}\Var(P_{\ell}\e^{-A\omega a } \bs X_{ijn}(\omega a)) = \sum_{k=1}^d \int_{0}^{\infty}P_{\ell}\e^{-Av}\E[\bs \xi_k \bs \xi_k']\e^{-A'v}P_{\ell}'a_k( \e^{Av}\bs e_i)_k\mathrm{d}v.
\end{equation*}
Indeed the integral on the r.h.s.\ is convergent, since the eigenvalue \(\lambda\) of any large component satisfies \(\mathrm{Re}2\lambda >\lambda_1\), the integrand decays exponentially by \eqref{eq: matrix exp bounds}. Thus, Condition 2 of Lemma \ref{Lemma: MCLT} holds with
\begin{equation*}
   \lim_{n\rightarrow \infty} N^{-1}\sum_{i=1}^d\sum_{j=1}^{N\mu_i}\Var(P_{\ell}\e^{-A\omega a } \bs X_{ijn}(\omega a)) = \sum_{k=1}^d  \int_{0}^{\infty}P_{\ell}\e^{-Av}\E[\bs \xi_k \bs \xi_k']\e^{-A'v}P_{\ell}'a_k( \e^{Av}\bs \mu)_k\mathrm{d}v=\Var(\bs V_{\ell}).
\end{equation*}
We get Condition 2 of Lemma \ref{Lemma: MCLT} with \(p=4\) as a consequence of Lemma \ref{Lemma: fourth moments} for large components, since 
\begin{equation*}
    N^{-2}\sum_{i=1}^d \sum_{j=1}^{N\mu_i}\E[\|P_{\ell}(\e^{-A\omega a}\bs X_{ijn}(\omega a)-\bs e_i)\|_2^4]\leq \cst N^{-1}\rightarrow 0
\end{equation*}
as \(n \rightarrow \infty\). Therefore, by Lemma \ref{Lemma: MCLT}, \eqref{eq: slutsk 2} holds, which concludes the proof. 
\end{proof}
\begin{proof}[Proof of Theorem \ref{Theorem: Main continuous time} LT\textsubscript{\(\ell\)}]
Apply the continuous mapping theorem with function \(P_J\) to the result of Lemma \ref{lemma: big time}
\end{proof}
\subsection{Proof of \eqref{eq: first order MCBP}}

This immediately follows by applying Slutsky's lemma to Theorem \ref{Theorem: Main continuous time} LT\textsubscript{c} and LT\textsubscript{\(\ell\)}, \eqref{eq: small component convergence in probability}, and using \eqref{eq:identity}. The only fluctuations that do not vanish under the renormalization used is those coming from Jordan blocks with eigenvalue \(\lambda_1\) and size \(m_1\). 

\subsection{Proof of Theorem \ref{theorem: cont time 2}}
The proof of this theorem is given in Appendix \ref{Appendix: proof of covar} 
\section{Proof of Theorems \ref{theorem: main result simplified}, \ref{theorem: Main results discrete}, and \ref{theorem: main results discrete 2}}
\label{Sec: proof of discrete time}
In this section, we prove Theorems \ref{theorem: main result simplified}, \ref{theorem: Main results discrete}, and \ref{theorem: main results discrete 2} by applying the embedding \eqref{eq: branchurn} to Theorems \ref{Theorem: Main continuous time} and \ref{theorem: cont time 2}. To apply this embedding, we need the following result on random time-changes of stochastic processes.
\begin{theorem}[Section 14 of \cite{Bill}]
\label{theorem: bill 2}
Let \(a <b\), \(c<d\) be real numbers. Let \((\phi_n)_{n\geq 1}\) be a sequence of non-decreasing processes in \(D[a,b]\) that take values in the interval \([c,d]\). Also, let \((\bs Y_n)_{n\geq 1}\) be a sequence of processes in \(D[c,d]\) that take values in~\(\mathbb{C}^d\). Then, if \((\bs Y_n,\bs \phi_n)\conindis (\bs Y,\phi)\) in \(D[a,b]\times D[c,d]\) as \(n \rightarrow \infty\), where \(\bs Y\) is continuous a.s., we have that
\begin{equation*}
    \bs Y_n(\phi_n(t)) \conindis \bs Y(\phi(t)) \text{ in \(D[a,b]\)}.
\end{equation*}
\end{theorem}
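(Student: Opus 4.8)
The plan is to derive this from the continuous mapping theorem, after establishing, as a deterministic fact, that the composition operator is continuous at the relevant configurations. Write $\Psi$ for the map sending a pair $(\bs y,\phi)$ --- with $\bs y\in D[c,d]$ taking values in $\mathbb C^d$ and $\phi\in D[a,b]$ non-decreasing with range in $[c,d]$ --- to the path $\bs y\circ\phi\in D[a,b]$. First one checks that $\Psi$ is well defined: since $\phi$ is non-decreasing and c\`adl\`ag it has one-sided limits everywhere and its left limit at any point is approached monotonically, so $\bs y\circ\phi$ is again c\`adl\`ag; it is precisely the hypothesis that each $\phi_n$ is non-decreasing that makes $\bs Y_n\circ\phi_n$ a bona fide element of $D[a,b]$. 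I would then prove that $\Psi$ is continuous at every point $(\bs y,\phi)$ for which $\bs y$ is continuous on $[c,d]$ --- with no requirement on $\phi$. Granting this, since $\bs Y$ is a.s.\ continuous, the law of $(\bs Y,\phi)$ puts full mass on the continuity set of $\Psi$, so the continuous mapping theorem applied to $(\bs Y_n,\phi_n)\conindis(\bs Y,\phi)$ yields $\bs Y_n(\phi_n(\cdot))=\Psi(\bs Y_n,\phi_n)\conindis\Psi(\bs Y,\phi)=\bs Y(\phi(\cdot))$ in $D[a,b]$; a routine check that $\Psi$ is Borel measurable finishes this step.

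For the continuity claim, suppose $(\bs y_n,\phi_n)\to(\bs y,\phi)$ in $D[c,d]\times D[a,b]$ with $\bs y$ continuous; by continuity of the coordinate projections this is equivalent to $\bs y_n\to\bs y$ in $D[c,d]$ and $\phi_n\to\phi$ in $D[a,b]$ separately. Because the limit $\bs y$ is continuous, the first upgrades to uniform convergence, $\sup_{u\in[c,d]}\|\bs y_n(u)-\bs y(u)\|_2\to0$. The second supplies increasing homeomorphisms $\lambda_n$ of $[a,b]$ with $\|\lambda_n\|\to0$ (in the slope norm of the paper's metric $d$) and $\sup_{t\in[a,b]}|\phi_n(\lambda_n(t))-\phi(t)|\to0$, i.e.\ $\phi_n\circ\lambda_n\to\phi$ uniformly. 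Reusing the same $\lambda_n$ (more precisely, taking $\lambda_n^{-1}$ in the infimum defining $d$, and noting $\|\lambda_n^{-1}\|=\|\lambda_n\|$), one obtains
\begin{equation*}
d\big(\bs y_n\circ\phi_n,\ \bs y\circ\phi\big)\ \le\ \|\lambda_n\|+\sup_{t\in[a,b]}\big\|\bs y_n(\phi_n(\lambda_n(t)))-\bs y(\phi(t))\big\|_2,
\end{equation*}
and the triangle inequality bounds the supremum on the right by
\begin{equation*}
\sup_{u\in[c,d]}\|\bs y_n(u)-\bs y(u)\|_2+\sup_{t\in[a,b]}\big\|\bs y(\phi_n(\lambda_n(t)))-\bs y(\phi(t))\big\|_2.
\end{equation*}
The first term goes to $0$ by the uniform convergence of $\bs y_n$; the second goes to $0$ since $\bs y$ is uniformly continuous on the compact interval $[c,d]$ and $\sup_t|\phi_n(\lambda_n(t))-\phi(t)|\to0$. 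Hence $d(\bs y_n\circ\phi_n,\bs y\circ\phi)\to0$, giving continuity of $\Psi$ at $(\bs y,\phi)$.

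An equivalent, perhaps cleaner route is to invoke Skorohod's representation theorem to realise $(\bs Y_n,\phi_n)\to(\bs Y,\phi)$ almost surely on one probability space, apply the deterministic lemma of the previous paragraph pathwise (legitimate since $\bs Y$ is continuous a.s.), and then pass back to convergence in distribution; this is essentially the argument in Billingsley, Section 14.

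The one genuine difficulty is Skorohod-metric bookkeeping: one must justify that $J_1$-convergence to a continuous limit coincides with uniform convergence, and that the time-changes $\lambda_n$ witnessing $\phi_n\to\phi$ can be chosen to satisfy the paper's slope condition $\|\lambda_n\|\to0$ and then be reused verbatim for the composed paths, so that the conclusion holds in the metric $d$ and not merely in a topologically equivalent weaker metric. Both are standard properties of the $J_1$ topology; once they are in place the argument is exactly the short chain of estimates above.
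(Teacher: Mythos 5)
The paper does not prove this theorem; it is imported verbatim from Billingsley \cite{Bill}, Section~14, so there is no argument of the paper's own to compare against. Your proposal is correct and is in essence Billingsley's proof: reduce the statement to the deterministic continuity of the composition map $\Psi(\bs y,\phi)=\bs y\circ\phi$ at every pair whose first coordinate is continuous, and then invoke the continuous mapping theorem (or equivalently the Skorokhod representation) since the law of $(\bs Y,\phi)$ concentrates on that continuity set. The two analytic ingredients you isolate are exactly the right ones and both hold for the metric the paper uses: $J_1$-convergence to a continuous limit is uniform convergence, and if $\lambda_n$ witnesses $\phi_n\to\phi$ then $\lambda_n^{-1}$ (with $\|\lambda_n^{-1}\|=\|\lambda_n\|$ for the log-slope norm) can be inserted into the infimum defining $d(\bs y_n\circ\phi_n,\bs y\circ\phi)$, after which the triangle inequality plus uniform continuity of $\bs y$ on the compact $[c,d]$ close the estimate. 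Nothing in the argument requires the time-change limit $\phi$ to be continuous, only non-decreasing and c\`adl\`ag, which matches the statement; this is worth keeping in mind since elsewhere in the paper the $\phi$ that arises is a deterministic continuous function, but the theorem as stated is more general.
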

\noindent We will use Kolmogorov's continuity theorem to check the a.s.\ continuity condition in Theorem \ref{theorem: bill 2}.
\begin{theorem}[Kolmogorov's continuity theorem]
\label{theorem: Kol cont}
Let \(a < b\) be real numbers. Let \(\bs Y\) be a random process in \(D[a,b]\) that takes values in \(\mathbb{C}^d\). If there exists positive constants \(\alpha,\beta,K\), such that, for all \(a \leq t_1 \leq t_2 \leq b\),
\begin{equation*}
    \E[\|\bs Y(t_2)-\bs Y(t_1)\|_2^{\alpha}] \leq K(t_2-t_1)^{1+\beta}.
\end{equation*}
Then \(\bs Y\) is a.s.\ continuous in \(D[a,b]\).
\end{theorem}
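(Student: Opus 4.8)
\textit{Proof sketch / plan.}
The plan is to prove the statement by the classical dyadic chaining argument, first obtaining almost-sure H\"older continuity of \(\bs Y\) along the dyadic rationals and then transferring this to the whole c\`adl\`ag path. Without loss of generality I would take \([a,b]=[0,1]\). Fix any \(\gamma \in (0,\beta/\alpha)\), so that \(\rho := 1+\beta - \alpha\gamma > 1\); for \(n \geq 0\) let \(D_n = \{k 2^{-n} : 0 \leq k \leq 2^n\}\) and \(D = \bigcup_{n\geq 0} D_n\).

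First I would estimate the largest increment of \(\bs Y\) between consecutive points of \(D_n\). By Markov's inequality and the hypothesis, for each \(n\) and \(0 \leq k < 2^n\),
\begin{equation*}
  \mathbb{P}\F(\|\bs Y((k+1)2^{-n}) - \bs Y(k2^{-n})\|_2 \geq 2^{-\gamma n}\R) \leq 2^{\alpha\gamma n} K 2^{-n(1+\beta)} = K 2^{-\rho n},
\end{equation*}
and summing over the \(2^n\) choices of \(k\) gives a bound of order \(2^{-(\rho-1)n}\), which is summable in \(n\) since \(\rho > 1\). By the Borel--Cantelli lemma there is then an almost-sure event \(\Omega_0\) and, on it, a random \(n_0 = n_0(\omega)\) with
\begin{equation*}
  \max_{0 \leq k < 2^n} \|\bs Y((k+1)2^{-n}) - \bs Y(k2^{-n})\|_2 < 2^{-\gamma n} \qquad \text{for every } n \geq n_0 .
\end{equation*}

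Next I would run the telescoping argument on \(\Omega_0\). Given two dyadic points \(s < t\) with \(0 < t - s \leq 2^{-n_0}\), choose \(N \geq n_0\) with \(2^{-(N+1)} < t - s \leq 2^{-N}\), express \(s\) and \(t\) through their finite binary expansions, and connect them by a chain of dyadic points in which each successive step is an increment between neighbours of \(D_n\) for some \(n > N\), using at most two increments at each scale. This yields
\begin{equation*}
  \|\bs Y(t) - \bs Y(s)\|_2 \leq \sum_{n > N} 2 \cdot 2^{-\gamma n} \leq \cst\, 2^{-\gamma N} \leq \cst\, (t - s)^{\gamma},
\end{equation*}
so on \(\Omega_0\) the restriction of \(\bs Y\) to \(D\) is H\"older-\(\gamma\) on intervals of length at most \(2^{-n_0}\), hence uniformly continuous on \(D \cap [0,1]\).

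Finally, since \(\bs Y\) is c\`adl\`ag and \(D\) is dense in \([0,1]\), right-continuity gives \(\bs Y(t) = \lim_{D \ni s \downarrow t} \bs Y(s)\) for every \(t\); the uniform estimate on \(D\) forces \(\lim_{D \ni s \uparrow t} \bs Y(s)\) to exist and equal the same value, so the left limit \(\bs Y(t-)\) equals \(\bs Y(t)\) and the path has no jumps, while the H\"older bound passes to these limits to give that \(\bs Y\) itself is continuous on \(\Omega_0\). I expect the only real care to be needed in the chaining step — checking that the telescoping path between two points of \(D_N\) uses only increments of scales strictly greater than \(N\) and at most two per scale, so that the geometric series closes as written; the probabilistic input (Markov plus Borel--Cantelli) and the passage to the c\`adl\`ag path are routine.
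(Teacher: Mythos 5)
The paper cites Kolmogorov's continuity theorem as a standard classical result and does not supply a proof, so there is nothing of the paper's to compare against; you are filling in the standard argument, and you do so essentially correctly. The three stages (Markov plus a union bound at each dyadic level, Borel--Cantelli to obtain an almost-sure modulus at all sufficiently fine scales, dyadic chaining to get H\"older continuity on \(D\)) are the canonical route. You have also handled the one nonstandard point of the statement here: it asserts that the \emph{given} c\`adl\`ag path is a.s.\ continuous, not merely that a continuous modification exists, and your final step correctly uses right-continuity together with the existence of left limits to identify \(\bs Y(t-)\) with the dyadic limit \(\lim_{D\ni s\uparrow t}\bs Y(s) = \bs Y(t)\), so the path has no jumps on \(\Omega_0\).

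One small caveat in the chaining bookkeeping, which you yourself flagged: with \(N\) chosen so that \(2^{-(N+1)} < t-s \leq 2^{-N}\), the level-\(N\) truncations \(s_N = 2^{-N}\lfloor 2^N s\rfloor\) and \(t_N = 2^{-N}\lfloor 2^N t\rfloor\) can differ by as much as \(2\cdot 2^{-N}\) (e.g.\ when \(t-s = 2^{-N}\) exactly), so the chain closing the gap between \(s_N\) and \(t_N\) may require one or two increments \emph{at scale \(N\) itself}, not only at scales \(n > N\). Equivalently, if you insist on using only scales \(n > N\), the gap \(|t_{N+1}-s_{N+1}|\) can reach \(3\cdot 2^{-(N+1)}\), so you may need up to three increments at scale \(N+1\). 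Either way the geometric series still sums to \(\cst\, 2^{-\gamma N} \leq \cst\, (t-s)^\gamma\); only the constant changes, so the conclusion is unaffected, but the phrase ``at most two increments at each scale \(n>N\)'' should be adjusted (either include scale \(N\) with the same Borel--Cantelli control, or allow a fixed bounded number rather than two).
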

\noindent 
The remainder of this section is split up as follows: Firstly, we show the functional convergence results in Theorem \ref{theorem: main result simplified} IBD \& TR and Theorem \ref{theorem: Main results discrete}. In each case, we show the limit process satisfies the equality in distribution given in Theorem \ref{theorem: main results discrete 2}. Then, we show Theorem \ref{theorem: main result simplified} TSD using Theorem \ref{theorem: main results discrete 2} and Slutsky's lemma. 
\subsection{Proof of Theorems \ref{theorem: main result simplified} IBD \& TR, \ref{theorem: Main results discrete}, and \ref{theorem: main results discrete 2}}
 By \eqref{eq: branchurn}, there exists a sequence of branching processes \((\bs X_n)_{n\geq1}\), such that for each \(n\),
\begin{equation}
(\bs X_n(\tau_{n,i}))_{i\geq 0}\eqindis (\bs U_n(i))_{i\geq 0},
\label{eq: branchurn2}
\end{equation}
where the \((\tau_{n,i})_{i\geq 0}\) are the ordered death times of particles in \(\bs X_n\) (where we take \(\tau_0=0\)). The first part of the proof is to show the random time change \((\tau_{n,i})_{i\geq 0}\) converges in probability to some deterministic process in \(D\). Once this is shown, we can input this random time change, \eqref{eq: branchurn2}, and the MCBP results of Theorem \ref{Theorem: Main continuous time} into Theorem \ref{theorem: bill 2} to acquire the desired functional limit theorem for the urn. Furthermore, this limit is shown to satisfy the equality in distribution given in Theorem \ref{theorem: main results discrete 2}. Since the growth of the \((\tau_{n,i})_{i\geq 0}\) depends on how \(N\) scales with \(n\), each regime results in a different limit for the random time change \((\tau_{n,i})_{i\geq 0}\). Therefore, we need to prove each regime separately. Note, by Lemma \ref{lemma: import ob}, all Jordan blocks with eigenvalue \(S\) have size 1. In particular, this means \(m_1=1\) in Theorem \ref{Theorem: Main continuous time}, which should be kept in mind whenever we invoke this theorem in the following proofs.
\subsubsection{Proof of Theorems \ref{theorem: main result simplified} IBD and \ref{theorem: main results discrete 2} IBD}
\textbf{Convergence of the random time change:} We are going to show that
\begin{equation}
\label{eq: discrete 6}
     \phi_{n}(t) = {n^{-1}N\beta_1\tau_{n,\lfloor nt \rfloor}} \coninprob t \text{ in } D[0,\infty).
\end{equation}
as \(n \rightarrow \infty\). Take \(\varepsilon(n) = n/N\) in \eqref{eq: improve 32}. By taking the scalar product of \eqref{eq: improve 32} with \(\bs a\) and using Lemma \ref{lemma: balanced frob}, we get that
\begin{equation}
\label{eq: discrete 1}
    n^{-1/2}\F(\sum_{i=1}^d a_i X_n(nt/N)_i-N\e^{Snt/N}\beta_1 \R) \conindis \sum_{i=1}^d a_i W_1(t)_i\text{ in } D[0,\infty)
\end{equation}
as \(n \rightarrow \infty\), where we recall \(\beta_1 = \bs a \cdot \bs \mu\). Let \[B_n(t)=\# \{i:\tau_{n,i}\leq t\}\] be the number of deaths in \(\bs X_n\) up to time \(t\). Since \(\bs X_n\) has a balanced replacement structure, we have
\begin{equation}
\label{eq: discrete 2}
   \sum_{i=1}^d a_i X_n(nt/N)_i = N\beta_1 +SB_n(nt/N). 
\end{equation}
This and \eqref{eq: discrete 1} imply
\begin{equation}
\label{eq: discrete 3}
    n^{-1/2}\F(N\beta_1 +SB_n(nt/N)-N\e^{Snt/N}\beta_1\R) \conindis \sum_{i=1}^d a_i W_1(t)_i\text{ in } D[0,\infty)
\end{equation}
as \(n \rightarrow \infty\). This implies 
\begin{equation}
\label{eq: discrete 4}
    n^{-1}B_n(nt/N)-\beta_1 t  \coninprob 0\text{ in } D[0,\infty)
\end{equation}
as \(n \rightarrow \infty\). For \(T\geq 0\) and \(n\geq 1\), let \[A_{n,T}:=\{\tau_{n,\lfloor nT \rfloor}\leq n(T+1)/\beta_1 N\}.\] Since \(B_n(\tau_{n,i})=i\), by solving \eqref{eq: discrete 4} for \(t\) when \(B_n(nt/N)=\lfloor nT \rfloor\), we see that \(\mathbb{P}(A_{n,T})\rightarrow 1\) as \(n \rightarrow \infty\). This and the fact \eqref{eq: discrete 4} holds on the interval \([0,(T+1)/\beta_1]\), imply, on \(A_{n,T}\),
\begin{equation}
\label{eq: Convergence 4}
     n^{-1}B_n(\tau_{n,\lfloor nt \rfloor})-n^{-1}N\beta_1 \tau_{n,\lfloor nt \rfloor}\coninprob 0\text{ in } D[0,T]
\end{equation}
as \(n \rightarrow \infty\), where we have used, for any \(\varepsilon>0\),
\begin{align*}
& \mathbb{P}\F(\sup_{0\leq t \leq T}\|n^{-1}B_n(\tau_{n,\lfloor nt \rfloor})-n^{-1}N\beta_1 \tau_{n,\lfloor nt \rfloor}\|_2 \geq \varepsilon\M|A_{n,T}\R) \leq \mathbb{P}\F(\sup_{0\leq t \leq (T+1)/\beta_1}\|n^{-1}B_n(nt/N)-\beta_1 t  \|_2\geq \varepsilon\M |A_{n,T}\R)\\
 &\qquad\qquad\qquad\qquad\qquad\qquad\qquad\qquad\qquad\qquad\qquad\leq  \mathbb{P}(A_{n,T})^{-1}\mathbb{P}\F(\sup_{0\leq t \leq (T+1)/\beta_1}\|n^{-1}B_n(nt/N)-\beta_1 t  \|_2\geq \varepsilon\R)\rightarrow 0
\end{align*}
as \(n \rightarrow \infty\). Furthermore, since \(\mathbb{P}(A_{n,T}) \rightarrow 1\) as \(n \rightarrow \infty\), \eqref{eq: Convergence 4} holds without conditioning on \(A_{n,T}\). Moreover, since \(T\geq 0\) was arbitrary, \eqref{eq: Convergence 4} holds in \(D[0,\infty)\). Lastly, using that \(B_n(\tau_{n,\lfloor nt \rfloor})=\lfloor nt \rfloor\) in \eqref{eq: Convergence 4} gives \eqref{eq: discrete 6}.
\\~\\
\textbf{Functional convergence:} Take \(\varepsilon(n) = n/\beta_1N\) in \eqref{eq: improve 32}. This gives 
  \begin{equation}
  \label{eq: dis 1}
         \bs Y_n(t):=n^{-1/2}(\bs X_n(nt/\beta_1N)-N\mathrm{e}^{Ant/\beta_1N}\bs \mu)\conindis \beta_1 ^{-1/2}\bs W_1(t)\text{ in } D[0,\infty)
    \end{equation}
   as \(n \rightarrow \infty\). We are going to apply Theorem \ref{theorem: bill 2} to the \(\bs Y_n\) with time change \(\phi_n\). We need to show \(\bs W_1\) is continuous a.s., but this is immediate since \(\bs W_1\) is a Brownian motion. Let \(T \geq 0\). By definition of \(A_{n,T}\), for all \(t \in [0,T]\), we have that~\(\phi_n(t) \leq T+1\) a.s.\ on \(A_{n,T}\). Furthermore, since \(\mathbb{P}(A_{n,T})\rightarrow 1\) as \(n \rightarrow \infty\), we have that conditionally on \(A_{n,T}\) both \eqref{eq: discrete 6} holds in \(D[0,T]\) and \eqref{eq: dis 1} holds in \(D[0,T+1]\). Therefore, Theorem \ref{theorem: bill 2} implies on \(A_{n,T}\)
 \begin{equation}
 \label{eq: dis 2}
         \bs Y_n(\phi_n(t)):=n^{-1/2}(\bs U_n(\lfloor nt \rfloor )-N\mathrm{e}^{A\tau_{n,\lfloor nt \rfloor }}\bs \mu)\conindis  \beta_1^{-1/2} \bs W_1(t)\text{ in } D[0,T]
    \end{equation}
    as \(n \rightarrow \infty\). Again, since \(\mathbb{P}(A_{n,T})\rightarrow 1\) as \(n \rightarrow \infty\), this holds without conditioning on \(A_{n,T}\). Also, since \(T\geq 0\) was arbitrary, \eqref{eq: dis 2} holds in \(D[0,\infty)\). By \eqref{eq: dis 2}, the proof is complete once we have shown
    \begin{equation}
    \label{eq: dis 7}
        n^{-1/2}N(\mathrm{e}^{A\tau_{n,\lfloor nt \rfloor }}-\mathrm{e}^{AS^{-1}\log(1+Snt/\beta_1 N)})\conindis -A\beta_1^{-3/2} S^{-1}\sum_{i=1}^d a_i W_1(t)_i \text{ in } D[0,\infty)
    \end{equation}
as \(n \rightarrow \infty\). Indeed, this and \eqref{eq: dis 2} gives the fluctuation result \eqref{eq: colour comp IBD 2}, and the asymptotic colour composition \eqref{eq: colour comp IBD} follows from \eqref{eq: colour comp IBD 2}. By taking the scalar product of \eqref{eq: dis 2} with \(\bs a\) and using Lemma \ref{lemma: balanced frob}, we get
\begin{equation}
\label{eq: dis 3}
    n^{-1/2}N (1 +(\beta_1 N)^{-1}Snt-\e^{S\tau_{n,\lfloor nt \rfloor }}) \conindis \beta_1^{-3/2}\sum_{i=1}^d  a_i W_1(t)_i\text{ in }  D[0,\infty)
\end{equation}
as \(n \rightarrow \infty\), where the first two terms in the brackets appear because of the balanced property. Taking the Taylor expansion of the logarithm around \(1+(\beta_1 N)^{-1}Snt\) in the second term on the l.h.s.\ in the following gives
\begin{align}
\log(1+Snt/\beta_1 N)- \log (\e^{S\tau_{n,\lfloor nt \rfloor }}) =&  \frac{1}{1+(\beta_1 N)^{-1}Snt}( 1+(\beta_1 N)^{-1}Snt-\e^{S\tau_{n,\lfloor nt \rfloor }})\nonumber\\
&+\mathcal{O}\F((1+(\beta_1 N)^{-1}Snt-\e^{S\tau_{n,\lfloor nt \rfloor }})^2\R).\label{eq: dis 4}
\end{align}
The \(\mathcal{O}\)-term is \(\mathcal{O}_p(N^{-2}n)\) by \eqref{eq: dis 3}. Moreover, since \(N^{-1}n^{1/2}=o(1)\) (as \(n = o(N)\)), the \(\mathcal{O}\)-term is \(o_p(n^{-1/2}N^{-1})\).  This, \eqref{eq: dis 3}, \eqref{eq: dis 4}, and the fact that, for all \(t \geq 0\), \((\beta_1 N)^{-1}Snt \rightarrow 0\) as \(n \rightarrow \infty\), imply
\begin{equation}
\label{eq: dis 5}
     n^{-1/2}N (\log(1+Snt/\beta_1N)-S\tau_{n,\lfloor nt \rfloor }) \conindis \beta_1^{-3/2}\sum_{i=1}^d a_i W_1(t)_i \text{ in } D[0,\infty)   
\end{equation}
as \(n \rightarrow \infty\). We multiply the terms within the brackets of \eqref{eq: dis 5} by \(-AS^{-1}\) and take their exponential. By the same argument given to show \eqref{eq: dis 5}, only the first two terms in the exponential power series do not disappear in probability, this implies
\begin{equation}
\label{eq: dis 6}
     n^{-1/2}N(\mathrm{exp}(A\tau_{n,\lfloor nt \rfloor }-AS^{-1}\log(1+Snt/\beta_1N))-1) \conindis -A\beta_1^{-3/2} S^{-1}\sum_{i=1}^d a_i W_1(t)_i \text{ in } D[0,\infty)
\end{equation}
as \(n \rightarrow \infty\). Lastly, multiplying \eqref{eq: dis 6} by \(\e^{AS^{-1}\log(1+Snt/\beta_1N)}\), which for all \(t\geq 0\) converges to 1 (since \(n=o(N)\)), gives \eqref{eq: dis 7}.
\\~\\
The proofs for the other regimes follow a similar structure and can be found in Appendix \ref{Appendix: proof of discrete time}
\subsection{Proof of Theorem \ref{theorem: main result simplified} TSD} Theorem \ref{theorem: main result simplified} TSD immediately follows by applying \eqref{eq:identity} to the l.h.s.\ of the converging sequences in Theorem \ref{theorem: main result simplified} TSD, then using Slutsky's lemma along with the functional limits of Theorem \ref{theorem: Main results discrete}. Indeed, it is clear from Theorem \ref{theorem: Main results discrete} that 
\begin{equation*}
    \text{small components \(<\) critical components \(<\) large components,}
\end{equation*}
here we use \(<\) to mean the fluctuations in Theorem \ref{theorem: Main results discrete} TSD are of lower order as \(n\rightarrow \infty\). One can take this picture further and note that critical components are of lower order than other critical components with a larger Jordan block size. Similarly, large components are of lower order than other large components with eigenvalues of larger real part, or eigenvalues of equal real part and larger Jordan block size. Furthermore, because of \eqref{eq: TSDs big con}, small components are of lower order than critical and large components on the critical and large component time scale. The one caveat to check is that there are no fluctuations coming from the Jordan space with eigenvalue \(S\) when \(S\) is simple. Since by the hierarchy we have just presented, these would be the dominant fluctuations of the urn, which contradicts Theorem \ref{theorem: Main results discrete} TSD. Since the urn is balanced, \(S\) has right-eigenvector \(\bs a\) by Lemma \ref{lemma: balanced frob}. Again, since the urn is balanced, we have, for any \(n,m\geq 0\),
\begin{equation}
   \bs a' \bs U_n(m) = (N\beta_1+Sm). \label{eq:balanced jordan proj}
\end{equation}
Since \(S\) is simple, by \eqref{eq: eigenvector proj}, we have that the Jordan block \(J\) with eigenvalue \(S\) has projection matrix \(P_J=\bs v_1 \bs a\), where \(\bs v_1\) is the left-eigenvector of \(S\) and is normalized such that \(\bs v_1 \cdot \bs a =1\). This and \eqref{eq:balanced jordan proj} imply, for any \(t\geq 0\),
\begin{align*}
  & P_J (\bs U_n(\lfloor n t \rfloor)-N\e^{AS^{-1}\ell_1(n,t)}\bs \mu) = (N\beta_1 +S\lfloor n t \rfloor -N\beta_1\e^{\ell_1(n,t)})\bs v_1=(\lfloor n t \rfloor- n t)S\bs v_1,\\
   &P_J (\bs U_n(\lfloor N(n/N)^t \rfloor)-N\e^{AS^{-1}\ell_2(n,t)}\bs \mu) = (N\beta_1 +S\lfloor N(n/N)^t \rfloor -N\beta_1\e^{\ell_2(n,t)})\bs v_1= (\lfloor N(n/N)^t \rfloor- N(n/N)^t)S\bs v_1.
\end{align*}
Thus, the fluctuations in this Jordan space are bounded by \(S\bs v_1\) for all \(n\). This is of smaller order than any possible small/critical/large component, so they vanish in the limit. Lastly, note that the equalities in distribution given in Theorem \ref{theorem: main results discrete 2} TSD\textsubscript{c} and TSD\textsubscript{\(\ell\)} for the limits in Theorem \ref{theorem: main result simplified} TSD critical \& large urns also hold from the above argument. We are simply summing up the limits in Theorem \ref{theorem: Main results discrete} for all components with the largest Jordan block size (critical urns) or the largest real eigenvalue and Jordan block size (large urns). Note, it is fine that we only have convergence in distribution, since we know how the limits behave jointly by Theorem \ref{theorem: cont time 2}.
\\~\\
\textbf{Acknowledgement.} The author is extremely grateful to Dr C\'ecile Mailler for many conversations and much advice, which without the work in this paper would not have been possible.

The author is supported by a scholarship from the EPSRC Centre for Doctoral Training in Statistical Applied Mathematics at Bath (SAMBa), under the project EP/S022945/1.
\bibliographystyle{plain}
\bibliography{bibo.bib}
\appendix 
\section{Appendix}
\subsection{Proof of Lemma \ref{Lemma: fourth moments}}
\label{Appendix: proof of fourth moments}
To prove Lemma \ref{Lemma: fourth moments}, we need the following consequence of the Burkholder-Davis-Gundy inequality.
\begin{lemma}
\label{Lemma: BDG}
Let \((\bs M(t))_{t\geq 0}\) be a complex d-dimensional martingale in \(D[0,\infty)\). Assume that \(\bs M(0)=0\). Then, for any~\(k \geq 1\), \(t \geq 0\), 
\begin{equation*}
    \E\F[\|\bs M(t)\|_2^{2^{k+1}}\R] \leq \cst_k\E\F[ \mathrm{tr}\F([\bs M,\overline{\bs M}]_t^{2^k}\R)\R].
\end{equation*}
\end{lemma}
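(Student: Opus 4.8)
The plan is to reduce the claimed bound to a standard form of the Burkholder-Davis-Gundy (BDG) inequality by iterating the scalar BDG inequality $k$ times. First I would recall that for a one-dimensional complex (or real) martingale $M$ with $M(0)=0$, the BDG inequality gives, for any $p\geq 1$,
\begin{equation*}
    \E\F[|M(t)|^{2p}\R] \leq \cst_p \E\F[[M,\overline M]_t^{p}\R].
\end{equation*}
Applying this componentwise to $\bs M=(M_1,\dots,M_d)$ and summing, and using that $\|\bs M(t)\|_2^{2p}\leq d^{p-1}\sum_{j}|M_j(t)|^{2p}$ (or just $\sum_j |M_j(t)|^{2p}\leq \|\bs M(t)\|_2^{2p}$ in the other direction depending on which way we need it), one obtains a bound of the shape $\E[\|\bs M(t)\|_2^{2p}]\leq \cst_{p,d}\sum_j \E[[M_j,\overline M_j]_t^{p}]$. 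The point of the lemma is to package this with $p=2^k$ and to rewrite $\sum_j [M_j,\overline M_j]_t^{2^k}$ in terms of $\mathrm{tr}([\bs M,\overline{\bs M}]_t^{2^k})$.

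The cleanest route, and the one I would carry out, is an induction on $k$. For $k=0$ the statement $\E[\|\bs M(t)\|_2^{2}]\leq \cst\,\E[\mathrm{tr}([\bs M,\overline{\bs M}]_t)]$ is exactly \eqref{eq: second moment martingale} applied with $\bs Y=\bs Z=\bs M$ started from $0$ (taking the trace of both sides). For the inductive step, note that for a finite-variation complex martingale the quadratic variation process $[\bs M,\overline{\bs M}]_t$ is itself (after subtracting its compensator) expressible through martingale increments; more concretely, one can apply the scalar BDG inequality to each component $M_j$ with exponent $2^{k+1}$,
\begin{equation*}
    \E\F[|M_j(t)|^{2^{k+1}}\R]\leq \cst_k\,\E\F[[M_j,\overline M_j]_t^{2^k}\R],
\end{equation*}
sum over $j$, and then bound $\sum_j [M_j,\overline M_j]_t^{2^k}$ by $\mathrm{tr}([\bs M,\overline{\bs M}]_t^{2^k})$. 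The latter inequality is linear algebra: $[\bs M,\overline{\bs M}]_t$ is a positive semidefinite Hermitian matrix (being a sum $\sum_{s\leq t}\Delta\bs M(s)\Delta\bs M(s)^*$ of rank-one PSD matrices), its diagonal entries are the $[M_j,\overline M_j]_t$, and for a PSD matrix $H$ one has $\sum_j (H_{jj})^{p}\leq \cst_{p,d}\,\mathrm{tr}(H^{p})$ for $p\geq 1$ (e.g. since $H_{jj}\leq \lambda_{\max}(H)$ and $\sum_j H_{jj}=\mathrm{tr}(H)\leq d\,\lambda_{\max}(H)$, while $\mathrm{tr}(H^p)\geq \lambda_{\max}(H)^p$). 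Combining, $\E[\|\bs M(t)\|_2^{2^{k+1}}]\leq \cst_{k,d}\sum_j\E[|M_j(t)|^{2^{k+1}}]\leq \cst_{k,d}\,\E[\mathrm{tr}([\bs M,\overline{\bs M}]_t^{2^k})]$, which closes the induction. Absorbing the $d$-dependence into $\cst_k$ (consistent with the paper's convention that $\cst$ may depend on fixed quantities) gives the statement.

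The main obstacle is a technical/measure-theoretic one rather than a conceptual one: justifying the scalar BDG inequality in the precise form $\E[|M(t)|^{2p}]\leq \cst_p\E[[M,\overline M]_t^{p}]$ for the complex, càdlàg, purely-discontinuous finite-variation martingales that arise here, and making sure the quadratic variation bracket used in that statement coincides with the pathwise bracket $[M,\overline M]_t=\sum_{s\leq t}|\Delta M(s)|^2$ defined in \eqref{eq:quadratic variation}. For the processes in this paper — MCBP-related martingales, which jump only at the countably many particle death times — this identification is standard: the optional quadratic variation equals the sum of squared jumps, so the pathwise bracket in \eqref{eq:quadratic variation} is the right object, and BDG applies (e.g. via the real and imaginary parts, or directly via the vector-valued BDG inequality). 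I would state this identification explicitly as a preliminary remark, cite a standard reference for BDG, and then the rest is the short induction above.
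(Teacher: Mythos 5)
Your proposal is correct and follows the same overall strategy as the paper: reduce to the scalar BDG inequality applied componentwise, then use linear algebra to pass from $\sum_j \bigl([M_j,\overline M_j]_t\bigr)^{2^k}$ to $\mathrm{tr}\bigl([\bs M,\overline{\bs M}]_t^{2^k}\bigr)$. Two small points of comparison are worth recording. First, the ``induction on $k$'' framing is a bit of a red herring: what you actually write is a direct argument (apply scalar BDG with exponent $2^{k+1}$); the case $k=0$ is never invoked in the ``inductive step.'' This is a stylistic slip, not an error, but the phrasing should be cleaned up. Second, your linear-algebra step genuinely differs from the paper's. The paper proves the auxiliary inequality
\begin{equation*}
\sum_{j}(H_{jj})^{2^k}\leq \mathrm{tr}\bigl(H^{2^k}\bigr)
\end{equation*}
with constant $1$ by iterating the observation that for Hermitian $B$ one has $(B^2)_{jj}=\|\bs B_j\|_2^2\geq (B_{jj})^2$, applying it successively to $B=H, H^2, H^4,\dots$; this exploits that the exponent is a power of two. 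You instead use the eigenvalue bounds $H_{jj}\leq \lambda_{\max}(H)$ and $\mathrm{tr}(H^p)\geq \lambda_{\max}(H)^p$ to get $\sum_j (H_{jj})^p\leq d\,\mathrm{tr}(H^p)$, which is valid for any $p\geq 1$ (not just powers of two) at the cost of a factor $d$, which is absorbed into $\cst_k$ anyway. Both are correct; the paper's is tighter, yours is more general. Your remark about justifying the scalar BDG inequality in the complex c\`adl\`ag purely-discontinuous setting and identifying the pathwise bracket with the optional quadratic variation is a reasonable extra note, though the paper takes this for granted.
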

\begin{proof}
Firstly, by the equivalence of the euclidean norm and \(\infty\)-norm, we have, for \(t \geq 0\),
\begin{equation}
  \E\F[\|\bs M(t)\|_2^{2^{k+1}}\R]\leq \cst_k   \E\F[\F(\max_{i = 1,...,d}{ |M_i(t)|}\R)^{2^{k+1}}\R]. \label{eq: almost d 1}
\end{equation}
Next, by the Burkholder-Davis-Gundy inequality in one dimension, we have, for \(i=1,...,d\), \(t \geq 0\),
\begin{equation*}
    \E\F[|M_i(t)|^{2^{k+1}}\R]\leq \cst_k \E\F[[M_i,\overline{M}_i]_t^{2^k}\R].
\end{equation*}
This implies
\begin{equation}
\label{eq: Lemma 1.7 interim 1}
    \E\F[\F(\max_{i = 1,...,d}{ |M_i(t)|}\R)^{2^{k+1}}\R]\leq \sum_{i=1}^d \E\F[|M_i(t)|^{2^{k+1}}\R] \leq \cst_k \sum_{i=1}^d\E\F[[M_i,\overline{M}_i]_t^{2^k}\R].
\end{equation}
For any hermitian matrix \(A\) (\(A=A^*\)), we have that \((A^2)_{ii} = (AA^*)_{ii}= \|\bs A_i\|^2_2\geq ( A_{ii})^2\), where \(\bs A_i\) is the \(i\)th row of \(A\). Since the quadratic variation raised to any positive integer power is hermitian, repeated use of \((A^2)_{ii}\geq (A_{ii})^2\) gives, for~\(t \geq 0\),
\begin{equation}
\label{eq: Lemma 1.7 interim 2}
    \sum_{i=1}^d\E\F[[M_i,\overline{M}_i]_t^{2^k}\R]\leq \sum_{i=1}^d\E\F[([\bs M,\overline{\bs M}]_t^2)_{ii}^{2^{k-1}}\R]\leq ... \leq \E \F[\mathrm{tr}\F([\bs M,\overline{\bs M}]_t^{2^k}\R)\R].
\end{equation}
The lemma follows by using (\ref{eq: Lemma 1.7 interim 2}) in (\ref{eq: Lemma 1.7 interim 1}) and \eqref{eq: almost d 1}.
\end{proof}
\begin{proof}[Proof of Lemma \ref{Lemma: fourth moments}] For \(t \geq 0\), let \(\bs Y(t) = \mathrm{e}^{-At}\bs X(t)\). This is a martingale by Lemma \ref{prop: martingale bp}. This implies, for any fixed~\(r~\geq~0\), the following processes are martingales started at 0
\begin{align}
    &\bs M_{s,r}(t) = (1+r)^{-(m_1-1)/2}\mathrm{e}^{(A-\lambda_1/2)r}P_s( \bs Y(t)-\bs Y(0)), \quad t\geq 0, \nonumber \\
    &\bs M_{c,\kappa,r}(t) = (1+r)^{-(m_1+2\kappa-2)/2}\mathrm{e}^{(A-\lambda_1/2)r}N_A^{m-\kappa}P_J( \bs Y(t)-\bs Y(0)), \quad t \geq 0,\nonumber \\
    &\bs M_{\ell}(t)=P_{\ell}( \bs Y(t)-\bs Y(0)), \quad t \geq 0. \label{eq:components}
\end{align}
We see the random variables in the l.h.s.\ of Lemma \ref{Lemma: fourth moments} are \(\bs M_{\ell}(t)\), \(\bs M_{c,\kappa,t}(t)\), and \(\bs M_{s,t}(t)\) for some fixed \(t \geq 0\). For brevity, we state the following arguments for \(\bs M_{\ell}(t)\) only. Almost identical arguments hold for \(\bs M_{s,t}(t)\) and \(\bs M_{c,\kappa,t}(t)\). To recover the arguments for small or critical components, one needs to replace
\begin{align*}
    &\bs M_{\ell}\text{ with \(\bs M_{s,t}\) or \(\bs M_{c,\kappa,t}\), and}\\
    &\text{\(P_{\ell}\) with \((1+t)^{-(m_1-1)/2}\mathrm{e}^{(A-\lambda_1/2)t}P_s\) or \((1+t)^{-(m_1+2\kappa-2)/2}\mathrm{e}^{(A-\lambda_1/2)t}N_A^{m-\kappa}P_J.\)}
\end{align*}If a step for the small or critical component case is not clear from its large component counterpart we add additional justification. 

Since \(\bs M_{\ell}\) is a martingale, for any \(t \geq 0\), we apply Lemma \ref{Lemma: BDG} to get
\begin{align}
\E \F[\|P_{\ell}(\bs Y(t)-\bs Y(0))\|_2^4\R] &=  \E\F[ \|\bs M_{\ell}(t)\|_2^4\R]\leq \cst \E \F[ \mathrm{tr}\F([\bs M_{\ell},\overline{\bs M}_{\ell}]_t^{2}\R)\R]. \label{eq:largecomp}
\end{align}
(Since the moments we are interested in are for some fixed \(t \geq 0\), we get the same bound for small and critical components using the martingales \(\bs M_{t,r}\) and \(\bs M_{c,k,t}\) evaluated at time \(t\).) Applying the triangle inequality to the l.h.s.\ of this bound gives, for any \(t \geq 0\),
\begin{equation}
\label{eq:largecomp2}
 \E \F[\|P_{\ell} \bs Y(t)\|_2^4\R] \leq \cst \E \F[ \mathrm{tr}\F([\bs M_{\ell},\overline{\bs M}_{\ell}]_t^{2}\R)\R] +\cst  \| \bs X(0)\|_2^4.
\end{equation} 
(To get the final term on the r.h.s.\ for small or critical components, we use, for any \(t\geq 0\), \[\|(1+t)^{-(m_1-1)/2}\e^{(A-\lambda_1/2)t}P_s\|_2 \leq \cst \text{ or }\|(1+t)^{-(m_1+2\kappa-2)/2}\e^{(A-\lambda_1/2)t}N_A^{m-\kappa}P_J\|_2 \leq \cst,\] which follows by \eqref{eq: matrix exp bounds}.) By Equations (\ref{eq:largecomp}), and (\ref{eq:largecomp2}), the lemma is proven once we show
\begin{equation}
\label{eq: mightneed 1}
    \E \F[ \mathrm{tr}\F([\bs M_{\ell},\overline{\bs M}_{\ell}]_t^{2}\R)\R] \leq  \cst (1+ \Xi_4^{3/2})\|\bs X(0)\|^2_2(1\wedge t ).
\end{equation}For \(1 \leq i \leq d\), let \((\tau_{ik})_{k\geq 1}\) be the ordered death times of particles of type \(i\) in \(\bs X\). Since a time-independent shift does not change the quadratic variation, we have 
\begin{equation}
 [\bs  M_{\ell},\overline{\bs M}_{\ell}]_t =  [P_{\ell}\bs Y  ,\overline{P_{\ell}\bs Y}]_t = \sum_{i=1}^d \sum_{k:\tau_{ik}\leq t}  P_{\ell} \mathrm{e}^{-A\tau_{ik}}\Delta \bs X(\tau_{ik}) \Delta \bs X(\tau_{ik})'(\overline{P}_{\ell} \mathrm{e}^{-A\tau_{ik}})', \quad t\geq 0, \label{eq: lemma 1.6 interim6}
\end{equation}
where in the second equality we have used that \(\overline{\bs Y}=\bs Y\), and that the jumps of \(\bs Y\) correspond to the particle deaths in \(\bs X\). If, in Lemma \ref{lemma: Jan 9.3}, we take \(p=1\), \(M_1(t) = P_{\ell} \mathrm{e}^{-At}\), \(N(\bs x) = \bs x \bs x'\), and \(M_2(t)=(\overline{P}_{\ell} \mathrm{e}^{-At})'\), then the first term on the r.h.s.\ of \eqref{eq: improve 1} is equal to
\[\sum_{k:\tau_{ik}\leq t}  P_{\ell} \mathrm{e}^{-A\tau_{ik}}\Delta \bs X(\tau_{ik}) \Delta \bs X(\tau_{ik})'(\overline{P}_{\ell} \mathrm{e}^{-A\tau_{ik}})', \quad t\geq 0.\]
This, Lemma \ref{lemma: Jan 9.3}, and \eqref{eq: lemma 1.6 interim6} imply
\begin{align}
  Z_{\ell}(t) := [\bs M_{\ell},\overline{\bs M}_{\ell}]_t-\sum_{i=1}^d \int_{0}^t P_{\ell}\mathrm{e}^{-Av}\E[\bs \xi_i \bs \xi_i'](\overline{P}_{\ell}\mathrm{e}^{-Av})'a_iX(v)_i\mathrm{d}v, \quad t\geq 0 \label{eq:jan 9.3 mart}
\end{align}
is a matrix-valued martingale. This martingale consists of a jump processes (first term) and a continuous process of finite variation (second term). The continuous finite-variation term does not contribute to the quadratic variation, so the quadratic variations of \( Z_{\ell}\) and \([\bs M_{\ell},\overline{\bs M}_{\ell}]\) are equal a.s. Furthermore, since the summands of the quadratic variation are hermitian and correspond to the jumps of the process, the quadratic variation of \([\bs M_{\ell},\overline{\bs M}_{\ell}]\) is the r.h.s.\ of \eqref{eq: lemma 1.6 interim6} with the summands squared. These two results imply a.s.\
\begin{equation}
\label{eq:quad var trick}
[  Z_{\ell},  \overline{Z}_{\ell}]_t  =\sum_{i=1}^d \sum_{k:\tau_{ik}\leq t}  (P_{\ell} \mathrm{e}^{-A\tau_{ik}}\Delta \bs X(\tau_{ik}) \Delta \bs X(\tau_{ik})'(\overline{P}_{\ell} \mathrm{e}^{-A\tau_{ik}})')^2,  \quad t \geq 0.
\end{equation}
If, in Lemma \ref{lemma: Jan 9.3}, we take \(p=2\),  \(M_{11}(t)=M_{12}(t) = P_{\ell} \mathrm{e}^{-At}\), \(N_1(\bs x)=N_2(\bs x) = \bs x \bs x'\), and \(M_{21}(t)=M_{22}(t)=(\overline{P}_{\ell} \mathrm{e}^{-At})'\), then the first term on the r.h.s.\ of \eqref{eq: improve 1} is equal to
\begin{equation*}
    \sum_{k:\tau_{ik}\leq t}  (P_{\ell} \mathrm{e}^{-A\tau_{ik}}\Delta \bs X(\tau_{ik}) \Delta \bs X(\tau_{ik})'(\overline{P}_{\ell} \mathrm{e}^{-A\tau_{ik}})')^2, \quad t\geq 0.
\end{equation*} This, Lemma \ref{lemma: Jan 9.3}, and \eqref{eq:quad var trick} imply, for all \(t \geq 0\),
\begin{equation}
\label{eq:jan 9.3 aplic}
  \E [  Z_{\ell},  \overline{Z}_{\ell}]_t= \sum_{i=1}^d \int_{0}^t \E\F[\F(P_{\ell}\mathrm{e}^{-Av}\bs \xi_i \bs \xi_i'(\overline{P}_{\ell}\mathrm{e}^{-Av})'\R)^2\R](a_i\mathrm{e}^{Av}\bs X(0))_i\mathrm{d}v. 
\end{equation}
For ease of notation, let
\begin{align}
    & U_{\ell}(t) = \sum_{i=1}^d \int_{0}^t P_{\ell}\mathrm{e}^{-Av}\E[\bs \xi_i \bs \xi_i'](\overline{P}_{\ell}\mathrm{e}^{-Av})'a_iX(v)_i\mathrm{d}v, \quad t\geq 0,\label{eq: U part} \\
    & V_{\ell}(t)=\sum_{i=1}^d \int_{0}^t \E\F[\F(P_{\ell}\mathrm{e}^{-Av}\bs \xi_i \bs \xi_i'(\overline{P}_{\ell}\mathrm{e}^{-Av})'\R)^2\R](a_i\mathrm{e}^{Av}\bs X(0))_i\mathrm{d}v, \quad t\geq0. \label{eq: V part}
\end{align}
Since \(Z_{\ell}\) is hermitian with \(Z_{\ell}(0)=0\), equations (\ref{eq: second moment martingale}) and (\ref{eq:jan 9.3 aplic}) imply, for \(t \geq 0\),
\begin{equation}
\E[Z_{\ell}(t)^2]=\E[Z_{\ell}(t)\overline{Z}_{\ell}(t)]
= \E [  Z_{\ell},  \overline{Z}_{\ell}]_t = V_{\ell}(t). \label{eq:lemma 1.6 interim}
\end{equation}
Using (\ref{eq:jan 9.3 mart}) and \eqref{eq: U part} in the l.h.s., then rearranging, gives
\begin{align}
\nonumber
    \E\F[[\bs M_{\ell},\overline{\bs M}_{\ell}]_t^2\R]&= V_{\ell}(t)+\E[[\bs M_{\ell},\overline{\bs M}_{\ell}]_t U_{\ell}(t)]+\E[ U_{\ell}(t)[\bs M_{\ell},\overline{\bs M}_{\ell}]_t]-\E[ U_{\ell}(t)^2]\\
    &=V_{\ell}(t)+\E[ Z_{\ell}(t) U_{\ell}(t)]+\E[ U_{\ell}(t) Z_{\ell}(t)]+\E[ U_{\ell}(t)^2].\label{eq:innerproductmatrixuse}
\end{align}
Define the matrix inner product \(\langle A,B\rangle_{\mathrm{tr}} = \mathrm{tr}(AB^*)\) and associated norm \(\|A\|_{\mathrm{tr}}=\mathrm{tr}(AA^*)^{1/2}\). Since, for all \(t\geq0\), \(Z_{\ell}(t)\) and \(U_{\ell}(t)\) are hermitian, applying the trace to both sides of (\ref{eq:innerproductmatrixuse}) and using that the trace is linear gives
\begin{equation}
\label{eq:lemma1.6interim2}
   \E\F[ \mathrm{tr}([\bs M_{\ell},\overline{\bs M}_{\ell}]_t^2)\R] = \mathrm{tr}(V_{\ell}(t))+ \E[ \langle Z_{\ell}(t), U_{\ell}(t)\rangle_{\mathrm{tr}}]+\E[\langle U_{\ell}(t) ,Z_{\ell}(t)\rangle_{\mathrm{tr}}]+\E[\mathrm{tr}( U_{\ell}(t)^2)].
\end{equation}
By the Cauchy-Schwarz inequality, we have
\begin{align}
    \E[ \langle Z_{\ell}(t), U_{\ell}(t)\rangle_{\mathrm{tr}}] &\leq \E[\|U_{\ell}(t)\|_{\mathrm{tr}}^2]^{1/2}\E[\|Z_{\ell}(t)\|_{\mathrm{tr}}^2]^{1/2}\nonumber\\
    &=\E[\mathrm{tr}(U_{\ell}(t)^2)]^{1/2}\E[\mathrm{tr}(Z_{\ell}(t)^2)]^{1/2}\nonumber\\
    &=\E[\mathrm{tr}(U_{\ell}(t)^2)]^{1/2}\mathrm{tr}(V_{\ell}(t))^{1/2},
    \label{eq:lemma1.6interim3}
\end{align}
where in the first equality we use that \(U_{\ell}(t)\) and \(Z_{\ell}(t)\) are hermitian, and in the second equality we use (\ref{eq:lemma 1.6 interim}). Using (\ref{eq:lemma1.6interim3}) in (\ref{eq:lemma1.6interim2}) gives
\begin{align}
    \E\F[ \mathrm{tr}([\bs M_{\ell},\overline{\bs M}_{\ell}]_t^2)\R] &\leq \mathrm{tr}(V_{\ell}(t))+2\E[\mathrm{tr}(U_{\ell}(t)^2)]^{1/2}\mathrm{tr}(V_{\ell})^{1/2}+\E[\mathrm{tr}(U_{\ell}(t)^2)]\\
     &\leq  3\mathrm{tr}(V_{\ell}(t))+3\E[\mathrm{tr}(U_{\ell}(t)^2)].\label{eq:Lemma 1.6 int3}
\end{align}
By using this in \eqref{eq: mightneed 1}, we see the lemma is proved once we show, for every \(t\geq 0\),
\begin{align}
    &\mathrm{tr}(V_{\ell}(t)) \leq  \cst   (1+ \Xi_4^{3/2}) \|\bs X(0)\|_2^2(1\wedge t),\nonumber\\
    &\E[\mathrm{tr}(U_{\ell}(t)^2)] \leq \cst   (1+ \Xi_4^{3/2}) \|\bs X(0)\|_2^2(1\wedge t) . \label{eq: may need 2}
\end{align}
From here, we treat the small, critical, and large component cases separately. 
\\~\\
\textbf{Large Components:} Set \(\hat{\lambda}\) to be the real part of the eigenvalue(s) in \(\Lambda_{\ell}\) with smallest real part. Set \(\hat{\kappa}\) to be the size of the largest Jordan block across all Jordan blocks with eigenvalue in \(\Lambda_{\ell}\). By (\ref{eq: matrix exp bounds}), and (\ref{eq: matrix exp bounds all}), for \(t \geq 0\),
\begin{align}
    \mathrm{tr}(V_{\ell}(t)) &=\sum_{i=1}^d \int_{0}^t \mathrm{tr}\E\F[\F(P_{\ell}\mathrm{e}^{-Av}\bs \xi_i \bs \xi_i'(\overline{P}_{\ell}\mathrm{e}^{-Av})'\R)^2\R](a_i\mathrm{e}^{Av}\bs X(0))_i\mathrm{d}v \nonumber \\
    &\leq \cst \Xi_4\|\bs X(0)\|_2\int_{0}^t(1+v)^{4\hat{\kappa}+m_1-5}\mathrm{e}^{-4\hat{\lambda} v+\lambda_1v}\mathrm{d}v. \label{eq:lemma1.6interim4}
\end{align}
Since \(\hat\lambda\) is the real part of some large eigenvalue, we have that \(2\hat \lambda > \lambda_1\), so the integrand on the r.h.s.\ of (\ref{eq:lemma1.6interim4}) is bounded and decays exponentially to 0 as \(v \rightarrow \infty\). Therefore, for any \(t \geq 0\),
\begin{equation}
\label{eq:Lemma 1.6 int2}
    \mathrm{tr}(V_{\ell}(t)) \leq \cst  \Xi_4 \|\bs X(0)\|_2(1\wedge t) \leq  \cst   (1+ \Xi_4^{3/2}) \|\bs X(0)\|_2^2(1\wedge t).
\end{equation}
Again by (\ref{eq: matrix exp bounds}), for any \(t \geq 0\) and \(\varepsilon>0\),
\begin{align}
   \E[\mathrm{tr}(U_{\ell}(t)^2)] &= \E\F[\mathrm{tr}\F(\F(  \sum_{i=1}^d \int_{0}^t P_{\ell}\mathrm{e}^{-Av}\E[\bs \xi_i \bs \xi_i'](\overline{P}_{\ell}\mathrm{e}^{-Av})'a_iX(v)_i\mathrm{d}v\R)^2\R)\R]\nonumber \\
   & \leq \cst \Xi_4 \sum_{i=1}^d\E\F[\F(\int_{0}^t (1+v)^{2(\hat\kappa-1)}\mathrm{e}^{-2\hat\lambda v}a_iX(v)_i\mathrm{d}v\R)^2\R]\nonumber\\
    &\leq \cst \Xi_4  \sum_{i=1}^d\int_{0}^t\mathrm{e}^{-4\varepsilon v}\mathrm{d}v\E\F[ \int_{0}^t (1+v)^{4(\hat\kappa-1)}\mathrm{e}^{-4(\hat\lambda-\varepsilon) v}a_i^2X(v)_i^2\mathrm{d}v\R]\nonumber\\
 &\leq  \cst \Xi_4  \sum_{i=1}^d\int_{0}^t\mathrm{e}^{-4\varepsilon v}\mathrm{d}v\int_{0}^t (1+v)^{4(\hat\kappa-1)}\mathrm{e}^{-4(\hat\lambda-\varepsilon) v}\E[X(v)_i^2]\mathrm{d}v, \label{eq:Lemma 1.6 int1}
\end{align}
where in the second line we have used by Jensen's inequality (for \(1 \leq i \leq d\), \(\|\mathbb{E}[\bs \xi_i \bs \xi_i']\|_2^2 \leq \Xi_4\)), and in the penultimate line we have used the Cauchy-Schwarz inequality for \(L^2\) bounded functions. To bound the expectation in \eqref{eq:Lemma 1.6 int1}, we take \(M_1=I\) in (\ref{eq: second moment 2}) and apply \eqref{eq: matrix exp bounds all}. This gives, for \(t \geq 0\),
\begin{align}
    \mathbb{E}[\|\bs X(t)\|_2^2] &= \sum_{i=1}^d \int_{0}^t \E[\|\mathrm{e}^{A(t-v)}\bs \xi_i\|_2^2] (a_i\mathrm{e}^{Av}\bs X(0))_i\mathrm{d}v+\|\mathrm{e}^{At}\bs X(0) \|_2^2 \nonumber \\
    &\leq \cst \sum_{i=1}^d \E[\| \bs \xi_i\|_2^2] \|\bs X(0)\|_2\e^{2\lambda_1t}(1+t)^{2(m_1-1)}\int_{0}^t(1+v)^{4(m_1-1)}\e^{-\lambda_1v} \mathrm{d}v+\cst(1+t)^{2(m_1-1)}\mathrm{e}^{2\lambda_1t}\|\bs X(0)\|_2^2
    \nonumber \\&\leq \cst  (1+ \Xi_4^{1/2}) (1+t)^{2(m_1-1)}\mathrm{e}^{2\lambda_1t}\|\bs X(0)\|_2^2, \label{eq: final d 2}
\end{align}
where in the second line we have used \eqref{eq: matrix exp bounds} to bound the matrix exponential of \(A\), and in the final line we have used Jensen's inequality to get \(\sum_{i=1}^d \E[\| \bs \xi_i\|_2^2] \leq \Xi_4^{1/2}\). Using this bound in \eqref{eq:Lemma 1.6 int1} gives, for~\(t\geq 0\),
\begin{equation}
  \E[\mathrm{tr}(U_{\ell}(t)^2)]   \leq \cst  (1+ \Xi_4^{3/2}) \|\bs X(0)\|_2^2\int_{0}^t\mathrm{e}^{-4\varepsilon v}\mathrm{d}v\int_{0}^t (1+v)^{4\hat\kappa+2m_1-6}\mathrm{e}^{(4\varepsilon-4\hat\lambda+2\lambda_1) v}\mathrm{d}v. \label{eq:1.6 cauchy schwarz bound}
\end{equation} 
 Since \(2\hat\lambda > \lambda_1\), we can take \(\varepsilon>0\) small enough such that \(2(\hat\lambda-\varepsilon) > \lambda_1\). With this choice, the second integrand on the r.h.s.\ of (\ref{eq:1.6 cauchy schwarz bound}) is bounded and decays exponentially to 0 as \(v\rightarrow \infty\). The first integral on the r.h.s.\ of (\ref{eq:1.6 cauchy schwarz bound}) is bounded by~\((4\varepsilon)^{-1}\) for all \(t\). Hence, the r.h.s.\ of \eqref{eq:1.6 cauchy schwarz bound} is \(\mathcal{O}(1)\) as \(t \rightarrow \infty\) and \(\mathcal{O}(t)\) as \(t \rightarrow 0\). Therefore, for \(t \geq 0\),
\begin{equation*}
    \E[\mathrm{tr}(U_{\ell}(t)^2)]\leq \cst (1+ \Xi_4^{3/2})   \|\bs X(0)\|_2^2(1\wedge t).
\end{equation*}
This and \eqref{eq:Lemma 1.6 int2} is \eqref{eq: may need 2} which gives the lemma for large components as claimed.
\\~\\
\textbf{Small and Critical Components:} We need to show \eqref{eq: may need 2} for
\begin{align*}
    &\text{\(\mathrm{tr}(V_{s,t}(t))\) and \(\E[\mathrm{tr}(U_{s,t}(t)^2)]\) (small components)} \quad \text{\(\mathrm{tr}(V_{c,\kappa,t}(t))\) and \(\E[\mathrm{tr}(U_{c,\kappa,t}(t)^2)]\) (critical components)}.
\end{align*}
Recall that these are defined by \eqref{eq: U part} and \eqref{eq: V part} with \( P_{\ell}\) replaced by
\begin{equation*}
    \text{\((1+t)^{-(m_1-1)/2}\mathrm{e}^{(A-\lambda_1/2)t}P_s\) (small components)\(\quad (1+t)^{-(m_1+2\kappa-1)/2}\mathrm{e}^{(A-\lambda_1/2)t}N_A^{m-\kappa}P_J\) (critical components).}
\end{equation*} Let \(\hat\lambda\) be the real part of the eigenvalue(s) with largest real part in \(\Lambda_s\). Let \(\hat\kappa\) be the size of the largest Jordan block across all Jordan blocks with eigenvalue in \(\Lambda_s\). Then, by \eqref{eq: matrix exp bounds}, for \(t \geq 0\),
\begin{align}
    \mathrm{tr}(V_{s,t}(t)) &=(1+t)^{-2(m_1-1)}\mathrm{e}^{-2\lambda_1 t}\sum_{i=1}^d \int_{0}^t \mathrm{tr}\E\F[\F(P_{s}\mathrm{e}^{A(t-v)}\bs \xi_i \bs \xi_i'(\overline{P}_{s}\mathrm{e}^{A(t-v)})'\R)^2\R](a_i\mathrm{e}^{Av}\bs X(0))_i\mathrm{d}v \nonumber \\
    &\leq \cst \Xi_4 \|\bs X(0)\|_2(1+t)^{-2(m_1-1)}\mathrm{e}^{-2\lambda_1 t}\int_{0}^t(1+(t-v))^{4(\hat\kappa-1)}(1+v)^{m_1-1}\mathrm{e}^{4\hat\lambda (t-v)+\lambda_1 v}\mathrm{d}v\nonumber\\
    &\leq\cst \Xi_4  \|\bs X(0)\|_2\int_{0}^t(1+(t-v))^{4(\hat\kappa-1)}\mathrm{e}^{4\hat\lambda (t-v)-2\lambda_1 (t-v)}\mathrm{d}v \nonumber\\
    &=\cst \Xi_4  \|\bs X(0)\|_2\int_{0}^t(1+u)^{4(\hat\kappa-1)}\mathrm{e}^{4\hat\lambda u-2\lambda_1 u}\mathrm{d}u\label{eq:smallcompbound1},
\end{align}
where in the final line we have used the change of variables \(t-v=u\). Since \(\hat\lambda\) is the real part of some eigenvalue in \(\Lambda_s\), we have that \(2\hat\lambda <\lambda_1\), therefore the integrand in (\ref{eq:smallcompbound1}) decays exponentially to 0 as \(u \rightarrow \infty\). Thus, the r.h.s.\ of (\ref{eq:smallcompbound1}) is \(\mathcal{O}(1)\) as \( t \rightarrow \infty\) and \(\mathcal{O}(t)\) as \(t \rightarrow 0\). This implies, for all \(t\geq0\),
\begin{equation}
  \mathrm{tr}(V_{s,t}(t)) \leq \cst \Xi_4  \|\bs X(0)\|_2(1\wedge t). \label{eq: V bound}  
\end{equation}
Also by \((\ref{eq: matrix exp bounds})\), for \(t\geq  0\),
\begin{align}
  \mathrm{tr}(V_{c,\kappa,t}(t)) &=(1+t)^{-2(m_1+2\kappa-2)}\mathrm{e}^{-2\lambda_1 t}\sum_{i=1}^d \int_{0}^t \mathrm{tr}\E\F[\F(N_A^{m-\kappa}P_J\mathrm{e}^{A(t-v)}\bs \xi_i \bs \xi_i'(N_A^{m-\kappa}\overline{P}_{J}\mathrm{e}^{A(t-v)})'\R)^2\R](a_i\mathrm{e}^{Av}\bs X(0))_i\mathrm{d}v \nonumber \\
  &\leq \cst \Xi_4  \|\bs X(0)\|_2 (1+t)^{-2(m_1+2\kappa-2)}\mathrm{e}^{-2\lambda_1 t}\int_{0}^t(1+(t-v))^{4(\kappa-1)}(1+v)^{m_1-1}\mathrm{e}^{2\lambda_1 (t-v)+\lambda_1 v}\mathrm{d}v,\nonumber \\
    &\leq \cst \Xi_4  \|\bs X(0)\|_2 (1+t)^{-4(\kappa-1/2)}\int_{0}^t(1+u)^{4(\kappa-1)}\mathrm{d}u.  \label{eq:critcompbound1}
\end{align}
The bound on the r.h.s.\ tends to 0 as \(t \rightarrow \infty\), and is \(\mathcal{O}(t)\) as \(t\rightarrow 0\). Thus, for all \(t\geq 0\),
\begin{align}
    &\mathrm{tr}(V_{c,\kappa,t}(t)) \leq \cst \Xi_4  \|\bs X(0)\|_2(1 \wedge t).
    \label{eq: may need 4}
\end{align}
Next, by (\ref{eq: matrix exp bounds}), for \(t \geq 0\),
\begin{align}
   \E[\mathrm{tr}(U_{s,t}(t)^2)] &= (1+t)^{-2(m_1-1)}\mathrm{e}^{-2\lambda_1 t}\E\F[\mathrm{tr}\F(\F(  \sum_{i=1}^d \int_{0}^t P_{s}\mathrm{e}^{A(t-v)}\E[\bs \xi_i \bs \xi_i'](\overline{P}_{s}\mathrm{e}^{A(t-v)})'a_iX(v)_i\mathrm{d}v\R)^2\R)\R]\nonumber\\
   &\leq \cst \Xi_4  (1+t)^{-2(m_1-1)}\mathrm{e}^{-2\lambda_1 t}\sum_{i=1}^d \E\F[\F(\int_{0}^t (1+(t-v))^{2(\hat\kappa-1)}\mathrm{e}^{2\hat\lambda(t- v)}X(v)_i\mathrm{d}v\R)^2\R],\label{eq: bound small}
   \end{align}
   and
   \begin{align}
\E[\mathrm{tr}(U_{c,\kappa,t}(t)^2)] &= (1+t)^{-2(m_1+2\kappa-2)}\mathrm{e}^{-2\lambda_1 t}\E\F[\mathrm{tr}\F(\F(  \sum_{i=1}^d \int_{0}^t N_A^{m-\kappa}P_J\mathrm{e}^{A(t-v)}\E[\bs \xi_i \bs \xi_i'](N_A^{m-\kappa}\overline{P}_{J}\mathrm{e}^{A(t-v)})'X(v)_i\mathrm{d}v\R)^2\R)\R]\nonumber\\
   &\leq \cst \Xi_4  (1+t)^{-2(m_1+2\kappa-2)}\mathrm{e}^{-2\lambda_1 t}\sum_{i=1}^d\E\F[\F( \int_{0}^t (1+(t-v))^{2(\kappa-1)}\mathrm{e}^{\lambda_1(t- v)}X(v)_i\mathrm{d}v\R)^2\R].\label{eq: bound crit}
\end{align}
Similarly to the large component case, we apply the Cauchy-Schwarz inequality to the expectations in the r.h.s.\ of \eqref{eq: bound small} and \eqref{eq: bound crit}, then use \eqref{eq: final d 2} in the resulting bounds. This gives, for any \(\varepsilon >0\), and \(t \geq 0\),
\begin{align}
 & (1+t)^{-2(m_1-1)}\mathrm{e}^{-2\lambda_1 t}\sum_{i=1}^d\E\F[\F(\int_{0}^t (1+(t-v))^{2(\hat\kappa-1)}\mathrm{e}^{2\hat\lambda v}X(t-v)_i\mathrm{d}v\R)^2\R]\nonumber\\
 &\quad\quad\quad\quad\quad\quad\quad\quad\quad\quad\quad\quad\quad\quad\leq \cst  (1+ \Xi_4^{1/2}) \|\bs X(0)\|_2^2 \int_{0}^{t}\mathrm{e}^{-2\varepsilon v}\mathrm{d}v\int_{0}^t (1+v)^{4(\hat\kappa-1)}\mathrm{e}^{4\hat\lambda v-2(\lambda_1-\varepsilon)v}\mathrm{d}v,  \label{eq:smallstuff}
 \end{align}
 and
 \begin{align}
 &(1+t)^{-2(m_1+2\kappa-2)}\mathrm{e}^{-2\lambda_1 t}\sum_{i=1}^d\E\F[\F( \int_{0}^t (1+(t-v))^{2(\kappa-1)}\mathrm{e}^{\lambda_1(t- v)}X(v)_i\mathrm{d}v\R)^2\R]\nonumber\\
 &\quad\quad\quad\quad\quad\quad\quad\quad\quad\quad\quad\quad\quad\quad\quad\quad\quad\quad\quad\leq \cst  (1+ \Xi_4^{1/2}) \|\bs X(0)\|_2^2(1+t)^{-4(\kappa-1/2)}\int_{0}^t\mathrm{d}v\int_{0}^t(1+v)^{4(\kappa-1)}\mathrm{d}v\nonumber\\
 &\quad\quad\quad\quad\quad\quad\quad\quad\quad\quad\quad\quad\quad\quad\quad\quad\quad\quad\quad \leq \cst (1+ \Xi_4^{1/2}) \|\bs X(0)\|_2^2 (1+t)^{-1}t.\label{eq:criticalstuff}
\end{align}
Since \(2\hat\lambda < \lambda_1\), we can choose \(\varepsilon\) small enough such that \(2\hat \lambda < \lambda_1-\varepsilon\). With this choice, the first integral on the r.h.s.\ of (\ref{eq:smallstuff}) is bounded by \((2\varepsilon)^{-1}\), and the second integrand decays exponentially to 0 as \(v \rightarrow \infty\). Thus, \eqref{eq:smallstuff} is \(\mathcal{O}(1)\) as \( t \rightarrow \infty\) and \(\mathcal{O}(t)\) as \(t \rightarrow 0\). We see this also holds for (\ref{eq:criticalstuff}). Therefore, if we use \eqref{eq:smallstuff} in \eqref{eq: bound small} and \eqref{eq:criticalstuff} in \eqref{eq: bound crit}, we get, for all \(t\geq 0\),
\begin{align*}
  &\E[\mathrm{tr}(U_{s,t}(t)^2)] \leq  \cst (1+\Xi_4^{3/2}) \|\bs X(0)\|_2^2(1 \wedge t).\\
  &\E[\mathrm{tr}(U_{c,\kappa,t}(t)^2)] \leq  \cst (1+\Xi_4^{3/2}) \|\bs X(0)\|_2^2(1\wedge t).
\end{align*}
This, \eqref{eq: may need 4}, and \eqref{eq: V bound} is \eqref{eq: may need 2} for small and critical components, which implies the lemma in these cases as claimed.
\end{proof}
\subsection{Proof of Theorem \ref{Theorem: Main continuous time} CT}
\label{Appendix: proof of CT}
Let
\begin{equation*}
    \bs Z_n(t) = N^{-1/2}(\e^{-A t}\bs X_n(t)- N\bs \mu), \quad t\geq 0.
\end{equation*}
 This is a martingale for each fixed \(n\) by Lemma \ref{prop: martingale bp}. We are going to apply Lemma \ref{proposition: Janson 9.1} to this sequence of martingales with limit \((\e^{-At}\bs W_2(t))_{t\geq 0}\). Starting with Condition 2, we must show, for each \(t\geq 0\),
 \begin{equation}
 [\bs Z_n,\bs Z_n] \coninprob \sum_{i=1}^d  \int_{0}^{t_1}\mathrm{e}^{-Av}\E[\bs \xi_i \bs \xi_i']\mathrm{e}^{-A'v}a_i(\mathrm{e}^{Av}\bs \mu)_i\mathrm{d}v. \label{eq: Zn quad var 4}
 \end{equation}
as \(n\rightarrow \infty\). By the same argument used to show \eqref{eq: Zn quad var 3},
 \begin{equation*}
    Y_n(t) =  [\bs Z_n,\bs Z_n]_t - N^{-1}\sum_{i=1}^d\int^{ t}_0 \e^{-Av}\E[\bs \xi_i\bs \xi_i']\e^{-A'v}a_i X_n(v)_i \mathrm{d}v, \quad t\geq 0
 \end{equation*}
 is a martingale. We prove \eqref{eq: Zn quad var 4} by showing, for each \(t\geq 0\),
\begin{align}
   &Y_n(t) \coninprob 0, \label{eq: Yn V2}\\
   &U_n(t):=N^{-1}\sum_{i=1}^d\int^{ t}_0 \e^{-Av}\E[\bs \xi_i\bs \xi_i']\e^{-A'v}a_i( X_n(v)_i-N(\e^{A v}\bs \mu)_i) \mathrm{d}v \coninprob 0. \label{eq: Un V2}
\end{align}
For the remainder of the proof, fix \(t\geq 0\). By the same argument used to show \eqref{eq: Yn component 1}, we see \begin{equation*}
    \E[Y_n(t)Y_n(t)'] = N^{-2}\sum_{i=1}^d\int^{\varepsilon t}_0 (\e^{-Av}\E[\bs \xi_i\bs \xi_i']\e^{-A'v})^2a_i N(\e^{Av}\bs \mu)_i \mathrm{d}v.
\end{equation*}
By (A1), the integrand can be bounded uniformly by \(N\cst\) for \(v \in [0,t]\) (take the bound to be where the integrand reaches its maximum in this interval). Thus, the second moment of \(\|Y_n(t)\|_2\) tends to 0 as \(n\rightarrow \infty\). Therefore \eqref{eq: Yn V2} holds. Equation~\eqref{eq: Un V2} is shown from the ST case in a similar matter, we omit the details. Lastly, Condition 3 follows from the ST case with \(\varepsilon_{\mathrm{max}} =1\). Thus,
\begin{equation*}
    \bs Z_n(t) \conindis  \e^{-At}\bs W_2(t) \text{ in } D[0,\infty)
\end{equation*}
as \(n\rightarrow \infty\). The theorem follows by multiplying on the left by \(\e^{At}\).
\subsection{Proof of Theorem \ref{theorem: cont time 2}}
\label{Appendix: proof of covar}
We first show pairwise independence between families, then show the covariance functions within dependent families. Each case is treated separately.
\\~\\
\textbf{Independence of \(\mathcal{W}_s\) and \(\mathcal{V}_{\ell}\):}
Let \(\bs V_{J} \in \mathcal{V}_{\ell}\), \(T\geq 1/2\), and
\begin{align*}
    &\bs Z_{ns}(t):=  N^{-1/2}\omega^{-(m_1-1)/2}\mathrm{e}^{-\lambda_1(\omega+t)/2}P_s( \bs X_n(\omega+t)-N\mathrm{e}^{A(\omega+t)}\bs \mu), \quad -T \leq t  \leq T ,\\
    & \bs Z_{n\ell}(t):=N^{-1/2} P_{J}(\e^{-A\omega t}\bs X_n(\omega t)-N\bs \mu), \quad 0 <  t < T.
\end{align*}
Asymptotic independence of these two processes follows once we show, for each \(T\geq 1/2\), joint convergence of 
\begin{equation}
    (\bs Z_{ns},\bs Z_{n \ell}) \conindis (\bs W_s, \bs V_J) \text{ in } D[-T,T]\times D(0,T] \label{eq: Convergence 6}
\end{equation}
as \(n\rightarrow \infty\), where \(\bs W_s\) is restricted to \(-T\leq t \leq T\), and \(\bs W_s\) and \(\bs V_J\) are independent. Tightness of the joint sequence immediately follows by convergence of each component. It is left to show independence of the finite dimensional distributions. Since \(\bs V_J\) is a constant process, it suffices to show, for each sequence of times \(-T\leq t_1 \leq ... \leq t_k \leq T\),
\begin{equation*}
     (\bs Z_{ns}(t_1),...,\bs Z_{ns}(t_k),\bs Z_{n \ell}(1/2)) \conindis (\bs W_s(t_1),...,\bs W_s(t_k), \bs V_J)
\end{equation*}
as \(n\rightarrow \infty\). The only part of this result not given by the proof of Theorem \ref{Theorem: Main continuous time} is the independence of \((\bs W_s(t_i))_{i=1}^k\) and \(\bs V_J\). Let \(K>0\), for each \(1\leq i \leq k\), we have
\begin{equation}
    \bs Z_{ns}(t_i) = \e^{(A-\lambda_1/2)K}\bs Z_{ns}(t_i-K)+(\bs Z_{ns}(t_i)-\e^{(A-\lambda_1/2)K}\bs Z_{ns}(t_i-K)). \label{eq: Equality 2}
\end{equation}
By \eqref{eq: t1.1 second conv}, we have that the second component on the r.h.s.\ is asymptotically independent of \(\sigma(\{\bs X_n(s): s\leq \omega +t_i-K\})\). In particular, this is asymptotically independent of \(\bs Z_{n\ell}(1/2)\), since this random sequence only depends on \(\bs X_n(\omega/2)\), and~\(\omega/2<\omega +t_i-K\) for all \(n\) sufficiently large. For the first component on the r.h.s.\ of \eqref{eq: Equality 2}, since \(\bs Z_{ns}\) contains only small projections, we have, by \eqref{eq: matrix exp bounds} and Theorem \ref{Theorem: Main continuous time} LT\textsubscript{s}, for some \(\delta>0\),
\begin{equation*}
\|\e^{(A-\lambda_1/2)K}\bs Z_{ns}(t_i-K)\|_2 \leq \e^{-\delta K}\|\bs Z_{ns}(t_i-K)\|_2 \conindis \e^{-\delta K}\|\bs W_{s}(t_i-K)\|_2
\end{equation*}
as \(n\rightarrow \infty\). Since \(\bs W_s(t)\) is \(\mathcal{O}_p(1)\) for all \(t\), this implies we can send \(K\) to infinity at any rate w.r.t.\ \(n\) to get 
\begin{equation*}
    \e^{(A-\lambda_1/2)K}\bs Z_{ns}(t_i-K) \coninprob 0
\end{equation*}
as \(n\rightarrow \infty\). This, along with the fact that the second component on the r.h.s.\ of \eqref{eq: Equality 2} is asymptotically independent of~\(\bs Z_{n\ell}(1/2)\) for any fixed \(K>0\), imply \(\bs W_s(t_i)\) and \(\bs V_J\) are independent. Therefore \eqref{eq: Convergence 6} holds.
\\~\\
\textbf{Independence of \(\mathcal{W}_s\) and \(\mathcal{W}_{c,|\rho|}\), \(\rho \in \Lambda_c\):}
Let \(\bs W_{J,\kappa} \in \mathcal{W}_{c,|\rho|}\). Assume w.l.o.g.\ that \(J\) has eigenvalue \(\rho\), if not, the argument is the same with \(\rho\) replaced by \(\overline{\rho}\). By Theorem \ref{Theorem: Main continuous time} LT\textsubscript{c}, we have
\begin{equation}
  \bs Z_{nc1}(t) = N^{-1/2}\omega^{-(m_1+2\kappa -2)/2}\e^{-\rho (\omega+t)}N_A^{m-\kappa}P_{J}(\bs X_n(\omega +t)-N\e^{A(\omega+ t)}\bs \mu)\conindis \bs W_{J,\kappa}(1) \text{ in }  D(-\infty,\infty)  \label{eq: Convergence 7} 
\end{equation}
as \(n \rightarrow \infty\). Since the limit is a constant process, the proof of independence of \(\bs W_{s}\) and \(\bs W_{J,\kappa}(1)\) follows an identical structure to the previous case. The one subtle difference is, unlike the previous case where we took the value of \(\bs Z_{n\ell}\) at \(1/2\), here we take \(\bs Z_{nc1}\) at \(-T-K\). This is to guarantee the independence property needed for the second component on the r.h.s.\ of \eqref{eq: Equality 2}. Indeed, \(\bs Z_{nc}(-T-K)\) only depends on \(\omega -T-K\) this being less than \(\omega +t_i-K\) for all \(1\leq i\leq k\). Then, since one can let \(K\) grow with \(n\) at an arbitrarily slow rate, we can still guarantee
\begin{equation*}
    \bs Z_{nc1}(-T-K)\conindis \bs W_{J,\kappa}(1)
\end{equation*}
as \(n,K\rightarrow \infty\). Now, we extend this result to independence of \(\bs W_s\) and \(\bs W_{J,\kappa}\). Let
\begin{equation*}
    \bs Z_{nc2}(t)= N^{-1/2}\omega^{-(m_1+2\kappa -2)/2}\e^{-\rho \omega t }N_A^{m-\kappa}P_{J}(\bs X_n(\omega t)-N\e^{A\omega t}\bs \mu).
\end{equation*}
The case of \(\bs W_{J,\kappa}(t)\) for \(0\leq t <1\) follows an identical structure to our previous proof. The finite dimensional distributions of \((\bs Z_{nc2}(t))_{t\in[0,1)}\) depend on \(\bs X_n\) at times smaller than \(\omega -T-K\) for all \(n\) large enough, hence the independence property needed for the second component on the r.h.s.\ of \eqref{eq: Equality 2} is guaranteed to hold. Now, let \(t>1\), and let \((\varepsilon_n)_{n\geq 1}\) be a positive sequence such that \(\varepsilon_n \rightarrow 0\) as \(n\rightarrow \infty\). We have that
\begin{equation*}
 \bs Z_{nc2}(t) = \e^{(A-\rho)\omega(t-1-\varepsilon_n)}\bs Z_{nc}(1+\varepsilon_n) + (\bs Z_{nc}(t)-\e^{(A-\rho)\omega(t-1-\varepsilon_n)}\bs Z_{nc}(1+\varepsilon_n))    
\end{equation*}
The second component on the r.h.s.\ can be shown in an identical way to \eqref{eq: t1.1 second conv} to be independent of \[\sigma(\{ \bs X_n(s): s\leq\omega(1+\varepsilon_n)\}).\] (For brevity, we do not give this proof; its structure is identical to the proof of Theorem \ref{Theorem: Main continuous time} LT\textsubscript{s}: \eqref{eq: small component convergence} - Condition 1 of Theorem \ref{Theorem:Bill con} (\(k=2\)).) Therefore, if we let \(\varepsilon_n\) tend to 0 slowly enough such that \(\varepsilon_n \omega \rightarrow \infty\) as \(n\rightarrow \infty\), we have asymptotic independence of the second component on the r.h.s.\ and \(\bs Z_{ns}\). For the first component, Theorem \ref{Theorem: Main continuous time} LT\textsubscript{c}, we have
\begin{align*}
    &\e^{(A-\rho)\omega(t-1-\varepsilon_n)}\bs Z_{nc}(1+\varepsilon_n)\\
    &=N^{-1/2}\sum_{i=0}^{\kappa-1}\frac{(t-1-\varepsilon_n)^i}{i!} \omega^{-(\kappa-i -1/2)}\e^{-\rho \omega (1+\varepsilon_n)}N_A^{m-\kappa+i}P_{J,\kappa} (\bs X_n(\omega (1+\varepsilon_n))-N\e^{A\omega (1+\varepsilon_n)}\bs\mu) \\
    &\conindis \sum_{i=0}^{\kappa-1}\frac{(t-1)^i}{i!}\bs W_{J,\kappa-i}(1)
\end{align*}
as \(n\rightarrow \infty\). The limit is the sum of components independent of \(\bs W_s\) and hence the limit is independent of \(\bs W_s\). Thus, the finite dimensional distributions of \((\bs Z_{nc2}(t))_{t\in (1,\infty]}\) converge to random variables independent of the process \(\bs W_s\). Therefore, we have shown the finite dimensional distributions of \(\bs W_s\) and \(\bs W_{J,k}\) are independent which implies they are independent as processes.
\\~\\
\textbf{Independence of \(\mathcal{V}_{\ell}\) and \(\mathcal{W}_{c,|\rho|} \), \(\rho \in \Lambda_c\):}
Let \(\delta>0\), and \((\varepsilon_n)_{n\geq 1}\) be such that \(\varepsilon_n \rightarrow 0\) as \(n \rightarrow \infty\). Assume that \(0<\varepsilon_n \leq \delta \) for all \(n\). Let \(\bs V_{J_1} \in \mathcal{V}_{\ell}\), \(\bs W_{J_2,\kappa} \in \mathcal{W}_{c,|\rho|}\), and
\begin{align*}
& \bs Z_{nc}(t)= N^{-1/2}\omega^{-(\kappa -1/2)}\e^{-\rho \omega t }N_A^{m-\kappa}P_{J_2}(\bs X_n(\omega t)-N\e^{A\omega t}\bs \mu),  \quad t \geq 0,\\
    &\bs Z_{n\ell}(t):=N^{-1/2} P_{J_1}(\e^{-A\omega t}\bs X_n(\omega t)-N\bs \mu),  \quad t \geq 0.
\end{align*}
Further assume that \(\varepsilon_n \rightarrow 0\) slow enough with respect to \(n\), such that \(\bs Z_{n\ell}(\varepsilon_n) \conindis \bs V_{J_1}\) as \(n \rightarrow \infty\). Then, asymptotic independence of \(\bs Z_{nc}\) and \(\bs Z_{n\ell}\) is equivalent to showing
\begin{equation*}
    (\bs Z_{n\ell}(\varepsilon_n),\bs Z_{nc})\conindis (\bs V_{J_1},\bs W_{J_2,\kappa})
\end{equation*}
as \(n\rightarrow \infty\) with \(\bs V_{J_1},\bs W_{J_2,\kappa}\) independent. The only part of this result not given by the proof of Theorem \ref{Theorem: Main continuous time} is independence of the converging finite dimensional distributions. If \(t=0\), then trivially for all \(n\), \(\bs Z_{nc}(0) = \bs W_{J_2,\kappa}(0)=0\) a.s. For the case of \(t>0\), we have
\begin{equation*}
    \bs Z_{nc}(t) = \e^{(A-\rho)\omega(t-\varepsilon_n)}\bs Z_{nc}(\varepsilon_n) + (\bs Z_{nc}(t)-\e^{(A-\rho)\omega(t-\varepsilon_n)}\bs Z_{nc}(\varepsilon_n)) 
\end{equation*}
The second term on the r.h.s.\ is asymptotically independent of \(\sigma (\{\bs X_n(s):0\leq s\leq\varepsilon_n\})\) (see the previous proof). Therefore, it is asymptotically independent of \(\bs Z_{n\ell}(\varepsilon_n)\), and so its distributional limit is independent of \(\bs V_{J_1}\). For the first term, by Theorem \ref{Theorem: Main continuous time} LT\textsubscript{c}, we have
\begin{align*}
\e^{(A-\rho)\omega(\delta-\varepsilon_n)}\bs Z_{nc}(\varepsilon_n)=N^{-1/2}\sum_{i=0}^{\kappa-1}\frac{(\delta-\varepsilon_n)^i}{i!} \omega^{-(\kappa-i -1/2)}\e^{-\rho \omega \varepsilon_n}N_A^{m-\kappa+i}P_{J_2,\kappa} (\bs X_n(\omega \varepsilon_n)-N\e^{A\omega \varepsilon_n}\bs\mu) \coninprob 0 
\end{align*}
as \(n \rightarrow \infty\), where in the convergence, we have used, for \(0 \leq i \leq \kappa-1\), \(\bs W_{J,\kappa-i}(\varepsilon_n)\coninprob 0\) as \(n\rightarrow\infty\). Therefore, \(\bs W_{J_2,\kappa}\) and~\(\bs V_{J_1}\) are independent.
\\~\\
\textbf{Independence of \(\mathcal{W}_{c,|\rho_1|}\) and \(\mathcal{W}_{c,|\rho_2|}\) for \(|\rho_1| \neq |\rho_2|\):} This follows by Lemma \ref{Theorem: large time} and Theorem \ref{theorem: cramer wold}. Let \(\bs W_{J_1,\kappa_1} \in \mathcal{W}_{|\rho_1|,\kappa_1}\), \(\bs W_{J_2,\kappa_2} \in \mathcal{W}_{|\rho_2|,\kappa_2}\), and 
\begin{align*}
    &\bs Z_{nc_1}(t):= N^{-1/2}\omega^{-(m_1+2\kappa -2)/2}\e^{-\rho_1 \omega t }N_A^{m_1-\kappa_1}P_{J_1}(\bs X_n(\omega t)-N\e^{A\omega t}\bs \mu), \quad t\geq 0,\\
    &\bs Z_{nc_2}(t):= N^{-1/2}\omega^{-(m_1+2\kappa -2)/2}\e^{-\rho_2 \omega t }N_A^{m_2-\kappa_2}P_{J_2}(\bs X_n(\omega t)-N\e^{A\omega t}\bs \mu), \quad t\geq 0.
\end{align*}
Since \(\rho_1,\rho_2\) are in different conjugate pairs, by Lemma \ref{Theorem: large time}, we have for any \(d_1,d_2 \in \mathbb{R}\),
\begin{equation*}
    d_1\bs Z_{nc_1}(t)+d_2\bs Z_{nc_2}(t) \conindis \bs V_{d_1,d_2}(t)\text{ in }D[0,\infty),
\end{equation*}
where, for \(0\leq t_1 \leq t_2\),
\begin{align*}
  &  \Cov(\bs V_{d_1,d_2}(t_2),\bs V_{d_1,d_2}(t_1)) =  \Cov(d_1\bs W_{J_1,\kappa_1}(t_2),d_1\bs W_{J_1,\kappa_1}(t_1))+ \Cov(d_2\bs W_{J_2,\kappa_2}(t_2),d_2\bs W_{J_2,\kappa_2}(t_2))\\
    &\Cov(\bs V_{d_1,d_2}(t_2),\overline{\bs V}_{d_1,d_2}(t_1)) =  \Cov(d_1\bs W_{J_1,\kappa_1}(t_2),d_1\overline{\bs W}_{J_1,\kappa_1}(t_1))+ \Cov(d_2\bs W_{J_2,\kappa_2}(t_2),d_2\overline{\bs W}_{J_2,\kappa_2}(t_2)).
\end{align*}
From here, we see the result follows by applying Theorem \ref{theorem: cramer wold}.
\\~\\
\textbf{Covariance matrices within the \(\mathcal{W}_{c,|\rho|}\) family:}
Let \(\bs W_{J_1,\kappa_1},...,\bs W_{J_p,\kappa_p}\in \mathcal{W}_{c,|\rho|}\), where the Jordan blocks \(J_1,...,J_p\) have respective sizes \(m_{c1},...,m_{cp}\), and eigenvalues \(\rho_{1},...,\rho_{p}\in \{\rho,\overline{\rho}\}\). Also, let \(d_1,...,d_p \in \mathbb{R}\). By Lemma \ref{Theorem: large time}, we have that
\begin{equation*}
N^{-1/2}\sum_{i=1}^p\e^{-\rho_i\omega t}\omega^{-(m_1+2\kappa_i-2)/2}d_iN_A^{m_{ci}-\kappa_i}P_{J_i}(\bs X_n(\omega t)-N\e^{A\omega t}\bs \mu) \conindis \bs W_c(t)\text{ in }  D[0,\infty),
\end{equation*}
where, for \(0\leq t_1\leq t_2 <\infty\),
\begin{align*}
   &\Cov(\bs W_{c}(t_2),\bs W_{c}(t_1))=\sum_{k,m=0}^{p}d_{k}d_{m}\bs 1_{\{\rho_{ck}=\overline{\rho}_{cm}\}}N_A^{\kappa_{k}-1}P_{J_{k}}BP_{J_{m}}'N_A'^{\kappa_{m}-1}\int_{0}^{t_1}\frac{(t_1-v)^{\kappa_{m}-1}(t_2-v)^{\kappa_{k}-1}v^{m_1-1}}{(\kappa_{k}-1)!(\kappa_{m}-1)!}\mathrm{d}v.
  \\
  &\Cov(\bs W_{c}(t_2),\overline{\bs W}_{c}(t_1))=\sum_{k,m=0}^{p}d_{k}d_{m}\bs 1_{\{\rho_{ck}=\rho_{cm}\}}N_A^{\kappa_{k}-1}P_{J_{k}}BP_{J_{m}}^*N_A'^{\kappa_{m}-1}\int_{0}^{t_1}\frac{(t_1-v)^{\kappa_{m}-1}(t_2-v)^{\kappa_{k}-1}v^{m_1-1}}{(\kappa_{k}-1)!(\kappa_{m}-1)!}\mathrm{d}v.
 \end{align*}
 Recall \(B= \sum_{i=1}^d a_i v(\bs\mu)_i\E[\bs \xi_i\bs \xi_i']\). This covariance function is equal to the covariance function of \(\sum_{i=1}^pd_i\bs W_{J_i,\kappa_i}\). Since they are both Gaussian processes, this implies
 \begin{equation*}
    (\bs W_c(t))_{t\geq 0} \eqindis \F(\sum_{i=1}^pd_i\bs W_{J_i,\kappa_i}(t)\R)_{t\geq 0}.
 \end{equation*}
The result now follows by applying Theorem \ref{theorem: cramer wold}.
 \\~\\
 \textbf{Covariance matrices within the \(\mathcal{V}_{\ell}\) family:} Let \(J_1,...,J_{k}\) be Jordan blocks with eigenvalues in \(\Lambda_{\ell}\). Since the projection matrices \(P_{J_1},...,P_{J_k}\) are linear and continuous, the result immediately follows by applying the continuous mapping theorem to the result of Lemma \ref{lemma: big time} with the maps \(P_{J_1},...,P_{J_k}\).
 \subsection{Proof of Theorems \ref{theorem: main result simplified} IBD \& TR, \ref{theorem: Main results discrete}, and \ref{theorem: main results discrete 2} cont.}
 \label{Appendix: proof of discrete time}
 \subsubsection{Proof of Theorem \ref{theorem: main result simplified} TR and \ref{theorem: main results discrete 2} TR}
\textbf{Convergence of the random time change:} We are going to show
\begin{align}
     &\phi_{n}(t) = \tau_{n,\lfloor nt \rfloor} \coninprob S^{-1}\log(1+St/\beta_1)  \text{ in } D[0,b)
    \label{eq: discrete 6 TC}
\end{align}
as \(n \rightarrow \infty\). Recall that \(b=\infty\) when \(S>0\), and \(b=-\beta_1/S\) if \(S<0\), where this upper cap arises due to the urns extinction after \(-n\beta_1/S\) draws if \(S<0\). The first part of the argument is the same as the IBD regime up to and including \eqref{eq: Convergence 4}. In particular, we use \eqref{eq: discrete 10} in place of \eqref{eq: improve 32} to get that
\begin{align}
\label{eq: dis 10}
   & 1+(\beta_1 N)^{-1}SB_n(t) \coninprob \e^{St}  \text{ in }  D[0,\infty)
\end{align}
as \(n \rightarrow \infty\). Then, for \(0\leq T<b\), by solving \eqref{eq: dis 10} for \(t\) when \(B_n(t) = \lfloor nT \rfloor\), we see, for any fixed \(\varepsilon>0\), the probability of \[A_{n,T,\varepsilon}:=\{\tau_{n,\lfloor nT \rfloor}\leq S^{-1}\log(1+ST/\beta_1)+\varepsilon\}\] tends to 1 as \(n \rightarrow \infty\). This and \eqref{eq: dis 10} imply
\begin{align}
 &\e^{S\tau_{n,\lfloor nt \rfloor}}\coninprob 1+S\beta_1^{-1}  t  \text{ in }  D[0,b) \label{eq: Convergence 8}
\end{align}
as \(n \rightarrow \infty\), where we have used the same argument that was used to show \eqref{eq: Convergence 4}. Lastly, \eqref{eq: discrete 6 TC} follows by applying the continuous mapping theorem with function \(S^{-1}\log\) to \eqref{eq: Convergence 8}.
\\~\\
\textbf{Functional convergence:} We use the same arguments as in the IBD regime with the TR processes in place of the IBD processes. In particular, we take \(\phi_n\) as in \eqref{eq: discrete 6 TC} and let
\begin{equation*}
    \bs Y_n(t) := N^{-1/2}(\bs X_n(t)-N\e^{At}\bs \mu)\conindis \bs W_2(t) \text{ in }D[0,\infty)
\end{equation*}
as \(n\rightarrow \infty\), where the convergence is given by \eqref{eq: discrete 10}. To apply Theorem \ref{theorem: bill 2}, we need to prove \(\bs W_2\) is continuous a.s. We use Theorem \ref{theorem: Kol cont} to do so. Fix \(T>0\). Since \(\bs W_2\) is a Gaussian process, for any \(0\leq t_1 \leq t_2 \leq T\), \(\bs W_2(t_2)-\bs W_2(t_1)\) is Gaussian. Furthermore, by Theorem \ref{Theorem: Main continuous time}, this Gaussian variable has covariance matrix
\begin{align}
    \Var(\bs W_2(t_2)-\bs W_2(t_1))&=\F(1-\e^{A(t_2-t_1)}\R)\sum_{i=1}^d  \int_{0}^{t_1}\mathrm{e}^{A( t_1-v)}\E[\bs \xi_i \bs \xi_i']\mathrm{e}^{A'(t_1-v)}a_i(\mathrm{e}^{Av}\bs \mu)_i\mathrm{d}v\nonumber \\
    & \quad +\mathrm{e}^{A(t_2-t_1)}\sum_{i=1}^d   \int_{0}^{t_2}\mathrm{e}^{A( t_1-v)}\E[\bs \xi_i \bs \xi_i']\mathrm{e}^{A'(t_1-v)}a_i(\mathrm{e}^{Av}\bs \mu)_i\mathrm{d}v \F(\mathrm{e}^{A(t_2-t_1)}-1\R).\label{eq:kol cont1}
\end{align}
For \(0 \leq t\leq T\), we have that
\begin{equation*}
    \F\|\sum_{i=1}^d  \int_{0}^{t}\mathrm{e}^{A( t-v)}\E[\bs \xi_i \bs \xi_i']\mathrm{e}^{A'(t-v)}a_i(\mathrm{e}^{Av}\bs \mu)_i\mathrm{d}v\R\|_2 \leq \cst_T.
\end{equation*}
Also, we have that \(\|\e^{A(t_2-t_1)}-I\|_2\leq \cst_T (t_2-t_1)\). Using these two results in \eqref{eq:kol cont1} implies
\begin{equation*}
   \| \Var(\bs W_2(t_2)-\bs W_2(t_1)) \|_2 \leq \cst_T (t_2-t_1).
\end{equation*}
Therefore, the conditions of Theorem \ref{theorem: Kol cont} hold with \(\alpha = 4\), \(\beta = 1\), and \(K\) some constant only depending on \(T\). Since \(T>0\) was arbitrary, \(\bs W_2\) is a.s.\ continuous in \(D[0,\infty)\). Thus, we can apply Theorem \ref{theorem: bill 2} giving 
\begin{equation}
    \bs Y_n(\phi_n(t)) = N^{-1/2}(\bs U_n(\lfloor nt \rfloor )-N\mathrm{e}^{A\tau_{n,\lfloor nt \rfloor }}\bs \mu)\conindis \bs W_2(\log(1+S t/\beta_1)/S)\text{ in } D[0,b) \label{eq: TR P1}
\end{equation}
as \(n \rightarrow \infty\). We are left to show
\begin{equation}
\label{eq: dis 11}
    N^{1/2}(\mathrm{e}^{A\tau_{n,\lfloor nt \rfloor }}-\mathrm{e}^{AS^{-1}\log(1+Snt/\beta_1N)})\conindis -A\e^{AS^{-1}\log(1+St/\beta_1)}(\beta_1S+S^2 t)^{-1}\sum_{i=1}^d a_i W_2(\log(1+St/\beta_1)/S)_i \text{ in }  D[0,b)
\end{equation}
as \(n \rightarrow \infty\). Indeed, this and \eqref{eq: TR P1} gives \eqref{eq: TR}, and \eqref{eq: colour comp TR} follows from \eqref{eq: TR}. By the same argument used to show \eqref{eq: dis 3}, we get that
\begin{equation} 
\label{eq: final d 10}
  N^{1/2}(1+(\beta_1 N)^{-1}Snt-\e^{S\tau_{n,\lfloor nt \rfloor}}) \conindis \beta_1 ^{-1}\sum_{i=1}^d a_i W_2(\log(1+St/\beta_1)/S)_i   \text{ in }  D[0,b)
\end{equation}
as \(n \rightarrow \infty\). We take the same Taylor expansion as in \eqref{eq: dis 4}, where in the TR case we have, for all \(t\geq 0\), \((\beta_1 N)^{-1}Sn t \rightarrow S \beta_1^{-1} t\) as \(n \rightarrow \infty\). Then, by the same argument used to show \eqref{eq: dis 5}, this, \eqref{eq: dis 4}, and \eqref{eq: final d 10} imply
\begin{equation}
\label{eq: dis 5 TC}
     N^{1/2}(\log(1+Snt/\beta_1N)-S\tau_{n,\lfloor nt \rfloor }) \conindis (\beta_1+S t)^{-1}\sum_{i=1}^d a_i W_2(\log(1+S t/\beta_1)/S)_i \text{ in }  D[0,b)   
\end{equation}
as \(n \rightarrow \infty\). Lastly, to show \eqref{eq: dis 11} from \eqref{eq: dis 5 TC}, we use the same arguments from equation \eqref{eq: dis 5} to the end of the proof of the IBD regime, where we note, for each \(t \in [0,b)\), \(\e^{AS^{-1}\log(1+Snt/\beta_1N)}\rightarrow \e^{AS^{-1}\log(1+St/\beta_1)}\) as \(n\rightarrow \infty\).
\subsubsection{Proof of Theorems \ref{theorem: Main results discrete} TSD\textsubscript{s} \eqref{eq: small component convergence} and \ref{theorem: main results discrete 2} TSD\textsubscript{s}}
\noindent For the TSD regime, we are always going to take \(\omega(n) = S^{-1}\log(n/N) \) in Theorem \ref{Theorem: Main continuous time} LT. Note, this may be negative for small \(n\) for which the equations in \ref{Theorem: Main continuous time} LT are not well defined. To avoid cumbersome details, we always assume \(n\) is large enough such that \( S^{-1}\log(n/N)\) is positive.
 \\~\\
\textbf{Convergence of the random time change:} We are going to show
\begin{equation}
\label{eq: dis 1 TSDs}
    \phi_n(t) = \tau_{n,\lfloor nt \rfloor}-S^{-1}\log(n/N) \coninprob S^{-1}\log( St/\beta_1)  \text{ in }  D(0,\infty)   
\end{equation}
 as \(n \rightarrow \infty\). The first part of the argument is the same as the IBD regime up to and including \eqref{eq: Convergence 4}. In particular, we take the Jordan projection \(\bs v_1 \bs a'\) in \eqref{eq: improve 80} in place of \eqref{eq: improve 32}, where \(\bs v_1\) is the right eigenvector corresponding to \(\bs a\) normalized so that \(\bs a \cdot \bs v_1 =1\) (this is indeed a large Jordan projection by \eqref{eq: eigenvector proj} and Lemma \ref{lemma: balanced frob}). This gives
 \begin{equation*}
   \e^{-S(\omega+t)}(1+SB_n(\omega+t)/\beta_1N) \coninprob 1  \text{ in }  D(-\infty,\infty)
\end{equation*}
as \(n \rightarrow \infty\), with \(\omega=\omega(n)=S^{-1}\log(n/N)\). Then, by the continuous mapping theorem with function \(-S^{-1}\log\), we have
\begin{equation}
\label{eq: TSDs to TSDl}
    \omega+t-S^{-1}\log(1+SB_n(\omega+t)/\beta_1N) \coninprob 0\text{ in }  D(-\infty,\infty)
\end{equation}
as \(n \rightarrow \infty\). Let \(T>0\). By solving \eqref{eq: TSDs to TSDl} for \(t\) when \(B_n(\omega+t)=\lfloor nT \rfloor\), we see the probability of \[A_{n,T}:=\{S^{-1}\log(ST/2\beta_1) \leq \tau_{n,\lfloor nT \rfloor}-\omega\leq S^{-1}\log(2ST/\beta_1)\}\] tends to 1 as \(n \rightarrow \infty\). This and \eqref{eq: TSDs to TSDl} imply
\begin{equation}
\label{eq: Convergence 9}
    \tau_{n,\lfloor nt \rfloor}-S^{-1}\log(1+S\lfloor nt \rfloor/\beta_1 N) \coninprob 0 \text{ in } D(0,\infty)
\end{equation}
as \(n\rightarrow\infty\), where this follows by the same argument used to show \eqref{eq: Convergence 4}. We see \eqref{eq: dis 1 TSDs} immediately follows from \eqref{eq: Convergence 9}. 
\\~\\
\textbf{Functional convergence:} We use the same arguments as used in the IBD regime with the TSD\textsubscript{\textit{s}} processes in place of the IBD processes. In particular, we take \(\phi_n\) as in \eqref{eq: dis 1 TSDs} and let
\begin{equation}
\label{eq: dis TSDs 2}
    \bs Y_n(t) := N^{-1/2}\e^{-S(\omega+t)/2}P_s(\bs X_n(\omega+t)-N\e^{A(\omega+t)}\bs \mu) \conindis \bs W_s(t) \text{ in } D(-\infty, \infty)
\end{equation}
as \(n\rightarrow \infty\), where the convergence is given by \eqref{eq: small component convergence}. To apply Theorem \ref{theorem: Kol cont}, we need to prove \(\bs W_s\) is continuous a.s. To show this, we take the same approach as in the TR regime. Fix \(T>0\). Since \(\bs W_s\) is a Gaussian process, for any~\(-T \leq t_2 \leq t_1 \leq T\),~\(\bs W_s(t_2)-\bs W_s(t_1)\) is Gaussian. Furthermore, by Theorem \ref{Theorem: Main continuous time}, this Gaussian variable has covariance matrix that satisfies 
\begin{equation*}
    \|\Var(\bs W_s(t_2)-\bs W_s(t_1))\|_2 = \F\|2\F(1-\mathrm{e}^{(A-\lambda_1/2) (t_2-t_1)}\R) \sum_{i=1}^d\int_{0}^{\infty}P_s\mathrm{e}^{Av}a_iv(\bs\mu)_{i}\E[\bs \xi_i \bs \xi_i']\e^{A'v}P_s'\mathrm{e}^{-\lambda_1 v}\mathrm{d}v\R\|_2 \leq \cst_T(t_2-t_1).
\end{equation*}
Therefore, the conditions of Theorem \ref{theorem: Kol cont} hold with \(\alpha =4\), \(\beta =1\), and \(K\) some constant only dependent on \(T\). Since \(T>0\) was arbitrary, \(\bs W_s\) is a.s.\ continuous in \(D(-\infty,\infty)\). Therefore, we can apply Theorem \ref{theorem: bill 2} to get 
\begin{align}
\label{eq: dis TSDs 3}
    &\bs Y_n(\phi_n(t)) = N^{-1/2}\e^{-S\tau_{n,\lfloor nt \rfloor}/2}P_s(\bs U_n(\lfloor nt \rfloor )-N\mathrm{e}^{A\tau_{n,\lfloor nt \rfloor }}\bs \mu)\conindis \bs W_s(\log(St/\beta_1)/S)\text{ in }  D(0,\infty)
\end{align}
as \(n \rightarrow \infty\). Furthermore, by \eqref{eq: dis 1 TSDs} and the continuous mapping theorem with function \(\e^{-\frac{S}{2}\cdot}\), we have
\begin{equation}
    \e^{-S\tau_{n,\lfloor nt \rfloor}/2}(nSt/\beta_1N)^{1/2}\coninprob 1  \text{ in } D(0,\infty)
    \label{eq: need for future append}
\end{equation}
as \(n \rightarrow \infty\). Using this result in \eqref{eq: dis TSDs 3} gives
\begin{equation}
\label{eq: dis TSDs 6}
   n^{-1/2}P_s(\bs U_n(\lfloor nt \rfloor )-N\mathrm{e}^{A\tau_{n,\lfloor nt \rfloor }}\bs \mu)\conindis  (St/\beta_1)^{1/2} \bs W_s(\log(St/\beta_1)/S)\text{ in }  D(0,\infty)
\end{equation}
as \(n \rightarrow \infty\). From this, the theorem follows once we have shown
\begin{equation}
\label{eq: dis 11 TDS}
    Nn^{-1/2}P_s(\mathrm{e}^{A\tau_{n,\lfloor nt \rfloor }}-\mathrm{e}^{AS^{-1}\log(1+Snt/\beta_1N)})\coninprob 0  \text{ in }  D(0,\infty)
\end{equation}
as \(n \rightarrow \infty\). By taking the Jordan projection \(\bs v_1 \bs a'\) in \eqref{eq: improve 80}, we see
\begin{equation}
\label{eq: TSDs 15}
    Z_n(t) := N^{-1/2}\F(\e^{-S(\omega+t)}\sum_{i=1}^d a_i X_n(\omega+t)_i-N\beta_1 \R) \conindis V_1  \text{ in } D(-\infty,\infty)
\end{equation}
as \(n \rightarrow \infty\), where we recall \(V_1 = \bs V_{J} \cdot \bs a\), with \(\bs V_{J}\) as in \eqref{eq: improve 80} and \(J\) the Jordan space with projection matrix \(\bs v_1 \bs a'\). We apply Theorem \ref{theorem: bill 2} to the \( Z_n\) (\(V_1\) is clearly a.s.\ continuous, since it is a constant process) with time change \(\phi_n\) giving
\begin{equation}
\label{eq: dis 7 TDS}
      Z_n(\phi_n(t))= N^{1/2}(\e^{-S\tau_{n,\lfloor nt \rfloor}}(1+S\lfloor nt \rfloor/\beta_1N)-1) \conindis \beta_1^{-1} V_1  \text{ in }  D(0,\infty)
\end{equation}
as \(n \rightarrow \infty\), where in the first equality we have used the balanced condition. Next, we take the Taylor expansion around 1 of \(\log(\e^{-S\tau_{n,\lfloor nt \rfloor}}(1+S\lfloor nt \rfloor/\beta_1N))\). By \eqref{eq: dis 7 TDS}, the second-order terms and higher are \(\mathcal{O}_p(N^{-1})\). Therefore, we have 
\begin{equation}
\label{eq: final d 12}
 N^{1/2}(\log(1+Snt/\beta_1N )-S\tau_{n,\lfloor nt \rfloor}) \conindis  \beta_1^{-1} V_1 \text{ in } D(0,\infty)
\end{equation}
as \(n \rightarrow \infty\), where the floor operator can be dropped, since the difference from the true value is of order \(n^{-1}N^{1/2}\rightarrow 0\) as~\(n \rightarrow \infty\). We multiply the terms in the brackets in \eqref{eq: final d 12} by \(-AS^{-1}\), then take their exponential. By the same argument that gave \eqref{eq: dis 6}, we get that
\begin{equation}
\label{eq: dis 33}
     N^{1/2}(\mathrm{exp}(A\tau_{n,\lfloor nt \rfloor}-AS^{-1}\log(1+Snt/\beta_1N ))-1) \conindis -A(\beta_1 S)^{-1}V_1 \text{ in } D(0,\infty)
\end{equation}
as \(n \rightarrow \infty\). We see the l.h.s.\ of \eqref{eq: dis 11 TDS} is the l.h.s.\ of \eqref{eq: dis 33} multiplied by \(N^{1/2}n^{-1/2}P_s\e^{AS^{-1}\log(1+Snt/\beta_1N )}\). Furthermore, since the projection matrix \(P_s\) only consists of small components, by \eqref{eq: matrix exp bounds}, for any \(t \in (0,\infty)\), we have
\begin{equation*}
    \|N^{1/2}n^{-1/2}P_s\e^{AS^{-1}\log(1+Snt/\beta_1N )}\|_2 \leq \cst n^{-\varepsilon}N^{\varepsilon}t^{1/2-\varepsilon} \rightarrow 0
\end{equation*}
as \(n \rightarrow \infty\), for some \(\varepsilon>0\) independent of \(t\). These two facts imply \eqref{eq: dis 11 TDS} through \eqref{eq: dis 33}, and hence the theorem holds.
\subsubsection{Proof of Theorem \ref{theorem: Main results discrete} TSD\textsubscript{s} \eqref{eq: small component convergence in probability}, TSD\textsubscript{c} and TSD\textsubscript{\(\ell\)}, and Theorem \ref{theorem: main results discrete 2} TSD\textsubscript{c} \eqref{eq: limit for TSDc} and TSD\textsubscript{\(\ell\)} \eqref{eq: large fluctuations depend}}
We group these cases together, since they use the same random time change limit.
\\~\\
\textbf{Convergence of the random time change:} We are going to show
\begin{equation}
\label{eq: time change TSDc}
   \phi_{n}(t) = S\log(n/N)^{-1}\tau_{n,\lfloor N(n/N)^t \rfloor} \coninprob t \text{ in } D(0,\infty) 
\end{equation}
as \(n \rightarrow \infty\). The first part of this proof is almost identical to the TSD\textsubscript{s} case up to \eqref{eq: TSDs to TSDl}, where instead we use that \eqref{eq: improve 80} holds over the larger timescale \((\omega t)_{t\in (0,\infty)}\). This gives us that
\begin{equation}
    \omega t - S^{-1}\log(1+SB_n(\omega t)/\beta_1N) \coninprob 0  \text{ in } D(0,\infty)
    \label{eq: Convergence 10}
\end{equation}
as \(n \rightarrow \infty\). Let \(T>0\). By solving \eqref{eq: Convergence 10} for \(t\) when \(B_n(\omega t)=\lfloor N(n/N)^T \rfloor\), we see the probability of \[A_{n,T}:=\{T/2S \leq \omega^{-1}\tau_{n,\lfloor N(n/N)^T\rfloor}\leq 2T/S\}\] tends to 1 as \(n \rightarrow \infty\). This and \eqref{eq: Convergence 10} imply 
\begin{equation}
\label{eq: Convergence 11}
 \tau_{n,\lfloor N(n/N)^t \rfloor}-S^{-1}\log(1+S\lfloor N(n/N)^t \rfloor/\beta_1N) \coninprob  0\text{ in }  D(0,\infty)   
\end{equation}
as \(n \rightarrow \infty\). This implies \eqref{eq: time change TSDc}, since, for all \(t \geq 0\),
\begin{equation*}
    \log(n/N)^{-1}\log(1+S\lfloor N(n/N)^t\rfloor/\beta_1N)=\frac{\log(1+(\beta_1 N)^{-1}S\lfloor N(n/N)^t\rfloor)}{\log((n/N)^t)}t \rightarrow t
\end{equation*}
as \(n \rightarrow \infty\).
\\~\\
\textbf{Functional convergence for Theorem \ref{theorem: Main results discrete} TSD\textsubscript{\textit{c}}:} We use the same arguments as used in the IBD regime with the TSD\textsubscript{\textit{c}} processes in place of the IBD processes. In particular, we take \(\phi_n\) as in \eqref{eq: time change TSDc} and let
\begin{equation}
\label{eq: dis TSDc 2}
    \bs Y_n(t) := N^{-1/2}\omega^{-(\kappa-1/2)}\e^{-\lambda \omega t}N_A^{m-\kappa}P_J(\bs X_n(\omega t)-N\e^{A\omega t}\bs \mu) \conindis \bs W_{J,\kappa}(t) \text{ in }D[0,\infty)
\end{equation}
as \(n\rightarrow \infty\), where the convergence is given by \eqref{eq: improve 40}. To apply Theorem \ref{theorem: Kol cont}, we need to prove \(\bs W_{J,\kappa}\) is a.s.\ continuous. To do so, we take the same approach as in the TR regime. Fix \(T>0\). Since \(\bs W_{J,\kappa}\) is a Gaussian process, for any \(0\leq t_1 \leq t_2\leq T\), the random variable \(\bs W_{J,\kappa}(t_2)-\bs W_{J,\kappa}(t_1)\) is Gaussian. Furthermore, by Theorem \ref{Theorem: Main continuous time}, we have 
\begin{align*}
    \Cov(\bs W_{J,\kappa}(t_2)-&\bs W_{J,\kappa}(t_1), \overline{\bs W}_{J,\kappa}(t_2)-\overline{\bs W}_{J,\kappa}(t_1)) \\
    &= C \F(\int_{0}^{t_1}(t_2-v)^{2(\kappa-1)}+(t_1-v)^{2(\kappa-1)}-2(t_2-v)^{\kappa-1}(t_1-v)^{\kappa-1}\mathrm{d}v+\int_{t_1}^{t_2}(t_2-v)^{2(\kappa-1)} \mathrm{d}v\R),
\end{align*}
where 
\begin{equation*}
    C=\frac{1}{(\kappa-1)!^2}\sum_{i=1}^dN_A^{m-1}P_{J}a_iv(\bs\mu)_{i}\E[\bs \xi_i \bs \xi_i']P_{J}^*N_A'^{m-1}.
\end{equation*}
For the second integral, since \(0\leq t_2 \leq T\), we have that
\begin{equation*}
 \int_{t_1}^{t_2}(t_2-v)^{2(\kappa-1)} \mathrm{d}v \leq \cst_T(t_2-t_1).
\end{equation*}
For the first integral, we have that
\begin{align*}
 \int_{0}^{t_1}(t_2-v)^{2(\kappa-1)}+(t_1-v)^{2(\kappa-1)}-2(t_2-v)^{\kappa-1}(t_1-v)^{\kappa-1}\mathrm{d}v&=\int_{0}^{t_1}\F((t_2-v)^{\kappa-1}-(t_1-v)^{\kappa-1}\R)^2\mathrm{d}v\\
 &\leq \cst_T (t_2^{\kappa-1}-t_1^{\kappa-1})^2\\
 &\leq \cst_T (t_2-t_1)^2,
\end{align*}
where, in the penultimate inequality, we use that the derivative of \(x^{\kappa-1}\) is non-decreasing for \(x\geq 0\), so the integrand is maximised at \(v=0\). Therefore, we have that
\begin{equation*}
     \|\Var(\bs W_{J,\kappa}(t_2)-\bs W_{J,\kappa}(t_1))\|_2\leq \cst_T (t_2-t_1).
\end{equation*}
This implies the conditions of Theorem \ref{theorem: Kol cont} are satisfied with \(\alpha = 4\), \(\beta =1\), and \(K\) some constant that depends on \(T\). Since \(T>0\) was arbitrary, \(\bs W_{J,\kappa}\) is a.s.\ continuous in \(D[0,\infty)\). Therefore, we can apply Theorem \ref{theorem: bill 2} which gives 
\begin{align}
\label{eq: dis TSDc 3}
    &\bs Y_n(\phi_n(t)) = N^{-1/2}\omega^{-(\kappa-1/2)}\e^{-\lambda \tau_{n,\lfloor N(n/N)^t \rfloor  }}N_A^{m-\kappa}P_J(\bs U_n(\lfloor N(n/N)^t \rfloor )-N\mathrm{e}^{A\tau_{n,\lfloor N(n/N)^t \rfloor  }}\bs \mu)\conindis \bs W_{J,\kappa}(t)
\end{align}
in \(D(0,\infty)\) as \(n \rightarrow \infty\). Next, by \eqref{eq: Convergence 11} and the continuous mapping theorem with function \(\e^{-\lambda \cdot}\), we have
\begin{equation}
\label{eq: dis TSDc 4}
    \e^{-\lambda \tau_{n,\lfloor N(n/N)^t \rfloor  }}(n/N)^{\lambda t/S} \coninprob (\beta_1 /S)^{\lambda/S} \text{ in } D(0,\infty)
\end{equation}
as \(n \rightarrow \infty\), where we have used, for all \(t \geq 0\),
\begin{equation*}
    (1+S\lfloor N(n/N)^t\rfloor/\beta_1N)^{\lambda/S}= (S(n/N)^t/\beta_1)^{\lambda/S} + o(1).
    \end{equation*}
(This can be seen by taking a Taylor expansion of the l.h.s.\ around \(S (n/N)^t/\beta_1\).) Using \eqref{eq: dis TSDc 4} in \eqref{eq: dis TSDc 3} gives
\begin{equation}
\label{eq: dis TSDc 5}
    N^{-1/2}(n/N)^{-\lambda t /S}\omega^{-(\kappa-1/2)}N_A^{m-\kappa}P_J(\bs U_n(\lfloor N(n/N)^t \rfloor )-N\mathrm{e}^{A\tau_{n,\lfloor N(n/N)^t \rfloor  }}\bs \mu)\conindis (S/\beta_1)^{\lambda/S}\bs W_{J,\kappa}(t) \text{ in }  D(0,\infty)
\end{equation}
as \(n \rightarrow \infty\). Since \(\mathrm{Re}\lambda/S=1/2\), by \eqref{eq: dis TSDc 5}, the theorem will follow once we have shown
\begin{equation}
\label{eq: TSDc 15}
       N^{1/2}(n/N)^{-t/2}\omega^{-(\kappa-1/2)}N_A^{m-\kappa}P_J( \mathrm{e}^{A\tau_{n,\lfloor N(n/N)^t \rfloor  }}-\mathrm{e}^{AS^{-1}\log(1+S(n/N)^t)/\beta_1}) \coninprob 0 \text{ in }  D(0,\infty)
\end{equation}
as \(n \rightarrow \infty\). We repeat steps \eqref{eq: TSDs 15}-\eqref{eq: dis 33}, where we take \(\phi_n\) as in \eqref{eq: time change TSDc} and we let 
\begin{equation*}
    Z_n(t) := N^{-1/2}\F(\e^{-S\omega t}\sum_{i=1}^d a_i X_n(\omega t)_i-N\beta_1 \R) \conindis V_1 \text{ in } D(0,\infty)
\end{equation*}
as \(n \rightarrow \infty\), where the convergence is due to \eqref{eq: improve 80}. Repeating these steps for the new \(\phi_n\) and \(Z_n\) gives
\begin{equation}
\label{eq: TSDc 16}
    N^{1/2}(\mathrm{exp}(A\tau_{n,\lfloor N(n/N)^t \rfloor}-AS^{-1}\log(1+S(n/N)^t/\beta_1))-1) \conindis -A(\beta_1 S)^{-1}V_1  \text{ in }  D(0,\infty)
\end{equation}
as \(n \rightarrow \infty\). We see the l.h.s.\ of \eqref{eq: TSDc 15} is the l.h.s.\ of \eqref{eq: TSDc 16} multiplied on the left by \\\((n/N)^{-t/2}\omega^{-(\kappa-1/2)}N_A^{m-\kappa}P_J\e^{AS^{-1}\log(1+S(n/N)^t/\beta_1)}\). Therefore, \eqref{eq: TSDc 15} follows from \eqref{eq: TSDc 16}, since, for any \(t \geq  0\),
\begin{equation}
\label{eq: Bound 10}
    \|(n/N)^{-t/2}\omega^{-(\kappa-1/2)}N_A^{m-\kappa}P_J\e^{AS^{-1}\log(1+S(n/N)^t/\beta_1)}\|_2 \leq \cst \omega^{-1/2}(1+t)^{\kappa-1}\rightarrow 0
\end{equation}
as \(n \rightarrow \infty\). The inequality in \eqref{eq: Bound 10} is a consequence of \eqref{eq: matrix exp bounds}.
\\~\\
\textbf{Functional convergence for Theorem \ref{theorem: Main results discrete} TSD\textsubscript{\(\ell\)}:} We use the same arguments as used in the IBD regime with the TSD\textsubscript{\(\ell\)} processes in place of the IBD processes. In particular, we take \(\phi_n\) as in \eqref{eq: time change TSDc} and let
\begin{equation}
\label{eq: dis TSDl 2}
    \bs Y_n(t) := N^{-1/2}P_J(\e^{-A\omega t}\bs X_n(\omega t)-N\bs \mu) \conindis \bs V_J \text{ in } D(0,\infty)
\end{equation}
as \(n\rightarrow \infty\), where this convergence is given by \eqref{eq: improve 80}. We apply Theorem \ref{theorem: bill 2} (\(\bs V_J\) is clearly a.s.\ continuous since it is constant) which gives 
\begin{align}
\label{eq: dis TSDl 3}
    &\bs Y_n(\phi_n(t)) = N^{-1/2}P_J(\e^{-A\tau_{n,\lfloor N(n/N)^t \rfloor}}\bs U_n({\lfloor N(n/N)^t \rfloor})-N\bs \mu)\conindis \bs V_{J} \text{ in }  D(0,\infty)
\end{align}
as \(n \rightarrow \infty\). Next, by \eqref{eq: TSDc 16}, we have that
\begin{equation}
\label{eq: dis TSDl 4}
    \mathrm{exp}(AS^{-1}\log(1+S(n/N)^t/\beta_1)-A\tau_{n,\lfloor N(n/N)^t \rfloor}) \coninprob 1  \text{ in }  D(0,\infty)
\end{equation}
as \(n \rightarrow \infty\).
 Using this in \eqref{eq: dis TSDl 3} gives
\begin{equation}
\label{eq: dis TSDl 5}
   N^{-1/2}\e^{-AS^{-1}\log(1+S(n/N)^t/\beta_1)}P_J(\bs U_n({\lfloor N(n/N)^t \rfloor})-\e^{A\tau_{n,\lfloor N(n/N)^t \rfloor}}N\bs \mu)\conindis \bs V_{J} \text{ in }  D(0,\infty)
\end{equation}
as \(n \rightarrow \infty\). Next, by applying the continuous mapping theorem to \eqref{eq: TSDc 16} with function \(P_J\), we have that
\begin{equation*}
    N^{1/2}\e^{-AS^{-1}\log(1+S(n/N)^t/\beta_1)}P_J(\e^{A\tau_{n,\lfloor nt \rfloor}}-\e^{AS^{-1}\log(1+S(n/N)^t/\beta_1)}) \conindis -AP_J(\beta_1 S)^{-1}V_1 \text{ in }  D(0,\infty) 
\end{equation*}
as \(n \rightarrow \infty\). This and \eqref{eq: dis TSDl 5} imply
\begin{equation}
     N^{-1/2}P_J(\e^{-AS^{-1}\log(1+S(n/N)^t/\beta_1)}\bs U_n({\lfloor N(n/N)^t \rfloor})-N\bs \mu)\conindis \bs V_{J}-AP_J(\beta_1 S)^{-1}V_1\bs\mu \text{ in } D(0,\infty)
     \label{eq: Convergence 12}
\end{equation}
as \(n\rightarrow \infty\). Then, by \eqref{eq: mat exp bound start},
\begin{align*}
  &N_A^{m-\kappa}P_J(\bs U_n(\lfloor N(n/N)^t\rfloor-N\e^{AS^{-1}\log(1+S(n/N)^t/\beta_1)}\bs \mu )
  \\&= (1+S(n/N)^t/\beta_1)^{\lambda/S}\sum_{i=0}^{\kappa-1}\frac{(S^{-1}\log(1+S(n/N)^t/\beta_1))^{i}}{i!}N_A^{m-\kappa+i}P_J(\e^{-AS^{-1}\log(1+S(n/N)^t/\beta_1)}\bs U_n({\lfloor N(n/N)^t \rfloor})-N\bs \mu)
\end{align*}
The highest order term in this sum is when \(i=\kappa-1\). Using this and \eqref{eq: Convergence 12}, we see
\begin{align*}
   &N^{-1/2}(n/N)^{-\lambda t/S}\log((n/N)^t)^{-(\kappa-1)} N_A^{m-\kappa}P_J(\bs U_n(\lfloor N(n/N)^t\rfloor-N\e^{AS^{-1}\log(1+S(n/N)^t/\beta_1)}\bs \mu )\\
   &\qquad\qquad\qquad\qquad\qquad\qquad\qquad\qquad\qquad\qquad\qquad\qquad \conindis \frac{(S/\beta_1)^{\lambda/S}N_A^{m-1}(\bs V_{J}-AP_J(\beta_1 S)^{-1}V_1\bs\mu)}{S^{\kappa-1}(\kappa-1)!} \text{ in } D(0,\infty)
\end{align*}
as \(n\rightarrow \infty\), as required.
\\~\\
\textbf{Functional convergence for Theorem \ref{theorem: Main results discrete} TSD\textsubscript{s} \eqref{eq: small component convergence in probability}:} We use the same arguments as used in the IBD regime with the TSD\textsubscript{s} processes in place of the IBD processes. In particular, we take \(\phi_n\) as in \eqref{eq: time change TSDc} and let
\begin{equation}
\label{eq: dis TSDss}
    \bs Y_n(t) :=  K^{-1} N^{-1/2}\omega^{-1/4}\mathrm{e}^{-S\omega t/2}P_s( \bs X_n(\omega t)-N\mathrm{e}^{A \omega t}\bs \mu)\coninprob 0 \text{ in } D[0,\infty)
\end{equation}
as \(n\rightarrow \infty\), where the convergence is given by \eqref{eq: small component convergence in probability}. We apply Theorem \ref{theorem: bill 2} which gives
\begin{equation}
   \label{eq: dis TSDss 2}
   \bs Y_n(\phi_n(t)) = K^{-1} N^{-1/2}\omega^{-1/4}\e^{-S\tau_{n,\lfloor N(n/N)^t \rfloor}/2}P_s(\bs U_n({\lfloor N(n/N)^t \rfloor})-N\mathrm{e}^{A\tau_{n,\lfloor N(n/N)^t \rfloor  }}\bs \mu)\coninprob 0 \text{ in }D(0,\infty)
\end{equation}
as \(n \rightarrow \infty\). This and \eqref{eq: dis TSDc 4} (with function \(\e^{-\frac{S}{2}\cdot}\)) imply
\begin{equation}
    K^{-1} N^{-1/2}(n/N)^{-t/2}\omega^{-1/4}P_s(\bs U_n({\lfloor N(n/N)^t \rfloor})-N\mathrm{e}^{A\tau_{n,\lfloor N(n/N)^t \rfloor  }}\bs \mu)\coninprob 0 \text{ in }D(0,\infty)
\end{equation}
as \(n\rightarrow \infty\). Thus, the result will follow once we show
\begin{equation}
   K^{-1} N^{1/2}(n/N)^{-t/2}\omega^{-1/4}P_s( \mathrm{e}^{A\tau_{n,\lfloor N(n/N)^t \rfloor  }}-\mathrm{e}^{AS^{-1}\log(1+S(n/N)^t/\beta_1)})\coninprob 0\text{ in }D(0,\infty)
\end{equation}
as \(n\rightarrow \infty\). This follows by \eqref{eq: TSDc 16}, and that, by \eqref{eq: matrix exp bounds}, for any \(t \geq 0\),
\begin{equation*}
    \| K^{-1} (n/N)^{-t/2}\omega^{-1/4}P_s \mathrm{e}^{AS^{-1}\log(1+S(n/N)^t/\beta_1)}\|_2 \rightarrow 0
\end{equation*}
as \(n\rightarrow \infty\).
\subsection{Proof of Lemma \ref{lemma: import ob}}
\label{Appendix: import obj}
Assume for contradiction that there exists some Jordan block with eigenvalue \(S\) of size larger than 1. In this case, \eqref{eq: dis 1 TSDs} still holds with an identical proof. Furthermore, by \eqref{eq: first order MCBP} with \(\omega = S^{-1}\log(n/N)\), we have that
\begin{equation*}
   \bs Y_n(t):=N^{-1}\log(n/N)^{-1}\e^{-(\log(n/N)+St)}(\bs X_n(S^{-1}\log(n/N)+t)-N\e^{A(S^{-1}\log(n/N)+t)}\bs\mu)\coninprob 0 \text{ in } D(0,\infty)
\end{equation*}
as \(n\rightarrow \infty\). Applying Theorem \ref{theorem: bill 2} with \(\phi_n\) as in \eqref{eq: dis 1 TSDs} and \(\bs Y_n\) gives 
\begin{equation}
\bs Y_n(\phi_n(t))= N^{-1}\log(n/N)^{m_1-1}\e^{-S\tau_{n,\lfloor nt \rfloor }}(\bs U_n(\lfloor nt \rfloor)-N\e^{A\tau_{n,\lfloor nt \rfloor }}\bs\mu)\coninprob 0 \text{ in } D(0,\infty) 
\label{eq: contradiction setup}
\end{equation}
as \(n\rightarrow \infty\). Next, notice that \eqref{eq: need for future append} and \eqref{eq: dis 33} still hold with identical proof. They imply, as \(n\rightarrow \infty\),
\begin{align*}
       & \e^{-S\tau_{n,\lfloor nt \rfloor}}(nSt/\beta_1N)\coninprob 1  \text{ in } D(0,\infty),\\
        &\mathrm{exp}(A\tau_{n,\lfloor nt \rfloor}-AS^{-1}\log(1+Snt/\beta_1N )) \coninprob 1 \text{ in } D(0,\infty).
\end{align*}
Using these in \eqref{eq: contradiction setup} gives
\begin{equation}
    n^{-1}\log(n/N)^{-1}(\bs U_n(\lfloor nt \rfloor)-N\e^{AS^{-1}\log(1+Snt/\beta_1N )}\bs\mu)\coninprob 0 \text{ in } D(0,\infty). \label{eq: cont complete}
\end{equation}
Now, since we have a Jordan block with eigenvalue \(S\) of size larger than 1, we have that \(N\e^{AS^{-1}\log(1+Sn/\beta_1N )}\) is of order at least \(n\log(n/N)\) as \(n\rightarrow \infty\) by \eqref{eq: mat exp bound start}. This gives a contradiction, since this and \eqref{eq: cont complete} imply the urn at draw \(n\) is of order at least \(n\log(n/N)>>n\) as \(n\rightarrow \infty\), which is impossible since we only add mass \(S\) at each draw.
\end{document}